\DeclarePairedDelimiter\floor{\lfloor}{\rfloor}
\newcolumntype{R}[1]{>{\raggedleft\arraybackslash}p{#1}}
\DeclareSymbolFont{rsfs}{U}{rsfs}{m}{n}
\DeclareSymbolFontAlphabet{\mathscrsfs}{rsfs}
\numberwithin{equation}{section}
\newcounter{smallarabics}
\newcounter{smallroman}
\newenvironment{romanenumerate}
{\begin{list}{{\normalfont\textrm{(\roman{smallroman})}}}
  {\usecounter{smallroman}\setlength{\itemindent}{0cm}
   \setlength{\leftmargin}{5ex}\setlength{\labelwidth}{4ex}
   \setlength{\topsep}{0.75\parsep}\setlength{\partopsep}{0ex}
   \setlength{\itemsep}{0ex}}}
{\end{list}}
\newcommand{\ben}{\begin{romanenumerate}}  
\newcommand{\een}{\end{romanenumerate}}  
\newtheorem{theoreme}{theorem }[section]
\newtheorem{theorem}[theoreme]{Theorem}
\newtheorem{proposition}[theoreme]{Proposition}
\newtheorem{Lemma}[theoreme]{Lemma}
\newtheorem{corollary}[theoreme]{Corollary}
\newtheorem{conjecture}[theoreme]{Conjecture}
\newtheorem{remark}{Remark}[section]
\newtheorem{example}[theoreme]{Example}
\newcolumntype{L}{>{\centering\arraybackslash}m{3cm}}
\newcommand\nn\nonumber
\renewcommand\leq\varleq
\renewcommand\geq\vargeq
 \newcommand{\R}{\mathbb{R}}
 \newcommand{\N}{\mathbb{N}}
\newcommand{\Z}{\mathbb{Z}} \newcommand{\C}{\mathbb{C}}
 \newcommand{\F}{\mathcal{F}}
 \newcommand{\G}{\mathcal{G}}  
\newcommand{\E}{\mathcal{E}} \renewcommand{\H}{\mathcal{H}}
\newcommand{\grad}{\nabla}
\renewcommand{\i}{\mathrm{i}}
\renewcommand{\F} {\mathcal{F}}
\renewcommand{\epsilon}{\varepsilon}
\newcommand\blue[1]{\textcolor{blue}{#1}}
\newcolumntype{A}{D{.}{.}{2.3}}
\pgfplotsset{compat=1.11}
      \def\@setcopyright{}
      \def\serieslogo@{}
\begin{document}

\author{Sylvain Gol\'enia and Marc-Adrien Mandich}
   \address{Univ. Bordeaux, CNRS, Bordeaux INP, IMB, UMR 5251,  F-33400, Talence, France}
   \email{sylvain.golenia@math.u-bordeaux.fr}
      \address{Independent researcher, Jersey City, 07305, NJ, USA}
	\email{marcadrien.mandich@gmail.com}
   

   \title[LAP for discrete Schr\"odinger operator]{Thresholds and more bands of A.C.\ spectrum for the discrete Schr{\"o}dinger operator with a more general long range condition}

   \begin{abstract}
   We continue the investigation of the existence of absolutely continuous (a.c.) spectrum for the discrete Schr\"odinger operator $\Delta+V$ on $\ell^2(\Z^d)$, in dimensions $d\geq 2$, for potentials $V$ satisfying the long range condition $n_i(V-\tau_i ^{\kappa}V)(n) = O(\ln^{-q}(|n|))$ for some $q>2$, $\kappa \in \N$, and all $1 \leq i \leq d$, as $|n| \to \infty$. $\tau_i ^{\kappa} V$ is the potential shifted by $\kappa$ units on the $i^{\text{th}}$ coordinate. The difference between this article and \cite{GM2} is that here \textit{finite} linear combinations of conjugate operators are constructed leading to more bands of a.c.\ spectrum being observed. The methodology is backed primarily by graphical evidence because the linear combinations are built by numerically implementing a polynomial interpolation. On the other hand an infinitely countable set of thresholds, whose exact definition is given later, is rigorously identified. Our overall conjecture, at least in dimension 2, is that the spectrum of $\Delta+V$ is void of singular continuous spectrum, and consecutive thresholds are endpoints of a band of a.c. spectrum.
   \end{abstract}

%
\subjclass[2010]{39A70, 81Q10, 47B25, 47A10.}

   \keywords{discrete Schr\"{o}dinger operator, long range potential, limiting absorption principle, Mourre theory, Chebyshev polynomials, polynomial interpolation, threshold}
 

\maketitle
\hypersetup{linkbordercolor=black}
\hypersetup{linkcolor=blue}
\hypersetup{citecolor=blue}
\hypersetup{urlcolor=blue}
\tableofcontents

\section{Introduction}

The discrete Schr\"odinger operators on the lattice $\Z^d$ have a long history in modeling quantum phenomena in media with discrete postions such as crystals, or more general media by means of discretisation. This article deals with a specific modeling aspect in the spectral theory of these operators and is a direct sequel to \cite{GM2}. Let $\mathscr{H} := \ell^2(\Z^d)$. The discrete Laplacian on $\Z^d$, which models the kinetic energy of a quantum particle, is
\begin{align}
\label{def:std}
&\Delta = \Delta[d] :=  \Delta_1+...+\Delta_d, \quad \text{where } \Delta_i :=  (S_i + S^*_i)/2.
\end{align}
Here $S_i = S_i^1$ and $S^*_i = S^{-1}_i$ are the shifts to the right and left respectively on the $i^{th}$ coordinate. So $(S_i ^{\pm 1} u)(n) = u(n_1,\ldots,n_i \mp 1,\ldots,n_d)$ for $u \in \mathscr{H}$, $n=(n_1,\ldots, n_d) \in \Z^d$. Set $|n|^2 = n_1^2 +...+n_d^2$. Let $\sigma(\cdot)$ denote the spectrum of an operator. A Fourier transformation shows that the spectra of $\Delta_i$ and $\Delta$ are purely absolutely continuous (a.c.), $\sigma(\Delta_i) \equiv [-1,1]$ and $\sigma(\Delta) = [-d,d]$. 

Let $V$ model a discrete electric potential and act pointwise, i.e.\ $(Vu)(n) = V(n) u(n)$, for $u \in \mathscr{H}$. We always assume $V$ is real-valued and goes to zero at infinity. Thus the essential spectrum of $\Delta+V$ equals $\sigma(\Delta)$. Let $\N$ and $\N^*$ be the positive integers, including and excluding zero respectively. Fix $\kappa \in \N^*$. The shifted potential by $\pm \kappa$ units is defined by 
\begin{equation*}
(\tau_i ^{\pm \kappa} V) u(n) := V(n_1,\ldots, n_i \mp \kappa,\ldots n_d)u(n), \quad \forall  1 \leq i  \leq d.
\end{equation*} 
As in \cite{GM2}, we are interested in potentials $V$ satisfying a non-radial condition of the form 
\begin{equation}
\label{generalLR condition}
n_i (V - \tau_i ^{\kappa} V)(n) = O(g(n)), \ \text{as} \ |n| \to \infty, \quad \forall  1 \leq i  \leq d,
\end{equation}
 where $g(n)$ is a radial function which goes to zero at infinity at an appropriate rate, e.g.\ $g(n) =  \ln ^{-q}(|n|+1)$, $q>2$. 
 We refer to \cite{GM2} for some examples of Schr\"odinger operators that satisfy \eqref{generalLR condition}. Also, one may generalize \eqref{generalLR condition} by shifting $V$ by a different amount in each direction (i.e.\ $\kappa_i$ instead of $\kappa$ in \eqref{generalLR condition}). We do not study this question here and refer instead to \cite{GM2} for numerical examples. We also want to mention that the class of $V$ given by \eqref{generalLR condition} is quite close to the $V(n) = O(1/|n|)$ class, for which the absence of singular continuous (s.c.) spectrum is proved in dimension 1, see \cite{Li1} and \cite{Ki}, but remains an open problem in higher dimensions. In our opinion the transition of spectral components at the $V(n) = O(1/|n|)$ level is still not very well understood, especially in dimensions $\geq 2$, but see e.g.\ \cite{Li1} and \cite{Li2} for $d=1$. This article may be viewed as a contribution in this direction.

For a closed interval $I \subset \R$ let $I_{\pm}:= \{ z \in \C_{\pm}: \mathrm{Re} (z) \in I\}$, $\C_\pm:=\{z\in \C, \pm\mathrm{Im}(z)>0\}$. The limiting absorption principle (LAP) is a statement about the extension of the holomorphic maps 
\begin{equation}
\label{LaP_generic}
I_{\pm} \ni z \mapsto (\Delta+V-z)^{-1} 
\end{equation}
to $I$. 
The LAP on an interval $I$ implies amongst other things the absence of s.c.\ spectrum for $\Delta+V$ on that set. This article aims for such type of results. In Mourre theory, which has its origins in \cite{Mo1} and \cite{Mo2}, and is extensively refined in \cite{ABG}, the strategy to obtain a LAP \eqref{LaP_generic} on an interval $I \subset \sigma(\Delta+V)$ depends roughly on the ability to prove two key estimates. The first estimate is a \textit{strict Mourre estimate} for $\Delta$ with respect to some self-adjoint conjugate operator $\mathbb{A}$ on this interval, that is to say, $\exists \gamma >0$ such that 
\begin{equation}
\label{mourreEstimate123}
1_I(\Delta) [\Delta, \i \mathbb{A}] _{\circ} 1_{I} (\Delta) \geq \gamma 1_{I} (\Delta),
\end{equation}
where $1_I(\Delta)$ is the spectral projection of $\Delta$ on $I$, and $[ \cdot , \i \mathbb{A} ] _{\circ}$ initially defined on the compactly supported sequences is the extension of the commutator between two operators to a bounded operator on $\mathscr{H}$ (this definition suffices for this article). The second estimate is one involving $V$, and according to a later version of the theory, is such as
\begin{equation}
\label{compactcomm}
 \ln^p(1+|n|) \cdot [V, \i \mathbb{A}]_{\circ}  \cdot \ln^p(1+|n|) \quad is \ a \ compact \ operator \ on \ \mathscr{H} \ for \ some \ p >1.
\end{equation}
To specify our choice of $\mathbb{A}$ we need the position operators $(N_i u)(n) := n_i u(n)$. 
To handle condition \eqref{generalLR condition} we consider a (\textit{finite}) linear combination of conjugate operators of the form
\begin{equation}
\label{LINEAR_combinationA}
\mathbb{A} = \sum _{j \geq 1} \rho_{j\kappa} \cdot A_{j \kappa}, \quad \rho_{j\kappa} \in \R, \quad A_{j \kappa} := \sum_{1 \leq i \leq d} A_i (j,\kappa),
\end{equation}
where each $A_i (j,\kappa)$, initially defined on compactly supported sequences, is the closure in $\mathscr{H}$ of :
\small
\begin{equation}
\label{generatorDilations_0111}
A_{i}(j,\kappa) := \frac{1}{2\i} \bigg [  \frac{j \kappa}{2}(S_i ^{j \kappa} + S_i^{-j \kappa}) + (S_i ^{j \kappa} - S_i^{-j \kappa}) N_i \bigg ]  = \frac{1}{4\i} \bigg[  (S_i ^{j \kappa} - S_i^{-j \kappa})N_i + N_i (S_i^{j \kappa} - S_i^{-j \kappa}) \bigg].
\end{equation} 
\normalsize
Each $A_{j\kappa}$ is self-adjoint in $\mathscr{H}$ by an adaptation of the case $(j,\kappa) = (1,1)$, and so $\mathbb{A}$ is self-adjoint, at least whenever it is a finite sum. The reason choice \eqref{LINEAR_combinationA} is relevant is that 
$$[V, \i A_{j\kappa} ]_{\circ} = \sum_{1 \leq i \leq d}  (4\i)^{-1} \left( (V - \tau_i ^{j \kappa} V) S_i ^{j \kappa} - (V - \tau_i ^{-j\kappa}V)S_i ^{-j\kappa} \right) N_i + \text{hermitian conjugate},
$$
and so \eqref{generalLR condition} implies \eqref{compactcomm}, again, at least when $\mathbb{A}$ is a finite sum and $g(n) = \ln^{-q}(1+|n|)$, $q>2$. The frequencies of the $A_{i}(j,\kappa)$ are in sync with the long range frequency decay of $V$. But the coefficients $\rho_{j \kappa}$ need to be chosen so that \eqref{mourreEstimate123} holds. This is a challenge. Categorize energies into two sets : $\boldsymbol{\mu}_{\kappa}(\Delta)$ and $\boldsymbol{\Theta}_{\kappa}(\Delta)$. $\boldsymbol{\mu}_{\kappa}(\Delta)$ are energies $E \in \sigma(\Delta)$ for which there is a self-adjoint linear combination (finite or infinite) of the form \eqref{LINEAR_combinationA}, an interval $I \ni E$ and $\gamma > 0$ such that the Mourre estimate \eqref{mourreEstimate123} holds. $\boldsymbol{\Theta}_{\kappa}(\Delta)$ are energies $E \in \sigma(\Delta)$ for which there is no self-adoint linear combination (finite or infinite) of the form \eqref{LINEAR_combinationA}, no interval $I \ni E$ and no $\gamma >0$ such that \eqref{mourreEstimate123} holds. By definition $\sigma(\Delta)$ is a disjoint union of $\boldsymbol{\mu}_{\kappa}(\Delta)$ and $\boldsymbol{\Theta}_{\kappa}(\Delta)$. From Mourre theory $\boldsymbol{\mu}_{\kappa}(\Delta)$ is an open set and so $\boldsymbol{\Theta}_{\kappa}(\Delta)$ is closed. In this article, including title and abstract, we refer to energies in $\boldsymbol{\Theta}_{\kappa}(\Delta)$ as \textit{thresholds}. \textit{This definition depends on the modeling assumption of $\mathbb{A}$} (see end of introduction for a comment). To be clear, the family $\mathbb{A}$ given by \eqref{LINEAR_combinationA} is a modeling assumption within the larger modeling framework of discrete Schr\"odinger operators given by \eqref{def:std}. Theorem \ref{lapy305} below highlights the usefulness of the sets $\boldsymbol{\mu}_{\kappa}(\Delta)$. Let $\sigma_p(\Delta+V)$ be the point spectrum of $\Delta+V$. Let $\langle \mathbb{A} \rangle := \sqrt{1+\mathbb{A}^* \mathbb{A} }$. 
\begin{theorem}
\label{lapy305} 
Let $q > 2$, $\kappa \in \N^*$ be such that $\limsup \left( |V(n)|, | n_i (V - \tau_i^{\kappa} V)(n) | \right) = O \left(  \ln ^{-q} (|n|) \right )$, as $|n| \to \infty$ and $\forall 1 \leq i \leq d$.
Let $E \in \boldsymbol{\mu}_{\kappa}(\Delta) \setminus \sigma_p(\Delta+V)$. Let $\mathbb{A} = \sum_j \rho_{j \kappa} A_{j \kappa}$ be a finite sum such that \eqref{mourreEstimate123} holds in a neighorhood of $E$. Then there is an open interval $I$, $I \ni E$, such that
\begin{enumerate}
\item $\sigma_p(\Delta+V) \cap I$ is at most finite (including multiplicity),
\item$\forall p >1/2$ the map $I_{\pm} \ni z \mapsto (\Delta+V-z)^{-1} \in \mathscr{B}\left( \mathcal{K}, \mathcal{K}^* \right)$ extends to a uniformly bounded map on $I$, with $\mathcal{K} = L^2_{1/2,p}(\mathbb{A}) = \left \{ \psi \in \mathscr{H} : \| \langle \mathbb{A} \rangle ^{1/2}  \ln ^{p} (\langle \mathbb{A} \rangle) \psi \| < \infty \right \}$,
\item The singular continuous spectrum of $\Delta+V$ is void in $I$. 
\end{enumerate}
\end{theorem}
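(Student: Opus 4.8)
The plan is to verify, for the pair $(H,\mathbb{A})$ with $H:=\Delta+V$, the hypotheses of the abstract limiting absorption principle (the optimal Mourre theorem with logarithmic weights) already employed in \cite{GM2}, and then to read off (1)--(3). Since $\mathbb{A}$ is a finite sum it is self-adjoint, $H$ is bounded self-adjoint with $\sigma_{\mathrm{ess}}(H)=[-d,d]$, and no domain subtleties arise. First I would record that $\Delta\in C^\infty(\mathbb{A})$: after expanding \eqref{generatorDilations_0111}, every iterated commutator of $\Delta$ with $\mathbb{A}$ is a finite linear combination of products of shifts $S_l^{\pm m}$, hence bounded; in particular $\Delta\in C^{1,1}(\mathbb{A})\subset C^1(\mathbb{A})$.

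The substantive point is the regularity of $V$ relative to $\mathbb{A}$. Using the commutator formula for $[V,\i A_{j\kappa}]_\circ$ displayed just before the theorem, I would apply the telescoping identity $V-\tau_i^{j\kappa}V=\sum_{l=0}^{j-1}\tau_i^{l\kappa}(V-\tau_i^{\kappa}V)$ to reduce the size of $n_i(V-\tau_i^{\pm j\kappa}V)(n)$ to that of $n_i(V-\tau_i^{\kappa}V)(n)$: translating the $i$-th coordinate by the bounded amount $l\kappa$ affects neither the $\ln^{-q}$ asymptotics nor, modulo the bounded and vanishing correction $l\kappa\,(V-\tau_i^{\kappa}V)(n)$, the prefactor $n_i$. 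Thus $|n_i(V-\tau_i^{\pm j\kappa}V)(n)|=O(\ln^{-q}(|n|))$, so for each $p<q/2$ the operator $\ln^{p}(1+|n|)\,[V,\i A_{j\kappa}]_\circ\,\ln^{p}(1+|n|)$ is a finite sum of shifts composed with multiplication operators vanishing at infinity, hence compact; since $q>2$ one may take $p>1$, which yields \eqref{compactcomm}. In particular $[V,\i\mathbb{A}]_\circ$ is compact and $V\in C^1(\mathbb{A})$, so $H\in C^1(\mathbb{A})$; together with $\Delta\in C^\infty(\mathbb{A})$ and the fact that $V$ vanishes at infinity (so that $V$ is compact), this supplies precisely the regularity input required by the abstract theorem, exactly as in \cite{GM2}.

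Next I would transfer the Mourre estimate from $\Delta$ to $H$. Because $V$ is compact, $(H-z)^{-1}-(\Delta-z)^{-1}=-(H-z)^{-1}V(\Delta-z)^{-1}$ is compact, hence $f(H)-f(\Delta)$ is compact for every $f\in C_c^\infty(\R)$, while $f(H)[V,\i\mathbb{A}]_\circ f(H)$ is compact by the previous paragraph. Choosing $f\equiv1$ on an interval $J'\ni E$ with support inside the neighbourhood on which \eqref{mourreEstimate123} holds, one obtains $1_{J'}(H)[H,\i\mathbb{A}]_\circ1_{J'}(H)\geq\gamma\,1_{J'}(H)+K$ with $K$ compact. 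Since $E\notin\sigma_p(H)$, the spectral cutoffs $g(H)$ converge strongly to $0$ as $\mathrm{supp}\,g\downarrow\{E\}$, so $\|g(H)Kg(H)\|\to0$, and for a small enough open interval $I\ni E$ one gets a \emph{strict} Mourre estimate $1_I(H)[H,\i\mathbb{A}]_\circ1_I(H)\geq(\gamma/2)\,1_I(H)$. By the Virial theorem (available since $H\in C^1(\mathbb{A})$) this already forces $\sigma_p(\Delta+V)\cap I=\emptyset$, so (1) holds; and feeding the strict estimate, together with the regularity established above, into the abstract limiting absorption principle yields, for every $p>1/2$, the uniformly bounded extension of $I_\pm\ni z\mapsto(H-z)^{-1}\in\mathscr{B}(\mathcal{K},\mathcal{K}^*)$ to $I$, which is (2). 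Finally (3) follows from (2) by the standard fact that local boundedness of the resolvent's boundary values precludes singular continuous spectrum in $I$.

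The step I expect to be the real obstacle is the regularity input of the second paragraph: one must check not merely that $[V,\i\mathbb{A}]_\circ$ is compact but that $H$ lies in the precise logarithmic refinement of $C^{1,1}(\mathbb{A})$ demanded by the weighted LAP --- this is exactly where the exponents $q>2$ (needed to get \eqref{compactcomm} with exponent $>1$) and $p>1/2$ (in the definition of $\mathcal{K}$) enter --- and the telescoping reduction to the single shift $V-\tau_i^{\kappa}V$, together with the bookkeeping of the unbounded factors $N_i$ against the logarithmic weights, is the technical heart of the matter. The transfer of the Mourre estimate and the invocation of the abstract theorem are then routine, and $\Delta\in C^\infty(\mathbb{A})$ is a direct computation.
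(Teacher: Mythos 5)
Your proof is correct and takes essentially the same route the paper itself prescribes. The paper does not give a self-contained argument for Theorem~\ref{lapy305}: it states that the result ``follows directly from [GM1] and [GM2]'' and, just before the theorem, writes the commutator formula for $[V,\i A_{j\kappa}]_\circ$ and asserts (without detail) that \eqref{generalLR condition} implies \eqref{compactcomm}. Your argument reconstructs precisely what that assertion requires: the telescoping identity $V-\tau_i^{j\kappa}V=\sum_{l=0}^{j-1}\tau_i^{l\kappa}(V-\tau_i^{\kappa}V)$ and the bookkeeping for $n_i$ against the bounded shifts, which is the step the paper leaves implicit; the rest (compactness of $V$, transfer of the Mourre estimate from $\Delta$ to $H=\Delta+V$, shrinkage to a strict estimate near the non-eigenvalue $E$, and invocation of the log-weighted LAP) is the standard Mourre blueprint that GM1 and GM2 follow. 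You also correctly flag the one place where the paper itself is cavalier --- it even writes the inclusion $C^1(\mathbb{A})\subset C^{1,1}(\mathbb{A})$ backwards while acknowledging that $C^1$ alone is not sufficient and deferring the logarithmic refinement of $C^{1,1}$ to the regularity section of [GM2]. So there is no genuinely different approach here, only a more explicit account of the same one.
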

This theorem can be refined, see \cite{GM2} and references therein. Theorem \ref{lapy305} as such follows directly from \cite{GM1} and \cite{GM2}. The main technique underlying our approach are commutator methods. In this context, operator regularity is a necessary and important topic. According to the standard literature, the regularity $\Delta, V \in C^{1,1}(\mathbb{A})$, or adequate variations thereof, are required. As far as we are concerned, it is clear that $\Delta$, $V$ belongs to $C^1(A_{j\kappa})$ for $1 \leq j \leq N < \infty$, and this implies $\Delta$, $V \in C^1(\sum_{1 \leq j \leq N} \rho_{j\kappa} A_{j\kappa})$. Although the $C^1(\mathbb{A})$ compliance falls short of the required regularity, since $C^1(\mathbb{A}) \subset C^{1,1}(\mathbb{A})$, we refer to the section on regularity in \cite{GM2} and let the reader fill in the details. We do not expect regularity to bring complications at least for $\mathbb{A}$'s = \eqref{LINEAR_combinationA} consisting of finite sums. 

Properties of eigenfunctions of $\Delta+V$ can also be analyzed thanks to the Mourre estimate \eqref{mourreEstimate123}, see \cite{FH}. For $\kappa=1$, an eigenvalue of $\Delta+V$ belonging to $\boldsymbol{\mu}_{\kappa}(\Delta)$ with $\mathbb{A} = \sum_{i=1}^d A_i (1,1)$ is such that the corresponding eigenfunction decays sub-exponentially ; in dimension 1 it decays exponentially at a rate depending on the distance to the nearest threshold, see \cite{Ma}. As far as we know, the sub-exponential decay of eigenfunctions at energy $\in \boldsymbol{\mu}_{\kappa}(\Delta)$ is an open problem for $\kappa \geq 2$, any $d \geq 1$ ; the exponential decay to nearest threshold is unknown for $\kappa \geq 1$, any $d \geq 2$. The reason we make this observation is because here we find many more ``thresholds'', and we wonder if the prospect of thresholds is part of the reason adapting Froese and Herbst's method to the discrete Schr\"odinger operators is met with difficulty, see \cite{Ma}. \\

\noindent \textbf{(P) Problem of article :} determine for as many energies $E \in \sigma(\Delta)$ if $E \in \boldsymbol{\mu}_{\kappa}(\Delta)$ or $E \in \boldsymbol{\Theta}_{\kappa}(\Delta)$. \\

In \cite{BSa} it is proved that $\boldsymbol{\mu}_{\kappa=1}(\Delta) \supset [-d,d] \setminus \{-d+2l : l=0,...,d\}$ in any dimension $d$ and this is done choosing $\mathbb{A} = A_1 = \sum_{i=1}^d A_i (1,1)$. Actually, equality holds and this is easy to prove (see Lemma \ref{lemSUMcosINTRO_00} below). In \cite{GM2} we fully solved problem \textbf{(P)} in dimension 1, $\forall \kappa \in \N^*$ (see Lemma \ref{lemSUMcosINTRO_00} below), but in higher dimensions we obtained incomplete results for $\boldsymbol{\mu}_{\kappa}(\Delta)$ and $\boldsymbol{\Theta}_{\kappa}(\Delta)$ for $\kappa \geq 2$, and there we chose $\mathbb{A} = A_{\kappa} = \sum_{i=1}^d A_i (1,\kappa)$. Note that this corresponds to \eqref{LINEAR_combinationA} with $\rho_{j\kappa} = 1$ if $j=1$ and $\rho_{j \kappa} = 0$ if $j \geq 2$. Table \ref{tab:table1011} displays the intervals already determined (numerically) to belong to $\boldsymbol{\mu}_{\kappa}(\Delta)$ for $2 \leq \kappa \leq 8$ (cf. \cite[Tables 11 and 13] {GM2}). In this article we continue to determine $\boldsymbol{\mu}_{\kappa}(\Delta)$ and $\boldsymbol{\Theta}_{\kappa}(\Delta)$ for $d, \kappa \geq 2$.


The overall high level strategy we adopt is perhaps best summarized in 3 steps :
\begin{enumerate}
\item Fix a dimension $d$ and a value of $\kappa \geq 2$. (we really only treat $d=2$ ; $d=3$ very briefly). 
\item Determine as many threshold energies in $\boldsymbol{\Theta}_{\kappa}(\Delta)$ as possible. To do this we use a simple idea (but which is generally very complicated to actually execute and solve in full generality). This idea yields threshold energies $\E=x_1+... + x_d$ and their decomposition into coordinate-wise energies $\vec{x} = (x_1, ..., x_d)$. These play a key role in the next step.  
\item Pick 2 consecutive threshold energies $\E_{i_1}$ and $\E_{i_2}$ determined in the previous step (consecutive means that there aren't any other thresholds between $\E_{i_1}$ and $\E_{i_2}$), and try to construct a conjugate operator $\mathbb{A}$ of the form \eqref{LINEAR_combinationA} such that for \textit{every} $E \in (\E_{i_1}, \E_{i_2})$, there is an interval $I \ni E$ and $\gamma>0$ such that \eqref{mourreEstimate123} holds. \textit{The $\mathbb{A}$ is the same for every $E \in (\E_{i_1}, \E_{i_2})$}. To determine the coefficients $\rho_{j \kappa}$, we perform polynomial interpolation. 
\end{enumerate}  
For the most part, we give rigorous proofs for the existence of the thresholds in step (2), whereas for step (3), we don't know how to actually carry out the polynomial interpolation theoretically and so we implement it numerically, determine the $\rho_{j \kappa}$ numerically, and plot a functional representation of \eqref{mourreEstimate123} to convince ourselves that strict positivity is in fact obtained. 



We now describe the idea to get thresholds. Let $U_{\kappa}$ be the Chebyshev polynomials of the second kind of order $\kappa$. As 
$[\Delta, \i A_{j\kappa}]_{\circ} = \sum_{i=1} ^d (1-\Delta_i^2) U_{j\kappa - 1} (\Delta_i)$ and the $\Delta_i$ are self-adjoint commuting operators we may apply functional calculus. To this commutator associate the polynomial 
\begin{equation}
\label{def:gE_intro}
[-1,1]^d \ni \vec{x} \mapsto g_{j\kappa} (\vec{x}) :=  \sum_{i=1} ^d (1-x_i^2) U_{j\kappa - 1} (x_i) \in \R, \quad \vec{x} = (x_1, ..., x_d).
\end{equation}
If the linear combination of conjugate operators is $\mathbb{A} = \sum_{j \geq1} \rho_{j\kappa} \cdot A_{j\kappa}$, set $G_{\kappa} : [-1,1]^d \mapsto \R$,
\begin{equation}
\label{def:GGE_intro}
G_{\kappa} (\vec{x}) :=  \sum_{j \geq 1} \rho_{j\kappa} \cdot g_{j\kappa} (\vec{x}) .
\end{equation}
$G_{\kappa}$ is a functional representation of $[\Delta, \i \mathbb{A}]_{\circ}$. Consider the constant energy $E \in \sigma(\Delta)$ surface 
\begin{equation}
\label{constE_MV}
S_E := \left \{ \vec{x} \in [-1,1]^d : E = x_1+...+x_d \right \}.
\end{equation} 
By functional calculus and continuity of the function $G_{\kappa}$ we have $E \in \boldsymbol{\mu}_{\kappa} (\Delta)$ iff $ \left.G_{\kappa} \right|_{S_E} >0$. \\

\noindent \textit{Definition of $\boldsymbol{\Theta}_{0,\kappa}(\Delta)$}. $E \in \boldsymbol{\Theta}_{0,\kappa}(\Delta)$ iff $\exists \ \vec{x} := (x_1, ..., x_d) \in S_E$ such that $g_{j \kappa} (\vec{x}) = 0$, $\forall j \in \N^*$. 

\noindent If $\vec{x}$ is such a solution, then for any choice of coefficients $\rho_{ j \kappa} \in \R$, \eqref{def:GGE_intro} $= G_{\kappa} (\vec{x})  =0$.


\noindent \textit{Definition of $\boldsymbol{\Theta}_{m,\kappa}(\Delta)$, $m \in \N^*$}. $E \in \boldsymbol{\Theta}_{m,\kappa}(\Delta)$ iff there are $(\vec{x}_q)_{q=0} ^{m} := (x_{q,1}, ...,x_{q,d})_{q=0}^m \subset S_E$, and $(\omega_q)_{q=0}^{m-1} \subset \R$, $\omega_q \leq 0$ (crucial), $\forall \ 0 \leq q \leq m-1$, such that 
\begin{equation}
\label{special relationship_intro}
g_{j \kappa} (\vec{x}_m) = \sum_{q=0} ^{m-1} \omega_q \cdot g_{j \kappa} (\vec{x}_q), \quad \forall j \in \N^*.
\end{equation} 
If the $\vec{x}_q$ are such a solution, then for any choice of coefficients $\rho_{ j \kappa} \in \R$,
\begin{equation}
\label{special G relationship_intro}
G_{\kappa} (\vec{x}_m) = \sum _{j\geq 1} \rho_{j \kappa} \cdot g_{j  \kappa} (\vec{x}_m) = \sum _{q=0} ^{m-1} \omega_q \sum _{j\geq 1} \rho_{j \kappa} \cdot g_{j \kappa} (\vec{x}_q) = \sum _{q=0} ^{m-1} \omega_q \cdot G_{\kappa} (\vec{x}_q).
\end{equation}
If $\boldsymbol{\Theta}_{m,\kappa}(\Delta) \cap \boldsymbol{\mu}_{\kappa}(\Delta)$ was non-empty, then the lhs of \eqref{special G relationship_intro} would be strictly positive whereas the rhs of \eqref{special G relationship_intro} would be non-positive. An absurdity. Thus :
\begin{Lemma}
Fix $d \geq 1$, $\kappa \geq 1$. Then $\boldsymbol{\Theta}_{m,\kappa}(\Delta) \subset \boldsymbol{\Theta}_{m,\alpha\kappa}(\Delta) \subset \boldsymbol{\Theta}_{\alpha\kappa}(\Delta)$, $\forall m \in \N$, and $\forall \alpha \in \N^*$.
\end{Lemma}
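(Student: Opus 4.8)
The statement to prove is the chain of inclusions $\boldsymbol{\Theta}_{m,\kappa}(\Delta) \subset \boldsymbol{\Theta}_{m,\alpha\kappa}(\Delta) \subset \boldsymbol{\Theta}_{\alpha\kappa}(\Delta)$ for all $m \in \N$ and $\alpha \in \N^*$. Let me think about each inclusion separately.

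For the second inclusion $\boldsymbol{\Theta}_{m,\alpha\kappa}(\Delta) \subset \boldsymbol{\Theta}_{\alpha\kappa}(\Delta)$: this is essentially the absurdity argument already sketched in the excerpt just before the Lemma. If $E \in \boldsymbol{\Theta}_{m,\alpha\kappa}(\Delta)$, pick the solution $(\vec{x}_q)_{q=0}^m \subset S_E$ and weights $(\omega_q)_{q=0}^{m-1}$ with $\omega_q \le 0$ realizing the relation $g_{j\alpha\kappa}(\vec{x}_m) = \sum_q \omega_q g_{j\alpha\kappa}(\vec{x}_q)$ for all $j$. Suppose for contradiction $E \in \boldsymbol{\mu}_{\alpha\kappa}(\Delta)$. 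Then there is a self-adjoint $\mathbb{A} = \sum_{j\ge1}\rho_{j\alpha\kappa}A_{j\alpha\kappa}$, an interval $I \ni E$ and $\gamma>0$ with the Mourre estimate, hence $G_{\alpha\kappa}|_{S_E} > 0$ by the functional-calculus characterization stated in the excerpt. But $G_{\alpha\kappa}(\vec{x}_m) = \sum_q \omega_q G_{\alpha\kappa}(\vec{x}_q)$ by the computation \eqref{special G relationship_intro}, and the right-hand side is a sum of non-positive coefficients times strictly positive quantities, hence $\le 0$ — contradiction. Thus $E \in \boldsymbol{\Theta}_{\alpha\kappa}(\Delta)$. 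I would also note $m=0$ is handled analogously (or as a degenerate case) using the definition of $\boldsymbol{\Theta}_{0,\kappa}$, where the relevant vanishing forces $G_{\alpha\kappa}(\vec{x}) = 0$.

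For the first inclusion $\boldsymbol{\Theta}_{m,\kappa}(\Delta) \subset \boldsymbol{\Theta}_{m,\alpha\kappa}(\Delta)$: the point is that the defining relations for $\boldsymbol{\Theta}_{m,\kappa}$ quantify over all $j \in \N^*$, in particular over all multiples $j = \alpha j'$ with $j' \in \N^*$. So if $(\vec{x}_q)_{q=0}^m \subset S_E$ and $(\omega_q)$ with $\omega_q \le 0$ satisfy $g_{j\kappa}(\vec{x}_m) = \sum_q \omega_q g_{j\kappa}(\vec{x}_q)$ for all $j \in \N^*$, then restricting to $j = \alpha j'$ gives $g_{j'(\alpha\kappa)}(\vec{x}_m) = \sum_q \omega_q g_{j'(\alpha\kappa)}(\vec{x}_q)$ for all $j' \in \N^*$, which is exactly the defining condition for $E \in \boldsymbol{\Theta}_{m,\alpha\kappa}(\Delta)$ (the same points $\vec{x}_q$, the same weights $\omega_q$, still non-positive). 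The case $m=0$ is the same observation applied to the vanishing condition $g_{j\kappa}(\vec{x}) = 0$ for all $j$, which implies $g_{j'(\alpha\kappa)}(\vec{x}) = 0$ for all $j'$.

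I do not expect a genuine obstacle here — both inclusions are essentially bookkeeping. The only point requiring a little care is the interface with Mourre theory in the second inclusion: one must invoke the already-stated equivalence ``$E \in \boldsymbol{\mu}_{\alpha\kappa}(\Delta)$ iff $G_{\alpha\kappa}|_{S_E} > 0$'' correctly, and remember that $\boldsymbol{\mu}$ and $\boldsymbol{\Theta}$ are defined as a partition of $\sigma(\Delta)$, so ruling out membership in $\boldsymbol{\mu}_{\alpha\kappa}(\Delta)$ is the same as establishing membership in $\boldsymbol{\Theta}_{\alpha\kappa}(\Delta)$. One should also check at the outset that $S_E \ne \emptyset$ is automatic for $E \in \sigma(\Delta) = [-d,d]$, so the points $\vec{x}_q$ live in the correct set throughout. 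Everything else is the direct transcription of the definitions given in the excerpt.
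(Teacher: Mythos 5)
Your proof is correct and follows essentially the same route the paper uses: the second inclusion is the absurdity argument via \eqref{special G relationship_intro} (applied with $\kappa$ replaced by $\alpha\kappa$), and the first inclusion is the observation that a relation holding for all $j \in \N^*$ in particular holds along the subsequence $j = \alpha j'$, which is exactly the defining relation at level $\alpha\kappa$. The additional remarks about $m=0$ and $S_E \neq \emptyset$ are sound; nothing is missing.
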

Simply because we haven't found any counterexamples, we actually conjecture :

\begin{conjecture}
\label{conjecture000}
Fix $d=2$, $\kappa \geq 1$. $\cup_{m \geq 0} \boldsymbol{\Theta}_{m,\kappa}(\Delta) = \boldsymbol{\Theta}_{\kappa}(\Delta)$.
\end{conjecture}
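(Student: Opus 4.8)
Since Conjecture~\ref{conjecture000} is open, what follows is a line of attack rather than a proof. One inclusion is automatic: taking $\alpha=1$ in the Lemma above gives $\boldsymbol{\Theta}_{m,\kappa}(\Delta)\subseteq\boldsymbol{\Theta}_{\kappa}(\Delta)$ for every $m$, hence $\bigcup_{m\geq 0}\boldsymbol{\Theta}_{m,\kappa}(\Delta)\subseteq\boldsymbol{\Theta}_{\kappa}(\Delta)$, so all the content is in the reverse inclusion, i.e.\ in the implication ``$E\notin\bigcup_{m\geq 0}\boldsymbol{\Theta}_{m,\kappa}(\Delta)\ \Rightarrow\ E\in\boldsymbol{\mu}_{\kappa}(\Delta)$'', and for that it is enough to produce a \emph{finite} combination \eqref{LINEAR_combinationA} realising \eqref{mourreEstimate123} near $E$. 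The plan is to reduce both conditions to statements about measures on the compact segment $S_E$ (recall $d=2$) via convex duality. By the functional-calculus criterion $\restr{G_{\kappa}}{S_E}>0$, the existence of such a finite combination is equivalent to the subspace $W_E:=\operatorname{span}\{\restr{g_{j\kappa}}{S_E}:j\in\N^\ast\}\subseteq C(S_E)$ meeting the open cone $C(S_E)_{++}$ of strictly positive functions; and by Hahn--Banach separation of a subspace from an open convex set, together with the Riesz representation theorem, this meeting \emph{fails} precisely when there is a nonzero positive Radon measure $\mu$ on $S_E$ with $\int_{S_E}g_{j\kappa}\,d\mu=0$ for all $j\in\N^\ast$ (the separating functional vanishes on $W_E$ and is $\geq 0$ on $C(S_E)_{+}$). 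On the other hand, unwinding \eqref{special relationship_intro} shows that $E\in\bigcup_{m}\boldsymbol{\Theta}_{m,\kappa}(\Delta)$ is the same statement with $\mu$ required to be \emph{finitely atomic}: an atomic annihilator $\sum_q c_q\delta_{\vec x_q}$ with $c_q\geq 0$ not all zero becomes, after normalising the mass of one atom to $1$, a solution of \eqref{special relationship_intro} with the remaining coefficients $\omega_q=-c_q\leq 0$, and conversely. Thus Conjecture~\ref{conjecture000} reduces to the following \emph{atomic reduction} on $S_E$: if some nonzero positive Radon measure annihilates every $g_{j\kappa}$, then so does some finitely atomic one.

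To prove the atomic reduction I would work with the convex, weak-$\ast$ compact set $\mathcal{M}_E$ of Borel probability measures on $S_E$ that annihilate all $g_{j\kappa}$; it is nonempty exactly when $E\notin\boldsymbol{\mu}_{\kappa}(\Delta)$, and by Krein--Milman it then has an extreme point, so it suffices to show that every extreme point of $\mathcal{M}_E$ is carried by a finite set. For \emph{finitely many} moment constraints this is the classical Rogosinski--Richter bound (an extreme measure subject to $N$ linear constraints has at most $N+1$ atoms); the difficulty is that $\mathcal{M}_E$ is cut out by countably many constraints, so one must exploit the one-dimensional structure on $S_E$. Here the identity $(1-x^2)U_{j\kappa-1}(x)=\tfrac12\big(T_{j\kappa-1}(x)-T_{j\kappa+1}(x)\big)$ together with the substitution $x_2=E-x_1$ turns $\restr{g_{j\kappa}}{S_E}$ into an explicit family of univariate polynomials, and one would hope that this family carries enough Chebyshev-system / total-positivity structure for the \emph{truncated} problems to be uniformly controlled: for each $N$, solving ``annihilate $g_{\kappa},\dots,g_{N\kappa}$'' produces, by the finite theory, an atomic obstruction $\nu_N$ whose number of atoms is bounded \emph{independently of $N$} (a Gauss-quadrature-type count on the relevant moment curve); a weak-$\ast$ limit $\nu$ of the $\nu_N$ is then still finitely atomic and, by continuity of the $g_{j\kappa}$, annihilates all of them, hence gives the configuration of \eqref{special relationship_intro} for some finite $m$.

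The restriction to $d=2$ is natural in this scheme because then $S_E$ is a segment and the annihilation problem is a one-dimensional moment problem, where Chebyshev-system and positive-polynomial techniques can plausibly be brought to bear; for $d\geq 3$, $S_E$ is a $(d-1)$-dimensional polytope slice and no comparable theory is available — consistent with the conjecture being stated only for $d=2$. The main obstacle is precisely the uniform bound on the number of atoms of the $\nu_N$, equivalently the claim that extreme points of $\mathcal{M}_E$ are finitely atomic: without it, the $\nu_N$ could spread out and converge to a non-atomic (for instance absolutely continuous) annihilating measure, which is exactly the configuration that would place $E$ in $\boldsymbol{\Theta}_{\kappa}(\Delta)\setminus\bigcup_{m}\boldsymbol{\Theta}_{m,\kappa}(\Delta)$. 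Ruling this out for the specific Chebyshev-type families $\{g_{j\kappa}\}$ is at present supported only by the absence of counterexamples, which is why the statement is left as a conjecture.
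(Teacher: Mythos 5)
This statement is explicitly a \emph{Conjecture} in the paper; no proof is offered there, and the authors justify it only by the absence of counterexamples. So there is no paper argument to compare against, and your decision to treat this as a line of attack rather than a proof is the right framing. The dualisation you set up is, as far as it goes, correct and is more than the paper records: the inclusion $\bigcup_m \boldsymbol{\Theta}_{m,\kappa}(\Delta) \subset \boldsymbol{\Theta}_{\kappa}(\Delta)$ is the $\alpha=1$ case of the Lemma stated just above the Conjecture; the paper's criterion $E\in\boldsymbol{\mu}_{\kappa}(\Delta) \Leftrightarrow G_{\kappa}|_{S_E}>0$ does turn ``$E$ is a threshold'' into ``$W_E$ is disjoint from $C(S_E)_{++}$'' (with one footnote worth adding: for the paper's allowed \emph{infinite} sums one should require $\sum_j|\rho_{j\kappa}|<\infty$ so that $G_{\kappa}$ lies in the sup-norm closure of $W_E$, and then openness of $C(S_E)_{++}$ lets you pass to a finite combination); the Hahn--Banach/Riesz step producing a nonzero positive annihilating Radon measure on the segment $S_E$ is correct; and the translation of finitely atomic annihilators into configurations of \eqref{special relationship_intro} is exactly the definition of $\boldsymbol{\Theta}_{m,\kappa}(\Delta)$, up to choosing an atom with positive mass to normalise to the pivot $\vec x_m$.

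The genuine gap is the one you flag yourself: the atomic reduction. Krein--Milman gives an extreme point of $\mathcal{M}_E$, but there is no general reason for extreme points of a weak-$*$ compact convex set cut out by \emph{countably many} moment constraints to be finitely atomic, and the truncated Rogosinski/Tchakaloff count of at most $N+1$ atoms for the first $N$ constraints degenerates as $N\to\infty$; absent a uniform bound, the $\nu_N$ can spread into a diffuse weak-$*$ limit, which is precisely the configuration that would exhibit an element of $\boldsymbol{\Theta}_{\kappa}(\Delta)\setminus\bigcup_m\boldsymbol{\Theta}_{m,\kappa}(\Delta)$. The needed uniform bound would have to come from special structure of the univariate family $\{(1-x^2)U_{j\kappa-1}(x)+(1-(E-x)^2)U_{j\kappa-1}(E-x)\}_{j\geq1}$ on $[\max(E-1,-1),\min(E+1,1)]$ (e.g.\ via the Chebyshev identity $(1-x^2)U_{j\kappa-1}=\tfrac12(T_{j\kappa-1}-T_{j\kappa+1})$), not from general moment theory. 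Since that is the whole content of the Conjecture, your proposal is best read as a correct and potentially useful reformulation of the problem, together with an accurate description of where it remains open.
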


We don't understand $d=3$ nearly as well to submit a Conjecture like \ref{conjecture000} for it. It turns out it is very easy to find threshold energies in $\boldsymbol{\Theta}_{0,\kappa}(\Delta)$. We prove :
\begin{Lemma}
\label{lemSUMcosINTRO} 
$\forall \ d,\kappa \in \N^*$, $\boldsymbol{\theta}_{0,\kappa} (\Delta) := \left\{\sum_{1 \leq q \leq d} \cos( j_q \pi / \kappa) : (j_1,...,j_d) \in \{0,...,\kappa \}^d \right\} \subset \boldsymbol{\Theta}_{0,\kappa} (\Delta)$.
\end{Lemma}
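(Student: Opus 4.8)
By the definition of $\boldsymbol{\Theta}_{0,\kappa}(\Delta)$ recalled just above the statement, it suffices to show that for every multi-index $(j_1,\ldots,j_d)\in\{0,\ldots,\kappa\}^d$, setting $E:=\sum_{q=1}^d\cos(j_q\pi/\kappa)$, there is a point $\vec x\in S_E$ with $g_{j\kappa}(\vec x)=0$ for all $j\in\N^*$. The plan is simply to take the obvious candidate $\vec x:=\big(\cos(j_1\pi/\kappa),\ldots,\cos(j_d\pi/\kappa)\big)$. Each coordinate lies in $[-1,1]$ and their sum is $E$ by construction, so $\vec x\in S_E$; the only thing left is to verify that this $\vec x$ annihilates every $g_{j\kappa}$.

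For that I would use the standard trigonometric form of the Chebyshev polynomials of the second kind: writing $x_q=\cos\theta_q$ with $\theta_q:=j_q\pi/\kappa\in[0,\pi]$, one has the identity $(1-x_q^2)\,U_{j\kappa-1}(x_q)=\sin\theta_q\,\sin(j\kappa\,\theta_q)$, valid for $\theta_q\in(0,\pi)$ from $U_{n-1}(\cos\theta)=\sin(n\theta)/\sin\theta$ together with $1-\cos^2\theta=\sin^2\theta$, and extending to $\theta_q\in\{0,\pi\}$ trivially because there the prefactor $1-x_q^2$ already vanishes while $U_{j\kappa-1}(\pm1)$ is finite. Now for our choice, $\kappa\theta_q=j_q\pi$, hence $\sin(j\kappa\theta_q)=\sin(jj_q\pi)=0$ for every $j\in\N^*$ (and every $q$). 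Therefore each summand of $g_{j\kappa}(\vec x)=\sum_{q=1}^d(1-x_q^2)U_{j\kappa-1}(x_q)$ is zero, so $g_{j\kappa}(\vec x)=0$ for all $j\in\N^*$. This places $E$ in $\boldsymbol{\Theta}_{0,\kappa}(\Delta)$, and since $(j_1,\ldots,j_d)$ was arbitrary, it gives $\boldsymbol{\theta}_{0,\kappa}(\Delta)\subset\boldsymbol{\Theta}_{0,\kappa}(\Delta)$.

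There is essentially no hard step here; the only point requiring a little care is the treatment of the endpoint cases $j_q\in\{0,\kappa\}$, i.e.\ $x_q=\pm1$, where the $\sin(j\kappa\theta_q)/\sin\theta_q$ form of $U_{j\kappa-1}$ is only formally defined — but as noted this is harmless because the factor $(1-x_q^2)$ kills that term outright. One might also remark in passing that the same computation shows these energies lie in $\boldsymbol{\Theta}_{0,\alpha\kappa}(\Delta)$ for all $\alpha\in\N^*$ (consistent with the preceding Lemma), since $\alpha\kappa\theta_q$ is still an integer multiple of $\pi$, though that is not needed for the statement as written.
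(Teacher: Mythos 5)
Your proof is correct and follows exactly the route the paper takes: one chooses the obvious point $\vec x=(\cos(j_1\pi/\kappa),\ldots,\cos(j_d\pi/\kappa))\in S_E$ and observes that it annihilates every $g_{j\kappa}$, the paper stating this annihilation without comment (it is the content of Lemma~\ref{lemma_rootsy}), whereas you spell out the $\sin\theta\,\sin(j\kappa\theta)$ computation and the endpoint cases $x_q=\pm1$ explicitly. This is the same argument, just with the supporting calculation written out.
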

\begin{remark} 
This lemma supports the conjectures in relation to the band endpoints in Table \ref{tab:table1011}.
\end{remark}
We prove equality in Lemma \ref{lemSUMcosINTRO} for $\kappa \in \{2,3,4,6\}$ in dimension 2 (Lemma \ref{lem_2346}). We conjecture:
\begin{conjecture}
\label{conjecture0001}
The inclusion in Lemma \ref{lemSUMcosINTRO} is not strict, i.e.\ equality holds.
\end{conjecture}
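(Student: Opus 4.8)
The statement asserts $\boldsymbol{\theta}_{0,\kappa}(\Delta)=\boldsymbol{\Theta}_{0,\kappa}(\Delta)$; since Lemma~\ref{lemSUMcosINTRO} already gives ``$\subseteq$'', one must prove $\boldsymbol{\Theta}_{0,\kappa}(\Delta)\subseteq\boldsymbol{\theta}_{0,\kappa}(\Delta)$. I would carry this out in dimension $2$ (for every $\kappa$), which is the setting of the surrounding conjectures, and comment on higher $d$ at the end. The plan starts with a reformulation: given $E\in\boldsymbol{\Theta}_{0,\kappa}(\Delta)$, pick $\vec{x}\in S_E$ with $g_{j\kappa}(\vec{x})=0$ for all $j\geq1$, write $x_i=\cos\theta_i$ with $\theta_i\in[0,\pi]$, and use $U_n(\cos\theta)=\sin((n+1)\theta)/\sin\theta$ to rewrite the condition as $\sum_i\sin\theta_i\sin(j\kappa\theta_i)=0$ for all $j\geq1$. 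This says exactly that all Fourier coefficients of the push-forward under $\theta\mapsto\kappa\theta$ of the finite signed measure $\sum_i\frac{\sin\theta_i}{2\i}(\delta_{\theta_i}-\delta_{-\theta_i})$ on $\R/2\pi\Z$ vanish (the transform is odd and zero at $0$, so $j\geq1$ suffices); equivalently,
\begin{equation*}
W(\psi):=\sum_{i:\ \kappa\theta_i\equiv\psi}\sin\theta_i\ =\ W(-\psi)\qquad\text{for all }\psi\in\R/2\pi\Z,
\end{equation*}
with $\equiv$ modulo $2\pi$ and the identity automatic at $\psi\in\{0,\pi\}$. I would take this reflection identity as the combinatorial core.

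Split $\{1,\dots,d\}=T\sqcup P$ with $T:=\{i:\theta_i\in\tfrac{\pi}{\kappa}\Z\}$: each $i\in T$ has $\theta_i=j_i\pi/\kappa$, $j_i\in\{0,\dots,\kappa\}$, and contributes an admissible cosine, and since $\cos0+\cos\pi=0$ one may always pad an admissible sum back up to $d$ terms; so it remains to control $\sum_{i\in P}\cos\theta_i$. The crucial point is that for $i\in P$ one has $\kappa\theta_i\bmod2\pi\in(0,\pi)\cup(\pi,2\pi)$, hence $\sin\theta_i>0$; thus $W(\psi_i)>0$ for $\psi_i:=\kappa\theta_i\bmod2\pi$, and the identity forces $W(-\psi_i)>0$, i.e.\ the presence of a partner index in the $-\psi_i$ fiber. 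This positivity makes the configuration rigid.

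For $d=2$ this finishes the proof, for all $\kappa$. If $P=\emptyset$ we are done; $|P|=1$ is impossible (no partner fiber); and if $P=\{1,2\}$ then, to have $W(\psi_1)=W(-\psi_1)$ with both sides positive, necessarily $\psi_2=-\psi_1$ and $\sin\theta_1=\sin\theta_2$. The former gives $\kappa(\theta_1+\theta_2)\equiv0$, i.e.\ $\theta_1+\theta_2\in\tfrac{2\pi}{\kappa}\Z$; the latter, with $\theta_i\in[0,\pi]$, gives $\theta_1=\theta_2$ or $\theta_1=\pi-\theta_2$. The case $\theta_1=\theta_2$ forces $\theta_1\in\tfrac\pi\kappa\Z$, contradicting $1\in P$; so $\theta_1=\pi-\theta_2$, which needs $\pi\in\tfrac{2\pi}\kappa\Z$, i.e.\ $\kappa$ even, and then $E=\cos\theta_1+\cos(\pi-\theta_1)=0=\cos(0)+\cos(\pi)\in\boldsymbol{\theta}_{0,\kappa}(\Delta)$. (Equivalently, for $\kappa$ even $g_{j\kappa}(c,-c)=(1-c^2)\big(U_{j\kappa-1}(c)+U_{j\kappa-1}(-c)\big)=0$ since $U_{j\kappa-1}$ is odd, so every nontrivial $d=2$ witness is $(c,-c)$, with energy $0$.) Hence $\boldsymbol{\Theta}_{0,\kappa}(\Delta)=\boldsymbol{\theta}_{0,\kappa}(\Delta)$ in dimension $2$ for all $\kappa$, which incidentally removes the restriction $\kappa\in\{2,3,4,6\}$.

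For $d\geq3$ the statement as worded must be handled with care, and this is the main obstacle. Now the indices of $P$ can form a ``cluster'' --- all $i$ with $\kappa\theta_i\equiv\pm\psi_0$ for one $\psi_0\in(0,\pi)$ --- whose contribution $\mathrm{Re}\big(e^{\i\psi_0/\kappa}(A+\widetilde B)\big)$, with $A,\widetilde B$ the cyclotomic sums recording the fiber multiplicities, need not be an admissible sum: e.g.\ with $\kappa=5$, $u:=\arctan\!\big(\tfrac{\sin(2\pi/5)}{2+\cos(2\pi/5)}\big)$ and $\vec{x}=(\cos u,\cos u,\cos(\tfrac{2\pi}{5}-u))$, one has $2\sin u=\sin(\tfrac{2\pi}{5}-u)$ and $5(\tfrac{2\pi}{5}-u)\equiv-5u\ (\mathrm{mod}\ 2\pi)$, so $g_{5j}(\vec{x})=0$ for all $j$, while $E=2\cos u+\cos(\tfrac{2\pi}{5}-u)=\sqrt{4+\sqrt5}\notin\Q(\cos\tfrac{\pi}{5})\supseteq\boldsymbol{\theta}_{0,5}(\Delta)$. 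So in $d\geq3$ one would have to enlarge $\boldsymbol{\theta}_{0,\kappa}(\Delta)$, and identifying the enlargement --- which cluster energies $\mathrm{Re}(e^{\i\psi_0/\kappa}(A+\widetilde B))$ actually occur --- brings in the classification of vanishing sums of roots of unity and Niven/Lehmer-type control of the arguments of such sums; this is exactly where the argument stops being elementary.
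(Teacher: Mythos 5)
Your target here is a conjecture: the paper offers no proof of it, only the supporting Lemma \ref{lem_2346}, which establishes equality for $d=2$ and $\kappa\in\{2,3,4,6\}$ by reducing the hard case to \eqref{fracU}--\eqref{crochetU}, i.e.\ to $T_\kappa(Y)=T_\kappa(E-Y)$, and then solving that equation explicitly in radicals branch by branch --- which is precisely why the paper stops at those four values of $\kappa$. Your route is genuinely different and, in dimension $2$, strictly stronger. Rewriting $g_{j\kappa}(\vec{x})=\sum_i\sin\theta_i\sin(j\kappa\theta_i)$ and invoking uniqueness of Fourier coefficients for the atomic measure $\sum_i\tfrac{\sin\theta_i}{2\i}(\delta_{\kappa\theta_i}-\delta_{-\kappa\theta_i})$ correctly converts the infinite family of conditions into the fiberwise cancellation $W(\psi)=W(-\psi)$; the strict positivity of $\sin\theta_i$ on the non-lattice fibers then makes the $d=2$ case analysis airtight: $|P|=1$ is impossible, and $|P|=2$ forces $\kappa\theta_2\equiv-\kappa\theta_1$ and $\sin\theta_1=\sin\theta_2$, hence either a contradiction with $i\in P$ or $\theta_1+\theta_2=\pi$ with $\kappa$ even and $E=0\in\boldsymbol{\theta}_{0,\kappa}(\Delta)$. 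This proves $\boldsymbol{\theta}_{0,\kappa}(\Delta)=\boldsymbol{\Theta}_{0,\kappa}(\Delta)$ in dimension $2$ for \emph{every} $\kappa$, removing the restriction to $\kappa\in\{2,3,4,6\}$ in Lemma \ref{lem_2346}, and it does so without ever inverting $T_\kappa$.

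Your $d=3$, $\kappa=5$ witness also checks out: with $2\sin u=\sin(2\pi/5-u)$ one has $E=\left|2e^{-\i u}+e^{\i(2\pi/5-u)}\right|$, whose square is $5+4\cos(2\pi/5)=4+\sqrt5$, and $\sqrt{4+\sqrt5}\notin\Q(\sqrt5)\supseteq\boldsymbol{\theta}_{0,5}(\Delta[d=3])$. Since Lemma \ref{lemSUMcosINTRO} --- and hence Conjecture \ref{conjecture0001} as literally worded --- is stated for all $d$, this is not merely an obstacle to your proof but a genuine counterexample to the conjecture in dimension $3$; the conjecture survives only when restricted to $d=2$, where your argument settles it. The classification of the admissible ``cluster energies'' for $d\geq3$, which you correctly flag as leading into vanishing sums of roots of unity, is beyond what the (properly restricted) statement requires.
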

Thresholds $\boldsymbol{\theta}_{0,\kappa} (\Delta)$ in Lemma \ref{lemSUMcosINTRO} were already found in \cite{GM2}. Here we prove $\boldsymbol{\Theta}_{m,\kappa}(\Delta) \neq \emptyset$, $\forall m \geq 0$, $\forall d,\kappa \geq 2$. Thus, there are infinitely many thresholds for $d,\kappa \geq 2$. This is a remarkable difference with the case of the dimension 1, or the case of $\kappa=1$ in any dimension (see \cite{GM2}) :

\begin{Lemma}
\label{lemSUMcosINTRO_00} 
Let $(d, \kappa) \in \N^* \times \{1\} \cup \{1\} \times \N^*$. Then $\boldsymbol{\theta}_{0,\kappa} (\Delta) = \boldsymbol{\Theta}_{0,\kappa} (\Delta) = \boldsymbol{\Theta}_{\kappa} (\Delta)$.
\end{Lemma}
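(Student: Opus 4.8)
The plan is to prove the chain of equalities $\boldsymbol{\theta}_{0,\kappa}(\Delta) = \boldsymbol{\Theta}_{0,\kappa}(\Delta) = \boldsymbol{\Theta}_{\kappa}(\Delta)$ by establishing the two inclusions $\boldsymbol{\Theta}_{\kappa}(\Delta) \subseteq \boldsymbol{\theta}_{0,\kappa}(\Delta)$ and $\boldsymbol{\theta}_{0,\kappa}(\Delta) \subseteq \boldsymbol{\Theta}_{0,\kappa}(\Delta) \subseteq \boldsymbol{\Theta}_{\kappa}(\Delta)$; the second chain is already covered by Lemma \ref{lemSUMcosINTRO} together with the Lemma preceding Conjecture \ref{conjecture000} (with $m=0$, $\alpha=1$). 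So the real content is the reverse inclusion $\boldsymbol{\Theta}_{\kappa}(\Delta) \subseteq \boldsymbol{\theta}_{0,\kappa}(\Delta)$, i.e.\ showing that for $(d,\kappa) \in \N^* \times \{1\} \cup \{1\} \times \N^*$ every threshold in the full sense of $\boldsymbol{\Theta}_{\kappa}(\Delta)$ is one of the explicit points $\sum_q \cos(j_q \pi/\kappa)$. Equivalently, I would show that if $E \notin \boldsymbol{\theta}_{0,\kappa}(\Delta)$ then $E \in \boldsymbol{\mu}_{\kappa}(\Delta)$, i.e.\ one can exhibit a finite linear combination $\mathbb{A} = \sum_j \rho_{j\kappa} A_{j\kappa}$ with $G_{\kappa}|_{S_E} > 0$.

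Consider first $d=1$ (any $\kappa$). Here $S_E = \{E\}$ is a single point, and $g_{j\kappa}(x) = (1-x^2)U_{j\kappa-1}(x)$. With the substitution $x = \cos\theta$, $\theta \in [0,\pi]$, one has $(1-x^2)U_{j\kappa-1}(x) = \sin\theta\,\sin(j\kappa\theta)$. So the question becomes: given $E = \cos\theta$ with $\theta \notin \{k\pi/\kappa : k=0,\dots,\kappa\}$, can we find real $\rho_{\kappa},\dots,\rho_{N\kappa}$ with $\sum_{j} \rho_{j\kappa}\sin\theta\sin(j\kappa\theta) > 0$? Since $\sin\theta > 0$ for $\theta \in (0,\pi)$ (the endpoints $\theta = 0,\pi$ give $E = \pm 1 = \pm\cos(0) \in \boldsymbol{\theta}_{0,\kappa}$ and are excluded), this reduces to: is $\sin(j\kappa\theta) \neq 0$ for some $j$? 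But $\sin(\kappa\theta) = 0$ forces $\kappa\theta \in \pi\Z$, i.e.\ $\theta \in \{k\pi/\kappa\}$, which is exactly the excluded set. So already $j=1$, i.e.\ $\mathbb{A} = A_{\kappa}$ with the single coefficient $\rho_{\kappa} = \sign(\sin\kappa\theta)$, does the job. This settles $d=1$, and in particular identifies $\boldsymbol{\Theta}_{\kappa}(\Delta)$ with the finite set $\boldsymbol{\theta}_{0,\kappa}(\Delta)$.

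Next consider $\kappa=1$ in arbitrary dimension $d$. Now $g_{j}(\vec{x}) = \sum_{i=1}^d (1-x_i^2)U_{j-1}(x_i)$, and $\boldsymbol{\theta}_{0,1}(\Delta) = \{\sum_q \cos(j_q\pi) : j_q \in \{0,1\}\} = \{-d, -d+2, \dots, d\}$, the set of $\binom{}{}$ endpoints. Fix $E \in (-d,d)$ not of this form; I must produce $\mathbb{A}$ with $G_1|_{S_E} > 0$. The natural candidate, as in \cite{BSa}, is $\mathbb{A} = A_1 = \sum_i A_i(1,1)$, i.e.\ $\rho_1 = 1$ and all other $\rho_j = 0$, giving $G_1(\vec{x}) = g_1(\vec{x}) = \sum_{i=1}^d (1-x_i^2)$ since $U_0 \equiv 1$. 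On $S_E$ we need $\sum_i (1-x_i^2) > 0$, i.e.\ not all $x_i = \pm 1$; but if all $x_i \in \{-1,1\}$ then $E = \sum_i x_i \in \{-d,\dots,d\}$ of the excluded form. Hence $G_1|_{S_E} > 0$ and $E \in \boldsymbol{\mu}_1(\Delta)$. (The compactness of $S_E$ gives the strict positivity uniformly, which is what the Mourre estimate needs.) This is exactly Lemma \ref{lemSUMcosINTRO_00} for $\kappa=1$, recovering the \cite{BSa} result.

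The main obstacle is really bookkeeping rather than a deep difficulty: one must (i) confirm that the trigonometric identity $(1-x^2)U_{n-1}(x) = \sin\theta\sin(n\theta)$ is applied correctly, including the boundary behavior at $x = \pm 1$, and (ii) make sure the "uniform strict positivity on the compact surface $S_E$" is genuinely what converts $G_\kappa|_{S_E} > 0$ into a Mourre estimate on a neighborhood of $E$ — this is the standard functional-calculus argument already invoked in the paragraph preceding the definition of $\boldsymbol{\Theta}_{0,\kappa}$, so I would simply cite it. The one slightly delicate point is checking that in the $d=1$ case the endpoints $E = \pm 1$, which are forced thresholds by $g_{j\kappa}(\pm 1) = 0$, coincide with the $\theta \in \{0,\pi\}$ excluded values and thus lie in $\boldsymbol{\theta}_{0,\kappa}$, so that the partition $\sigma(\Delta) = \boldsymbol{\mu}_\kappa(\Delta) \sqcup \boldsymbol{\Theta}_\kappa(\Delta)$ matches $\boldsymbol{\theta}_{0,\kappa}$ exactly with no energy left unclassified.
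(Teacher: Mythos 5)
The paper does not give a proof of Lemma~\ref{lemSUMcosINTRO_00}; it asserts the result as "easy to prove" (for $\kappa=1$, crediting the inclusion to \cite{BSa}) and otherwise refers to \cite{GM2}, so there is no proof in this text to compare against. Your argument is correct and is the natural one in the paper's framework: it uses exactly the tools the introduction sets up, namely the easy chain $\boldsymbol{\theta}_{0,\kappa}\subseteq\boldsymbol{\Theta}_{0,\kappa}\subseteq\boldsymbol{\Theta}_\kappa$ (Lemma~\ref{lemSUMcosINTRO} plus the unnamed Lemma before Conjecture~\ref{conjecture000} with $m=0$, $\alpha=1$), and the characterisation $E\in\boldsymbol{\mu}_\kappa(\Delta)\iff G_\kappa|_{S_E}>0$, closing the circle via the contrapositive in each case. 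The two computations are sound: for $d=1$ the surface $S_E$ is a point, the identity $(1-x^2)U_{n-1}(x)=\sin\theta\sin(n\theta)$ with $x=\cos\theta$ shows $g_\kappa(E)\ne0$ precisely when $\theta\notin\{j\pi/\kappa\}_{j=0}^\kappa$, and a single sign choice $\rho_\kappa=\pm1$ produces a strict Mourre estimate; for $\kappa=1$ and general $d$, $U_0\equiv1$ reduces $G_1$ to $\sum_i(1-x_i^2)$, which vanishes on $S_E$ only when every $x_i\in\{\pm1\}$, i.e.\ when $E\in\{-d,-d+2,\dots,d\}=\boldsymbol{\theta}_{0,1}(\Delta)$. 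The one delicacy you flag (extending pointwise positivity on the compact $S_E$ to a spectral interval) is correctly discharged by the compactness-and-continuity remark the paper itself makes just before the definition of $\boldsymbol{\Theta}_{0,\kappa}$. One cosmetic slip: the text "\verb|$\binom{}{}$| endpoints" is a leftover that should read "$d+1$ endpoints," but this does not affect the argument.
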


Now we clarify the interpolation setup of step (3) above. Suppose $\E_{i_1}$ and $\E_{i_2}$ are consecutive thresholds, with $\E_{i_1} \in \boldsymbol{\Theta}_{m_1,\kappa}(\Delta)$ and $\E_{i_2} \in \boldsymbol{\Theta}_{m_2, \kappa}(\Delta)$ for some $m_1, m_2 \in \N$. Suppose the coordinate-wise energies are $(\vec{x}_q)_{q=0}^{m_1} \subset S_{\E_{i_1}}$ and $(\vec{y}_r)_{r=0}^{m_2} \subset S_{\E_{i_2}}$. Recall we have the assumption that the conjugate operator $\mathbb{A}$ is the \textit{same} $\forall E \in (\E_{i_1},\E_{i_2})$. Thus, while we want $G_{\kappa} > 0$ on $(\E_{i_1},\E_{i_2})$, a continuity argument implies that $G_{\kappa}$ is at best non-negative at the endpoints $\E_{i_1}$ and $\E_{i_2}$, due to \eqref{special G relationship_intro}. Also by continuity, $G_{\kappa}(\vec{x})$ must be a local minimum whenever  $G_{\kappa}(\vec{x}) = 0$ and $\vec{x}$ is an interior point of $S_{\E_{i_1}}$ or $S_{\E_{i_2}}$. Let $\mathrm{int}(\cdot)$ be the interior of a set. Thus we require :
\begin{equation}
\begin{cases}
\label{interpol_intro}
& G_{\kappa}(\vec{x}_q) = 0, \quad \forall 0 \leq q \leq m_1, \quad and \quad \grad G_{\kappa} (\vec{x}_q) = 0, \quad for \ \vec{x}_q \in \mathrm{int}(S_{\E_{i_1}}) \quad [\text{left}],\\
& G_{\kappa}(\vec{y}_r) = 0, \quad \forall 0 \leq r \leq m_2, \quad and \quad \grad G_{\kappa} (\vec{y}_r) = 0, \quad for \ \vec{y}_r \in \mathrm{int}(S_{\E_{i_2}})  \quad [\text{right}].
\end{cases}
\end{equation}
By \eqref{special G relationship_intro} the conditions $G_{\kappa}(\vec{x}_{m_1}) = 0$ and $G_{\kappa}(\vec{y}_{m_2}) = 0$ are redundant. Constraints \eqref{interpol_intro} set up a system of linear equations to be solved for the coefficients $\rho_{j \kappa}$, i.e.\ we have polynomial interpolation. But in order for the computer to numerically solve the linear system, we need to assume a certain set of multiples of $\kappa$ : $\Sigma := \{ j_1 \kappa, j_2 \kappa, j_3 \kappa, ..., j_{\ell} \kappa \}$. We choose $\Sigma$ essentially by trial and error but prioritize lower order polynomials to keep things as simple as possible. In other words, we loop over sets $\Sigma$ until we find an appropriate $\mathbb{A}$. Of course, if our assumption that the $\mathbb{A}$ is the same for all $E \in (\E_{i_1},\E_{i_2})$ is valid, then constraints \eqref{interpol_intro} are necessary but not necessarily sufficient in order to find an appropriate $\mathbb{A}$, see section \ref{howchooseindices} for two illustrations of an inappropriate $\mathbb{A}$. Furthermore, for the sake of argument, suppose that \eqref{interpol_intro} gives rise to $N$ linearly independent equations, then it is natural to consider a finite sum $\mathbb{A}$ consisting of $N+1$ terms, so that the linear system is exactly specified (up to a constant multiple). Although we have many examples where this works, we have an example where it does not ; instead we considered an $\mathbb{A}$ with $>N+1$ terms and this led to an appropriate $\mathbb{A}$, see section \ref{KAPPA8more}. In that case linear system \eqref{interpol_intro} was underspecified as such.

Problem \textbf{(P)} is harder as $d,\kappa$ increase. So we focus mostly on the dimension 2. Some of those results will carry over to higher dimensions. As for $\kappa$ we mostly limit the numerical illustrations and evidence to a handful of values. \textit{We always restrict our analysis to positive energies}, because $\boldsymbol{\mu}_{\kappa}(\Delta) = - \boldsymbol{\mu}_{\kappa}(\Delta)$, by Lemma \ref{Lemma_symmetryDelta}, but see the subtle observations after Lemmas \ref{Lemma_symmetryDelta_88} and \ref{Lemma_symmetryTT}.

\textit{Until otherwise specified, we now focus exclusively on the \underline{dimension 2}}. For $\kappa \geq 2$, let
\begin{equation}
\label{setsJJ}
J_2 = J_2 (\kappa) := \left( 2 \cos(\pi / \kappa), 1+\cos(\pi / \kappa) \right), \quad J_1 = J_1(\kappa) := \left(1+\cos(\pi / \kappa), 2\right).
\end{equation}
By Lemma \ref{lemSUMcosINTRO}, $\inf J_2, \sup J_2 = \inf J_1, \sup J_1 \in \boldsymbol{\theta}_{0,\kappa}(\Delta)$. In \cite{GM2} we proved $J_1 \subset \boldsymbol{\mu}_{\kappa}(\Delta)$, $\forall \kappa$. As for $J_2$ it was identified (numerically for the most part) as a gap between 2 bands of a.c.\ spectrum, but this was based on \eqref{LINEAR_combinationA} with $j=1$ only, see Table \ref{tab:table1011}. Let $\E_0 = \E_0 (\kappa) := \sup J_2$. We prove : 

\begin{theorem} 
\label{thm_decreasing energy general}
Fix $\kappa \geq 2$. There is a strictly decreasing sequence of energies $\{ \E_n \}_{n=1} ^{\infty} = \{ \E_n (\kappa) \}_{n=1} ^{\infty}$, which depends on $\kappa$, such that $\{ \E_n \} \subset J_2 \cap \boldsymbol{\Theta}_{\kappa}(\Delta)$ and $\E_n \searrow \inf J_2 = 2 \cos(\pi / \kappa)$. Also, $\E_{2n-1}$ and $\E_{2n} \in \boldsymbol{\Theta}_{n, \kappa}(\Delta)$, $\forall n \geq 1$.
\end{theorem}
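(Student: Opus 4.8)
The plan is to pass to ``angular'' coordinates and to build, for every $n$, an \emph{explicit} finite chain of points on the constant--energy curve $S_E$ realising the defining relation of $\boldsymbol{\Theta}_{n,\kappa}(\Delta)$; the energies $\E_n$ will be those $E$ for which such a chain can be closed, and the whole construction is governed by the dynamics of one real one--dimensional map. Write $x_i=\cos\theta_i$, $\theta_i\in[0,\pi]$; then $(1-x_i^2)U_{j\kappa-1}(x_i)=\sin\theta_i\sin(j\kappa\theta_i)$, so in dimension $2$
\[
g_{j\kappa}(\vec x)=\sin\theta_1\sin(j\kappa\theta_1)+\sin\theta_2\sin(j\kappa\theta_2),\qquad (\cos\theta_1,\cos\theta_2)\in S_E\iff\cos\theta_1+\cos\theta_2=E .
\]
Because $\sin(ja)=\sin(jb)$ for all $j\in\N^*$ forces $a\equiv b\pmod{2\pi}$ (or $a,b\in\pi\Z$), a relation $g_{j\kappa}(\vec x_m)=\sum_{q<m}\omega_q\,g_{j\kappa}(\vec x_q)$ valid for \emph{all} $j$ forces the frequencies $\kappa\theta$ appearing in the $\vec x_q$ to coincide up to sign modulo $2\pi$, i.e.\ to differ by multiples of $\tfrac{2\pi}{\kappa}$. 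A coordinate with $\theta_i\in\tfrac{\pi}{\kappa}\Z$ is invisible to every $g_{j\kappa}$ (``frozen''; this is the mechanism of Lemma~\ref{lemSUMcosINTRO}, which already produces $\inf J_2$ and $\sup J_2$), while a diagonal point $\theta_1=\theta_2$ gives $g_{j\kappa}=2\sin\theta\sin(j\kappa\theta)$, again a single sine.

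For $E$ near $2\cos(\pi/\kappa)$ introduce the transfer map
\[
f=f_{E,\kappa}\colon\ x\longmapsto \cos\tfrac{2\pi}{\kappa}\,(E-x)+\sin\tfrac{2\pi}{\kappa}\,\sqrt{1-(E-x)^2}\;=\;\cos\!\Big(\tfrac{2\pi}{\kappa}-\arccos(E-x)\Big),
\]
and the chain $\vec x_q:=(s_q,\,E-s_q)\in S_E$, $s_{q+1}:=f(s_q)$, with angles $\gamma_q=\arccos s_q$, $\delta_q=\arccos(E-s_q)$, so that $\gamma_{q+1}+\delta_q=\tfrac{2\pi}{\kappa}$ and hence $\sin(j\kappa\gamma_{q+1})=-\sin(j\kappa\delta_q)$. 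Two closings will be used: the ``even'' scheme starts at $s_0=\cos(\pi/\kappa)$ and closes at the frozen value $E-s_n=\cos 0=1$, i.e.\ $E-f^{\,n}_{E,\kappa}(\cos\tfrac{\pi}{\kappa})=1$ (exactly the system recorded in the introduction for $\kappa=3,4$); the ``odd'' scheme starts at $s_0=\cos 0=1$ and closes at a diagonal point $s_n=E/2$, i.e.\ $f^{\,n}_{E,\kappa}(1)=E/2$. In both cases $\vec x_0$ and $\vec x_n$ have single--sine $g_{j\kappa}$, and with $e_q(j):=\sin(j\kappa\delta_q)$ one gets
\[
g_{j\kappa}(\vec x_0)=\sin\delta_0\,e_0,\quad g_{j\kappa}(\vec x_q)=-\sin\gamma_q\,e_{q-1}+\sin\delta_q\,e_q\ \ (1\leq q\leq n-1),\quad g_{j\kappa}(\vec x_n)=-c_n\,e_{n-1},
\]
with $c_n=\sin\gamma_n$ (even, frozen endpoint) or $c_n=2\sin\gamma_n$ (odd, diagonal endpoint). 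This bidiagonal system inverts to $g_{j\kappa}(\vec x_n)=\sum_{q<n}\omega_q\,g_{j\kappa}(\vec x_q)$ with $\omega_{n-1}=-c_n/\sin\delta_{n-1}$ and $\omega_q=\omega_{q+1}\,\sin\gamma_{q+1}/\sin\delta_q$; all $\omega_q$ therefore share one sign, which is $\leq 0$ \emph{precisely when} every $\gamma_q,\delta_q\in(0,\pi)$. I intend to secure this by an invariant--region argument: for $E$ in a right--neighbourhood of $2\cos(\pi/\kappa)$ the orbit $(s_q)$ stays in a region forcing $\delta_q\in(0,\tfrac{\pi}{\kappa})$, whence $\gamma_q=\tfrac{2\pi}{\kappa}-\delta_{q-1}\in(\tfrac{\pi}{\kappa},\tfrac{2\pi}{\kappa})$. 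Then $\E_n\in\boldsymbol{\Theta}_{n,\kappa}(\Delta)\subset\boldsymbol{\Theta}_{\kappa}(\Delta)$.

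Monotonicity and the limit come from the dynamics of $f_{E,\kappa}$ at the \emph{saddle--node} point $(E,x)=(2\cos(\pi/\kappa),\cos(\pi/\kappa))$: a direct computation gives $f_{E,\kappa}(\cos\tfrac{\pi}{\kappa})=\cos\tfrac{\pi}{\kappa}$ and $f'_{E,\kappa}(\cos\tfrac{\pi}{\kappa})=1$ at $E=2\cos(\pi/\kappa)$, whereas for $E>2\cos(\pi/\kappa)$ there is no fixed point nearby (the fixed--point equation reads $\cos(\theta-\tfrac{\pi}{\kappa})=E/(2\cos\tfrac{\pi}{\kappa})>1$). So for $E$ slightly above $2\cos(\pi/\kappa)$ orbits drift slowly and monotonically through the bottleneck left by the vanished fixed point; the number of iterations needed to reach the prescribed closing value is finite, monotone in $E$, and tends to $\infty$ as $E\downarrow 2\cos(\pi/\kappa)$. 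Hence each closing equation has, for large $n$, a unique relevant root; the two families interlace into one strictly decreasing sequence with $\E_{2n-1},\E_{2n}\in\boldsymbol{\Theta}_{n,\kappa}(\Delta)$ and $\E_n\searrow\inf J_2=2\cos(\pi/\kappa)$, with $\E_n-2\cos(\pi/\kappa)=O(1/n)$; each $\E_n$ is in fact the root of a single equation in nested radicals obtained by unfolding $f^{\,n}_{E,\kappa}$. It remains only to check that the finitely many largest energies lie below $\sup J_2=1+\cos(\pi/\kappa)$, which is a direct estimate.

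The hard part will be the dynamical analysis intertwined with the positivity requirement of the telescoping: one must produce a single interval of energies on which \emph{simultaneously} the orbit $s_q=f^{\,q}_{E,\kappa}(s_0)$ stays in the region keeping all chain angles in $(0,\pi)$ — so that the forced coefficients $\omega_q$ are $\leq 0$ — \emph{and} the level--$n$ closing equation is solvable with the stated monotonicity and limit. This is essentially a one--parameter bifurcation study of the family $f_{E,\kappa}$ at its saddle--node; the precise bookkeeping of the two families and their interlacing, and of the handful of small--$n$ (largest) energies, will require individual attention, and making all the needed inequalities uniform in $\kappa\geq 2$ is the technically delicate point.
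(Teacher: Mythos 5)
Your plan is the paper's proof transcribed into dynamical-systems language, so the skeleton is identical; what differs is the dressing, and some of it is a genuine gain. The transfer map $f_{E,\kappa}(x)=\cos\bigl(\tfrac{2\pi}{\kappa}-\arccos(E-x)\bigr)$ is exactly one full step of the paper's ``ping-pong game'' of section~\ref{geo_construction}: the pass $s_q\mapsto E-s_q$ is the symmetry condition $X_{n-q}=\E_n-X_{1+q}$ in \eqref{conjecture_system_conj_intro}--\eqref{conjecture_system_conj_intro_even}, and the reflection $\delta\mapsto\tfrac{2\pi}{\kappa}-\delta$ is the level condition $T_\kappa(X_q)=T_\kappa(X_{n-q})$ with $T'_\kappa(X_q)T'_\kappa(X_{n-q})<0$ (indeed for $\kappa=3,4$ your $f_{E,\kappa}$ reduces literally to the $f_E$ appearing before Proposition~\ref{lem1_sequence3344}). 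Your bidiagonal telescoping in $e_q(j)=\sin(j\kappa\delta_q)$ is tighter than the Chebyshev cancellation in Propositions~\ref{prop1_intro} and \ref{prop1_intro_even}, and the recursion $\omega_q=\omega_{q+1}\sin\gamma_{q+1}/\sin\delta_q$ makes the sign of $\omega_q$ self-evident; that is a real simplification. The saddle-node picture at $E=2\cos(\pi/\kappa)$, where $\cos(\pi/\kappa)$ is a neutral fixed point of $f_{E,\kappa}$, unifies what the paper spreads across Propositions~\ref{prop_interlace}, \ref{prop_convergeence} and the geometric Lemma~\ref{l:croissdiff} (the asymmetry of $T_\kappa$'s well about its minimiser). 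As a bonus, the standard saddle-node passage-time scaling gives $\E_n-2\cos(\pi/\kappa)\sim 1/n^2$ rather than your looser $O(1/n)$; since the paper only \emph{conjectures} this rate for $\kappa\geq 3$ (Conjecture~\ref{conjecture12}), carrying your picture through carefully would actually strengthen the paper.

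That said, what you have is an outline, and the load-bearing steps are all deferred to ``the hard part''. You still need: (i) the invariant-region argument keeping $\delta_q\in(0,\pi/\kappa)$ along the whole orbit so that the telescoped $\omega_q$ are genuinely $\leq 0$; (ii) strict monotonicity of $s_q(E)$ in $E$ to secure, for each $n$, a \emph{unique} admissible root of the $n$-th closing equation inside $J_2$ and to interlace the even and odd families into one strictly decreasing sequence; (iii) the endpoint estimate placing the first few solutions below $\sup J_2$. These are precisely the contents of Propositions~\ref{prop1}, \ref{prop2} and \ref{prop_interlace}, which the paper establishes by hand via the admissible parameter sets $\mathbf{A}_n$, the ordering \eqref{chain_reveal}, Remark~\ref{r:ppcroissant} (bi-continuity), and the intermediate value theorem; the bottleneck heuristic is morally right but does not deliver uniqueness or the monotone dependence on $E$ for free. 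Supply those and your formulation becomes a complete, and arguably cleaner, proof.
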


\begin{proposition}
\label{propk2_2/n+2}
Fix $\kappa = 2$. The sequence in Theorem \ref{thm_decreasing energy general} is simply $\E_n = 2/(n+2)$, $n \in \N^*$.
\end{proposition}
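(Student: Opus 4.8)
\emph{Setup and strategy.} Fix $\kappa=2$ and $d=2$, so $J_2=J_2(2)=(0,1)$ and $\inf J_2=2\cos(\pi/2)=0$. Put $\psi_j(a):=(1-a^2)U_{2j-1}(a)$; since $U_{2j-1}$ is an odd polynomial, $\psi_j$ is odd, and writing $a=\cos\theta$ one has $\psi_j(\cos\theta)=\sin\theta\,\sin(2j\theta)$, so $\psi_j(a)=0$ for all $j\in\N^*$ precisely when $a\in\{-1,0,1\}=\{\cos(l\pi/2):l=0,1,2\}$. With this notation $g_{2j}\big((a,b)\big)=\psi_j(a)+\psi_j(b)$ for $(a,b)\in[-1,1]^2$. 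I would recall that the proof of Theorem \ref{thm_decreasing energy general} constructs, for every $m\ge1$, energies $\E_{2m-1},\E_{2m}\in\boldsymbol{\Theta}_{m,\kappa}(\Delta)$ as solutions of a nonlinear ``chain scheme'' on the level surfaces $S_E$, and then observe that for $\kappa=2$ this scheme degenerates — because $U_1(x)=2x$ is linear and the only special values are $-1,0,1$ — into a chain of coordinates in arithmetic progression that must terminate at the value $1$; solving this explicitly yields $\E_n=2/(n+2)$.

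\emph{Even indices.} For $m\ge1$ set $E:=1/(m+1)$ and take the $m+1$ distinct points $\vec y_q:=\big(-qE,(q+1)E\big)$, $q=0,\dots,m$, of $S_E\cap[-1,1]^2$ (the bounds $0\le qE<1$ and $E\le(q+1)E\le(m+1)E=1$ hold). Using $\psi_j(-t)=-\psi_j(t)$ and $\psi_j(0)=\psi_j(1)=0$, the sum $\sum_{q=1}^{m}g_{2j}(\vec y_q)=\sum_{q=1}^{m}\big(\psi_j((q+1)E)-\psi_j(qE)\big)$ telescopes to $\psi_j((m+1)E)-\psi_j(E)=-\psi_j(E)=-g_{2j}(\vec y_0)$. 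Hence, with $\vec x_m:=\vec y_0$, $\vec x_q:=\vec y_{q+1}$ for $0\le q\le m-1$, and $\omega_q:=-1\le0$, the defining identity of $\boldsymbol{\Theta}_{m,2}(\Delta)$ holds for every $j$, so $1/(m+1)\in\boldsymbol{\Theta}_{m,2}(\Delta)$.

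\emph{Odd indices and conclusion.} For $m\ge1$ set $E:=2/(2m+1)$ and take the $m+1$ distinct points $\vec z_0:=(E/2,E/2)$ and $\vec z_q:=\big(-(2q-1)E/2,(2q+1)E/2\big)$, $q=1,\dots,m$, of $S_E\cap[-1,1]^2$; note $(2m+1)E/2=1$. Here $g_{2j}(\vec z_0)=2\psi_j(E/2)$, while $\sum_{q=1}^{m}g_{2j}(\vec z_q)$ telescopes to $\psi_j((2m+1)E/2)-\psi_j(E/2)=-\psi_j(E/2)$; so with $\vec x_m:=\vec z_0$, $\vec x_q:=\vec z_{q+1}$ for $0\le q\le m-1$, and $\omega_q:=-2\le0$ we get $2/(2m+1)\in\boldsymbol{\Theta}_{m,2}(\Delta)$. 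Since $\E_{2m}=1/(m+1)=2/(2m+2)$ and $\E_{2m-1}=2/(2m+1)$, this is exactly $\E_n=2/(n+2)$ for all $n\ge1$: strictly decreasing, contained in $J_2=(0,1)$, with $\E_n\searrow0=\inf J_2$, and with $\E_{2m-1},\E_{2m}$ at level $m$ of the filtration $\boldsymbol{\Theta}_{m,2}(\Delta)$, as required by Theorem \ref{thm_decreasing energy general}. To finish, I would match these with the scheme of Theorem \ref{thm_decreasing energy general}: for $\kappa=2$ that scheme prescribes at level $m$ precisely such a chain — anchored at the special value $0$ with $m+1$ terms in the even case, giving $(m+1)E=1$, and built from the diagonal point with coordinate $E/2$ in the odd case, giving $(2m+1)E/2=1$ — and closes it at the first coordinate hitting a special value, which for $\kappa=2$ can only be $1$; these closing equations have the unique solutions above.

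\emph{Main obstacle.} The telescoping verifications are routine; the real work is the identification of the $\kappa=2$ instance of the chain scheme of Theorem \ref{thm_decreasing energy general} with these arithmetic-progression chains, i.e.\ checking that its closure condition really becomes ``the terminal coordinate equals $1$'' (this is where the linearity of $U_1$ collapses the nested-radical recursions that occur for $\kappa=3,4$), that every chain point lies in $[-1,1]^2$ with $\vec y_0$, resp.\ $\vec z_0$, legitimately playing the distinguished role of $\vec x_m$, and that no strictly shorter admissible chain reaches the same energy (which would misplace the index $m$). If one prefers a proof independent of that scheme, the obstacle becomes a full classification: using linear independence of the sequences $j\mapsto\psi_j(a)$ — valid whenever the values $a$ are pairwise neither equal nor opposite — together with the sign constraint $\omega_q\le0$ and a case analysis over which of the coordinates $-1,0,1$ can occur in a chain, one shows $\boldsymbol{\Theta}_{m,2}(\Delta)\cap J_2=\{2/(2m+1),\,1/(m+1)\}$, whence the decreasing enumeration of $\bigcup_{m\ge1}\boldsymbol{\Theta}_{m,2}(\Delta)\cap J_2$ is forced to equal $2/(n+2)$.
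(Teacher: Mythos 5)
Your telescoping computations are correct, but they prove the wrong thing: you show that $2/(n+2)$ lies in $\boldsymbol{\Theta}_{m,2}(\Delta)$, which the paper already obtains for free from Propositions~\ref{prop1_intro} and~\ref{prop1_intro_even} once Proposition~\ref{propk2_2/n+2} is established. What the proposition actually requires is that the \emph{unique} solutions of systems~\eqref{conjecture_system_conj_intro} and~\eqref{conjecture_system_conj_intro_even} subject to~\eqref{chain_1}, \eqref{chain_1even} are $\mathcal{E}_n = 2/(n+2)$, i.e.\ the identification of the scheme's output with your arithmetic-progression chains. You flag exactly this as the ``real work'' and then only gesture at it; in particular you never resolve the sign ambiguity in $T_2(X_q)=T_2(X_{n-q}) \Leftrightarrow X_q = \pm X_{n-q}$, which is the step that collapses the nonlinear chain into a linear recursion.

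The paper's proof does that missing step directly and is much shorter: using $X_{n-q}=\mathcal{E}_n-X_{1+q}$ and the ordering with the anchor $X_{(n+1)/2}=\mathcal{E}_n/2$ (odd $n$) or $X_{n/2}=0$ (even $n$), the sign is forced to be $-$, giving $X_{q+1}=X_q+\mathcal{E}_n$, hence $X_q=\mathcal{E}_n-1+q\mathcal{E}_n$, and the closure $X_{n+1}=1$ yields $\mathcal{E}_n=2/(n+2)$. If you carry out this short derivation (and invoke the uniqueness already asserted in Propositions~\ref{prop1} and~\ref{prop2}), the telescoping portion of your write-up becomes entirely redundant.
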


For $\kappa \geq 3$ the $\E_n$ are complicated numbers (see Proposition \ref{lem1_sequence3344} and the discussion preceding it). Table \ref{table with endpoints3and4} gives the first values of $\{ \E_n \}$ for $\kappa = 3,4$. After graphing some numerical solutions for $\E_{2n}$, $1 \leq n \leq 4800$, for $\kappa \in \{3,4,5,6,8\}$ (see also \cite{GM4}), we propose a conjecture :

\begin{table}[H]
\small
  \begin{center}
    \begin{tabular}{c|c|c|c|c|c|c|c} 
    $\kappa$ & $\E_6$ & $\E_5$ & $\E_4$ & $\E_3$ & $\E_2$ & $\E_1$ & $\E_0$   \\ 
      \hline
3 & $\simeq 1.093$ & $\simeq 1.112$ & \eqref{solution_k3_5th_band} $\simeq 1.137$ & \eqref{E_L_k3_3} $\simeq 1.173$ & $\frac{9+\sqrt{33}}{12} \simeq 1.228$  & $\frac{5+3\sqrt{2}}{7} \simeq 1.320$ & 3/2  \\
4 &  $\simeq 1.466$ & $\simeq 1.476$ & $\simeq 1.491$ & $\simeq 1.512$ & $\simeq 1.545$ & 8/5 & $1+\frac{1}{\sqrt{2}} \simeq 1.707$ 
    \end{tabular}
  \end{center}
    \caption{First few values of $\{\E_n\}$ in Theorem \ref{thm_decreasing energy general}. $\Delta$ in dimension $2$. $\kappa = 3,4$.}
        \label{table with endpoints3and4}
\end{table}
\normalsize

\begin{conjecture}
\label{conjecture12}
Let $\{ \E_n \}$ be the sequence in Theorem \ref{thm_decreasing energy general}. $\E_n - \inf J_2 = c(\kappa)/n^2 + o(1/n^2)$, $\forall \kappa \geq 3$, where $c(\kappa)$ means a constant depending on $\kappa$.
\end{conjecture}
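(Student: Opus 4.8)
\noindent \emph{Proof strategy.} The plan is to recognise the $n^{-2}$ rate as the signature of a saddle--node (``intermittency'') bottleneck in the one--dimensional iteration that governs $\{\E_n\}$. The proof of Theorem~\ref{thm_decreasing energy general} produces, for each $n$, coordinate--wise energies realising $\E_{2n-1},\E_{2n}\in\boldsymbol{\Theta}_{n,\kappa}(\Delta)$; the first step is to reorganise these solutions into a self--consistent scalar recursion. For $\kappa=3,4$ this is exactly Proposition~\ref{lem1_sequence3344}: there is a real--analytic family of maps $F_E$ and a base point $x_0=x_0(\kappa)$ --- for $\kappa=3$, $F_E(x)=\frac12\left(-(E-x)+\sqrt{3}\sqrt{1-(E-x)^2}\right)$ and $x_0=\tfrac12$; for $\kappa=4$, $F_E(x)=\sqrt{1-(E-x)^2}$ and $x_0=\tfrac1{\sqrt2}$ --- with $\E_{2n}$ characterised by $\E_{2n}-1=F^{\,n}_{\E_{2n}}(x_0)$ and $\E_{2n-1}$ by the analogous identity differing by $O(1)$ iterations. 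For general $\kappa\geq3$ one must carry out the corresponding reduction of the ``system of $n+1$ nonlinear equations'' behind Theorem~\ref{thm_decreasing energy general} to a scalar form $\E_{2n}-b_\kappa=F^{\,n}_{\E_{2n}}(x_0)$ with $F_E$ jointly real--analytic in $(E,x)$ near $(E^\ast,x_0)$, where $E^\ast:=\inf J_2=2\cos(\pi/\kappa)$; this reduction, together with the non--degeneracy facts below, carries much of the work.

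The structural input needed is: \emph{(i)} at $E=E^\ast$ the base point $x_0$ is a fixed point of $F_{E^\ast}$ which is \emph{parabolic and non--degenerate}, i.e.\ $F_{E^\ast}'(x_0)=1$ and $a_\kappa:=\tfrac12 F_{E^\ast}''(x_0)\neq0$; and \emph{(ii)} the family $\{F_E\}$ unfolds this parabolic point as a \emph{transversal saddle--node} as $E$ crosses $E^\ast$, i.e.\ $\mu(E):=F_E(x_0)-x_0$ has $\mu(E^\ast)=0$, $\mu'(E^\ast)\neq0$, and for $E>E^\ast$ --- the relevant side, since $\E_n\searrow E^\ast$ --- the map $F_E$ has \emph{no} fixed point near $x_0$, so an orbit launched from $x_0$ drifts slowly through the ghost bottleneck. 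For $\kappa=3$ one checks directly $F_1(\tfrac12)=\tfrac12$, $F_1(\tfrac12+s)=\tfrac12+s-\tfrac23 s^2+O(s^3)$ (so $a_3=-\tfrac23$) and $\partial_E F_E(\tfrac12)\big|_{E=1}=-1$ (so $\mu'(E^\ast)=-1\neq0$ and $E>E^\ast$ is the no--fixed--point side); for $\kappa=4$ the fixed--point equation $2x^2-2Ex+(E^2-1)=0$ has the double root $x=\tfrac1{\sqrt2}$ precisely at $E=\sqrt2$ and no real root for $E>\sqrt2$ --- the cleanest possible saddle--node. These are, at the energy $E^\ast$, exactly the degeneracies underlying $E^\ast\in\boldsymbol{\Theta}_\kappa(\Delta)$ (Lemma~\ref{lemSUMcosINTRO}).

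The dynamical heart is the classical bottleneck--crossing--time estimate. Because $F_E(x)-x=\mu(E)+a_\kappa(x-x_0)^2+O\big((x-x_0)^3\big)$ is of size $O(\mu+(x-x_0)^2)$, an orbit near $x_0$ is slowly varying and is compared, with controlled error, to the flow of $\dot x=\mu(E)+a_\kappa(x-x_0)^2$. Starting at $x_0$, the number of iterations to leave a fixed neighbourhood of $x_0$ on the relevant side is thus
\[
\int\frac{dx}{\mu(E)+a_\kappa(x-x_0)^2}=\frac{\pi}{2}\,\Big(|a_\kappa|\,|\mu'(E^\ast)|\,(E-E^\ast)\Big)^{-1/2}\big(1+o(1)\big),\qquad E\downarrow E^\ast
\]
(a half--line integral, the orbit starting at the very centre of the bottleneck), while the remaining iterations --- from the edge of that neighbourhood to the target $\E_{2n}-b_\kappa$, which converges to a point in the ``fast'' region where $F_{E^\ast}$ has no fixed point --- number only $O(1)$. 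Hence the defining relation reads $n=K(\kappa)\,(\E_{2n}-E^\ast)^{-1/2}+O(1)$ with $K(\kappa):=\frac{\pi}{2}\big(|a_\kappa|\,|\mu'(E^\ast)|\big)^{-1/2}$. Reintroducing the global index (the iteration count attached to index $m$ is $\approx m/2$) and using that the odd-- and even--index subschemes differ by only $O(1)$ iterations, hence share the same leading constant, inversion yields
\[
\E_m-E^\ast=\frac{c(\kappa)}{m^2}+o\!\left(\frac1{m^2}\right),\qquad c(\kappa)=\frac{\pi^2}{|a_\kappa|\,|\mu'(E^\ast)|},
\]
e.g.\ $c(3)=\tfrac32\pi^2$ and $c(4)=\pi^2/\sqrt2$, to be reconciled with the large--scale numerics of \cite{GM4} and Table~\ref{table with endpoints3and4}. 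The error is genuinely $o(m^{-2})$ (in fact $O(m^{-3}\log m)$) because the correction to the crossing time above is bounded, at worst $O\!\big(\log\tfrac1{E-E^\ast}\big)$, and this passes through the inversion.

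Finally, this picture explains the restriction $\kappa\geq3$: by Proposition~\ref{propk2_2/n+2} the recursion for $\kappa=2$ is the \emph{energy--independent} M\"obius map $\E_{n+1}=2\E_n/(2+\E_n)$, whose parabolic fixed point at $\inf J_2=0$ is never unfolded (``$\mu'\equiv0$''), so there is no bottleneck and one recovers the pure parabolic law $\E_n=2/(n+2)$, of order $n^{-1}$ rather than $n^{-2}$. The main obstacle is therefore twofold: \emph{(a)} extracting, uniformly enough in $\kappa\geq3$, a clean self--consistent scalar recursion from the threshold equations of Theorem~\ref{thm_decreasing energy general}; and \emph{(b)} verifying the non--degeneracy package (i)--(ii) --- quadratic tangency, transversality in $E$, correct side --- along with the (routine but not trivial) quantitative ODE--comparison estimates pinning the crossing--time correction at $O(1)$, which is what upgrades $\Theta(m^{-2})$ to $c(\kappa)/m^2+o(m^{-2})$ and identifies $c(\kappa)$.
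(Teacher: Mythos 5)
The statement you are addressing is labelled a \emph{conjecture} in the paper, and the paper offers no proof of it: the only support given is the log--log regression of Figure \ref{fig:test_T3k3_converge} (slope close to $-2$) together with the computations up to $n=4800$ in \cite{GM4}. So there is no argument of the authors to compare yours against, and your saddle--node / bottleneck--crossing mechanism is a genuine contribution beyond what the paper contains. The local data you quote check out: for $\kappa=3$ one finds $f_1(1/2)=1/2$, $f_1'(1/2)=1$, $f_1''(1/2)=-4/3$ (so $a_3=-2/3$) and $\partial_E f_E(1/2)\vert_{E=1}=-1$; for $\kappa=4$ the fixed--point equation $2x^2-2Ex+E^2-1=0$ degenerates exactly at $E=\sqrt2$. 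Structurally, the one--step map is $x\mapsto\sigma_\kappa(E-x)$, where $\sigma_\kappa$ is the local deck involution of $T_\kappa$ at its critical point $c=\cos(\pi/\kappa)$; since $\sigma_\kappa'(c)=-1$, parabolicity at $(E^*,x_0)$ is automatic for every $\kappa$, $\mu'(E^*)=\sigma_\kappa'(c)=-1$, and $a_\kappa=\tfrac12\sigma_\kappa''(c)=-T_\kappa'''(c)/\bigl(3\,T_\kappa''(c)\bigr)$. This also corrects your explanation of why $\kappa=2$ is excluded: there too $\mu'(E^*)=-1\neq0$; what degenerates is $a_2=0$ (the deck involution of $T_2$ is affine), so the drift through the bottleneck is ballistic and the rate drops to $n^{-1}$, consistent with Proposition \ref{propk2_2/n+2}.

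That said, what you have is a strategy, not a proof, and the gaps you yourself flag are real. (i) The reduction of system \eqref{conjecture_system_conj_intro} to the scalar recursion $\E_{2n}-1=f^{(n)}_{\E_{2n}}(\cos(\pi/\kappa))$ is only available for $\kappa=3,4$, via Proposition \ref{lem1_sequence3344}, which the paper itself states without proof; for general $\kappa\geq3$ you must justify the branch selection in the ping--pong construction and verify $T_\kappa'''(\cos(\pi/\kappa))\neq0$ so that the tangency is genuinely quadratic. (ii) The discrete--to--ODE comparison with an additive $O(\log(1/\mu))$ correction to the crossing time is precisely the technical content that upgrades $\Theta(n^{-2})$ to $c(\kappa)/n^2+o(n^{-2})$ and identifies $c(\kappa)$; it is asserted, not carried out. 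Note that at the scale of Table \ref{table with endpoints3and4} the orbit steps are of size $\sim 0.1$, far from the bottleneck regime, so your predicted constants $c(3)=\tfrac32\pi^2$ and $c(4)=\pi^2/\sqrt2$ cannot be checked against the printed data and should be tested against the large--$n$ computations of \cite{GM4}. (iii) The odd--indexed subsequence starts at $\E/2$ rather than at $\cos(\pi/\kappa)$ and obeys a different terminal condition; to conclude that both subsequences share the same $c(\kappa)$ after reindexing you need to say explicitly that the displacement $\E/2-\cos(\pi/\kappa)=O(\mu)$ is negligible against the bottleneck width $O(\sqrt{\mu})$. Until (i)--(iii) are supplied, the conjecture remains open, but your route is the right one to pursue and is strictly more informative than the paper's numerical evidence.
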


In section \ref{section_gen1} we state two Theorems and a Conjecture generalizing Theorem \ref{thm_decreasing energy general}. The next Theorem is the only mathematically rigorous proof we have of a Mourre estimate.
\begin{theorem}
\label{OnlyMourreband}
Fix $\kappa = 2$. Then $(2/3,1) = (\E_1, \E_0) \subset \boldsymbol{\mu}_{\kappa}(\Delta)$. Specifically, the Mourre estimate \eqref{mourreEstimate123} can be obtained with the conjugate operator $\mathbb{A} = A_{2} + \frac{9}{14} A_4$ for all energies $E \in (\E_1, \E_0)$.
\end{theorem}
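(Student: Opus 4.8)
The plan is to reduce Theorem~\ref{OnlyMourreband} to a concrete, one-dimensional-looking polynomial positivity statement on the constant-energy surface $S_E$, and then verify that positivity explicitly for $\kappa=2$. With $\kappa=2$ and $\mathbb{A} = A_2 + \tfrac{9}{14}A_4$ we have $\rho_{2} = 1$, $\rho_{4} = \tfrac{9}{14}$, and all other $\rho_{j\kappa}=0$, so by \eqref{def:gE_intro}--\eqref{def:GGE_intro} the functional representation of $[\Delta, \i\mathbb{A}]_\circ$ is
\begin{equation*}
G_2(\vec{x}) = g_2(\vec{x}) + \tfrac{9}{14}\, g_4(\vec{x}) = \sum_{i=1}^{2}\Big[(1-x_i^2)U_1(x_i) + \tfrac{9}{14}(1-x_i^2)U_3(x_i)\Big].
\end{equation*}
Using $U_1(x)=2x$ and $U_3(x)=8x^3-4x$, the summand becomes $h(x_i)$ with $h(x) := (1-x^2)\big(2x + \tfrac{9}{14}(8x^3-4x)\big) = (1-x^2)\big(\tfrac{72}{14}x^3 - \tfrac{10}{14}x\big) = \tfrac{2}{14}(1-x^2)x(36x^2 - 5)$. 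By the functional-calculus criterion quoted in the excerpt, $E\in\boldsymbol{\mu}_\kappa(\Delta)$ iff $\left.G_2\right|_{S_E} > 0$, i.e. iff
\begin{equation*}
\min_{\substack{x_1,x_2\in[-1,1]\\ x_1+x_2=E}} \big( h(x_1) + h(x_2) \big) > 0 \qquad\text{for every } E\in(2/3,1).
\end{equation*}
So the whole theorem is the claim that this two-variable minimization, over the segment $S_E$, is strictly positive for every $E$ in the open interval $(\E_1,\E_0) = (2/3,1)$.

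The first step I would carry out is to parametrize $S_E$ by a single variable: write $x_1 = E/2 + t$, $x_2 = E/2 - t$ with $t$ ranging over $[-(1-E/2), 1-E/2]$ (for $E\in(2/3,1)$ this interval is nonempty and symmetric), and define $\Phi_E(t) := h(E/2+t) + h(E/2-t)$. This is an even polynomial in $t$ of degree $6$, hence a cubic in $s := t^2$; its coefficients are explicit polynomials in $E$. The second step is to locate the critical points: $\Phi_E'(t) = h'(E/2+t) - h'(E/2-t) = 0$. Besides $t=0$, the interior critical points are exactly the $\vec{x}$ flagged in \eqref{interpol_intro} as points where a double zero of $G_2$ is forced; the interpolation conditions defining $\E_0 = 1$ and $\E_1 = 2/3$ were set up precisely so that $\Phi_{\E_0}$ and $\Phi_{\E_1}$ each have a degenerate (double) zero on $S_E$ — at the symmetric point and/or at a boundary point of the segment. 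The third step is the monotonicity/sign bookkeeping: show that for $E\in(2/3,1)$ the minimum of $\Phi_E$ over the allowed $t$-range is attained either at $t=0$ or at the interior critical point coming from the branch $36x^2 = 5$, evaluate $\Phi_E$ there, and check these two candidate minimum values are $>0$ on the open interval, degenerating to $0$ only at the endpoints $E=2/3$ and $E=1$. Because everything is polynomial in $E$ and in $s=t^2$ of low degree, this reduces to checking that one or two explicit rational functions of $E$ (the resultant-eliminated critical values) are positive on $(2/3,1)$ — a finite, verifiable computation, e.g. by exhibiting the relevant polynomials with all roots outside the open interval or by a Sturm-sequence argument.

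Once positivity of $\left.G_2\right|_{S_E}$ is established for each fixed $E\in(2/3,1)$, the passage to the operator Mourre estimate \eqref{mourreEstimate123} is the standard one: $[\Delta,\i\mathbb{A}]_\circ = G_2(\Delta_1,\Delta_2)$ by functional calculus of the commuting self-adjoint operators $\Delta_1,\Delta_2$, and for a closed subinterval $I \Subset (2/3,1)$ one has $\gamma_I := \min_{S_I} G_2 > 0$ by compactness and continuity, where $S_I = \bigcup_{E\in I} S_E$; then $1_I(\Delta)[\Delta,\i\mathbb{A}]_\circ 1_I(\Delta) \geq \gamma_I\, 1_I(\Delta)$ because the joint spectrum of $(\Delta_1,\Delta_2)$ restricted to $\Delta\in I$ lies in $S_I$. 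Since this works for every $E$, every such $E$ lies in $\boldsymbol{\mu}_\kappa(\Delta)$, giving $(2/3,1)\subset\boldsymbol{\mu}_\kappa(\Delta)$ with the stated conjugate operator; the regularity remarks after Theorem~\ref{lapy305} cover the $C^{1,1}(\mathbb{A})$ hypotheses since $\mathbb{A}$ is a finite sum.

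The main obstacle I anticipate is the third step: controlling the location and the sign of the interior critical value of $\Phi_E$ uniformly in $E$ across the whole open interval $(2/3,1)$, rather than just at the two endpoints where the construction guarantees a double zero. The risk is that $\Phi_E$ develops a spurious interior minimum (from the $x(36x^2-5)$ factor, whose zeros $x = \pm\sqrt{5}/6 \approx \pm 0.373$ sit inside the relevant range of $x_i$) that dips to zero or below somewhere strictly between $2/3$ and $1$; ruling this out is exactly the content of "plot \eqref{mourreEstimate123} to convince ourselves'' in the general numerical scheme, but here — because $\kappa=2$ makes $h$ a degree-$4$ polynomial — it should be turned into an honest finite computation via elimination (computing $\mathrm{Res}_t(\Phi_E,\Phi_E')$ and checking its sign, together with boundary evaluations at $t = \pm(1-E/2)$) rather than a picture. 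Organizing that elimination cleanly, and double-checking the endpoint degeneracies match $\E_0 = 1$ and $\E_1 = 2/3$ from Proposition~\ref{propk2_2/n+2}, is where the real work lies.
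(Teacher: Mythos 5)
Your opening reduction is the same as the paper's, and the closing passage from pointwise positivity of $G_{\kappa}^{E}$ on $[E-1,1]$ to the operator inequality via joint functional calculus of $(\Delta_1,\Delta_2)$ plus a compactness argument in $E$ is correct and standard. Two things, however, separate your proposal from a proof.

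First, a concrete degree error that obscures how easy the problem really is. Since $h(x)=\tfrac{1}{7}(1-x^2)x(36x^2-5)$ has odd degree $5$, the leading $t^5$ contributions of $h(E/2+t)$ and $h(E/2-t)$ cancel (they are odd in $t$), so $\Phi_E(t)$ is an even polynomial of degree \emph{four}, i.e.\ a \emph{quadratic} in $s=t^2$, not a cubic. Equivalently, $G_{\kappa}^{E}(x)$ is a degree-$4$ polynomial in $x$. A quadratic in $s$ factors into two linear factors; translated back to $x$ this says $G_{\kappa}^{E}(x)$ factors into a constant (proportional to $E$) times two monic parabolas of the form $x^2-Ex+c_i(E)$, each symmetric about $x=E/2$. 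This is exactly the structure the paper exploits, and once you see it, the machinery you invoke (resultant elimination, Sturm sequences, tracking several candidate interior critical values) is overkill: you only need the discriminants of the two parabolas and the location of their (at most two) real roots relative to $[E-1,1]$.

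Second, and more importantly, you stop precisely where the theorem begins. The statement is an explicit positivity claim, and the entire content of the paper's proof is carrying out the computation you defer to as ``a finite, verifiable computation.'' The paper writes the factorization $G_{\kappa}^{E}(x)=c\,E\,(x^2-Ex+r_-r_+)(x^2-Ex+s_-s_+)$ with explicit $r_\pm(E),\,s_\pm(E)$, shows one parabola has no real roots when $|E|>2/3$, shows the other is negative precisely on $(s_-,s_+)$, and reduces the theorem to $s_-\le E-1\le 1\le s_+$, whose solution set is $[2/3,1]\cup[4/3,2]$; combined with the sign of the prefactor this gives positivity on $(2/3,1)$. Your step 3 correctly identifies this as ``where the real work lies,'' but leaving it unexecuted means the proposal is a plan, not a proof. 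As it stands you have rederived the reduction but not the theorem.
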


Of the gaps identified in \cite{GM2}, we believe $J_2$ is the simplest to understand its structure : 

\begin{conjecture}
\label{conjecture22}
Fix $\kappa \geq 2$. Let $\{ \E_n \}$ be the sequence in Theorem \ref{thm_decreasing energy general}. For each interval $( \mathcal{E}_n , \mathcal{E}_{n-1} )$, $n \geq 1$, $\exists$ a conjugate operator $\mathbb{A}(n) = \sum_{q=1} ^{N(n)} \rho_{j_q \kappa} (n) A_{j_q \kappa}$, $A_{j_q \kappa} = \sum_{1 \leq i \leq 2} A_i (j_q,\kappa)$, such that the Mourre estimate \eqref{mourreEstimate123} holds with $\mathbb{A}(n)$, $\forall E \in ( \mathcal{E}_n , \mathcal{E}_{n-1} )$. $\mathbb{A}(n)$ is typically not unique. It can be chosen so that $N(n) = 2n$. In particular, $\{\E_n\} = J_2 \cap \boldsymbol{\Theta}_{\kappa}(\Delta)$, $\forall \kappa \geq 2$.
\end{conjecture}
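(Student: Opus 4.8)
\noindent \emph{A possible line of attack.} The plan is to make the polynomial interpolation of step (3) of the Introduction rigorous, one interval at a time. Fix $\kappa \geq 2$ and $n \geq 1$ and set $J := (\E_n, \E_{n-1}) \subset J_2$. After the substitution $x_i = \cos\theta_i$ one has the identity $(1-x_i^2)U_{j\kappa-1}(x_i) = \sin\theta_i \sin(j\kappa\theta_i)$, whence
\begin{equation*}
g_{j\kappa}(\cos\theta_1,\cos\theta_2) = \sin\theta_1\sin(j\kappa\theta_1) + \sin\theta_2\sin(j\kappa\theta_2);
\end{equation*}
parametrizing $S_E$ by $\cos\theta_1 = t$, $\cos\theta_2 = E - t$ with $t$ in an $E$-dependent interval turns each $\left.g_{j\kappa}\right|_{S_E}$ into a trigonometric polynomial in $t$ of controlled degree. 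The task is to exhibit coefficients $(\rho_{j_q\kappa})_{q=1}^{2n}$, \emph{independent of} $E$, for which $G_\kappa := \sum_q \rho_{j_q\kappa}\, g_{j_q\kappa}$ satisfies $\left.G_\kappa\right|_{S_E} > 0$ for every $E \in J$; by the functional-calculus criterion recalled after \eqref{constE_MV}, this is precisely the Mourre estimate \eqref{mourreEstimate123} near each such $E$, so $J \subset \boldsymbol{\mu}_\kappa(\Delta)$, with a conjugate operator $\mathbb{A}(n)$ built from $N(n) = 2n$ summands.

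First I would set up the interpolation system. By Theorem \ref{thm_decreasing energy general} and Lemma \ref{lemSUMcosINTRO}, $\E_{n-1}$ and $\E_n$ lie in sets $\boldsymbol{\Theta}_{m_1,\kappa}(\Delta)$, $\boldsymbol{\Theta}_{m_2,\kappa}(\Delta)$ and carry coordinate vectors and nonpositive weights as in \eqref{special relationship_intro}. As explained around \eqref{interpol_intro}, since $\mathbb{A}(n)$ must be the same throughout $J$, continuity forces $G_\kappa$ to vanish at all of those coordinate vectors and to be critical along $S_E$ at the interior ones; by \eqref{special G relationship_intro} the conditions at $\vec x_{m_1}$ and $\vec y_{m_2}$ are redundant. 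Taking $\Sigma = \{j_1\kappa, \dots, j_{2n}\kappa\}$ with $2n$ elements --- as in the base case $\kappa = 2$, $n = 1$, $\mathbb{A} = A_2 + \tfrac{9}{14}A_4$ of Theorem \ref{OnlyMourreband} --- these vanishing and criticality conditions form a homogeneous linear system in the $2n$ coefficients $\rho_{j_q\kappa}$, and one must choose the exponents so that the system has a nontrivial solution with the correct sign pattern, after which a normalization is fixed. Proving that such a $\Sigma$ always exists, and that $2n$ summands suffice, is the first technical hurdle; here I would exploit the explicit recursive description of the $\E_n$ behind Proposition \ref{propk2_2/n+2}, very plausibly through an induction on $n$ in which $\mathbb{A}(n+1)$ is obtained from $\mathbb{A}(n)$ by adjoining two well-chosen terms $A_{j_{2n+1}\kappa}$, $A_{j_{2n+2}\kappa}$ and re-solving.

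The main obstacle --- and the reason the paper offers only numerical evidence here --- is the passage from interpolation to \emph{uniform} strict positivity: vanishing and criticality at the finitely many threshold points of $S_{\E_n}$ and $S_{\E_{n-1}}$ do not by themselves force $\left.G_\kappa\right|_{S_E} > 0$ at the intermediate $E$. I would attack this in three steps: (i) using compactness of $\overline J$, reduce to controlling, uniformly in $E \in \overline J$, the finitely many critical points of the single-variable trigonometric polynomial $t \mapsto \left.G_\kappa\right|_{S_E}(t)$; (ii) show that for $E$ strictly inside $J$ no relation of type \eqref{special relationship_intro} holds on $S_E$ --- no threshold obstruction is present --- and that the zeros forced at the two endpoint thresholds disappear as $E$ enters the interior; (iii) produce an $E$-dependent sum-of-squares (Fej\'er--Riesz type) representation of $\left.G_\kappa\right|_{S_E}$ together with a manifestly positive remainder on $J$, degenerating to the boundary cases at $\E_n$ and $\E_{n-1}$. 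Finally, once $(\E_n, \E_{n-1}) \subset \boldsymbol{\mu}_\kappa(\Delta)$ is known for every $n \geq 1$, the ``in particular'' clause is immediate: $J_2 \setminus \{\E_n : n \geq 1\} = \bigcup_{n \geq 1}(\E_n, \E_{n-1}) \subset \boldsymbol{\mu}_\kappa(\Delta)$, the $\E_n$ lie in $\boldsymbol{\Theta}_\kappa(\Delta)$ by Theorem \ref{thm_decreasing energy general}, and $\boldsymbol{\mu}_\kappa(\Delta)$ and $\boldsymbol{\Theta}_\kappa(\Delta)$ partition $\sigma(\Delta)$, so $J_2 \cap \boldsymbol{\Theta}_\kappa(\Delta) = \{\E_n : n \geq 1\}$.
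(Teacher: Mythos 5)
This statement is a \emph{conjecture}, and the paper offers no proof of it: what the paper supplies is (a) the rigorous special case $\kappa = 2$, $n = 1$ via Theorem \ref{OnlyMourreband}, where $G_{\kappa=2}^E(x) = -(180E/7)(x^2-Ex+r_-r_+)(x^2-Ex+s_-s_+)$ is computed in closed form and the positivity on $J = (2/3,1)$ is read off from the discriminants, and (b) numerical/graphical evidence for a few more $(n,\kappa)$, arising from solving the homogeneous linear systems \eqref{system_interpol}--\eqref{system_interpol_even} and plotting $G_\kappa^E$. You are therefore right to flag the passage from interpolation to uniform strict positivity as the key missing step: the paper concedes precisely this, saying the constraints \eqref{interpol_intro} are ``necessary but not necessarily sufficient'' (cf.\ section \ref{howchooseindices}, where bad choices of $\Sigma$ are exhibited). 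Your outline of the setup agrees with the paper's, and the closing ``in particular'' deduction --- $\bigcup_{n\geq 1}(\E_n,\E_{n-1})\subset\boldsymbol{\mu}_\kappa(\Delta)$ plus $\{\E_n\}\subset\boldsymbol{\Theta}_\kappa(\Delta)$ and the disjoint union $\boldsymbol{\mu}_\kappa\sqcup\boldsymbol{\Theta}_\kappa = \sigma(\Delta)$ --- is correctly drawn.

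Two caveats on the parts of your sketch that go beyond the paper. First, the suggestion to build $\mathbb{A}(n+1)$ from $\mathbb{A}(n)$ by adjoining two extra terms and re-solving does not match the paper's own numerical data for $\kappa=2$ (Table \ref{sol_kappa_2_____}): the exponent sets $\Sigma$ there are $\{2,4\}$, $\{2,4,6,10\}$, $\{2,4,6,8,10,12\}$, $\{2,4,6,8,10,12,14,20\}$, which are not nested, and the coefficients $\rho$ change entirely from one band to the next, so a naive induction on $n$ would need substantial modification. Second, and more centrally, the Fej\'er--Riesz / sum-of-squares idea for step (iii) is a genuinely new ingredient not present in the paper; as stated it is a wish rather than an argument, and you give no mechanism showing that the $2n$-dimensional interpolation solution, once fixed, admits a positive remainder uniformly over the compact family $\{S_E : E\in\overline J\}$. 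It is plausible, and would be a real contribution if made to work, but as written the proposal remains --- as you yourself acknowledge --- an outline with the decisive step unresolved, consistent with the fact that the paper itself leaves this as an open conjecture.
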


This Conjecture is based on graphical evidence for $\kappa =2,3$, see sections \ref{appli_scheme_k2} and \ref{appli_scheme_k3}, and see \cite{GM4} for more evidence. $\kappa=2$ is the only value of $\kappa$ for which the closure of $J_2 \cup J_1$ equals $\sigma(\Delta) \cap [0,2]$. Thus, if Conjecture \ref{conjecture22} is true, problem \textbf{(P)} is fully solved in the case of $\kappa=2$ (in dimension 2). But for $\kappa \geq 3$, Theorem \ref{thm_decreasing energy general} and Conjecture \ref{conjecture22}, together with the already existing results recorded in Table \ref{tab:table1011}, do not paint a complete picture. For example, the above discussion does not address the situation on the interval $(0,\beta) \simeq (0,0.542477)$, $\beta = \frac{1}{2} (\frac{1}{2}( 5 - \sqrt{7}) )^{1/2}$, for $\kappa=3$, or on $(0,\beta') \simeq (0,1.026054)$, $\beta' = (\frac{3}{2} - \frac{1}{\sqrt{5}})^{1/2}$, for $\kappa=4$. We make some progress in that direction, but things are getting even more complicated. In addition to \eqref{setsJJ}, for $\kappa \geq 3$ set 
$$J_3 = J_3 (\kappa) := \left( 1+\cos(2 \pi / \kappa), 2\cos(\pi / \kappa) \right).$$
$J_3$, $J_2$, $J_1$ are adjacent intervals. Based on evidence for $\kappa=3,4$ (section \ref{sec_Conjecture J3}) we conjecture :

\begin{conjecture}
\label{conjecture_k344_2d}
For any $\kappa \geq 3$, $J_3(\kappa) \subset \boldsymbol{\mu}_{\kappa}(\Delta)$.
\end{conjecture}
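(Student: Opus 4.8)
\emph{Reduction.} Recall that $E\in\boldsymbol{\mu}_\kappa(\Delta)$ only requires \emph{some} (possibly $E$-dependent) finite self-adjoint $\mathbb{A}=\sum_j\rho_{j\kappa}A_{j\kappa}$ for which \eqref{mourreEstimate123} holds; equivalently, via the criterion ``$E\in\boldsymbol{\mu}_\kappa(\Delta)\iff\left.G_\kappa\right|_{S_E}>0$'', that $\left.G_\kappa\right|_{S_E}>0$ for some choice of coefficients $(\rho_{j\kappa})_j$. So to prove $J_3(\kappa)\subset\boldsymbol{\mu}_\kappa(\Delta)$ it suffices to show: for every $E\in J_3(\kappa)$ there is a finite trigonometric polynomial $\Phi_E(\psi)=\sum_j\rho_{j\kappa}\sin(j\psi)$ with
\[
\left.G_\kappa\right|_{S_E}=\sin\theta_1\,\Phi_E(\kappa\theta_1)+\sin\theta_2\,\Phi_E(\kappa\theta_2)>0\quad\text{on }S_E,
\]
where $x_i=\cos\theta_i$, $\cos\theta_1+\cos\theta_2=E$, and we used $(1-x^2)U_{n-1}(\cos\theta)=\sin\theta\sin(n\theta)$. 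A reformulation (conditional on Conjecture~\ref{conjecture000}) is that $\mathrm{int}(J_3(\kappa))$ meets none of the sets $\boldsymbol{\Theta}_{m,\kappa}(\Delta)$, $m\geq0$; I would keep the direct formulation, since it yields $\boldsymbol{\mu}_\kappa$-membership unconditionally.

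\emph{Geometry.} For $E\in J_3(\kappa)$ the bound $\cos\theta_1+\cos\theta_2>1+\cos(2\pi/\kappa)=2\cos^2(\pi/\kappa)$ together with $\cos\theta_i\leq1$ forces $\cos\theta_i>\cos(2\pi/\kappa)$, i.e.\ $\theta_1,\theta_2\in(0,2\pi/\kappa)$, so on $S_E$ every $\kappa\theta_i$ lies in $(0,2\pi)$ --- the analogue of the confinement $\theta_i\in(0,\pi/\kappa)$ that makes $g_\kappa\geq0$ on $J_1$. The only $\boldsymbol{\theta}_{0,\kappa}(\Delta)$ configurations approached by points of $S_E$ as $E\to\partial J_3$ are the diagonal $(\cos(\pi/\kappa),\cos(\pi/\kappa))$ at $\sup J_3$ and the two axis points $(1,\cos(2\pi/\kappa))$, $(\cos(2\pi/\kappa),1)$ at $\inf J_3$; for $E$ in the interior, the endpoints of the segment $S_E$ are non-threshold points (no $g_{j\kappa}$ vanishes there), which leaves room to choose $\Phi_E$.

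\emph{Construction.} For $E$ in a compact subinterval $[\inf J_3+\delta,\ \sup J_3]$, $\delta>0$, I would build one finite $\mathbb{A}$ by polynomial interpolation, as in the paper: impose that $G_\kappa$ vanish --- to the order forced by the threshold structure --- at the relevant $\boldsymbol{\theta}_{0,\kappa}$ boundary configurations, and that the diagonal point be a strict minimum of $\left.G_\kappa\right|_{S_{\sup J_3}}$ (this last condition being $\sum_j(-1)^j j\,\rho_{j\kappa}>0$, obtained from the expansion $G_\kappa\sim2\cos(\pi/\kappa)\,s\,\Psi_\kappa(s)$ with $\Psi_\kappa(s)=\sum_j(-1)^j\rho_{j\kappa}\sin(j\kappa s)$ along the segment through the diagonal), solve the resulting linear system for the $\rho_{j\kappa}$, and verify $G_\kappa>0$ on $\bigcup_E S_E$. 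Near $\inf J_3$ one subdivides further, allowing more terms; assembling a countable exhaustion of $\mathrm{int}(J_3(\kappa))$ by such subintervals then gives the claim. I would first replay the $\kappa=3,4$ numerics of Section~\ref{sec_Conjecture J3} to pin down the interpolation pattern, and only then attempt the construction for general $\kappa$.

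\emph{Main obstacle.} Two difficulties compound. First, each candidate $G_\kappa$ is a polynomial of degree $\sim\kappa N$ in $(x_1,x_2)$, and proving it stays positive on the relevant region --- uniformly in $\kappa$ and in the subinterval --- seems to require a manifestly nonnegative (sum-of-squares / telescoping) rewriting of $G_\kappa$, or sharp quantitative estimates after the rescaling $\theta_i=\phi_i/\kappa$. Second, and more essentially, it appears that no single finite $\mathbb{A}$ can work on the whole of $J_3(\kappa)$: near the axis configuration $(1,\cos(2\pi/\kappa))$ the contribution of $g_{j\kappa}$ from the coordinate near $\cos(2\pi/\kappa)$ has the opposite sign to that from the coordinate near $1$ (there $\kappa\theta_i$ sits just below $2\pi$, where $\sin<0$), unlike the situation at $\inf J_1$ where both are positive and a fixed $\mathbb{A}=A_\kappa$ suffices; a Vandermonde-type argument then shows that maintaining the requisite vanishing along energies $E\to\inf J_3$ would force killing unboundedly many Taylor coefficients of $\Phi_E$ at $0$, so the number of terms $N=N(E)$ must blow up as $E\to\inf J_3$. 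Quantifying this blow-up --- producing for each $E$ an explicit finite $\Phi_E$ together with a proof of positivity on $S_E$ --- is the heart of the problem.
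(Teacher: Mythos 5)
The statement in question is a \emph{conjecture}, and the paper offers no proof --- only numerical evidence for $\kappa=3,4$ in Section~\ref{sec_Conjecture J3}, where a cover of part of $J_3$ is found by manually tuning a handful of coefficients $\rho_{j\kappa}$ (Tables~\ref{tab:tableDelta222222} and~\ref{tab:tableDelta2dexploreKAPPA=3_againnn}) rather than by the systematic interpolation used for $J_2$. So there is no ``paper proof'' against which to compare your argument.

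Your submission is an honest strategic outline, not a proof, and you concede as much in your final paragraph. The ingredients you do establish are correct. The reduction to finding, for each $E\in J_3(\kappa)$, a finite odd trigonometric polynomial $\Phi_E$ with $\sin\theta_1\,\Phi_E(\kappa\theta_1)+\sin\theta_2\,\Phi_E(\kappa\theta_2)>0$ on $S_E$ is exactly the paper's pointwise criterion for membership in $\boldsymbol{\mu}_\kappa(\Delta)$; the confinement $\theta_i\in[0,2\pi/\kappa)$ for $\vec{x}\in S_E$, $E\in J_3$, is correct and pinpoints why $\mathbb{A}=A_\kappa$ alone fails on $J_3$ (on $J_1$ one has $\theta_i\in(0,\pi/\kappa)$ so $\sin(\kappa\theta_i)>0$ automatically, whereas on $J_3$ the argument $\kappa\theta_i$ sweeps $(0,2\pi)$ and $\sin$ changes sign); and your second-order expansion of $G_\kappa$ along $S_{\sup J_3}$ near the diagonal, yielding the necessary condition $\sum_j(-1)^j j\,\rho_{j\kappa}>0$, is a correct computation.

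What is missing is the hard part, and you have named it accurately. The analogous expansion at the axis endpoint $(1,E-1)$ with $E\searrow\inf J_3$ yields the competing condition $\sum_j j\,\rho_{j\kappa}<0$ (since $U_{j\kappa-1}(E-1)\approx -(E-\inf J_3)\,j\kappa/\sin^2(2\pi/\kappa)$ there); the two together already rule out any two-term $\mathbb{A}$, and the constraint set tightens further as $E$ approaches $\inf J_3$, where the two threshold sequences $\{\F_n\},\{\G_n\}$ of Theorem~\ref{thm_decreasing energy general_k3} accumulate from below. You conjecture on this basis that the number of terms $N(E)$ must blow up as $E\searrow\inf J_3$; that is plausible but not proved here (the accumulating thresholds are \emph{below} $\inf J_3$, so a direct contradiction with a uniform $\mathbb{A}$ on $J_3$ does not follow without more work). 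In any case, the actual construction of $\Phi_E$ and the verification that the resulting degree-$\sim\kappa N$ polynomial stays strictly positive on the whole segment $S_E$ are both absent. Those two steps are precisely why the statement is a conjecture, and your proposal does not resolve them.
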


Conjecture \ref{conjecture_k344_2d} is not the highlight of this article, but it is a head-scratching curiosity, and perhaps quite a narrow question to investigate. We prove the existence of thresholds below $J_3$ :

\begin{theorem} 
\label{thm_decreasing energy general_k3}
Fix $\kappa \geq 3$. There are strictly increasing sequences of energies $\{ \F_n \}_{n=1} ^{\infty}$, $\{ \G_n \}_{n=1} ^{\infty}$, which depend on $\kappa$, such that $\{ \F_n \},  \{ \G_n \} \subset (\cos(\pi/ \kappa)+\cos(2\pi/ \kappa), 1+\cos(2\pi/ \kappa)) \cap \boldsymbol{\Theta}_{\kappa}(\Delta)$ and $\F_n, \G_n \nearrow \inf J_3 = 1+\cos(2\pi/ \kappa)$. $\F_{2n-1}$, $\F_{2n}$, $\G_{2n-1}$, $\G_{2n} \in \boldsymbol{\Theta}_{n, \kappa}(\Delta)$, $\forall n \geq 1$.
\end{theorem}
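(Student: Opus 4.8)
The plan is to stay in dimension $2$ and to recast the combinatorial definition of $\boldsymbol{\Theta}_{m,\kappa}(\Delta)$ as an identity between finite atomic measures. The Chebyshev composition identity $U_{j\kappa-1}(x)=U_{j-1}(T_\kappa(x))\,U_{\kappa-1}(x)$ gives, for $\vec x=(x_1,x_2)\in S_E$,
\begin{equation*}
g_{j\kappa}(\vec x)=\sum_{i=1}^{2}a(x_i)\,U_{j-1}(T_\kappa(x_i)),\qquad a(x):=(1-x^2)\,U_{\kappa-1}(x),
\end{equation*}
and writing $x_i=\cos\theta_i$ one has $a(\cos\theta)\,\delta_{T_\kappa(\cos\theta)}=\sin\theta\,\sin(\kappa\theta)\,\delta_{\cos(\kappa\theta)}$. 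Because $\{U_{j-1}\}_{j\geq1}$ is a basis of $\R[x]$, two finitely supported measures on $[-1,1]$ with the same $U_{j-1}$-moments for every $j$ coincide; so, attaching to $\vec x\in S_E$ the measure $\mu_{\vec x}:=a(x_1)\,\delta_{T_\kappa(x_1)}+a(x_2)\,\delta_{T_\kappa(x_2)}$, the relation \eqref{special relationship_intro} defining $\boldsymbol{\Theta}_{m,\kappa}(\Delta)$ becomes the measure identity $\mu_{\vec x_m}=\sum_{q=0}^{m-1}\omega_q\,\mu_{\vec x_q}$ with all $\omega_q\leq 0$. Note that $a(x)=0$ exactly at $x=\cos(l\pi/\kappa)$, $l\in\{0,\dots,\kappa\}$: these are the points at which $\mu_{\vec x}$ collapses to a single atom, they are the atoms of the $\boldsymbol{\theta}_{0,\kappa}$ thresholds of Lemma~\ref{lemSUMcosINTRO}, and $\inf J_3=1+\cos(2\pi/\kappa)=\cos 0+\cos(2\pi/\kappa)$ is the one built from the two \emph{distinct} angles $0$ and $2\pi/\kappa$.

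Next I would produce, for each $n\geq1$, explicit \emph{chain} configurations realizing this identity with $m=n$. Write the points as $\vec x_q=(\cos\alpha_q,\cos\beta_q)$ with $\cos\alpha_q+\cos\beta_q=E$, and impose the chaining $\cos(\kappa\beta_q)=\cos(\kappa\alpha_{q+1})$, $0\leq q\leq n-1$, so that the atoms of $\mu_{\vec x_0},\dots,\mu_{\vec x_n}$ line up along $\cos(\kappa\alpha_0),\cos(\kappa\alpha_1),\dots,\cos(\kappa\alpha_n),\cos(\kappa\beta_n)$. Forcing the coefficient of every atom in $\sum_{q=0}^{n-1}\omega_q\mu_{\vec x_q}-\mu_{\vec x_n}$ to vanish produces: (i) endpoint conditions $a(\cos\alpha_0)=0$ and $a(\cos\beta_n)=0$, i.e.\ $\alpha_0,\beta_n\in(\pi/\kappa)\Z$; (ii) a two-term recursion $\omega_q\,a(\cos\alpha_q)=-\omega_{q-1}\,a(\cos\beta_{q-1})$ for $1\leq q\leq n-1$ together with a closing relation at $q=n$, which pins down the $\omega_q$ up to a positive scalar and makes them all of one sign $\leq 0$ as soon as $\sin(\kappa\alpha_q)$ and $\sin(\kappa\beta_q)$ have the appropriate (opposite) signs along the chain. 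Eliminating $\beta_q$ through $\cos\alpha_q+\cos\beta_q=E$ turns $(\alpha_q)$ into the forward orbit of a one-dimensional map $f_E$ (of the sort already used for $J_2$: solve $\cos\alpha+\cos\beta=E$ for $\beta\in[0,\pi]$, reflect $\beta$ by a fixed element of $(\pi/\kappa)\Z$ and fold back into $[0,\pi]$). Each energy is then a root $E$ of a single scalar equation of the form $\cos\!\big(f_E^{\,n}(\alpha_0^\star)\big)=E-\cos(\beta_n^\star)$; the two choices of starting angle $\alpha_0^\star\in\{0,2\pi/\kappa\}$ give the two sequences $\{\F_n\}$ and $\{\G_n\}$ (their orbits limiting, as $E\to\inf J_3$, onto the two parabolic fixed points of $f_E$ at the angles $0$ and $2\pi/\kappa$), while a secondary binary choice in how the orbit is closed yields the two consecutive indices $2n-1$ and $2n$; in all four cases the configuration has $n+1$ points, hence lies in $\boldsymbol{\Theta}_{n,\kappa}(\Delta)$.

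It then remains to: (1) show each scalar equation has a root in $\big(\cos(\pi/\kappa)+\cos(2\pi/\kappa),\,1+\cos(2\pi/\kappa)\big)$ by computing the sign of $E\mapsto\cos(f_E^{\,n}(\alpha_0^\star))-E+\cos(\beta_n^\star)$ at the two endpoints and invoking the intermediate value theorem; (2) verify along the resulting orbit that the atoms $\cos(\kappa\alpha_0),\dots,\cos(\kappa\alpha_n),\cos(\kappa\beta_n)$ are pairwise distinct (and otherwise merge atoms and re-derive the linear system) and that the sign conditions on $\sin(\kappa\alpha_q),\sin(\kappa\beta_q)$ hold, so that $\omega_q\leq 0$ and hence $\F_n,\G_n\in\boldsymbol{\Theta}_{n,\kappa}(\Delta)\subset\boldsymbol{\Theta}_{\kappa}(\Delta)$; (3) prove $n\mapsto\F_n$ and $n\mapsto\G_n$ strictly increasing with $\F_n,\G_n\nearrow 1+\cos(2\pi/\kappa)$. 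For (3) the mechanism is that as $E\uparrow 1+\cos(2\pi/\kappa)$ the map $f_E$ acquires a neutral (parabolic) fixed point at the angle carrying the chain's starting point; orbits then take arbitrarily long to cross any fixed neighbourhood of it, which both forces $n\to\infty$ as $E\to\inf J_3$ (giving the convergence, and presumably the $1/n^2$ rate of Conjecture~\ref{conjecture12}) and, via the monotone dependence of $f_E$ and of the escape time on $E$, gives the monotonicity of the sequences. The explicit case $\kappa=2$ of the analogous $J_2$ statement, Proposition~\ref{propk2_2/n+2} with $f_E(x)=x-E$, already displays every ingredient in closed form.

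The main obstacle is the control needed for steps (2) and (3): one has to track the \emph{entire} forward orbit of $f_E$ — that it stays inside the interval where $f_E$ is defined, that its atoms do not collide prematurely, and that $\sin(\kappa\alpha_q)$ and $\sin(\kappa\beta_q)$ keep the prescribed signs — uniformly in $n$ and for $E$ ranging over the relevant interval, and then to extract sharp asymptotics of the orbit near the parabolic fixed point. For general $\kappa\geq3$ neither $f_E$ nor its fixed-point data are available in closed form (in contrast with $\kappa=2,3,4$), which is where the argument becomes genuinely delicate; everything else reduces to the measure-theoretic bookkeeping above and to elementary continuity.
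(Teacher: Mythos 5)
Your measure reformulation is a genuinely different way to organize the argument, and it is correct: the identity $U_{j\kappa-1}(x)=U_{j-1}(T_\kappa(x))\,U_{\kappa-1}(x)$ does turn the $j$-indexed family of conditions defining $\boldsymbol{\Theta}_{m,\kappa}(\Delta)$ into a single equality of finitely supported measures, and your chain construction is the same combinatorial object that the paper packages as the systems \eqref{conjecture_system_conj_intro}/\eqref{conjecture_system_conj_intro_even}. What you replace is the proof of Propositions \ref{prop1_intro} and \ref{prop1_intro_even}: instead of the explicit telescoping cancellation the paper carries out, you observe that $\{U_{j-1}\}_{j\geq1}$ separates finitely supported measures, so forcing the atoms to cancel pins down the $\omega_q$ automatically; the sign condition $\omega_q\leq0$ then is exactly the paper's \eqref{o3}/\eqref{o33}, i.e.\ your requirement that consecutive atoms in the chain come from opposite branches of $T_\kappa$. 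One bookkeeping slip to note: you impose $a(\cos\alpha_0)=a(\cos\beta_n)=0$ at both ends of the chain, which matches the paper's even-index systems (condition \eqref{o22}); for the odd-index systems the chain closes instead at the degenerate symmetric pair $(\E/2,\E/2)$, which is \emph{not} a zero of $a$ (cf.\ the first line of the proof of Proposition \ref{prop1_intro}). Your phrase about ``a secondary binary choice in how the orbit is closed'' is the right instinct; it should be made precise in those terms.

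Where the two routes diverge more substantially is in producing and estimating the energies. The paper builds each $\F_n$, $\G_n$ via the ping-pong algorithm of section \ref{geo_construction}: a continuous one-parameter family of chains pinned by the intermediate value theorem at a lobe endpoint (Propositions \ref{prop_k=3_odd_888}--\ref{prop_k=3_even_888} and \ref{prop_k=3_odd}--\ref{prop_k=3_even}, adapting Propositions \ref{prop1}--\ref{prop2}), with strict monotonicity from a direct comparison of adjacent orbits (Propositions \ref{prop_interlace_888}, \ref{prop_interlace_j4}, adapting Proposition \ref{prop_interlace}); the quantitative input for convergence is the geometric inequality of Lemma \ref{l:croissdiff}. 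You propose instead to encode the chain as the forward orbit of a one-dimensional map $f_E$ and to read convergence off a parabolic fixed point. That is an appealing alternative --- and it would explain the conjectured $1/n^2$ rate --- but it is precisely where your proposal stops short, as you yourself acknowledge. The control you would need (that the orbit stays in the admissible region, that atoms do not collide, that the branch signs persist, that the escape time is monotone in $E$, all uniformly in $n$) is not established; and since the $\G_n$ orbit already leaves the rightmost lobe (Remark \ref{remark000}: $X_0\in(\cos(3\pi/\kappa),\cos(2\pi/\kappa))$), you would also need the analogue of Lemma \ref{l:croissdiff} away from the rightmost lobe, which the paper itself has only as Conjecture \ref{l:croissdiffJJ}. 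In short: your steps (1)--(2) give a cleaner encoding of what Propositions \ref{prop1_intro}/\ref{prop1_intro_even} already prove, while your step (3) is a plausible but as yet unproved replacement for the paper's existence/interlacing/convergence chain.
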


\begin{conjecture}
\label{conjecture_F_G}
Sequences $\{ \F_n \}_{n=1} ^{\infty}$, $\{ \G_n \}_{n=1} ^{\infty}$ of Theorem \ref{thm_decreasing energy general_k3} are distinct : $\{ \F_n \} \cap \{ \G_n \} = \emptyset$.
\end{conjecture}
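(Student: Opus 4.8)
The plan is to work directly with the explicit description of the two sequences produced in the proof of Theorem~\ref{thm_decreasing energy general_k3}. There, each $\F_n$ (resp.\ $\G_n$) is built \emph{together with} a distinguished chain of points $(\vec{x}_q)_{q=0}^{\lceil n/2\rceil}\subset S_{\F_n}$ (resp.\ in $S_{\G_n}$) realising the threshold relation \eqref{special relationship_intro}, and the energy itself is characterised as the solution of a finite nested-radical recursion obtained by iterating, along the energy line $S_E$, a map $f_E$ analogous to the one governing the $\E_n$ of Theorem~\ref{thm_decreasing energy general} (for $\kappa=3$ this is $f_E(x)=\tfrac{-(E-x)+\sqrt{3}\sqrt{1-(E-x)^2}}{2}$, a ``rotation by $2\pi/\kappa$'' on the circle $x=\cos\theta$). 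The two sequences $\{\F_n\}$ and $\{\G_n\}$ differ, one expects, only in the choice of seed, i.e.\ of the initial point of the iteration; call the seeds $s_\F$ and $s_\G$ and suppose, after relabelling if necessary, that $s_\F<s_\G$. The first step is to write these recursions out carefully and to record, for each fixed $n$, the precise finite system whose admissible solution is $\F_n$, and likewise for $\G_n$.

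From there I would pursue two complementary routes. \emph{Route A (rigidity).} First reduce to collisions at a single index: since $\F_{2n-1},\F_{2n},\G_{2n-1},\G_{2n}\in\boldsymbol{\Theta}_{n,\kappa}(\Delta)$, a cross-index coincidence would put one energy of the sub-interval simultaneously in $\boldsymbol{\Theta}_{n,\kappa}(\Delta)$ and $\boldsymbol{\Theta}_{m,\kappa}(\Delta)$ for $n\neq m$, which one tries to rule out by comparing the lengths of the minimal realising chains. It then suffices to separate the (at most four) energies in $\boldsymbol{\Theta}_{n,\kappa}(\Delta)$ produced at a fixed $n$; for this one uses that the monotonicity and contraction properties of $f_E$ force each scheme's recursion to have a unique admissible solution, and exhibits a robust discrete invariant separating the $\F$-chain from the $\G$-chain, e.g.\ the position of the multiset of first coordinates relative to a marked value such as $\cos(\pi/\kappa)$ or $\cos(2\pi/\kappa)$. \emph{Route B (interlacing).} Prove by induction on $n$ that $\F_n<\G_n<\F_{n+1}$ for all $n\geq 1$ (or the reverse strict chain): the seed inequality $s_\F<s_\G$ propagates through the iteration because $f_E$ is order-preserving on the relevant arc and depends monotonically on $E$ in the favourable direction. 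As both sequences are strictly increasing with common limit $\inf J_3=1+\cos(2\pi/\kappa)$, an interlacing of this form yields $\{\F_n\}\cap\{\G_n\}=\emptyset$ at once, and in fact fuses the two sequences into a single strictly monotone one --- a cleaner statement than Conjecture~\ref{conjecture_F_G} as phrased.

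The hard part will be the induction underlying Route B (equivalently, the uniqueness claim feeding Route A). On the relevant arc $f_E$ is an order-preserving contraction, but the recursion couples each iterate with the still-unknown energy $E$ through the quantity $E-x$, so one must exclude that passing from step $n$ to step $n+1$ reverses the seed ordering or lets the $\F$- and $\G$-branches cross. This requires uniform control --- in $n$, in $\kappa$, and along the whole sub-interval $\bigl(\cos(\pi/\kappa)+\cos(2\pi/\kappa),\,1+\cos(2\pi/\kappa)\bigr)$ --- on the partial derivatives of $f_E$ in both its argument and in $E$, and it is exactly this estimate that is delicate enough to have kept the statement at the level of a conjecture. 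For $\kappa=3,4$ the recursions are short enough that the interlacing can be checked by hand against the explicit values (consistent with the numerical evidence of section~\ref{sec_Conjecture J3}), which suggests the general case is a lengthy but essentially elementary induction; producing a single argument valid for all $\kappa\geq 3$ is the genuine obstacle.
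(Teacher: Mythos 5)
This statement is Conjecture~\ref{conjecture_F_G}; the paper offers no proof of it, so there is no proof of record against which to compare. What you have written is a plan of attack, not a proof --- and you say so yourself in the final paragraph, acknowledging that the crucial induction ``is exactly this estimate that is delicate enough to have kept the statement at the level of a conjecture.'' That candour is fine, but it means the proposal does not establish the statement, and I will point out two concrete obstructions in the plan itself.

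Route~A's reduction step is unsound as stated. You propose to rule out a cross-index coincidence by observing that it would place a single energy in both $\boldsymbol{\Theta}_{n,\kappa}(\Delta)$ and $\boldsymbol{\Theta}_{m,\kappa}(\Delta)$ for $n\neq m$. But these sets are nested: given a realising chain $(\vec{x}_q)_{q=0}^{m}$ with coefficients $(\omega_q)_{q=0}^{m-1}\leq 0$, one can pass to a chain of length $m+2$ by appending $\vec{x}_{m+1}:=\vec{x}_m$ with the extra coefficient $\omega_m:=0$, since \eqref{special relationship_intro} only requires $\omega_q\leq 0$ and places no distinctness condition on the $\vec{x}_q$. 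Hence $\boldsymbol{\Theta}_{m,\kappa}(\Delta)\subset\boldsymbol{\Theta}_{m+1,\kappa}(\Delta)$, and membership in two such sets is no contradiction. To rescue this route you would have to work with a sharper, minimality-sensitive notion of chain length, and nothing in the paper supplies such a tool.

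Route~B proposes the wrong interlacing. The paper's own numerics for $\kappa=3$ give $\G_1\simeq 0.108$, $\G_2\simeq 0.271$, $\F_1\simeq 0.286$, $\G_3\simeq 0.382$, $\F_2\simeq 0.408$, $\G_4\simeq 0.447$, $\F_3\simeq 0.466$, \dots, so neither $\F_n<\G_n<\F_{n+1}$ nor its mirror $\G_n<\F_n<\G_{n+1}$ holds (the latter already fails at $n=1$ since $\F_1>\G_2$). The observed pattern is rather $\G_{n+1}<\F_n<\G_{n+2}$, i.e.\ an interlacing with an index shift of one. Any strict interlacing would of course yield disjointness, so the strategic idea is sound, but the ``seed ordering propagates monotonically'' heuristic you invoke does not obviously produce the correct offset; the two constructions differ not merely in seed but in which endpoint ($\cos(2\pi/\kappa)$ versus $1$) plays the role of the calibration point $X_{n+1}$, and in the orientation of the chain relative to $\cos(\pi/\kappa)$ (compare \eqref{chain_1_k3_888} with \eqref{chain_1_k3}), so the $\F$-chain and $\G$-chain for the same $n$ are not simply translates of one another. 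Pinning down the correct interlacing, and proving it, remains exactly the open content of the conjecture.
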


Unfortunately we were not able to accurately numerically compute many solutions $\F_n$ and $\G_n$ and so we are not well positioned to conjecture on the rate of convergence of $\F_n$ and $\G_n$, but we speculate the rate is faster than the $O(1/n^2)$ rate of Conjecture \ref{conjecture12}. In section \ref{section_gen2} we state two Theorems and a Conjecture generalizing Theorem \ref{thm_decreasing energy general_k3} for $\{ \F_n \}$. A generalization for $\{ \G_n \}$ is very likely.

Hopefully it will become clear from our examples and constructions that there are \textit{many more} thresholds $\in  [0, \inf J_3 (\kappa)]$ for $\kappa \geq 3$ in addition to the sequences $\{ \F_n \}$ and $\{ \G_n \}$. Just how many more ? Here are a few open questions we find interesting :

$\bullet$ Is there a decreasing sequence $\{ \H_n \}_{n=1} ^{\infty} \subset \boldsymbol{\Theta}_{\kappa}(\Delta)$ with $\H_n \searrow 0$, for all $\kappa \geq 2$ ?

$\bullet$ Of the thresholds $\in \boldsymbol{\theta}_{0,\kappa}(\Delta)$, which ones are accumulation points, as a subset of $\boldsymbol{\Theta}_{\kappa}(\Delta)$ ?

$\bullet$ Are there accumulation points $ \in \boldsymbol{\Theta}_{\kappa}(\Delta) \setminus \boldsymbol{\theta}_{0,\kappa}(\Delta)$ ?

$\bullet$ What are the rates of convergence to the accumulation points $ \in \boldsymbol{\Theta}_{\kappa}(\Delta)$ ?

$\bullet$ Are there infinitely many accumulation points within $\boldsymbol{\Theta}_{\kappa}(\Delta)$ ?

$\bullet$ Is there an interval $I \subset \sigma(\Delta)$ for which $\boldsymbol{\Theta}_{\kappa}(\Delta)$ is dense in $I$ ?



As $\kappa$ increases, the number of thresholds increases dramatically. The rate of increase is likely exponential. In section \ref{KAPPA10} we use $\kappa=10$ to construct a countable set of thresholds that is in one-to-one correspondence with the nodes of an infinite binary tree. The construction is merely to illustrate how easy it is to find an abundance of thresholds. However we do conjecture :

\begin{conjecture}
\label{conjecture6666}
Fix $\kappa \geq 1$. $\cup_{m \geq 0} \boldsymbol{\Theta}_{m,\kappa}(\Delta)$ and $\boldsymbol{\Theta}_{\kappa}(\Delta)$ are countable sets. 
\end{conjecture}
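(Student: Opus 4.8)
The plan is to reduce Conjecture~\ref{conjecture6666} to two statements: (i) each $\boldsymbol{\Theta}_{m,\kappa}(\Delta)$ is a \emph{finite} set, and (ii) $\boldsymbol{\Theta}_\kappa(\Delta)=\bigcup_{m\geq 0}\boldsymbol{\Theta}_{m,\kappa}(\Delta)$ (this is Conjecture~\ref{conjecture000} when $d=2$, together with its higher-dimensional analogue). Granting both, $\boldsymbol{\Theta}_\kappa(\Delta)$ is a countable union of finite sets, and we are done. The first step toward (i) is to eliminate the quantifier ``$\forall j\in\N^*$'' from the definition of $\boldsymbol{\Theta}_{m,\kappa}(\Delta)$. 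Writing $x_{q,i}=\cos\theta_{q,i}$ and using $(1-x^2)U_{n-1}(x)=\sin\theta\sin(n\theta)$, the sequence $j\mapsto g_{j\kappa}(\vec{x}_q)=\sum_{i=1}^d\sin\theta_{q,i}\sin(j\kappa\theta_{q,i})$ satisfies the constant-coefficient linear recurrence with characteristic polynomial $P_q(z)=\prod_{i=1}^d(z^2-2T_\kappa(x_{q,i})z+1)$, a polynomial in $x_{q,1},\dots,x_{q,d}$ of degree $2d$. Hence every sequence $j\mapsto g_{j\kappa}(\vec{x}_m)-\sum_{q<m}\omega_q g_{j\kappa}(\vec{x}_q)$ is annihilated by $\mathrm{lcm}(P_0,\dots,P_m)$, whose degree is at most $2d(m+1)$, so it vanishes identically as soon as it vanishes for $j=1,\dots,2d(m+1)$. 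Therefore
\[
\boldsymbol{\Theta}_{m,\kappa}(\Delta)=\Bigl\{E:\ \exists\,(\vec{x}_q)_{q=0}^m,\,(\omega_q)_{q=0}^{m-1}\ \text{with}\ \vec{x}_q\in[-1,1]^d,\ \textstyle\sum_i x_{q,i}=E,\ \omega_q\leq 0,\ \text{and \eqref{special relationship_intro} for }1\leq j\leq 2d(m+1)\Bigr\},
\]
which is the projection to the $E$-axis of a semialgebraic set; by Tarski--Seidenberg, $\boldsymbol{\Theta}_{m,\kappa}(\Delta)$ is semialgebraic, hence a finite union of points and open intervals. The same reduction, applied to ``$\exists\,\rho\in\R^n$ with $\min_{S_E}\sum_{k\leq n}\rho_k g_{k\kappa}>0$'', shows $\Xi_n:=\{E:\text{no }G\in\mathrm{span}\{g_\kappa,\dots,g_{n\kappa}\}\text{ is positive on }S_E\}$ is semialgebraic, and $\boldsymbol{\Theta}_\kappa(\Delta)=\bigcap_n\Xi_n$ is a decreasing intersection.

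To pass from ``finite union of points and intervals'' to ``finite'' in (i), I would induct on $m$, the point being to exclude intervals. Suppose $(a,b)\subset\boldsymbol{\Theta}_{m,\kappa}(\Delta)$; by semialgebraic cell decomposition there is a subinterval on which witnessing data $\vec{x}_q(E),\omega_q(E)$ — hence the angles $\theta_{q,i}(E)$ — can be chosen real-analytic. Along a branch on which the $2(m+1)$ numbers $T_\kappa(x_{q,i}(E))$ are pairwise distinct, $\mathrm{lcm}(P_0,\dots,P_m)$ has full degree $2d(m+1)$, so \eqref{special relationship_intro} imposes $2d(m+1)$ independent equations on the at most $2m+2$ parameters $(\vec{x}_q,\omega_q,E)$: an overdetermined system, incompatible with a curve. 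Hence the branch must lie in a resonant stratum where the angles satisfy identities such as $\theta_{m,i}(E)\equiv\pm\theta_{q,i'}(E)\pmod{2\pi/\kappa}$; on such a stratum the recurrence has strictly smaller order, and one re-examines the remaining (fewer) equations together with $\sum_i\cos\theta_{q,i}(E)=E$. The base case $m=0$ is the template: having $g_{j\kappa}(\vec{x}(E))=0$ for all $j$ along a curve forces, by linear independence of the exponentials $e^{\i j\kappa\theta}$, a frequency cancellation $\theta_i(E)\equiv\pm\theta_{i'}(E)\pmod{2\pi/\kappa}$ with $\sin\theta_i=\pm\sin\theta_{i'}$, which pins each $\theta_i(E)$ up to discrete shifts and makes $E$ constant — a contradiction; so $\boldsymbol{\Theta}_{0,\kappa}(\Delta)$ is finite, in line with the finite set $\boldsymbol{\theta}_{0,\kappa}(\Delta)$ of Lemma~\ref{lemSUMcosINTRO} (compare Lemma~\ref{lemSUMcosINTRO_00}). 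The inductive step is a finite but extensive case analysis over the resonance patterns, checking that every reduced system stays overdetermined.

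For (ii) I would use the separation (Hahn--Banach) characterization: $E\in\boldsymbol{\Theta}_\kappa(\Delta)$ iff there is a non-zero positive Radon measure $\mu$ on $S_E$ with $\int_{S_E}g_{j\kappa}\,d\mu=0$ for all $j$; and $\boldsymbol{\Theta}_{m,\kappa}(\Delta)$ corresponds precisely to such $\mu$ with support of size $\leq m+1$ (normalize $\mu$ and read the $\omega_q$ off the weights). The set of probability measures on $S_E$ annihilating every $g_{j\kappa}$ is convex and weak-$*$ compact, so by Krein--Milman it has an extreme point $\mu$, and $\mu$ is finitely supported \emph{unless} $\{1\}\cup\{g_{j\kappa}|_{S_E}\}_{j\geq 1}$ is total in $L^2(\mu)$. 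Thus (ii) holds once one shows that such totality forces $\mu$ to be finitely supported. In $d=2$, $S_E$ is a segment and $g_{j\kappa}|_{S_E}$ is an even polynomial of degree $\asymp j\kappa$ in an affine parameter, with $\sum_j 1/(j\kappa)=\infty$, so the statement one needs is of M\"untz--Sz\'asz type for the lacunary, non-monomial system $\{1,g_\kappa|_{S_E},g_{2\kappa}|_{S_E},\dots\}$.

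The main obstacle is twofold. In (i), the resonant strata are exactly where the thresholds of Theorem~\ref{thm_decreasing energy general} live — their coordinate angles are rational multiples of $\pi/\kappa$ — so these strata cannot be dismissed, and one must rule out the possibility that some stratum leaves just enough freedom for a one-parameter family, i.e.\ that the constraint count drops below the parameter count there. In (ii), the real difficulty is deciding whether $\{1\}\cup\{g_{j\kappa}|_{S_E}\}$ can be total in $L^2(\mu)$ for a $\mu$ with infinite support; already in $d=2$ this is a delicate density question for a non-monomial Müntz system, and for $d\geq 3$ the surface $S_E$ is higher-dimensional and even the relevant multivariate density theory is unclear, so a genuinely new idea seems needed. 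Failing (ii), one is left with the decreasing intersection $\boldsymbol{\Theta}_\kappa(\Delta)=\bigcap_n\Xi_n$ of semialgebraic sets, which a priori could be an uncountable (measure-zero, nowhere dense) Cantor-type set; excluding that directly — e.g.\ by controlling both the total length and the number of interval components of $\Xi_n$ as $n\to\infty$ — is an alternative but, I expect, equally delicate route.
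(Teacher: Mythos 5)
The statement you are trying to prove is a \emph{conjecture} in the paper — the authors offer no proof — so there is nothing to compare against, and your proposal must stand on its own. It does not, as it stands, prove either half of the statement, though it contains one genuinely useful reduction worth isolating. Since $j\mapsto g_{j\kappa}(\vec x_q)=\sum_i\sin\theta_{q,i}\sin(j\kappa\theta_{q,i})$ satisfies the constant-coefficient order-$2d$ recurrence with characteristic polynomial $\prod_{i=1}^d\bigl(z^2-2T_\kappa(x_{q,i})z+1\bigr)$, the universally quantified condition \eqref{special relationship_intro} is equivalent to its truncation to $1\leq j\leq 2d(m+1)$, and $\boldsymbol{\Theta}_{m,\kappa}(\Delta)$ is then the projection to the $E$-axis of a semialgebraic set, hence by Tarski--Seidenberg a finite union of points and open intervals. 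This observation is correct and the paper does not make it.

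What remains are precisely the two things that need proving, and you do not prove either. (a) Excluding the intervals: you reduce this to a stratification-by-resonance argument, carry it out only for $m=0$ in $d=2$, and leave the inductive step as an acknowledged ``finite but extensive case analysis.'' If completed, this alone would give the first half of Conjecture~\ref{conjecture6666} (countability of $\bigcup_m\boldsymbol{\Theta}_{m,\kappa}(\Delta)$) and would be a genuine contribution, but it is not done. (b) The identity $\boldsymbol{\Theta}_\kappa(\Delta)=\bigcup_m\boldsymbol{\Theta}_{m,\kappa}(\Delta)$ is Conjecture~\ref{conjecture000} of the paper itself — open — and your Krein--Milman reformulation converts it into a M\"untz-type density question that you explicitly say you cannot resolve. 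Reducing one open conjecture to another open conjecture plus an unresolved density statement is not a proof.

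One factual slip you should fix, because it distorts your constraint counting: the thresholds of Theorem~\ref{thm_decreasing energy general} do \emph{not} have coordinate angles that are rational multiples of $\pi/\kappa$. Already $\kappa=2$, $n=1$ gives $\E_1=2/3$ with $X_1=1/3$, and $\arccos(1/3)\notin\pi\Q$. What those thresholds satisfy are pairwise resonances $T_\kappa(X_q)=T_\kappa(X_{n-q})$, i.e.\ $\theta_q\equiv\pm\theta_{n-q}\pmod{2\pi/\kappa}$, which is exactly the condition that makes the characteristic polynomials share factors. Your concern that the relevant thresholds live on resonant strata is therefore well founded, but the strata are defined by relations between pairs of angles, not by individual angles being special, and it is on this corrected picture that the inductive case analysis would have to be organized.
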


As far as the sets $\boldsymbol{\mu}_{\kappa}(\Delta)$ are concerned, we have had little success on $[0, \inf J_3(\kappa)]$. For instance, for $(0,1/2) \cap \boldsymbol{\mu}_{\kappa=3}(\Delta)$, we have only 1 piece of numerical evidence, namely $\left((9-\sqrt{33})/12, 2/7\right)$ $\simeq (0.2713, 0.2857) \subset \boldsymbol{\mu}_{\kappa=3}(\Delta)$, see Section \ref{k3_below_05}. We were not successful in finding other bands of a.c.\ spectrum on $(0,1/2)$ for $\kappa=3$. There are various explanations for this setback and these apply to all values of $\kappa$ and in general conceptually speaking. Either :
\begin{enumerate}
\item whenever we picked $\E_{i_1}$, $\E_{i_2} \in \boldsymbol{\Theta}_{\kappa}(\Delta)$ we were mistaken and there is in fact a threshold energy $\E \in \boldsymbol{\Theta}_{\kappa}(\Delta)$ lying between $\E_{i_1}$ and $\E_{i_2}$ that we are unaware of. 
\item simply we haven't tried enough conjugate operators $\mathbb{A}$. This is always a challenge because we never really know before going into a numerical computation if system \eqref{interpol_intro} should be exactly specified or underspecified and which multiples of $\kappa$ $\{j_1 \kappa, j_2 \kappa, j_3 \kappa, ...\}$ to choose.
\item our assumption that $\mathbb{A}$ is the \textit{same for all} energies $E \in (\E_{i_1}, \E_{i_2})$ is inadequate. Afterall there is no obvious reason why it should be the case. The $\rho_{j \kappa}$ may need to depend on $E$. Or, it may be that an infinite linear combination \eqref{LINEAR_combinationA} is required.
\end{enumerate}

We still don't understand the situation on $[0, \inf J_3(\kappa)]$ well, even for $\kappa =3$. Graphically we found a plethora of thresholds on this interval for $\kappa=3$, enough to put someone in a trance, see \cite[Section 9]{GM4}. In spite of this lack of understanding we dare conjecture boldly:

\begin{conjecture}
\label{conjecture33}
Fix $d=2$, $\kappa \geq 1$. Let $\E_{i_1}, \E_{i_2} \in \boldsymbol{\Theta}_{\kappa}(\Delta)$ be two consecutive thresholds -- meaning that there aren't any other thresholds in between $\E_{i_1}$ and $\E_{i_2}$. Then there is a (finite?) linear combination $\mathbb{A} = \sum_{j=1} ^{N} \rho_{j \kappa} A_{j \kappa}$ such that the Mourre estimate \eqref{mourreEstimate123} holds with $\mathbb{A}$ for every energy $E \in ( \E_{i_1}, \E_{i_2} )$. In particular, in light of Theorem \ref{lapy305}, $\sigma_p(\Delta+V)$ is locally finite on $( \E_{i_1}, \E_{i_2} )$, whereas the singular continuous spectrum of $\Delta+V$ is void.
\end{conjecture}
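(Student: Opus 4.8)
\medskip
\noindent\textbf{Proof proposal.} The plan is to use the functional-calculus dictionary of the introduction to convert Conjecture~\ref{conjecture33} into a statement about positivity of a single explicit polynomial on a thin slab of $[-1,1]^2$, and then to produce the coefficients $\rho_{j\kappa}$ by interpolation. Since $E\in\boldsymbol{\mu}_{\kappa}(\Delta)$ iff $\left.G_{\kappa}\right|_{S_E}>0$, the conjecture is equivalent to the existence of one finite coefficient vector $(\rho_{j\kappa})_{j\le N}$ for which the polynomial $G_{\kappa}$ of \eqref{def:GGE_intro} is strictly positive on the open slab $\Omega:=\{\vec{x}\in[-1,1]^2:\E_{i_1}<x_1+x_2<\E_{i_2}\}$ and non-negative on $\overline{\Omega}$, with its zeros located exactly at the threshold witnesses on the boundary segments $S_{\E_{i_1}}$ and $S_{\E_{i_2}}$. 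The first step I would take is therefore to settle Conjecture~\ref{conjecture000}, $\cup_{m\ge0}\boldsymbol{\Theta}_{m,\kappa}(\Delta)=\boldsymbol{\Theta}_{\kappa}(\Delta)$ in dimension $2$: this is what attaches to the two consecutive thresholds $\E_{i_1}\in\boldsymbol{\Theta}_{m_1,\kappa}(\Delta)$ and $\E_{i_2}\in\boldsymbol{\Theta}_{m_2,\kappa}(\Delta)$ the explicit coordinate-wise data $(\vec{x}_q)_{q=0}^{m_1}\subset S_{\E_{i_1}}$, $(\vec{y}_r)_{r=0}^{m_2}\subset S_{\E_{i_2}}$ together with the sign-constrained identities \eqref{special G relationship_intro}, without which the interpolation problem of step (3) cannot even be written down.

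Granting that, the second step is, for a suitably chosen frequency set $\Sigma=\{j_1\kappa,\dots,j_\ell\kappa\}$, to impose the interpolation constraints \eqref{interpol_intro} --- vanishing of $G_{\kappa}$ at every $\vec{x}_q$ and $\vec{y}_r$, and $\grad G_{\kappa}=0$ at the interior witnesses (the conditions at $\vec{x}_{m_1},\vec{y}_{m_2}$ being redundant by \eqref{special G relationship_intro}) --- and to solve the resulting linear system in $(\rho_{j\kappa})$. Two things must be shown: (i) that $\Sigma$ can be chosen so that the system has a solution, ideally with $\ell=N+1$ so that $\mathbb{A}$ is determined up to scale; and (ii), the crucial point, that this solution actually makes $G_{\kappa}>0$ on all of $\Omega$ and not merely at the finitely many prescribed points, since \eqref{interpol_intro} is necessary but not sufficient. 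For (ii) I would exploit that $d=2$: parametrising each level set by $t\mapsto(t,\,E-t)$ turns $\left.G_{\kappa}\right|_{S_E}$ into a one-variable polynomial $p_E(t)$ with coefficients polynomial in $E$, so the task becomes to certify $p_E>0$ on its segment of definition uniformly for $E\in[\E_{i_1},\E_{i_2}]$ --- for instance through a weighted sum-of-squares (Luk\'{a}cs-type) representation with coefficients depending continuously on $E$ --- while checking that the zeros of $p_E$ forced at the two endpoints by \eqref{special G relationship_intro} do not propagate inward, i.e.\ $G_{\kappa}$ has no interior zero in $\Omega$ and its boundary zeros are transversally non-degenerate, so the certificate persists on the open interval. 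Once the Mourre estimate \eqref{mourreEstimate123} holds with a single finite $\mathbb{A}$ on every compact subinterval of $(\E_{i_1},\E_{i_2})$ (which follows from positivity on $\Omega$ by compactness), the stated spectral conclusions are immediate: apply Theorem~\ref{lapy305} on a covering of $(\E_{i_1},\E_{i_2})$ and combine with the local finiteness of the eigenvalues of $\Delta+V$ in that interval, as in \cite{GM1,GM2}.

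The main obstacle is step (ii), together with the hypothesis built into the conjecture that one \emph{finite} $\mathbb{A}$ suffices for the whole interval --- exactly the concern flagged earlier that the $\rho_{j\kappa}$ might have to depend on $E$, or that an infinite combination might be needed. Even for a fixed admissible $\Sigma$ and a known solution of \eqref{interpol_intro}, proving global positivity of a high-degree polynomial on a two-dimensional slab, uniformly in the parameter $E$, is precisely the step currently carried out only graphically; a conceptual proof seems to require either an explicit structured ansatz for $(\rho_{j\kappa})$ with a closed-form positivity argument --- of the kind available for $\kappa=2$ in Proposition~\ref{propk2_2/n+2} and Theorem~\ref{OnlyMourreband}, but with no visible analogue for $\kappa\ge3$ --- or a soft deformation argument for solvability of the certificate problem along the segment, which in turn would need a uniform quantitative lower bound on $G_{\kappa}$ away from the threshold surfaces. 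I would expect the first genuine progress to come from proving the conjecture for the explicit families of consecutive thresholds already in hand, namely the $\E_n$ of Theorem~\ref{thm_decreasing energy general} and the $\F_n,\G_n$ of Theorem~\ref{thm_decreasing energy general_k3}, where the witnesses are known and the slabs degenerate in a controlled way; the fully general statement does not look reachable without first resolving Conjecture~\ref{conjecture000} and the accompanying classification questions.
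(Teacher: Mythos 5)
The statement in question is Conjecture~\ref{conjecture33}. The paper leaves it open and provides no proof; what it offers is the interpolation heuristic sketched in the introduction and implemented only numerically in sections~\ref{desc_scheme}--\ref{KAPPA8more}, with the sole rigorous instance being Theorem~\ref{OnlyMourreband} ($\kappa=2$, first band). Your write-up is a faithful restatement of that program together with an honest inventory of its obstructions, but it closes none of them. The decisive difficulty is your own step~(ii): having imposed the necessary constraints \eqref{interpol_intro}, you must prove that the interpolated $G_{\kappa}$ is strictly positive on the whole slab $\{\E_{i_1}<x_1+x_2<\E_{i_2}\}$, not merely at the finitely many prescribed nodes. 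Your Luk\'acs/sum-of-squares idea for the one-variable slices $p_E(t)$ is a plausible direction, but you supply no mechanism to make such a certificate uniform in $E$, no treatment of the forced boundary zeros of $p_{\E_{i_1}}$ and $p_{\E_{i_2}}$ (where positivity necessarily degenerates), and no argument ruling out interior zeros away from the interpolation nodes; these are exactly the points the paper handles by inspecting plots.

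You also premise the strategy on Conjecture~\ref{conjecture000}, which is itself open, and on the unproved hypothesis---explicitly listed by the paper, in the itemized discussion immediately preceding the conjecture, as a possible failure mode---that a single finite $\mathbb{A}$ can serve the entire band, with $E$-independent coefficients $\rho_{j\kappa}$. You neither establish this nor exclude the alternative that an infinite combination or $E$-dependent coefficients are required, so even if your interpolation system \eqref{interpol_intro} were solvable for every pair of consecutive thresholds, the conclusion would not follow. Your instinct to attack the explicit families $\E_n$, $\F_n$, $\G_n$ first, where the witness data is known, is sound and consistent with Conjecture~\ref{conjecture22}; but as written the proposal names the obstruction without removing it, and the statement remains a conjecture.
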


\textit{We are done discussing $d=2$}. In higher dimensions we have only 1 general result : thresholds in dimension $d$ generate thresholds in dimension $d+1$, via shifting. Recall notation \eqref{def:std}. 
\begin{Lemma}
\label{shift_threshold_d}
$\forall d \geq 1$, $\kappa \geq 1$, $m \in \N$, $\{ \cos(\frac{j \pi}{\kappa}) : 0 \leq j \leq \kappa \} + \boldsymbol{\Theta}_{m,\kappa}(\Delta[d]) \subset \boldsymbol{\Theta}_{m,\kappa}(\Delta[d+1])$.
\end{Lemma}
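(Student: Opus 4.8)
The plan is to exploit the single–variable vanishing phenomenon that underlies Lemma~\ref{lemSUMcosINTRO}, together with the fact that each $g_{j\kappa}$ is \emph{additive across coordinates}: adjoining to a point $\vec x\in[-1,1]^d$ one extra coordinate whose value is a common zero of all the maps $x\mapsto(1-x^2)U_{j\kappa-1}(x)$ changes the sum $x_1+\cdots+x_d$ by a fixed amount while leaving every $g_{j\kappa}$ unchanged, and this is exactly what is needed to transport the witnessing data for a threshold from dimension $d$ to dimension $d+1$.

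First I would record the elementary fact: if $c=\cos(j_0\pi/\kappa)$ with $0\le j_0\le\kappa$, then $(1-c^2)\,U_{j\kappa-1}(c)=0$ for every $j\in\N^*$. Indeed, for $j_0\in\{0,\kappa\}$ one has $1-c^2=0$ outright; for $0<j_0<\kappa$, writing $c=\cos\theta$ with $\theta=j_0\pi/\kappa$, $\sin\theta\ne0$, and using $U_n(\cos\theta)=\sin((n+1)\theta)/\sin\theta$ with $n=j\kappa-1$, one gets $(1-c^2)U_{j\kappa-1}(c)=\sin^2\theta\cdot\sin(j\kappa\theta)/\sin\theta=\sin(j_0\pi/\kappa)\,\sin(j j_0\pi)=0$. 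This is precisely the computation proving Lemma~\ref{lemSUMcosINTRO}, so one may alternatively just invoke it.

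Next, fix $d\ge1$, $\kappa\ge1$, $m\in\N$, and $c=\cos(j_0\pi/\kappa)$ with $0\le j_0\le\kappa$, and let $E\in\boldsymbol{\Theta}_{m,\kappa}(\Delta[d])$. By definition there are points $(\vec x_q)_{q=0}^{m}\subset S_E\subset[-1,1]^d$ with $x_{q,1}+\cdots+x_{q,d}=E$ for each $q$, and (when $m\ge1$) coefficients $(\omega_q)_{q=0}^{m-1}\subset\R$ with $\omega_q\le0$, such that $g_{j\kappa}(\vec x_m)=\sum_{q=0}^{m-1}\omega_q\,g_{j\kappa}(\vec x_q)$ for all $j\in\N^*$ in dimension $d$ (for $m=0$ this is the single relation $g_{j\kappa}(\vec x_0)=0$). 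I would then set $\vec x_q':=(x_{q,1},\ldots,x_{q,d},c)\in[-1,1]^{d+1}$. Each $\vec x_q'$ satisfies $x_{q,1}'+\cdots+x_{q,d+1}'=E+c$, hence $\vec x_q'\in S_{E+c}$ in dimension $d+1$, and by additivity and the vanishing just established
\[
g_{j\kappa}(\vec x_q')=\Big(\sum_{i=1}^{d}(1-x_{q,i}^2)U_{j\kappa-1}(x_{q,i})\Big)+(1-c^2)U_{j\kappa-1}(c)=g_{j\kappa}(\vec x_q),\qquad\forall j\in\N^*,
\]
where on the left $g_{j\kappa}$ denotes the dimension-$(d+1)$ polynomial and on the right the dimension-$d$ one.

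Finally I would conclude by transporting the relation: for $m=0$, $g_{j\kappa}(\vec x_0)=0$ for all $j$ gives $g_{j\kappa}(\vec x_0')=0$ for all $j$, so $E+c\in\boldsymbol{\Theta}_{0,\kappa}(\Delta[d+1])$; for $m\ge1$, substituting the displayed identity into $g_{j\kappa}(\vec x_m)=\sum_{q}\omega_q g_{j\kappa}(\vec x_q)$ yields $g_{j\kappa}(\vec x_m')=\sum_{q}\omega_q g_{j\kappa}(\vec x_q')$ with the \emph{same} coefficients $\omega_q\le0$, so $E+c\in\boldsymbol{\Theta}_{m,\kappa}(\Delta[d+1])$. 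Since $E\in\boldsymbol{\Theta}_{m,\kappa}(\Delta[d])$ and $j_0\in\{0,\ldots,\kappa\}$ were arbitrary, the inclusion follows. There is no real obstacle here — the statement is essentially a bookkeeping argument — and the only points requiring a little care are the endpoint values $j_0\in\{0,\kappa\}$ in the vanishing step and the observation that the sign constraint $\omega_q\le0$ is trivially preserved because the coefficients are carried over unchanged.
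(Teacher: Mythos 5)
Your proof is correct and follows the same approach as the paper: append $\cos(j_0\pi/\kappa)$ as a $(d+1)$-st coordinate, observe that the extra term $(1-c^2)U_{j\kappa-1}(c)$ vanishes for all $j$ so that $g_{j\kappa}$ is unchanged, and transport the defining linear relation with the same coefficients $\omega_q$. The paper's proof is terser (it leaves the vanishing of the appended term implicit, relying on Lemma~\ref{lemma_rootsy}), but the argument is identical.
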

For $(d,\kappa) \in \N^* \times \{1\}$, the inclusion in Lemma \ref{shift_threshold_d} is in fact equality. But we conjecture : 
\begin{conjecture}
\label{conjecture13d33}
There are values of $\kappa \geq 2$ for which the inclusion in Lemma \ref{shift_threshold_d} is strict (see Example \ref{onlyexample}).
\end{conjecture}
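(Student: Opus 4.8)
The plan is to \emph{prove} the conjecture by producing one explicit witness. Since Lemma \ref{shift_threshold_d} asserts that the shifted set is \emph{contained} in the higher-dimensional threshold set, strictness just means the right-hand side has an extra element: we need $\kappa \geq 2$, a dimension $d$, an index $m$, and an energy $E \in \boldsymbol{\Theta}_{m,\kappa}(\Delta[d+1])$ that is \emph{not} of the form $\cos(j\pi/\kappa) + \theta$ with $0 \leq j \leq \kappa$ and $\theta \in \boldsymbol{\Theta}_{m,\kappa}(\Delta[d])$. I would specialize to $d = 1$, because there the dimension-$d$ side is completely understood.

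\emph{Step 1: the dimension-one side.} For $d = 1$ and any $\kappa \geq 1$, Lemma \ref{lemSUMcosINTRO_00} gives $\boldsymbol{\Theta}_{\kappa}(\Delta[1]) = \boldsymbol{\Theta}_{0,\kappa}(\Delta[1]) = \boldsymbol{\theta}_{0,\kappa}(\Delta[1]) = \{\cos(j\pi/\kappa) : 0 \leq j \leq \kappa\}$. Moreover $\boldsymbol{\Theta}_{0,\kappa}(\Delta) \subseteq \boldsymbol{\Theta}_{m,\kappa}(\Delta)$ for every $m$ (in the definition of $\boldsymbol{\Theta}_{m,\kappa}$ take all $\vec{x}_q$ equal to a solution of $g_{j\kappa}(\vec{x}) = 0$ and all $\omega_q = 0$), while $\boldsymbol{\Theta}_{m,\kappa}(\Delta) \subseteq \boldsymbol{\Theta}_{\kappa}(\Delta)$ by the Lemma preceding Conjecture \ref{conjecture000}. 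Hence $\boldsymbol{\Theta}_{m,\kappa}(\Delta[1]) = \{\cos(j\pi/\kappa) : 0 \leq j \leq \kappa\}$ for \emph{every} $m$, so the left-hand side of Lemma \ref{shift_threshold_d} with $d = 1$ is the finite set $\mathcal{S}_\kappa := \{\cos(j\pi/\kappa) + \cos(j'\pi/\kappa) : 0 \leq j, j' \leq \kappa\}$.

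\emph{Step 2: an extra threshold in dimension two, and conclusion.} Apply Theorem \ref{thm_decreasing energy general} in dimension $2$. For $\kappa = 3$ it produces $\E_1 = \E_1(3) = \frac{5 + 3\sqrt{2}}{7} \in J_2(3) \cap \boldsymbol{\Theta}_{1,3}(\Delta[2])$ (see Table \ref{table with endpoints3and4}), and this number is irrational; since all four values $\cos(j\pi/3)$, $j \in \{0,1,2,3\}$, are rational, $\mathcal{S}_3 \subset \Q$, hence $\E_1 \notin \mathcal{S}_3$. Therefore $\boldsymbol{\Theta}_{1,3}(\Delta[2]) \supsetneq \{\cos(j\pi/3) : 0 \leq j \leq 3\} + \boldsymbol{\Theta}_{1,3}(\Delta[1])$, i.e.\ the inclusion in Lemma \ref{shift_threshold_d} is strict for $\kappa = 3$ (with $d = 1$, $m = 1$). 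The same scheme works for $\kappa = 2$: by Proposition \ref{propk2_2/n+2}, $\E_1 = 2/3 \in \boldsymbol{\Theta}_{1,2}(\Delta[2])$, whereas $\mathcal{S}_2 = \{-2,-1,0,1,2\} \not\ni 2/3$. This proves the conjecture; moreover, for $\kappa \in \{2,3\}$ one checks by enumeration that $\mathcal{S}_\kappa \cap \overline{J_2(\kappa)}$ consists only of the endpoints $2\cos(\pi/\kappa)$ and $1 + \cos(\pi/\kappa)$, so in fact \emph{every} $\E_n$, $n \geq 1$, is a witness.

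\emph{Main obstacle.} For the two concrete values $\kappa \in \{2,3\}$ there is essentially no obstacle: Theorem \ref{thm_decreasing energy general} already does the substantive work of exhibiting thresholds lying strictly inside $J_2$, and what remains is the arithmetic comparison with the explicit finite set $\mathcal{S}_\kappa$ plus the observation that the dimension-one threshold set does not grow with $m$. The genuine difficulty is in \emph{strengthening} the statement: (i) proving strictness for \emph{all} $\kappa \geq 2$ simultaneously would require knowing $\E_1(\kappa)$ explicitly enough (via Proposition \ref{lem1_sequence3344}) to rule out the finitely many algebraic values in $\mathcal{S}_\kappa$ uniformly in $\kappa$; and (ii) proving strictness of the shift $\Delta[d] \to \Delta[d+1]$ for $d \geq 2$ is harder, because the identification $\boldsymbol{\theta}_{0,\kappa}(\Delta[d]) = \boldsymbol{\Theta}_{0,\kappa}(\Delta[d])$ is established only for $\kappa \in \{2,3,4,6\}$ in dimension $2$ (Lemma \ref{lem_2346}), so the left-hand side of Lemma \ref{shift_threshold_d} can no longer be described as a simple sum-of-cosines set and one must track the $m \geq 1$ thresholds on both sides.
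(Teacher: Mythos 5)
Your argument is sound as a proof of the statement read literally, and it takes a completely different route from the one the paper gestures at. The paper does not actually prove this conjecture: its only supporting evidence is Example \ref{onlyexample}, which exhibits $\tfrac{3}{2\sqrt 7}\in\boldsymbol{\Theta}_{1,3}(\Delta[d=3])$ and then merely \emph{conjectures} that no shift of it lies in $\boldsymbol{\Theta}_{1,3}(\Delta[d=2])$, the authors explicitly conceding this is not a proof because $\boldsymbol{\Theta}_{1,3}(\Delta[d=2])$ is not fully known. Your idea of taking $d=1$, where the left-hand side collapses to the explicit finite set $\{\cos(j\pi/\kappa)\}+\{\cos(j'\pi/\kappa)\}$, is the key move: your sandwich $\boldsymbol{\Theta}_{0,\kappa}\subseteq\boldsymbol{\Theta}_{m,\kappa}\subseteq\boldsymbol{\Theta}_{\kappa}$ combined with Lemma \ref{lemSUMcosINTRO_00} is correct (one can even see directly that $S_E$ is a singleton in dimension one, so the defining relation forces $g_{j\kappa}(E)=0$ for all $j$), and invoking the rigorously proved Theorem \ref{thm_decreasing energy general} to produce $\E_1(3)=(5+3\sqrt2)/7\notin\Q$ inside $\boldsymbol{\Theta}_{1,3}(\Delta[2])$, or $\E_1(2)=2/3\notin\{-2,-1,0,1,2\}$, settles the existential claim with $d=1$, $m=1$. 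What this buys is an actual proof where the paper has only a heuristic. What it does not buy --- and you correctly flag this yourself --- is the case the authors evidently intend, namely strictness of the shift $\Delta[d]\to\Delta[d+1]$ for $d\geq 2$ (the setting of Example \ref{onlyexample}), which remains genuinely open because the left-hand side $\boldsymbol{\Theta}_{m,\kappa}(\Delta[d])$ is not determined for $d\geq 2$ (Lemma \ref{lem_2346} handles only $m=0$ and a few $\kappa$). If you write this up, state explicitly that you are proving the existential version via $d=1$ and that the $d\geq2$ version is still conjectural.
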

Lemma \ref{shift_threshold_d} generalizes \cite[Lemma III.5]{GM2}. Our treatment of problem \textbf{(P)} in \textit{dimension 3} is brief. There is still considerable work to be done just to understand the case $\kappa=2$, especially on the interval $(0,1)$. \textit{Theorem \ref{thm3d_k2} and Conjecture \ref{conjecture11_3d} below are for the \underline{dimension $3$}}.
\begin{theorem}
\label{thm3d_k2} Fix $\kappa = 2$. We have :
\begin{itemize}
\item $(2,3) \subset \boldsymbol{\mu}_{\kappa}(\Delta)$ (proved in \cite{GM2}). 
\item $0,1,2,3 \in \boldsymbol{\theta}_{0,\kappa}(\Delta)$ (Lemma \ref{lemSUMcosINTRO}).
\item Let $\{ \E_n = \E_n (\kappa=2) \}$ be the sequence in Theorem \ref{thm_decreasing energy general}. Applying Lemmas \ref{shift_threshold_d} and \ref{Lemma_symmetryDelta} gives :
\item $\{\E_n+1\} \subset (1,2) \cap \boldsymbol{\Theta}_{\kappa}(\Delta)$, with $\E_n + 1 \searrow 1$, 
\item $\{\E_n \} \subset (0,1) \cap \boldsymbol{\Theta}_{\kappa}(\Delta)$, with $\E_n \searrow 0$, 
\item $\{-\E_n +1 \} \subset (0,1) \cap \boldsymbol{\Theta}_{\kappa}(\Delta)$, with $-\E_n + 1 \nearrow 1$. 
\end{itemize}
\end{theorem}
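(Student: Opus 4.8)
The plan is to obtain all six assertions as bookkeeping consequences of results already in hand, as the statement itself indicates: the band $(2,3)\subset\boldsymbol{\mu}_{2}(\Delta[3])$ is quoted from \cite{GM2}; the membership $0,1,2,3\in\boldsymbol{\theta}_{0,2}(\Delta[3])$ comes from specializing Lemma \ref{lemSUMcosINTRO}; and the three shifted sequences come from Theorem \ref{thm_decreasing energy general} together with Proposition \ref{propk2_2/n+2} (which produce the base sequence in dimension $2$), Lemma \ref{shift_threshold_d} (which transports thresholds from $\Delta[d]$ to $\Delta[d+1]$), and Lemma \ref{Lemma_symmetryDelta} (the reflection $x\mapsto -x$). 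No new estimate is needed; only the selection of the right shift and reflection for each line.

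For the second bullet, apply Lemma \ref{lemSUMcosINTRO} with $d=3$, $\kappa=2$: since $\{\cos(j\pi/2):0\le j\le 2\}=\{1,0,-1\}$, we get $\boldsymbol{\theta}_{0,2}(\Delta[3])=\{a+b+c:a,b,c\in\{-1,0,1\}\}=\{-3,-2,-1,0,1,2,3\}\supset\{0,1,2,3\}$. For the sequences, Theorem \ref{thm_decreasing energy general} with $\kappa=2$ gives a strictly decreasing $\{\E_n\}\subset J_2\cap\boldsymbol{\Theta}_{2}(\Delta[2])$ with $\E_n\searrow\inf J_2 = 2\cos(\pi/2)=0$ and $\E_n\in\boldsymbol{\Theta}_{\lceil n/2\rceil,2}(\Delta[2])$; Proposition \ref{propk2_2/n+2} identifies it as $\E_n=2/(n+2)$, so in particular $\{\E_n\}\subset(0,1)$ because $J_2=(0,1)$ for $\kappa=2$.

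The only point not available by direct quotation is that $\boldsymbol{\Theta}_{m,2}(\Delta[2])$ is invariant under $\vec{x}\mapsto -\vec{x}$; this is needed because Lemma \ref{shift_threshold_d} is stated for the refined sets $\boldsymbol{\Theta}_{m,\kappa}$, not merely for $\boldsymbol{\Theta}_{\kappa}$. It follows from the definition of $\boldsymbol{\Theta}_{m,\kappa}(\Delta)$ and the parity of Chebyshev polynomials: for $\kappa=2$ the order $j\kappa-1=2j-1$ is odd, hence $U_{2j-1}(-x_i)=-U_{2j-1}(x_i)$ and so $g_{j\kappa}(-\vec{x})=-g_{j\kappa}(\vec{x})$; replacing each witness $\vec{x}_q\in S_E$ in \eqref{special relationship_intro} by $-\vec{x}_q\in S_{-E}$ preserves the relation with the same nonpositive $\omega_q$. (This is the pertinent case of Lemma \ref{Lemma_symmetryDelta}.) Hence $\{-\E_n\}\subset\boldsymbol{\Theta}_{\lceil n/2\rceil,2}(\Delta[2])$.

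Now apply Lemma \ref{shift_threshold_d} with $d=2$, $\kappa=2$ (whose shift set is $\{\cos(j\pi/2):0\le j\le 2\}=\{-1,0,1\}$), together with $\boldsymbol{\Theta}_{m,\kappa}(\Delta)\subset\boldsymbol{\Theta}_{\kappa}(\Delta)$: shifting $\{\E_n\}$ by $0$ yields $\{\E_n\}\subset(0,1)\cap\boldsymbol{\Theta}_{2}(\Delta[3])$ with $\E_n\searrow 0$; shifting $\{\E_n\}$ by $1$ yields $\{\E_n+1\}\subset(1,2)\cap\boldsymbol{\Theta}_{2}(\Delta[3])$ with $\E_n+1\searrow 1$ (these stay in $(1,2)$ because $\E_n\in(0,1)$); and shifting $\{-\E_n\}$ by $1$ yields $\{1-\E_n\}\subset(0,1)\cap\boldsymbol{\Theta}_{2}(\Delta[3])$ with $1-\E_n\nearrow 1$. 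Strict monotonicity and the limits are immediate from $\E_n=2/(n+2)$. The main obstacle is essentially nil: all the genuine content lives upstream in Theorem \ref{thm_decreasing energy general}, Proposition \ref{propk2_2/n+2}, Lemma \ref{shift_threshold_d}, and \cite{GM2}; the proof here is careful assembly, the one mild subtlety being the $\vec{x}\mapsto-\vec{x}$ symmetry at the level of $\boldsymbol{\Theta}_{m,\kappa}$.
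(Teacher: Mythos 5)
Your proposal is correct, and the assembly follows the same logic the theorem statement sketches. The only place where you deviate slightly is in the third sequence: you reflect in dimension $2$ first (so you need the symmetry at the level of $\boldsymbol{\Theta}_{m,\kappa}$, not just $\boldsymbol{\Theta}_{\kappa}$) and then apply Lemma \ref{shift_threshold_d}. Two remarks on this. First, the $m$-level reflection symmetry you re-derive for $\kappa=2$ is already in the paper as Lemma \ref{Lemma_symmetryDelta_88} (valid for all even $\kappa$); your parenthetical attributes it to Lemma \ref{Lemma_symmetryDelta}, which only gives the coarser $\boldsymbol{\Theta}_{\kappa}(\Delta)=-\boldsymbol{\Theta}_{\kappa}(\Delta)$ and does not by itself justify feeding the reflected sequence into Lemma \ref{shift_threshold_d}. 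Second, the order implied by the theorem's citations (shift, then reflect) avoids the $m$-level issue entirely: take $\E_n\in\boldsymbol{\Theta}_{m,2}(\Delta[2])$, shift by $\cos(\pi)=-1$ via Lemma \ref{shift_threshold_d} to get $\E_n-1\in\boldsymbol{\Theta}_{m,2}(\Delta[3])\subset\boldsymbol{\Theta}_{2}(\Delta[3])$, and only then apply Lemma \ref{Lemma_symmetryDelta} in dimension $3$ to obtain $1-\E_n\in\boldsymbol{\Theta}_{2}(\Delta[3])$. Both routes reach the conclusion; yours gives slightly more (the refined membership $1-\E_n\in\boldsymbol{\Theta}_{m,2}(\Delta[3])$), at the cost of invoking, or reproving, Lemma \ref{Lemma_symmetryDelta_88}.
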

Our graphical evidence also suggests the following conjecture, although it is quite mysterious and surprising to us how and why it happens :
\begin{conjecture}
\label{conjecture11_3d}
Fix $\kappa =2$. Let $\{ \E_n = \E_n (\kappa=2) \}$ be the sequence in Theorem \ref{thm_decreasing energy general}. For each interval $( \mathcal{E}_n +1, \mathcal{E}_{n-1} +1 )$, $n \geq 1$,  the Mourre estimate \eqref{mourreEstimate123} holds with $\mathbb{A}(n) = \sum_{1 \leq q \leq N(n)} \rho_{j_q \kappa} (n) A_{j_q \kappa}$, $A_{j_q \kappa} = \sum_{1 \leq i \leq 3} A_i (j_q,\kappa)$, $\forall E \in ( \mathcal{E}_n +1, \mathcal{E}_{n-1} +1)$, where the coefficients $\rho_{j_q \kappa}(n)$ are \textit{exactly} those used in the 2-dimensional case, see Conjecture \ref{conjecture22}. In particular $\{\E_n +1 \}_{n=1}^{\infty} = (1,2) \cap \boldsymbol{\Theta}_{\kappa}(\Delta)$. 
\end{conjecture}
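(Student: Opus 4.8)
The plan is to transcribe the operator assertion into a polynomial-positivity problem by functional calculus, exactly as in the derivation preceding \eqref{constE_MV}. Write $\mathbb{A}(n) = \sum_{q=1}^{N(n)} \rho_{j_q\kappa}(n)\, A_{j_q\kappa}$ with $A_{j_q\kappa} = \sum_{1\le i\le 3} A_i(j_q,\kappa)$ and the $\rho_{j_q\kappa}(n)$ taken, per Conjecture \ref{conjecture22}, equal to the $2$-dimensional coefficients. Then $[\Delta[3],\i\mathbb{A}(n)]_\circ = \sum_{i=1}^3 h_n(\Delta_i)$, where $h_n(x) := \sum_{q=1}^{N(n)} \rho_{j_q\kappa}(n)(1-x^2)U_{j_q\kappa-1}(x)$ is precisely the single-variable polynomial attached to the $2$-dimensional conjugate operator. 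Since the $\Delta_i$ commute and are self-adjoint, \eqref{mourreEstimate123} for $\Delta[3]$ with $\mathbb{A}(n)$ on $(\E_n+1,\E_{n-1}+1)$ is equivalent to strict positivity of $G_\kappa^{(3)}(\vec x) := h_n(x_1)+h_n(x_2)+h_n(x_3)$ on $S_{E'}^{(3)} := \{\vec x\in[-1,1]^3 : x_1+x_2+x_3=E'\}$ for every $E'$ in that interval; the spectral statements then follow from Theorem \ref{lapy305} together with the $C^1$ regularity of $\Delta,V$ with respect to finite sums \eqref{LINEAR_combinationA} noted after that theorem. Record what will be used repeatedly: for $\kappa=2$ the $U_{2j-1}$ are odd, so $h_n$ is odd with $h_n(0)=h_n(\pm1)=0$; and by Proposition \ref{propk2_2/n+2}, $\E_k=2/(k+2)$, so $\E_0=1$ and the intervals $(\E_n+1,\E_{n-1}+1)$ exhaust $(1,2)$ minus the points $\{\E_n+1\}$.

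First I would dispose of the two ``flat'' faces of the constant-energy surface. Fix $E'\in(\E_n+1,\E_{n-1}+1)\subset(1,2)$; feasibility of $S_{E'}^{(3)}$ forces every coordinate into $[E'-2,1]$, so in particular no coordinate equals $-1$. Slicing by $x_3$, write $G_\kappa^{(3)} = h_n(x_3)+\big(h_n(x_1)+h_n(x_2)\big)$ with $x_1+x_2=E'-x_3$. At $x_3=1$ one has $h_n(1)=0$ and $x_1+x_2=E'-1\in(\E_n,\E_{n-1})$, so positivity of the slice is exactly the $2$-dimensional statement of Conjecture \ref{conjecture22}. At $x_3=E'-2$ one has $x_1=x_2=1$ forced, so the value is $h_n(E'-2)$; one checks $h_n>0$ on $(\E_n-1,\E_{n-1}-1)$, which for $n=1$ is the interval $(-1/3,0)$ and is immediate from $h_1(x)=\frac{4}{7}x(1-x^2)(9x^2-1)$, and in general should follow from the interpolation nodes of \eqref{interpol_intro} and the reflection $h_n(-x)=-h_n(x)$. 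What remains is the interior slice $x_3\in(E'-2,1)$.

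For the interior, set $\phi_n(E) := \min_{\vec x\in S_E^{(2)}}\big(h_n(x_1)+h_n(x_2)\big)$, continuous on $[-2,2]$, so that
\[
\min_{S_{E'}^{(3)}} G_\kappa^{(3)} = \min_{x_3\in[E'-2,\,1]}\big(h_n(x_3)+\phi_n(E'-x_3)\big).
\]
Conjecture \ref{conjecture22} gives $\phi_n>0$ on $(\E_n,\E_{n-1})$, and continuity together with the threshold identities \eqref{special relationship_intro}--\eqref{special G relationship_intro} forces $\phi_n(\E_n)=\phi_n(\E_{n-1})=\phi_n(2)=0$. The essential difficulty is that $\phi_n$ is \emph{not} nonnegative throughout $[\E_n,2]$ --- the numerics backing Conjecture \ref{conjecture22} show $\phi_n$ going negative on part of $(\E_{n-1},2)$ --- so the $3$-dimensional minimum cannot be closed by mere pointwise positivity of the two summands. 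Instead, one must exploit the constraint $x_1+x_2+x_3=E'$ with all $x_i\in[-1,1]$: it bounds how many coordinates can lie simultaneously in the (finitely many) subintervals of $[-1,1]$ where $h_n<0$, and it forces the minimizing pair for $\phi_n(E'-x_3)$, whenever $E'-x_3$ sits where $\phi_n<0$, to have a coordinate near $1$, so that $h_n(x_1)+h_n(x_2)$ is governed there by a one-variable estimate on $h_n$. For $n=1$, $E'\in(5/3,2)$: feasibility already confines all three coordinates to $(-1/3,1]$, at most one of them lying in $(0,1/3)$ (the only subinterval where $h_1<0$) and the other two then in $(1/3,1]$ where $h_1\ge0$; one compares $\max_{(0,1/3)}|h_1|$ with $h_1(x_1)+h_1(x_2)$ using the explicit $h_1$, a finite computation that is essentially the proof of Theorem \ref{OnlyMourreband} with one extra variable, and it should be carried out rigorously.

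The main obstacle is making the interior estimate uniform in $n$. As $n\to\infty$, $h_n$ has degree $\sim 4n+1$ and accumulates sign changes in $[-1,1]$, while the target intervals $(\E_n+1,\E_{n-1}+1)$ converge to $1$, precisely where the $3$-dimensional feasibility constraint is weakest (coordinates may range over nearly all of $[-1,1]$); so the clean reductions that work for small $n$ degrade and one needs genuinely three-dimensional, node-tracking quantitative control of $h_n$. This is why the assertion is only conjectural and is supported by graphics. Granting the Mourre estimate on every $(\E_n+1,\E_{n-1}+1)$, the ``in particular'' follows at once: since $\E_n\searrow0$ and $\E_0=1$ these open intervals plus the points $\{\E_n+1\}$ cover $(1,2)$, hence $(1,2)\setminus\{\E_n+1\}\subset\boldsymbol{\mu}_\kappa(\Delta[3])$ and so $(1,2)\cap\boldsymbol{\Theta}_\kappa(\Delta[3])\subset\{\E_n+1\}$, while the reverse inclusion $\{\E_n+1\}\subset(1,2)\cap\boldsymbol{\Theta}_\kappa(\Delta[3])$ is Theorem \ref{thm3d_k2} (via Lemma \ref{shift_threshold_d} and Lemma \ref{Lemma_symmetryDelta}); equality follows.
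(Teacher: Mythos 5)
You should first be aware that the statement you are addressing is presented in the paper as a conjecture: the authors offer no proof, only the graphical evidence of the section on $\kappa=2$ in dimension $3$, where $G_{\kappa=2}^{E}(x,y)$ is plotted at $E=1+\E_4$ and $E=1+\E_3$ for $y$ ranging over the critical values $-1/3,0,1/3,2/3,1$. Your proposal is therefore not being measured against a hidden complete argument, and your candid admission that the interior estimate cannot currently be closed is consistent with the status of the statement. As far as it goes, your reduction is correct and is essentially a formalization of what the paper does pictorially: your slicing of $S_{E'}^{(3)}$ by $x_3$ is exactly the paper's slicing by $y$; your observation that the $x_3=1$ slice reproduces the two-dimensional band-$n$ problem at energy $E'-1$ is the reason the conjecture uses the same coefficients $\rho_{j_q\kappa}(n)$; your treatment of the face $x_3=E'-2$ (with the explicit check $h_1>0$ on $(-1/3,0)$) is sound; and your derivation of the ``in particular'' clause from coverage of $(1,2)$ together with Theorem \ref{thm3d_k2} is the intended one.

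The substantive remark is that your interior analysis via $\phi_n(E):=\min_{S_E^{(2)}}\bigl(h_n(x_1)+h_n(x_2)\bigr)$ discards precisely the structure the paper's figures reveal. The empirical phenomenon recorded there is not merely that the minimum over each slice is nonnegative, but that at $E=\E_n+1$ the slices at $y=X_{q,n}$ reproduce, one for one, the left-endpoint configurations of the two-dimensional bands $1,\dots,n$, each with its own pattern of double zeros, while for $y$ away from these critical values the slice is strictly positive. Any eventual proof along your lines would need to track this finer node structure rather than the single scalar $\phi_n$: as you correctly note, $\phi_n$ is negative on part of $(\E_{n-1},2)$, and the compensation between $h_n(x_3)$ and the two-dimensional minimum is exactly what the generic bound loses. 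Your $n=1$ case analysis is correct and completable (it is Theorem \ref{OnlyMourreband} with one extra variable), but for $n\geq 2$, and a fortiori uniformly in $n$, the argument remains open --- which is why the statement is a conjecture and not a theorem in the paper.
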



Conjecture \ref{conjecture11_3d} may extend to $\kappa \geq 2$, but we have not looked into it. Other than the two sequences in $(0,1)$ in Theorem \ref{thm3d_k2}, we don't have any more knowledge about this interval.



We conclude the introduction with several comments. 

In this article we construct \textit{finite} linear combinations of the form \eqref{LINEAR_combinationA}. It would be very interesting to know if there are energies $\in \boldsymbol{\mu}_{\kappa}(\Delta)$ for which an $\mathbb{A} =$ \textit{infinite sum} is required. Another related question : is there a $\boldsymbol{\Theta}_{\infty,\kappa}(\Delta)$ ? i.e.\ thresholds $\in \boldsymbol{\Theta}_{\kappa}(\Delta)$ with \eqref{special relationship_intro} $= g_{j \kappa} (\vec{x}_{\infty}) = \sum_{q=0} ^{\infty} \omega_q \cdot g_{j \kappa} (\vec{x}_q)$, $\forall j \in \N^*$ ? In the language of section \ref{geo_construction}, are there solutions $\in \mathfrak{T}_{\infty,\kappa}(\Delta)$ ?


To extend \eqref{LINEAR_combinationA} one may be tempted to consider an even larger class of conjugate operators of the form, say $\mathbb{A} = \sum_{1\leq i \leq d} p(S_i ^{\kappa}, S_i ^{-\kappa}) \cdot N_i + N_i \cdot p(S_i ^{\kappa}, S_i ^{-\kappa})$, where $p( \cdot, \cdot)$ is a polynomial in 2 variables, or perhaps even a continuous function of 2 variables, satisfying $(p(S_i ^{\kappa}, S_i ^{-\kappa}))^* = p(S_i ^{\kappa}, S_i ^{-\kappa})$. We believe such extension doesn't really add anything, as argued in section \ref{sine_series}.

There is the question of whether a LAP for $\Delta+V$ could hold in a neighborhood of some $E \in \boldsymbol{\Theta}_{\kappa}(\Delta)$, using a completely different idea or completely different tools. It is not clear at all to us if threshold energies $\in \boldsymbol{\Theta}_{\kappa}(\Delta)$ are artifacts of the mathematical tools we employ to analyze the a.c.\ spectrum, or if on the contrary they have a special physical significance, such as notable embedded eigenvalues or resonances under an appropriate (additional) perturbation. 

A comment about the notion of \textit{threshold} for one particle discrete Schr\"odinger operators. These are traditionally defined as being the energies corresponding to the critical points of $\mathcal{F} \Delta \mathcal{F}^{-1} = d-2\sum_{1 \leq i \leq d }\sin^2(\xi_i/2)$ ($\mathcal{F} =$ \eqref{FourierTT}), $\xi_i \in [-\pi,\pi]$, see e.g.\ \cite{IJ}, \cite{NoTa}. This definition typically occurs in the context of the kernel of the resolvent of $\Delta$. $-d,d$ are elliptic thresholds ; $2-d, 4-d, ..., d-2$ are hyperbolic thresholds. In this article our notion of threshold is that of energies corresponding to roots of $\mathcal{F} [\Delta, \i \mathbb{A}]_{\circ} \mathcal{F}^{-1}$. It is not clear to us if it is a coincidence that the 2 sets of thresholds coincide when $\mathbb{A} = A_{\kappa=1}$, in which case $\mathcal{F} [\Delta, \i \mathbb{A}]_{\circ} \mathcal{F}^{-1} = \sum_{1 \leq i \leq d }\sin^2(\xi_i)$.

Recall that the Molchanov-Vainberg Laplacian $D$ is isomorphic to $\Delta$ in dimension 2, see \cite{GM2}. Thus, to search for bands $\subset \boldsymbol{\mu}_{\kappa}(\Delta)$ in dimension 2, it may be useful to use the results for $D$ as indication, see \cite{GM2}. Let us illustrate for $\kappa=2,3$. Fix $\kappa=2$. From \cite{GM2}, $(0,1/2) \subset \boldsymbol{\mu}_{2\kappa = 4}(D)$, which in turn implies a strict Mourre estimate for $\Delta$ on $(0,1)$, but it is with respect to a conjugate operator $\mathbb{B}_{\kappa=2}$ distinct from \eqref{LINEAR_combinationA} ($\mathbb{B}_{\kappa=2} = \pi A_{2\kappa=4} \pi ^{-1}$ in the notation of \cite{GM2}). In turn compactness and regularity of the commutator $[V, \i \mathbb{B}_{\kappa}]$ requires a condition different from $\eqref{generalLR condition}$, see \cite[condition (1.15)]{GM2}. Said condition is satisfied if for instance $(V-\tau_i^{\kappa}V)(n) = O(|n| ^{-1} \ln ^{-q} (|n|))$, $q>2$. As for $\kappa=3$, thanks to the fact that $(0,1/4) \subset \boldsymbol{\mu}_{2\kappa = 6}(D)$ (this is a numerical result, see \cite[Table 15]{GM2}), one obtains a strict Mourre estimate for $\Delta$ on $(0,1/2)$, but it is wrt.\ a conjugate operator $\mathbb{B}_{\kappa=3} = \pi A_{2\kappa=6} \pi ^{-1}$. These observations support Conjecture \ref{conjecture33}.

Finally, we wonder if there is a corresponding class of potentials to \eqref{generalLR condition} in the continuous operator case for which an analogous phenomena -- i.e.\ a plethora of thresholds embedded in the continuous spectrum and a LAP in between -- is observed.

\noindent \textbf{Acknowledgements :} It is a pleasure to credit Laurent Beauregard, engineer at the European Space Agency in Darmstadt, for valuable contributions to the numerical implementations.


\section{Basic properties and lemmas for the Chebyshev polynomials}

Let $T_{n}$ and $U_{n}$ be the Chebyshev polynomials of the first and second kind respectively of order $n$. They are defined by the formulas 
\begin{equation}
\label{def_T_U}
T_n(\cos(\theta)) = \cos(n \theta), \quad U_{n-1}( \cos(\theta)) = \sin(n \theta) / \sin(\theta), \quad \theta \in [-\pi,\pi], n \in \N^*.
\end{equation}

The first few $T_n$ Chebyshev polynomials are :
\begin{equation}
\label{cheby_23}
\begin{aligned}
T_2(x) &= 2x^2-1 \quad \text{and} \quad T_2(x) = T_2(y) \Leftrightarrow (x+y)(x-y)=0. \\
T_3(x) &= 4 x^3-3x \quad \text{and} \quad T_3(x) = T_3(y) \Leftrightarrow (x-y) [4(x^2+xy+y^2)-3]=0. 
\end{aligned}
\end{equation} 

The roots of $U_{n-1}$ are $\cos(l \pi / n)$, $1 \leq l \leq n-1$. We'll absolutely need a commutator $[ \cdot, \cdot]$ for \textit{functions}. For functions $f,g$ of real variables $x,y$, let 
\begin{equation}
\label{def_comm}
[f(x), g(y) ] := f(x) g(y) - f(y) g(x).
\end{equation}
\begin{remark}
The quantity $[f(x), g(y) ] / (x-y)$ is sometimes called \textit{Bezoutian} in the literature.
\end{remark}

\begin{Lemma}
\label{Tcos}
For $x,y \in [-1,1]$, $T_{\kappa}(x) = T_{\kappa}(y)$ if and only if $T_{\alpha \kappa}(x) = T_{\alpha \kappa}(y)$ for all $\alpha \in \N^*$. 
\end{Lemma}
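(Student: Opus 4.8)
The plan is to exploit the trigonometric parametrization $x=\cos(\theta)$, $y=\cos(\phi)$ with $\theta,\phi\in[0,\pi]$, under which $T_{\kappa}(x)=T_{\kappa}(y)$ becomes $\cos(\kappa\theta)=\cos(\kappa\phi)$, i.e. $\kappa\theta\equiv\pm\kappa\phi\pmod{2\pi}$. The forward implication is the interesting direction, and the reverse is essentially immediate: if $T_{\kappa}(x)=T_{\kappa}(y)$ then applying $T_{\alpha}$ to both sides and using the composition identity $T_{\alpha}\circ T_{\kappa}=T_{\alpha\kappa}$ (which follows directly from \eqref{def_T_U}) gives $T_{\alpha\kappa}(x)=T_{\alpha\kappa}(y)$. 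So the bulk of the argument is the converse: assuming $T_{\alpha\kappa}(x)=T_{\alpha\kappa}(y)$ for \emph{all} $\alpha\in\N^*$, conclude it already holds for $\alpha=1$.

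For the converse I would argue as follows. Write $x=\cos(\theta)$, $y=\cos(\phi)$. The hypothesis says $\cos(\alpha\kappa\theta)=\cos(\alpha\kappa\phi)$ for every $\alpha\geq 1$. Taking $\alpha=1$ and $\alpha=2$: from $\cos(\kappa\theta)=\cos(\kappa\phi)$ alone one cannot yet conclude anything is wrong, so instead I would use the full family. Set $u=\kappa\theta$, $v=\kappa\phi$; we know $\cos(\alpha u)=\cos(\alpha v)$ for all $\alpha\in\N^*$. I claim this forces $\cos(u)=\cos(v)$, which is exactly $T_{\kappa}(x)=T_{\kappa}(y)$. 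Indeed, $\cos(u)=\cos(v)$ is the case $\alpha=1$, so there is nothing to prove — wait, that is circular; the hypothesis already \emph{includes} $\alpha=1$. So actually the statement, read carefully, is trivial in the converse direction too: $T_{\alpha\kappa}(x)=T_{\alpha\kappa}(y)$ for all $\alpha\in\N^*$ in particular holds for $\alpha=1$, giving $T_{\kappa}(x)=T_{\kappa}(y)$ at once. Hence the \emph{real} content is only the forward direction, and the proof reduces to the composition identity $T_{\alpha}(T_{\kappa}(x))=T_{\alpha\kappa}(x)$.

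Therefore the clean writeup is: first establish $T_{\alpha}\circ T_{\kappa}=T_{\alpha\kappa}$ for all $\alpha,\kappa\in\N^*$ by the substitution $x=\cos(\theta)$, where $T_{\alpha}(T_{\kappa}(\cos\theta))=T_{\alpha}(\cos(\kappa\theta))=\cos(\alpha\kappa\theta)=T_{\alpha\kappa}(\cos\theta)$, and since $[-1,1]$ is infinite the polynomial identity follows. Then: $(\Rightarrow)$ apply $T_{\alpha}$ to both sides of $T_{\kappa}(x)=T_{\kappa}(y)$; $(\Leftarrow)$ specialize $\alpha=1$. The only mild subtlety to flag is making sure the composition identity is stated as an identity of polynomials (valid on all of $\R$, not just $[-1,1]$), which is automatic since two polynomials agreeing on an infinite set coincide; but here we only ever need it on $[-1,1]$ anyway. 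There is no serious obstacle; the "hard part" is simply recognizing that the lemma is a restatement of the semigroup property $T_{\alpha}\circ T_{\beta}=T_{\alpha\beta}$ of Chebyshev polynomials of the first kind.
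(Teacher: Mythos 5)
Your proof is correct, and since the paper leaves this lemma without proof, there is nothing to compare against: the argument you give — the composition/semigroup identity $T_{\alpha}\circ T_{\kappa}=T_{\alpha\kappa}$ for the forward direction, and the observation that the converse is just the $\alpha=1$ instance of the hypothesis — is the natural and surely intended route. One small stylistic note: the middle of your writeup wanders while you realize the converse is trivial; the ``clean writeup'' paragraph at the end is the whole proof and could stand alone.
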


\begin{Lemma}
\label{lemma_cos_alpha_beta}
Fix $\kappa \in \N^*$. If $\cos(\kappa \theta) = \cos(\kappa \phi)$ then 
$$ \sin(\kappa \phi ) \sin(2\kappa \theta) = \sin(\kappa \theta) \sin(2\kappa \phi) \Rightarrow \sin(\alpha \kappa \phi ) \sin(\beta \kappa \theta) = \sin(\alpha \kappa \theta) \sin(\beta \kappa \phi), \forall \alpha, \beta \in \N^*.$$
\end{Lemma}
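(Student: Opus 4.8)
\textbf{Proof proposal for Lemma \ref{lemma_cos_alpha_beta}.}

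The plan is to reduce the trigonometric identities to statements about two angles $\theta,\phi$ satisfying $\cos(\kappa\theta)=\cos(\kappa\phi)$, which forces $\kappa\phi = \pm\kappa\theta + 2\pi k$ for some integer $k$; equivalently $\kappa\phi \equiv \kappa\theta$ or $\kappa\phi \equiv -\kappa\theta$ modulo $2\pi$. I would handle these two cases separately. In the first case $\kappa\phi \equiv \kappa\theta \pmod{2\pi}$, every quantity of the form $\sin(\alpha\kappa\phi)$ equals $\sin(\alpha\kappa\theta)$, so both the hypothesis $\sin(\kappa\phi)\sin(2\kappa\theta) = \sin(\kappa\theta)\sin(2\kappa\phi)$ and the conclusion $\sin(\alpha\kappa\phi)\sin(\beta\kappa\theta) = \sin(\alpha\kappa\theta)\sin(\beta\kappa\phi)$ are trivially true (both sides coincide term by term). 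In the second case $\kappa\phi \equiv -\kappa\theta \pmod{2\pi}$, we get $\sin(\alpha\kappa\phi) = -\sin(\alpha\kappa\theta)$ for every $\alpha$, and then $\sin(\alpha\kappa\phi)\sin(\beta\kappa\theta) = -\sin(\alpha\kappa\theta)\sin(\beta\kappa\theta) = \sin(\alpha\kappa\theta)\sin(\beta\kappa\phi)$, so again the conclusion holds identically; and the hypothesis holds for the same reason. Thus in fact both the hypothesis and the conclusion follow directly from $\cos(\kappa\theta)=\cos(\kappa\phi)$ alone.

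A cleaner way to package this, avoiding the case split, is to introduce $a := \kappa\theta$, $b:=\kappa\phi$ and observe that $\cos a = \cos b$ is equivalent to $\sin a \sin(n b) = \sin b \sin(n a)$ for all $n\in\N$. This last equivalence can be proved by the product-to-sum formula: $2\sin a\sin(nb) - 2\sin b\sin(na) = [\cos(a-nb) - \cos(a+nb)] - [\cos(b-na)-\cos(b+na)]$, and then using $\cos a = \cos b$ together with the Chebyshev recursion (or induction on $n$) to see the right-hand side vanishes. Alternatively one can write $\sin(na)/\sin a = U_{n-1}(\cos a)$ when $\sin a \neq 0$, so $\sin a\sin(nb) = \sin b\sin(na)$ becomes $\sin a\sin b\,(U_{n-1}(\cos b) - U_{n-1}(\cos a)) = 0$, which is immediate from $\cos a=\cos b$; the degenerate cases $\sin a = 0$ or $\sin b = 0$ are checked by hand (if $\sin a = 0$ and $\cos a = \cos b$ then $\sin b = 0$ too). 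Applying this equivalence once gives the hypothesis from $\cos(\kappa\theta)=\cos(\kappa\phi)$, and applying the symmetric bilinear version — namely $\sin(\alpha a)\sin(\beta b) = \sin(\beta a)\sin(\alpha b)$ for all $\alpha,\beta$, which follows from the same $U$-polynomial identity since $\sin(\alpha a)\sin(\beta b) - \sin(\beta a)\sin(\alpha b) = \sin a\sin b\,(U_{\alpha-1}(\cos a)U_{\beta-1}(\cos b) - U_{\beta-1}(\cos a)U_{\alpha-1}(\cos b))$ — yields the conclusion.

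I do not expect a genuine obstacle here; the lemma is essentially a bookkeeping statement about $\cos(\kappa\theta)=\cos(\kappa\phi)$. The only mild subtlety is the degenerate situation where one of $\sin(\kappa\theta)$, $\sin(\kappa\phi)$ vanishes, i.e.\ $\kappa\theta$ or $\kappa\phi$ is a multiple of $\pi$ — there the division by $\sin(\kappa\theta)$ used to invoke $U_{n-1}$ is illegal and one must argue directly, but as noted $\cos(\kappa\theta)=\cos(\kappa\phi) = \pm 1$ then forces both sines to vanish and all identities collapse to $0=0$. The implication arrow in the statement ($\Rightarrow$) is a bit of a red herring: both the premise and the consequent are consequences of $\cos(\kappa\theta)=\cos(\kappa\phi)$, so I would simply prove each of them from that single hypothesis rather than trying to derive the consequent from the premise directly — though deriving it directly is also easy once one knows $\sin(\kappa\phi)/\sin(\kappa\theta)$ is the sign $\pm 1$ dictated by which branch of $\cos(\kappa\theta)=\cos(\kappa\phi)$ we are on, and then $\sin(\alpha\kappa\phi)/\sin(\alpha\kappa\theta)$ is that same sign.
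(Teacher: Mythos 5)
Your proof is correct but takes a genuinely different route from the paper's. The paper's argument is a double induction on $\alpha$ and $\beta$: one expands $\sin((\beta+2)\kappa\theta)$ and $\sin((\beta+2)\kappa\phi)$ by the angle-addition formula, invokes Lemma \ref{Tcos} to supply $\cos(m\kappa\theta)=\cos(m\kappa\phi)$ for all $m$, and reduces the step to the inductive hypothesis; then one repeats the same scheme in $\alpha$. You instead exploit the closed-form characterization $\cos(\kappa\theta)=\cos(\kappa\phi)\iff \kappa\phi\equiv\pm\kappa\theta\pmod{2\pi}$, which makes the bilinear identity $\sin(\alpha\kappa\phi)\sin(\beta\kappa\theta)=\sin(\alpha\kappa\theta)\sin(\beta\kappa\phi)$ transparent in each sign case; your alternative packaging via $\sin(na)=\sin a\, U_{n-1}(\cos a)$ amounts to the same thing and is equally valid (the degenerate cases $\sin(\kappa\theta)=0$ or $\sin(\kappa\phi)=0$ are handled correctly, since then $\cos(\kappa\theta)=\cos(\kappa\phi)=\pm1$ forces both sines to vanish). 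Both proofs are correct; the induction is self-contained and avoids ever extracting $\phi$ explicitly as a function of $\theta$, while your case split is shorter and more conceptual. You also point out something the paper leaves tacit: the displayed premise $\sin(\kappa\phi)\sin(2\kappa\theta)=\sin(\kappa\theta)\sin(2\kappa\phi)$ is itself an automatic consequence of $\cos(\kappa\theta)=\cos(\kappa\phi)$ (expand $\sin(2\kappa\cdot)=2\sin(\kappa\cdot)\cos(\kappa\cdot)$ and substitute), so the stated implication is trivially true because its consequent already follows from the outer hypothesis. That observation does not conflict with how the paper later uses the lemma in the proof of Corollary \ref{corollaryEquiv}, but it does show the lemma could be restated more simply with $\cos(\kappa\theta)=\cos(\kappa\phi)$ as the lone hypothesis and the bilinear identity as the conclusion.
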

To prove Lemma \ref{lemma_cos_alpha_beta} use induction, on $\alpha$ and $\beta$, apply angle formulas and Lemma \ref{Tcos}. 

Corollaries \ref{corollaryEquiv} and \ref{basic_lemma} are at the heart of our search for thresholds. 

\begin{corollary} 
\label{corollaryEquiv}
Let $\kappa \in \N^*$, $\kappa \geq 2$ be given. If $x,y \in \R$ are such that $U_{\kappa-1}(x)$, $U_{\kappa-1}(y) \neq 0$, then $T_{\kappa} (x) = T_{\kappa} (y) \Leftrightarrow [U_{\alpha \kappa-1}(x) , U_{\beta \kappa-1}(y) ] = 0, \forall \alpha, \beta \in \N^*$.
\end{corollary}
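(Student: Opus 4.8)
The plan is to reduce Corollary \ref{corollaryEquiv} to the trigonometric identity already packaged in Lemma \ref{lemma_cos_alpha_beta}, via the substitution $x = \cos\theta$, $y = \cos\phi$. Since $x,y \in [-1,1]$ (the hypothesis $U_{\kappa-1}(x), U_{\kappa-1}(y) \neq 0$ together with the genuine interest in the spectrum means we work on $[-1,1]$; if $x$ or $y$ lies outside $[-1,1]$ one can either argue separately using the strict monotonicity of $T_\kappa$ there, or note both sides of the claimed equivalence fail in a compatible way), we may write $\theta, \phi \in [0,\pi]$ with $x = \cos\theta$, $y = \cos\phi$. Then by \eqref{def_T_U}, $T_\kappa(x) = \cos(\kappa\theta)$ and $T_\kappa(y) = \cos(\kappa\phi)$, while $U_{\alpha\kappa - 1}(x) = \sin(\alpha\kappa\theta)/\sin\theta$ and similarly for $y$. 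The hypothesis $U_{\kappa-1}(x) \neq 0$ forces $\sin\theta \neq 0$ and $\sin(\kappa\theta) \neq 0$, and likewise $\sin\phi \neq 0$, $\sin(\kappa\phi) \neq 0$.

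First I would establish the forward direction ($\Rightarrow$). Assume $T_\kappa(x) = T_\kappa(y)$, i.e.\ $\cos(\kappa\theta) = \cos(\kappa\phi)$. Writing out the commutator from \eqref{def_comm},
\begin{equation*}
[U_{\alpha\kappa-1}(x), U_{\beta\kappa-1}(y)] = U_{\alpha\kappa-1}(x) U_{\beta\kappa-1}(y) - U_{\alpha\kappa-1}(y) U_{\beta\kappa-1}(x) = \frac{\sin(\alpha\kappa\theta)\sin(\beta\kappa\phi) - \sin(\alpha\kappa\phi)\sin(\beta\kappa\theta)}{\sin\theta\,\sin\phi}.
\end{equation*}
So the commutator vanishes for all $\alpha,\beta \in \N^*$ precisely when $\sin(\alpha\kappa\theta)\sin(\beta\kappa\phi) = \sin(\alpha\kappa\phi)\sin(\beta\kappa\theta)$ for all $\alpha,\beta$. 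To apply Lemma \ref{lemma_cos_alpha_beta} I need its hypothesis $\sin(\kappa\phi)\sin(2\kappa\theta) = \sin(\kappa\theta)\sin(2\kappa\phi)$; using $\sin(2\kappa\theta) = 2\sin(\kappa\theta)\cos(\kappa\theta)$ and $\sin(2\kappa\phi) = 2\sin(\kappa\phi)\cos(\kappa\phi)$, both sides equal $2\sin(\kappa\theta)\sin(\kappa\phi)\cos(\kappa\theta) = 2\sin(\kappa\theta)\sin(\kappa\phi)\cos(\kappa\phi)$ since $\cos(\kappa\theta) = \cos(\kappa\phi)$. Thus the hypothesis of Lemma \ref{lemma_cos_alpha_beta} holds, and the Lemma delivers the desired family of identities, hence all the commutators vanish.

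For the reverse direction ($\Leftarrow$), I would take the single instance $\alpha = 1$, $\beta = 2$: the vanishing of $[U_{\kappa-1}(x), U_{2\kappa-1}(y)]$ gives $\sin(\kappa\theta)\sin(2\kappa\phi) = \sin(\kappa\phi)\sin(2\kappa\theta)$, i.e.\ $2\sin(\kappa\theta)\sin(\kappa\phi)\cos(\kappa\phi) = 2\sin(\kappa\phi)\sin(\kappa\theta)\cos(\kappa\theta)$. Since $\sin(\kappa\theta)\sin(\kappa\phi) \neq 0$ by the non-vanishing hypothesis on $U_{\kappa-1}$, we may cancel to get $\cos(\kappa\phi) = \cos(\kappa\theta)$, which is exactly $T_\kappa(x) = T_\kappa(y)$. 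The main (and essentially only) obstacle is the bookkeeping around the domain: making sure $x,y$ can legitimately be taken in $[-1,1]$ so the $\cos$-substitution is available, and confirming that the non-vanishing of $U_{\kappa-1}$ translates exactly into $\sin\theta, \sin(\kappa\theta) \neq 0$ — everything else is a direct transcription through \eqref{def_T_U} and an application of Lemma \ref{lemma_cos_alpha_beta}. One could alternatively bypass the trigonometric substitution and argue purely polynomially using Lemma \ref{Tcos} and the three-term recurrences for $T_n, U_n$, but the trigonometric route is cleaner and reuses the machinery already in place.
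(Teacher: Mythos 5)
Your proof is correct and takes essentially the same route as the paper's: substitute $x = \cos\theta$, $y = \cos\phi$, rewrite the commutator as a sine identity via \eqref{def_T_U}, and then appeal to Lemma \ref{lemma_cos_alpha_beta} for $(\Rightarrow)$ and to the $\alpha=1$, $\beta=2$ instance together with cancellation of $\sin(\kappa\theta)\sin(\kappa\phi)\neq 0$ for $(\Leftarrow)$. One small inaccuracy (shared with the paper's own proof, which asserts ``By assumption $\sin(\kappa\theta),\sin(\kappa\phi)\neq 0$'') is your claim that $U_{\kappa-1}(x)\neq 0$ forces $\sin\theta\neq 0$: at $x=\pm 1$ one has $\sin\theta=0$ yet $U_{\kappa-1}(\pm 1)=\pm\kappa\neq 0$, so the endpoints $x,y\in\{\pm 1\}$ technically need a separate easy check.
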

\begin{proof}
Let $x = \cos(\theta)$, $y = \cos(\phi)$. By assumption $\sin(\kappa \theta)$, $\sin(\kappa \phi) \neq 0$. So :
\begin{align*}
 [U_{\alpha \kappa-1}(x) , U_{\beta \kappa-1}(y) ] = 0, \forall \alpha, \beta \in \N^* 
& \Leftrightarrow \sin(\alpha \kappa \phi) \sin(\beta \kappa \theta) = \sin(\alpha \kappa \theta) \sin(\beta \kappa \phi), \forall \alpha, \beta \in \N^* \\
& \Leftrightarrow \sin(\kappa \phi) \sin(2\kappa \theta) = \sin(\kappa \theta) \sin(2\kappa \phi) \\
& \quad \quad \quad \quad  \underline{and} \quad \cos(\kappa \theta ) = \cos(\kappa \phi) \\
&\Leftrightarrow T_{\kappa} (x) = T_{\kappa} (y).
\end{align*}
\qed
\end{proof}

\begin{corollary}
\label{basic_lemma}
Let $\kappa \in \N^*$, $\kappa \geq 2$ be given. Let $x,y \in [-1,1]$. Then $[U_{\alpha \kappa-1}(x) , U_{\beta \kappa-1}(y) ]=0$ for all $\alpha, \beta \in \N^*$ if and only if $U_{\kappa-1}(x)=0$, or $U_{\kappa-1}(y)=0$, or $T_{\kappa}(x) = T_{\kappa}(y)$.
\end{corollary}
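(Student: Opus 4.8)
The plan is to deduce Corollary \ref{basic_lemma} from Corollary \ref{corollaryEquiv} by carefully handling the degenerate cases where $U_{\kappa-1}(x)=0$ or $U_{\kappa-1}(y)=0$, which are excluded from the hypothesis of Corollary \ref{corollaryEquiv}. The overall strategy is a case analysis on whether $x$ and $y$ are roots of $U_{\kappa-1}$.

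First I would prove the ``if'' direction. Suppose $T_\kappa(x) = T_\kappa(y)$ with $x,y \in [-1,1]$ but neither is necessarily a root of $U_{\kappa-1}$. If $U_{\kappa-1}(x) \neq 0$ and $U_{\kappa-1}(y) \neq 0$, then Corollary \ref{corollaryEquiv} applies directly and gives $[U_{\alpha\kappa-1}(x), U_{\beta\kappa-1}(y)] = 0$ for all $\alpha,\beta$. If instead $U_{\kappa-1}(x) = 0$, write $x = \cos(l\pi/\kappa)$ for some $1 \leq l \leq \kappa-1$; then $\sin(\alpha\kappa\theta) = \sin(\alpha l \pi) = 0$ for every $\alpha \in \N^*$, hence $U_{\alpha\kappa-1}(x) = 0$ for all $\alpha$, so the Bezoutian $[U_{\alpha\kappa-1}(x), U_{\beta\kappa-1}(y)] = U_{\alpha\kappa-1}(x)U_{\beta\kappa-1}(y) - U_{\alpha\kappa-1}(y)U_{\beta\kappa-1}(x) = -U_{\alpha\kappa-1}(y)U_{\beta\kappa-1}(x)$, and since $U_{\beta\kappa-1}(x) = 0$ as well, the whole expression vanishes. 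The case $U_{\kappa-1}(y) = 0$ is symmetric. This establishes that each of the three stated conditions implies the Bezoutian condition.

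Next I would prove the ``only if'' direction: assume $[U_{\alpha\kappa-1}(x), U_{\beta\kappa-1}(y)] = 0$ for all $\alpha,\beta \in \N^*$. If $U_{\kappa-1}(x) = 0$ or $U_{\kappa-1}(y) = 0$ we are done, so suppose both are nonzero. Then by Corollary \ref{corollaryEquiv} (reading it right-to-left) we get $T_\kappa(x) = T_\kappa(y)$, which is the third alternative. This completes the equivalence.

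The main obstacle — really the only non-routine point — is the observation that $U_{\kappa-1}(x) = 0$ forces $U_{\alpha\kappa-1}(x) = 0$ for \emph{all} $\alpha \in \N^*$, which is what lets the Bezoutian collapse in the degenerate case; this follows immediately from the defining formula $U_{\alpha\kappa-1}(\cos\theta) = \sin(\alpha\kappa\theta)/\sin\theta$ together with the fact that a root of $U_{\kappa-1}$ has the form $\cos(l\pi/\kappa)$, so $\alpha\kappa\theta = \alpha l \pi$ is an integer multiple of $\pi$. One should also note the edge case where $\sin\theta = 0$, i.e. $x = \pm 1$: there $U_{\kappa-1}(\pm 1) = \pm \kappa \neq 0$, so such $x$ never fall into the degenerate branch and cause no trouble. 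Everything else is bookkeeping with the definition \eqref{def_comm} of the commutator of functions.
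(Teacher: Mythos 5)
Your proof is correct and follows exactly the route the paper intends: the corollary is just Corollary \ref{corollaryEquiv} extended by the observation that a root of $U_{\kappa-1}$ (necessarily of the form $\cos(l\pi/\kappa)$, $1\le l\le \kappa-1$, hence interior to $(-1,1)$) is a common root of every $U_{j\kappa-1}$, which collapses the Bezoutian to zero in both degenerate branches. The paper states this as an immediate consequence without a separate proof, and your case analysis supplies precisely the missing bookkeeping, including the correct handling of $x=\pm1$.
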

Another identity we'll exploit is 
\begin{equation}
\label{identity_derivative}
\frac{d}{dx} T_{\kappa}(x) = \kappa U_{\kappa-1}(x).
\end{equation}

Finally, we'll make use of the variations of $T_{\kappa}$ :

\begin{Lemma}
\label{variationsTk}
Fix $\kappa \geq 1$. $T_{\kappa}([-1,1]) = [-1,1]$. $T_{\kappa}(1) = 1$, $T_{\kappa}(-1) = (-1)^{\kappa}$. The local extrema of $T_{\kappa}$ in $[-1,1]$ are located at $\cos (j \pi / \kappa)$, $0 \leq j \leq \kappa$. On $(\cos(j \pi / \kappa), \cos((j-1) \pi / \kappa))$, $j \in \{ 0, ..., \kappa\}$, $T_{\kappa}$ is strictly increasing if $j$ is odd and strictly decreasing if $j$ is even. 
\end{Lemma}

\section{Functional representation of the strict Mourre estimate for $\Delta$ wrt.\ $\mathbb{A}$}
\label{stdLaplacianMourre}

Let $\mathcal{F} : \mathscr{H} \to L^2([-\pi,\pi]^d,d\xi)$ be the Fourier transform 
\begin{equation}
\normalsize
(\mathcal{F} u)(\xi) :=  (2\pi)^{-d/2} \sum \limits_{n \in \Z^d} u(n) e^{\i n \cdot \xi}, \quad  \xi=(\xi_1,\ldots,\xi_d).
\label{FourierTT}
\end{equation}
The commutator between $\Delta$ and $A_{j\kappa}$, computed against compactly supported sequences, is $[\Delta, \i A_{j\kappa}] =  \mathcal{F}^{-1} \left[  \sum_{i=1} ^d \sin(\xi_i) \sin(j\kappa \xi_i) \right] \mathcal{F} = \sum_{i=1} ^d (1-\Delta_i^2) U_{j\kappa-1} (\Delta_i)$, $\forall j\in\N^*$.
So $[\Delta, \i A_{j\kappa}]$ extends to a bounded operator $[\Delta, \i A_{j\kappa}]_{\circ}$. Let 
\begin{equation}
\label{def_m}
m(x) := 1-x^2.
\end{equation}
Fix $E \in \sigma(\Delta)$ and consider the polynomial $g_{j\kappa}  ^E : [-1,1]^{d-1} \mapsto \R$,
\begin{equation}
\label{def:gE}
g_{j\kappa} ^E (x_1,...,x_{d-1}) :=  \sum_{i=1} ^{d-1} m(x_i) U_{j\kappa - 1} (x_i) + m\left(E- \sum_{i=1} ^{d-1} x_i \right) U_{j\kappa - 1} \left(E- \sum_{i=1} ^{d-1} x_i \right) .
\end{equation}

\begin{Lemma}
\label{lemma_rootsy}
The roots of $m(x) U_{j \kappa-1}(x)$ are $\{\cos(l \pi / (j\kappa)) : 0 \leq l \leq j \kappa \}$. The intersection over $j \in \N^*$ of the latter set is $\{ \cos(l \pi / \kappa) : 0 \leq l \leq \kappa \}$ and these are roots of $m(x) U_{j \kappa-1}(x)$, $\forall j \in \N^*$. 
\end{Lemma}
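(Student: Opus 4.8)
\textbf{Proof proposal for Lemma \ref{lemma_rootsy}.} The plan is to read off the roots of $m(x)U_{j\kappa-1}(x)$ directly from the known factorizations of the two factors, and then take the intersection over $j$. First I would recall that $m(x) = 1-x^2 = (1-x)(1+x)$, so the roots of $m$ are exactly $x = 1 = \cos(0)$ and $x = -1 = \cos(\pi) = \cos(j\kappa \pi / (j\kappa))$. Next, from the displayed fact just after \eqref{def_comm} in Section 2 (the roots of $U_{n-1}$ are $\cos(l\pi/n)$ for $1 \leq l \leq n-1$), applied with $n = j\kappa$, the roots of $U_{j\kappa-1}$ are $\cos(l\pi/(j\kappa))$ for $1 \leq l \leq j\kappa - 1$. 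Combining the two lists and noting $1 = \cos(0\cdot \pi/(j\kappa))$, $-1 = \cos(j\kappa\,\pi/(j\kappa))$, the set of roots of the product $m(x)U_{j\kappa-1}(x)$ is precisely $\{\cos(l\pi/(j\kappa)) : 0 \leq l \leq j\kappa\}$, as claimed. (One should note these are $j\kappa+1$ distinct numbers since $\cos$ is injective on $[0,\pi]$, so there is no multiplicity subtlety affecting the \emph{set} of roots.)

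For the second assertion I would compute $\bigcap_{j \in \N^*} \{\cos(l\pi/(j\kappa)) : 0 \leq l \leq j\kappa\}$. Since $\cos$ is a bijection from $[0,\pi]$ onto $[-1,1]$, a number $\cos\alpha$ with $\alpha \in [0,\pi]$ lies in the $j$-th set iff $\alpha = l\pi/(j\kappa)$ for some integer $0 \leq l \leq j\kappa$, i.e.\ iff $\alpha/\pi$ is a rational of the form $l/(j\kappa)$ in $[0,1]$. So the intersection over all $j$ corresponds to those $\alpha \in [0,\pi]$ with $\alpha/\pi \in \bigcap_j \{l/(j\kappa) : 0 \leq l \leq j\kappa\}$. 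A rational $p/q \in [0,1]$ in lowest terms lies in $\{l/(j\kappa)\}$ iff $q \mid j\kappa$; for this to hold for \emph{every} $j \in \N^*$ (in particular $j=1$) we need $q \mid \kappa$, and conversely if $q \mid \kappa$ then $q \mid j\kappa$ for all $j$. Hence the admissible $\alpha$ are exactly $\alpha = (p/q)\pi$ with $q \mid \kappa$, which is the same as $\alpha = k\pi/\kappa$ for $0 \leq k \leq \kappa$. Therefore the intersection is $\{\cos(k\pi/\kappa) : 0 \leq k \leq \kappa\}$. Finally, each such $\cos(k\pi/\kappa) = \cos(jk\pi/(j\kappa))$ with $0 \leq jk \leq j\kappa$, so it is a root of $m(x)U_{j\kappa-1}(x)$ for every $j \in \N^*$, proving the last clause.

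I do not anticipate a serious obstacle here; the only point requiring a little care is the elementary number-theoretic step identifying $\bigcap_j \{l/(j\kappa)\}$ with $\{k/\kappa\}$, i.e.\ that a fraction in lowest terms $p/q$ divides into the grid of mesh $1/(j\kappa)$ for all $j$ precisely when $q \mid \kappa$ — this is immediate once one reduces fractions to lowest terms and tests $j=1$. Everything else is bookkeeping with the standard root formulas for $T$, $U$ and the injectivity of $\cos$ on $[0,\pi]$, both already available in the excerpt (Section 2 and Lemma \ref{variationsTk}).
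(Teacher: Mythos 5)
Your proof is correct, and since the paper states Lemma \ref{lemma_rootsy} without proof, there is no official argument to compare against. Your treatment of the first claim (reading off the roots of $m$ and of $U_{j\kappa-1}$ separately and observing the two lists splice into $\{\cos(l\pi/(j\kappa)) : 0 \le l \le j\kappa\}$) is exactly the natural one, and the final clause is handled cleanly.

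The one remark worth making is that the number-theoretic detour for the intersection (reducing to lowest terms $p/q$, testing $q \mid j\kappa$ for all $j$) is more machinery than the statement needs. Since you have already proved the final clause — namely that $\cos(k\pi/\kappa) = \cos(jk\pi/(j\kappa))$ lies in the $j$-th root set for every $j$ — you get one inclusion $\{\cos(k\pi/\kappa) : 0 \le k \le \kappa\} \subseteq \bigcap_j \{\cos(l\pi/(j\kappa)) : 0 \le l \le j\kappa\}$ immediately. The reverse inclusion is even cheaper: the intersection over all $j \in \N^*$ is in particular contained in the $j=1$ set, which \emph{is} $\{\cos(k\pi/\kappa) : 0 \le k \le \kappa\}$. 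So the two inclusions already force equality with no appeal to lowest-terms fractions at all. Your argument is not wrong — it is a correct, self-contained identification of the intersection that does not presuppose the final clause — but if you are happy to establish the last clause first (as you do anyway), the whole middle paragraph collapses to two lines.
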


If the linear combination of conjugate operators is $\mathbb{A} = \sum_{j \geq 1} \rho_{j\kappa} \cdot A_{j\kappa}$ set $G_{\kappa} ^E : [-1,1]^{d-1} \mapsto \R$,
\begin{equation}
\label{def:GGE}
G_{\kappa} ^E (x_1,...,x_{d-1}) :=  \sum_{j \geq 1} \rho_{j\kappa} \cdot g_{j\kappa} ^E (x_1,...,x_{d-1}) .
\end{equation}
Of course, $G_{\kappa} ^E$ depends on the choice of the coefficients $\rho_{j\kappa}$, but it is not indicated explicitly in the notation. Recall $S_E$ defined by \eqref{constE_MV} (constant energy surface). Note that $S_E$ is symmetric in all variables. So $S'_E := \left. S_E \right|_{\R^{d-1}}$ is unambiguously defined. The point is that $\left. G_{\kappa}^E \right|_{S'_E}$ is a functional representation of $1_{\{ E \} } (\Delta) [\Delta, \i \mathbb{A}]_{\circ} 1_{\{ E \} } (\Delta)$. By functional calculus and continuity of the function $G_{\kappa} ^E$, $E \in \boldsymbol{\mu}_{\kappa} (\Delta)$ if and only if $\left. G_{\kappa}^E \right|_{S'_E} > 0$. Note also that \eqref{def:gE} and \eqref{def:GGE} are basically the same thing as \eqref{def:gE_intro} and \eqref{def:GGE_intro} but localized in energy $E$.

We highlight specially the 2 and 3-dimensional cases as this is our main focus. In dimension 2, we adopt the simpler notation :
\begin{equation}
\label{def:gE22}
g_{j\kappa} ^E (x) = m(x) U_{j\kappa - 1} (x) + m(E-x) U_{j\kappa - 1} (E-x),
\end{equation}
$x \in [\max(E-1,-1),\min(E+1,1)] \subset [-1,1]$. In dimension 3, we adopt the simpler notation :
\begin{equation}
\label{def:gE33}
g_{j\kappa} ^E (x,y) = m(x) U_{j\kappa - 1} (x) + m(y) U_{j\kappa - 1} (y)  + m(E-x-y) U_{j\kappa - 1} (E-x-y),
\end{equation}
$y \in [\max(E-2,-1),\min(E+2,1)]$ and $x \in [\max(E-y-1,-1),\min(E-y+1,1)]$. 

\color{black}

\color{black}

\begin{Lemma}
\label{Lemma_symmetryDelta}
For any $d,\kappa \in \N^*$, $\boldsymbol{\mu}_{\kappa} (\Delta) = - \boldsymbol{\mu}_{\kappa} (\Delta)$. Taking complements, $\boldsymbol{\Theta}_{\kappa} (\Delta) = - \boldsymbol{\Theta}_{\kappa} (\Delta)$.
\end{Lemma}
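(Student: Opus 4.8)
The plan is to exhibit an explicit unitary (or antiunitary) operator on $\mathscr{H}$ that intertwines $\Delta$ with $-\Delta$ and conjugates the family of conjugate operators $\eqref{LINEAR_combinationA}$ into itself, so that the whole Mourre setup is preserved under $E \mapsto -E$. First I would introduce the parity-type operator $(Pu)(n) := (-1)^{n_1+\dots+n_d} u(n) = (-1)^{|n|_1} u(n)$, which is unitary and self-adjoint on $\mathscr{H}$. A direct computation gives $P S_i P = -S_i$ and $P S_i^* P = -S_i^*$, hence $P \Delta_i P = -\Delta_i$ and $P \Delta P = -\Delta$. Thus $\sigma(\Delta) = \sigma(-\Delta)$ symmetrically and, more importantly, $1_I(\Delta) \mapsto 1_{-I}(\Delta)$ under conjugation by $P$.

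Next I would check what $P$ does to the conjugate operators. Since $N_i$ commutes with $P$ (both are diagonal in the position basis) and $P S_i^{j\kappa} P = (-1)^{j\kappa} S_i^{j\kappa}$, conjugation by $P$ sends $A_i(j,\kappa)$ to $(-1)^{j\kappa} A_i(j,\kappa)$, hence $A_{j\kappa} \mapsto (-1)^{j\kappa} A_{j\kappa}$. Therefore $P \mathbb{A} P = \sum_j (-1)^{j\kappa}\rho_{j\kappa} A_{j\kappa}$, which is again a (finite or infinite) self-adjoint linear combination of the form $\eqref{LINEAR_combinationA}$ — just with sign-flipped coefficients $\tilde\rho_{j\kappa} := (-1)^{j\kappa}\rho_{j\kappa}$. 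Now suppose $E \in \boldsymbol{\mu}_\kappa(\Delta)$: there is some $\mathbb{A}$, an interval $I \ni E$ and $\gamma>0$ with $1_I(\Delta)[\Delta,\i\mathbb{A}]_\circ 1_I(\Delta) \geq \gamma 1_I(\Delta)$. Conjugating this operator inequality by the self-adjoint unitary $P$ and using $P[\Delta,\i\mathbb{A}]_\circ P = [P\Delta P, \i P\mathbb{A} P]_\circ = [-\Delta, \i \tilde{\mathbb{A}}]_\circ = [\Delta, \i(-\tilde{\mathbb{A}})]_\circ$ yields $1_{-I}(\Delta)[\Delta, \i(-\tilde{\mathbb{A}})]_\circ 1_{-I}(\Delta) \geq \gamma 1_{-I}(\Delta)$, with $-I \ni -E$ and $-\tilde{\mathbb{A}}$ still of the admissible form. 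Hence $-E \in \boldsymbol{\mu}_\kappa(\Delta)$, giving $\boldsymbol{\mu}_\kappa(\Delta) \subseteq -\boldsymbol{\mu}_\kappa(\Delta)$; applying the same to $-E$ gives the reverse inclusion, so $\boldsymbol{\mu}_\kappa(\Delta) = -\boldsymbol{\mu}_\kappa(\Delta)$, and taking complements in $\sigma(\Delta) = [-d,d]$ (which is itself symmetric) gives $\boldsymbol{\Theta}_\kappa(\Delta) = -\boldsymbol{\Theta}_\kappa(\Delta)$.

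Alternatively — and perhaps more in the spirit of Section~\ref{stdLaplacianMourre} — one can avoid operators entirely and argue at the level of the functional representation $G_\kappa^E$. Using $m(-x) = m(x)$ and the parity $U_{n-1}(-x) = (-1)^{n-1} U_{n-1}(x)$, one checks $g_{j\kappa}^{-E}(-x_1,\dots,-x_{d-1}) = (-1)^{j\kappa-1} g_{j\kappa}^E(x_1,\dots,x_{d-1})$, and the reflection $\vec{x} \mapsto -\vec{x}$ maps $S'_E$ bijectively onto $S'_{-E}$. So $\left.G_\kappa^{-E}\right|_{S'_{-E}} > 0$ for the coefficients $\tilde\rho_{j\kappa} = (-1)^{j\kappa}\rho_{j\kappa}$ exactly when $\left.G_\kappa^{E}\right|_{S'_E} > 0$ for the $\rho_{j\kappa}$ (after absorbing the global sign $(-1)^{-1}$, which is harmless since one may replace $\mathbb{A}$ by $-\mathbb{A}$), and the characterization $E \in \boldsymbol{\mu}_\kappa(\Delta) \iff \left.G_\kappa^E\right|_{S'_E} > 0$ then closes the argument.

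I do not anticipate a genuine obstacle here; this is a symmetry argument and the only thing to be careful about is bookkeeping. The one point that deserves a sentence of care is the factor $(-1)^{j\kappa}$: when $\kappa$ is even this factor is always $+1$ and $P$ commutes with every $A_{j\kappa}$, whereas for odd $\kappa$ it genuinely alternates, and one must note that the transformation $\rho_{j\kappa} \mapsto (-1)^{j\kappa}\rho_{j\kappa}$ (together with an overall sign) keeps us inside the admissible class $\eqref{LINEAR_combinationA}$ — which it manifestly does. A second minor point is that the definition of $\boldsymbol{\mu}_\kappa(\Delta)$ quantifies over \emph{all} admissible $\mathbb{A}$, so we are free to switch from $\mathbb{A}$ to the conjugated operator; no uniformity in $E$ is needed.
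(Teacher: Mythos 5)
Your proof is correct, and your second (functional) route is essentially the paper's own proof: the paper uses the parity of $U_n$ (odd for $n$ odd, even for $n$ even), the evenness of $m$, and $S_{-E}=-S_E$ to pass the Mourre estimate from $E$ to $-E$ with the coefficient change $\rho_{j\kappa}\mapsto -\rho_{j\kappa}$ for $\kappa$ even and $\rho_{j\kappa}\mapsto(-1)^{j+1}\rho_{j\kappa}$ for $\kappa$ odd, and both of these agree with your uniform formula $(-1)^{j\kappa+1}\rho_{j\kappa}$. Your first route, conjugation by the lattice parity unitary $(Pu)(n)=(-1)^{n_1+\cdots+n_d}u(n)$, is a genuinely different and arguably cleaner argument: it packages the whole symmetry into the operator identities $P\Delta P=-\Delta$ and $PA_{j\kappa}P=(-1)^{j\kappa}A_{j\kappa}$, from which the Mourre estimate at $-E$ follows by a one-line conjugation of the operator inequality. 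This is more conceptual in that it exhibits the parity bookkeeping in the functional-calculus argument as the shadow of an explicit unitary equivalence on $\mathscr{H}$, and it does not need the characterization $E\in\boldsymbol{\mu}_{\kappa}(\Delta)\iff\left. G_{\kappa}^E \right|_{S'_E}>0$. The only point deserving care, which you flag, is that the sign-flipped linear combination remains admissible; this holds because the class \eqref{LINEAR_combinationA} is manifestly closed under coefficient changes, exactly as the paper's proof also uses implicitly.
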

\begin{proof}
The $U_n(\cdot)$ are even when $n$ is even, and odd when $n$ is odd. Also $S_{-E} = -S_E$. If $\kappa$ is even, and we have a linear combination $\sum_{j=1} ^N \rho_{j\kappa} A_{j\kappa}$ such that $G_{\kappa} ^E > 0$ on $S'_E$ so that $E \in \boldsymbol{\mu}_{\kappa} (\Delta)$, then taking $\sum_{j=1} ^N (-\rho_{j\kappa}) A_{j\kappa}$ gives $G_{\kappa} ^{-E} > 0$ on $S'_{-E}$ so that $-E \in \boldsymbol{\mu}_{\kappa} (\Delta)$. Thus $\boldsymbol{\mu}_{\kappa} (\Delta) \subset - \boldsymbol{\mu}_{\kappa} (\Delta)$. The argument is reversed for the reverse inclusion. On the other hand, if $\kappa$ is odd, and we have a linear combination $\sum_{j=1} ^N \rho_{j\kappa} A_{j\kappa}$ such that $G_{\kappa}^E > 0$ on $S'_E$ so that $E \in \boldsymbol{\mu}_{\kappa} (\Delta)$, then taking the linear combination $\sum_{j=1} ^N ((-1)^{j+1} \rho_{j\kappa}) A_{j\kappa}$ gives $G_{\kappa} ^{-E} > 0$ on $S'_{-E}$ so that $-E \in \boldsymbol{\mu}_{\kappa} (\Delta)$.
\qed
\end{proof}
Thanks to Lemma \ref{Lemma_symmetryDelta} we focus on positive energies only in this article. Also :

\begin{Lemma}
\label{Lemma_symmetryDelta_88}
For any $d \in \N^*$, for any $\kappa \in \N^*$ even (!), any $m \in \N$, $\boldsymbol{\Theta}_{m,\kappa} (\Delta) = - \boldsymbol{\Theta}_{m,\kappa} (\Delta)$.
\end{Lemma}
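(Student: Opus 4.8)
The plan is to mimic the proof of Lemma \ref{Lemma_symmetryDelta}, but tracking the finer structure \eqref{special relationship_intro} that defines $\boldsymbol{\Theta}_{m,\kappa}(\Delta)$. First I would recall that $E \in \boldsymbol{\Theta}_{m,\kappa}(\Delta)$ means there exist $(\vec{x}_q)_{q=0}^m \subset S_E$ and coefficients $(\omega_q)_{q=0}^{m-1} \subset \R$ with $\omega_q \leq 0$ such that $g_{j\kappa}(\vec{x}_m) = \sum_{q=0}^{m-1} \omega_q \, g_{j\kappa}(\vec{x}_q)$ for all $j \in \N^*$. The goal is to produce, from such a witness for $E$, a corresponding witness for $-E$, thereby showing $\boldsymbol{\Theta}_{m,\kappa}(\Delta) \subset -\boldsymbol{\Theta}_{m,\kappa}(\Delta)$; the reverse inclusion follows by symmetry of the argument (or by applying the forward inclusion to $-E$).

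The key observation is the parity behavior of $g_{j\kappa}$ under the reflection $\vec{x} \mapsto -\vec{x}$. Since $U_n$ is even when $n$ is even and odd when $n$ is odd, and $m(x) = 1-x^2$ is even, each summand $m(x_i) U_{j\kappa-1}(x_i)$ picks up a sign $(-1)^{j\kappa-1}$ under $x_i \mapsto -x_i$; hence $g_{j\kappa}(-\vec{x}) = (-1)^{j\kappa-1} g_{j\kappa}(\vec{x})$. Now use the hypothesis that $\kappa$ is \emph{even}: then $j\kappa - 1$ is odd for every $j \in \N^*$, so uniformly $g_{j\kappa}(-\vec{x}) = -\,g_{j\kappa}(\vec{x})$ for all $j$. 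This is exactly the feature that makes the even-$\kappa$ case clean and is presumably why the hypothesis is flagged with "(!)". Also note $S_{-E} = -S_E$, so if $\vec{x}_q \in S_E$ then $-\vec{x}_q \in S_{-E}$.

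Putting these together: set $\vec{x}'_q := -\vec{x}_q \in S_{-E}$ for $0 \leq q \leq m$, and keep the \emph{same} coefficients $\omega'_q := \omega_q \leq 0$. Then for every $j \in \N^*$,
\[
g_{j\kappa}(\vec{x}'_m) = -\,g_{j\kappa}(\vec{x}_m) = -\sum_{q=0}^{m-1} \omega_q\, g_{j\kappa}(\vec{x}_q) = \sum_{q=0}^{m-1} \omega_q\, \bigl(-g_{j\kappa}(\vec{x}_q)\bigr) = \sum_{q=0}^{m-1} \omega'_q\, g_{j\kappa}(\vec{x}'_q),
\]
and since each $\omega'_q \leq 0$, the tuple $(\vec{x}'_q)_{q=0}^m$ together with $(\omega'_q)_{q=0}^{m-1}$ is a valid witness showing $-E \in \boldsymbol{\Theta}_{m,\kappa}(\Delta)$. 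This gives $\boldsymbol{\Theta}_{m,\kappa}(\Delta) \subset -\boldsymbol{\Theta}_{m,\kappa}(\Delta)$, and replacing $E$ by $-E$ gives the opposite inclusion, hence equality. One should also check the degenerate case $m=0$ separately: there $E \in \boldsymbol{\Theta}_{0,\kappa}(\Delta)$ means some $\vec{x} \in S_E$ has $g_{j\kappa}(\vec{x}) = 0$ for all $j$, and then $-\vec{x} \in S_{-E}$ satisfies $g_{j\kappa}(-\vec{x}) = -g_{j\kappa}(\vec{x}) = 0$ for all $j$, so $-E \in \boldsymbol{\Theta}_{0,\kappa}(\Delta)$. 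I do not anticipate a genuine obstacle here — the only thing to be careful about is that the sign-preservation of the $\omega_q$ (which is the "crucial" condition in the definition) survives the transformation, and it does precisely because the global sign flip $g_{j\kappa}(-\vec{x}) = -g_{j\kappa}(\vec{x})$ is independent of $j$ when $\kappa$ is even; for odd $\kappa$ this uniformity fails, which is exactly why the lemma is stated only for even $\kappa$.
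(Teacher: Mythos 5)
Your proof is correct and is precisely the "direct from the definition" argument the paper alludes to without writing out: the uniform sign flip $g_{j\kappa}(-\vec{x}) = -g_{j\kappa}(\vec{x})$ (valid since $j\kappa-1$ is odd for all $j$ when $\kappa$ is even) lets one carry the same nonpositive $\omega_q$ to the reflected points $-\vec{x}_q \in S_{-E}$. Your closing remark about why the argument breaks for odd $\kappa$ also matches the paper's observation that the odd case is left open.
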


The proof of Lemma \ref{Lemma_symmetryDelta_88} follows directly from the definition of $\boldsymbol{\Theta}_{m,\kappa} (\Delta)$. It is an open problem for us to decide if Lemma \ref{Lemma_symmetryDelta_88} also holds for $\kappa$ odd. We also take the opportunity to prove Lemma \ref{shift_threshold_d}. 

\noindent \textit{Proof of Lemma \ref{shift_threshold_d}}.
Let $E \in  \boldsymbol{\Theta}_{m,\kappa} (\Delta[d])$, with $E = x_{q,1} + ... + x_{q,d}$ for $0 \leq q \leq m$. Set $\E = E+\cos(l\pi / \kappa)$, $0 \leq l \leq \kappa$. Then $\forall j \in \N^*$ (and using the notation \eqref{def:gE_intro} instead of \eqref{def:gE}) :
$$g_{j \kappa} (\vec{x}_m, \cos(l\pi / \kappa)) = g_{j \kappa} (\vec{x}_m) =  \sum _{q =0} ^{m-1} \omega_q \cdot g_{j \kappa} (\vec{x}_q) = \sum _{q =0} ^{m-1} \omega_q \cdot g_{j \kappa} (\vec{x}_q, \cos(l\pi / \kappa)).$$
This implies $\E \in \boldsymbol{\Theta}_{m,\kappa} (\Delta[d+1])$.
\qed

\begin{Lemma}
\label{symmetry_minima_E/2}
Let $d=2$. Each $g_{j\kappa} ^E$, and hence $G_{\kappa} ^E$, is symmetric about the axis $x = E/2$. In particular $\frac{d}{dx} g_{j\kappa} ^E(E/2) = 0$ for each $j$ and $\frac{d}{dx} G_{\kappa} ^E (E/2) = 0$ for any choice of coefficients $\rho_{j \kappa}$.
\end{Lemma}
\begin{proof}
Straightforwardly from \eqref{def:gE22}, $g_{j\kappa} ^E (E/2 - t) = g_{j\kappa} ^E (E/2 + t)$ for all $t\in\R$. 
\qed
\end{proof}

\section{Trying to identify thresholds by brute force : initial attempt}

In this section we have a first crack at trying to determine the thresholds $\in \boldsymbol{\Theta}_{m,\kappa}(\Delta)$. If we make simple assumptions, we are able to solve the equations (detailed below) for $m=0,1$ (in dimension 2) by brute force, but we don't know if these assumptions are satisfactory to produce \underline{\textit{all}} solutions for $m=0,1$. For $m \geq 2$ however, we have no idea how to solve the equations. In section \ref{geo_construction} a different approach is used to determine thresholds $\in \boldsymbol{\Theta}_{m,\kappa}(\Delta)$, in dimension 2.


\subsection{Trying to determine $0^{th}$ order thresholds} In dimension 2, $E \in \boldsymbol{\Theta}_{0,\kappa} (\Delta) \cap [0,2]$ iff $\exists E \in [0,2]$ and $Y \in [E-1,1]$ such that $g_{j \kappa} ^E (Y) = 0$, $\forall j \in \N^*$. We start by proving Lemma \ref{lemSUMcosINTRO}.
\label{subsection0th}

\noindent \textit{Proof of Lemma \ref{lemSUMcosINTRO} }.
Let $E= \sum_{q=1}^d x_q$, $x_q = \cos( j_q \pi / \kappa)$. Then $g_{j\kappa} (x_1, ..., x_d) = 0$, $\forall j \in \N^*$.
\qed

We followed up Lemma \ref{lemSUMcosINTRO} with Conjecture \ref{conjecture0001}. Our evidence for Conjecture \ref{conjecture0001} is :

\begin{Lemma} 
\label{lem_2346}
For $\kappa = 2,3,4,6$, in dimension 2, $\boldsymbol{\theta}_{0,\kappa} (\Delta) = \boldsymbol{\Theta}_{0,\kappa} (\Delta)$.
\end{Lemma}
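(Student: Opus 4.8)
The statement to prove is Lemma \ref{lem_2346}: for $\kappa \in \{2,3,4,6\}$ in dimension $2$, $\boldsymbol{\theta}_{0,\kappa}(\Delta) = \boldsymbol{\Theta}_{0,\kappa}(\Delta)$, i.e.\ the easy inclusion from Lemma \ref{lemSUMcosINTRO} is actually an equality. So the content is the reverse inclusion $\boldsymbol{\Theta}_{0,\kappa}(\Delta) \subset \boldsymbol{\theta}_{0,\kappa}(\Delta)$.

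\textbf{The plan.} Fix $E \in \boldsymbol{\Theta}_{0,\kappa}(\Delta) \cap [0,2]$ (by Lemma \ref{Lemma_symmetryDelta} it suffices to treat $E \geq 0$). By the definition of $\boldsymbol{\Theta}_{0,\kappa}(\Delta)$ in dimension $2$, there is $\vec{x} = (x, E-x)$ with $x \in [\max(E-1,-1),\min(E+1,1)]$ such that $g_{j\kappa}^E(x) = m(x)U_{j\kappa-1}(x) + m(E-x)U_{j\kappa-1}(E-x) = 0$ for every $j \in \N^*$. Set $y := E - x$. The goal is to show $x$ and $y$ must each be of the form $\cos(l\pi/\kappa)$, $0 \le l \le \kappa$, whence $E \in \boldsymbol{\theta}_{0,\kappa}(\Delta)$.

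\textbf{Main step.} Taking $j=1$ gives $m(x)U_{\kappa-1}(x) = -m(y)U_{\kappa-1}(y)$; combined with other values of $j$ one wants to conclude. The key tool is Corollary \ref{basic_lemma}: if $m(x)U_{\kappa-1}(x)\neq 0$ and $m(y)U_{\kappa-1}(y)\neq 0$, then write $m(x)U_{\kappa-1}(x) = c$, and from $g^E_{j\kappa}=0$ for all $j$ one should be able to extract a proportionality forcing $[U_{\alpha\kappa-1}(x),U_{\beta\kappa-1}(y)]=0$ for all $\alpha,\beta$ — actually more care is needed: $g^E_{j\kappa}=0$ says $m(x)U_{j\kappa-1}(x) = -m(y)U_{j\kappa-1}(y)$, so dividing the $\alpha$-equation by the $\beta$-equation (when nonzero) gives $U_{\alpha\kappa-1}(x)/U_{\beta\kappa-1}(x) = U_{\alpha\kappa-1}(y)/U_{\beta\kappa-1}(y)$, i.e.\ $[U_{\alpha\kappa-1}(x), U_{\beta\kappa-1}(y)] = 0$. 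Then Corollary \ref{basic_lemma} gives $T_\kappa(x) = T_\kappa(y)$ (the cases $U_{\kappa-1}(x)=0$ or $U_{\kappa-1}(y)=0$ being the boundary-ish cases handled separately — but if $U_{\kappa-1}(x)=0$ then $x = \cos(l\pi/\kappa)$ already, and then $m(y)U_{\kappa-1}(y)=0$ too, etc.). So generically we reduce to the coupled system: $T_\kappa(x) = T_\kappa(y)$ and $m(x)U_{\kappa-1}(x) + m(y)U_{\kappa-1}(y) = 0$, with $x+y=E$.

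\textbf{Finishing for $\kappa\in\{2,3,4,6\}$.} This is where $\kappa$ being $2,3,4,6$ matters — these are exactly the $\kappa$ for which $T_\kappa(x)=T_\kappa(y)$ factors nicely (cf.\ \eqref{cheby_23} for $\kappa=2,3$). For $\kappa=2$: $T_2(x)=T_2(y) \iff (x+y)(x-y)=0$, so $x=y$ or $x=-y$; plug into $m(x)U_1(x)+m(y)U_1(y)=0$ with $U_1(t)=2t$, i.e.\ $2x(1-x^2)+2y(1-y^2)=0$, and solve the resulting low-degree polynomial to see the solutions are at $x,y\in\{-1,0,1\}=\{\cos(l\pi/2)\}$. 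Similarly for $\kappa=3$ use the factorization in \eqref{cheby_23}: $T_3(x)=T_3(y)\iff (x-y)[4(x^2+xy+y^2)-3]=0$; for $\kappa=4$ and $\kappa=6$, use $T_4 = T_2\circ T_2$, $T_6 = T_2\circ T_3 = T_3 \circ T_2$ to reduce to the $\kappa=2,3$ factorizations (via Lemma \ref{Tcos}, $T_\kappa(x)=T_\kappa(y)$ for composite $\kappa$ decomposes), and then in each branch substitute into the single remaining equation $m(x)U_{\kappa-1}(x)+m(y)U_{\kappa-1}(y)=0$ and check by a finite (if tedious) polynomial computation that the only real solutions in $[-1,1]^2$ have both coordinates in $\{\cos(l\pi/\kappa)\}$. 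I expect the organizing principle to be: parametrize $x=\cos\theta$, $y=\cos\phi$; $T_\kappa(x)=T_\kappa(y)$ means $\kappa\phi = \pm\kappa\theta \pmod{2\pi}$, i.e.\ $\phi = \pm\theta + 2\pi k/\kappa$; substitute into $\sin\theta\sin\kappa\theta + \sin\phi\sin\kappa\phi = 0$ and use product-to-sum to get an equation like $\cos((\kappa-1)\theta) - \cos((\kappa+1)\theta) + \cos((\kappa-1)\phi) - \cos((\kappa+1)\phi)=0$ which, for each residue $k$, becomes a trigonometric equation solvable in closed form precisely when $\kappa\in\{2,3,4,6\}$ (the "crystallographic" values).

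\textbf{The main obstacle.} The hard part is the case analysis in the finishing step: one must enumerate all branches of $T_\kappa(x)=T_\kappa(y)$ (there are $\sim\kappa$ of them, coming from the $\kappa$-th roots of unity), and in each branch solve a polynomial/trigonometric equation and verify no "spurious" solutions appear — solutions $(x,y)$ with $x+y=E$, $T_\kappa(x)=T_\kappa(y)$, and $m(x)U_{\kappa-1}(x)+m(y)U_{\kappa-1}(y)=0$ but with $x$ or $y$ \emph{not} of the form $\cos(l\pi/\kappa)$. The claim is that for $\kappa\in\{2,3,4,6\}$ no such spurious solutions exist, and presumably for other $\kappa$ they do (which is why the lemma is stated only for these four values and Conjecture \ref{conjecture0001} remains open). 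I would also need to carefully handle the degenerate cases where $U_{\kappa-1}(x)=0$ or $U_{\kappa-1}(y)=0$ separately at the outset, since Corollary \ref{basic_lemma} branches there; but those cases are immediately favorable since $U_{\kappa-1}(x)=0$ already puts $x\in\{\cos(l\pi/\kappa)\}$ and then the $j=1$ equation forces $m(y)U_{\kappa-1}(y)=0$, hence $y\in\{\cos(l\pi/\kappa)\}$ too. So modulo the bookkeeping, the proof is: reduce to the two-equation system via Corollary \ref{basic_lemma}, then brute-force the system using the special factorizations of $T_\kappa$ available for $\kappa=2,3,4,6$.
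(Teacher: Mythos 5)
Your overall strategy matches the paper's: use the $j=1$ equation to dispose of the degenerate cases $m(x)U_{\kappa-1}(x)=0$, and in the generic case divide the $j$-equations to produce $[U_{\alpha\kappa-1}(x),U_{\beta\kappa-1}(y)]=0$ for all $\alpha,\beta$, hence $T_\kappa(x)=T_\kappa(y)$ by Corollary~\ref{corollaryEquiv}, then solve that relation jointly with the $j=1$ equation $m(x)U_{\kappa-1}(x)+m(y)U_{\kappa-1}(y)=0$ branch by branch. This is exactly the paper's two-step reduction \eqref{fracU}--\eqref{crochetU}.

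However, there is a genuine gap in your stated goal and the finishing step. You assert that one should ``show $x$ and $y$ must each be of the form $\cos(l\pi/\kappa)$,'' and later that ``the only real solutions in $[-1,1]^2$ have both coordinates in $\{\cos(l\pi/\kappa)\}$.'' This is \emph{false}, and your own $\kappa=2$ computation reveals it: in the branch $y=-x$ the equation $2x(1-x^2)+2y(1-y^2)=0$ is an \emph{identity}, so every $x\in(-1,1)\setminus\{0\}$ gives a generic-case witness $(x,-x)$ of $E=0$, with $x\notin\{-1,0,1\}$. The same occurs for $\kappa=4,6$: since $j\kappa-1$ is odd when $\kappa$ is even, $U_{j\kappa-1}$ is odd and $g_{j\kappa}^{0}(x)=m(x)U_{j\kappa-1}(x)-m(x)U_{j\kappa-1}(x)=0$ identically on the line $y=-x$. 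So $\boldsymbol{\Theta}_{0,\kappa}(\Delta)$ is a set of \emph{energies}, and a given threshold can have a continuum of witnesses $\vec x$ none of whose coordinates are of the form $\cos(l\pi/\kappa)$; what must be shown is only that the resulting energy $E=x+y$ lies in $\boldsymbol{\theta}_{0,\kappa}(\Delta)$. The paper avoids this trap by solving directly for the admissible \emph{energies} $E$ in each branch of \eqref{crochetU} (e.g.\ for $\kappa=2$, the branches give $E=0$ or $E=2Y$; the second fails \eqref{fracU}, and $E=0=\cos(\pi/2)+\cos(\pi/2)\in\boldsymbol{\theta}_{0,2}$). Once you replace the target by ``$E\in\boldsymbol{\theta}_{0,\kappa}(\Delta)$'' and carry out the branch-by-branch elimination in the variable $E$ rather than $(x,y)$, your argument does go through; as written, the finishing step would fail the moment you actually solved the $y=-x$ branch. (Your ``crystallographic'' heuristic for why $\kappa\in\{2,3,4,6\}$ is also not what the paper uses — the paper just relies on the closed-form branches of $T_\kappa^{-1}$ for these $\kappa$ — but that is incidental.)
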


\begin{proof}
$g_{j \kappa} ^E (Y) = 0 \Leftrightarrow m(Y) U_{j\kappa-1}(Y) = -m(E-Y) U_{j\kappa-1}(E-Y)$. This happens iff :
\begin{itemize}
\item $m(Y) = 0$ and $m(E-Y)=0$, or
\item $m(Y) = 0$ and $U_{\kappa-1}(E-Y)=0$, or
\item $U_{\kappa-1}(Y)= 0$ and $m(E-Y)=0$, or
\item $U_{\kappa-1}(Y)= 0$ and $U_{\kappa-1}(E-Y)=0$, or 
\item $m(Y)$, $m(E-Y)$, $U_{\kappa-1}(Y)$ and $U_{\kappa-1}(E-Y)$ are all non-zero, and for all $j$,
\begin{equation}
\label{fracU}
\frac{U_{j \kappa-1}(Y)}{U_{j \kappa-1}(E-Y)} = - \frac{m(E-Y)}{m(Y)}.
\end{equation}
\noindent We don't know how to solve \eqref{fracU} directly, but instead we note that it implies 
\begin{equation} 
\label{crochetU}
[U_{j_1 \kappa-1}(Y) , U_{j_2 \kappa-1}(E-Y)]=0, \quad  \text{or equivalently}, \quad  \ T_{\kappa}(Y) = T_{\kappa}(E-Y),
\end{equation}
which is easy to solve (the equivalence holds due to Corollary \ref{corollaryEquiv}). So we solve \eqref{crochetU} and keep only those solutions that also satisfy \eqref{fracU}.
\end{itemize}
The solutions to the first 4 bullet points are included in Lemma \ref{lemSUMcosINTRO}. We briefly discuss the solutions to the 5th bullet point. For $\kappa = 2$, \eqref{crochetU} has solutions $E=0, 2Y$. As a general rule, for any $\kappa$, $E=2Y$ solves \eqref{crochetU} but not \eqref{fracU}. So $E=0$ is the only valid solution and it is included in Lemma \ref{lemSUMcosINTRO}. For $\kappa=3$, \eqref{crochetU} leads to $E-Y = (-Y - \sqrt{3} \sqrt{1-Y^2} )/2$ or $E-Y = (-Y + \sqrt{3} \sqrt{1-Y^2} )/2$. Plugging these into \eqref{fracU} for $j=1$, leads to the solutions $E= \pm 1/2, -1$ and $E = \pm 1/2, 1$, respectively, which are already included in Lemma \ref{lemSUMcosINTRO}. For $\kappa=4$, \eqref{crochetU} leads to $E=0$, or $E=2Y$ (which we reject), or $E-Y = \sqrt{1-Y^2}$ or $E-Y = - \sqrt{1-Y^2}$. Plugging the latter 2 equations into \eqref{fracU} for $j=1$, leads to the solutions $E= 0, \pm 1, \pm \sqrt{2}$, which are all included in Lemma \ref{lemSUMcosINTRO}. For $\kappa=6$, \eqref{crochetU} leads to $E=0$, or $E=2Y$ (which we reject), or $E-Y = (\pm Y \pm \sqrt{3} \sqrt{1-Y^2} )/2$ or $E-Y = ( \pm Y \mp \sqrt{3} \sqrt{1-Y^2} )/2$. Plugging these into \eqref{fracU} for $j=1$, leads to the solutions $E= \pm 1/2, \pm 1, 0, \pm \sqrt{3}/2$, which are all included in Lemma \ref{lemSUMcosINTRO}.
\qed
\end{proof}


\subsection{Trying to determine $1^{st}$ order thresholds} 
\label{subsection1st}
We try to find thresholds $\in \boldsymbol{\Theta}_{1,\kappa} (\Delta) \cap [0,2]$ in dimension 2 : $E \in \boldsymbol{\Theta}_{1,\kappa} (\Delta)$ iff $\exists E \in [0,2]$, $Y_0,Y_1 \in [E-1,1]$, and $\omega_0 <0$ such that 
\begin{equation}
\label{OMEGAO}
\omega_0 = \frac{g_{j \kappa} ^E (Y_1)}{g_{j \kappa} ^E (Y_0)} = \frac{g_{l \kappa} ^E (Y_1)}{g_{l \kappa} ^E (Y_0)}, \quad \forall j,l \in \N^*
\end{equation}
(we assume $g_{j \kappa} ^E (Y_0) \neq 0$ otherwise we are back in the case of subsection \ref{subsection0th}), which leads to solving $[g_{j \kappa}^E (Y_0),g_{l \kappa} ^E (Y_1)]=0$. Expanding, this is :

\begin{equation}
\label{long_comm22}
\begin{aligned}
& m(Y_0) m(Y_1) [U_{j \kappa-1}(Y_0) , U_{l \kappa-1}(Y_1) ] \\
& \quad + m(E-Y_0) m(Y_1) [U_{j \kappa-1}(E-Y_0) , U_{l \kappa-1}(Y_1) ]  \\
& \quad + m(Y_0) m(E-Y_1) [U_{j \kappa-1}(Y_0) , U_{l \kappa-1}(E-Y_1) ] \\
& \quad + m(E-Y_0) m(E-Y_1) [U_{j \kappa-1}(E-Y_0) , U_{l \kappa-1}(E-Y_1) ] = 0.
\end{aligned}
\end{equation}
We don't know how to proceed in order to thoroughly solve this equation. Instead, we propose a handful of assumptions which simplify things (and basically amounts to setting each of the 4 terms in \eqref{long_comm22} equal to 0). The list of various Ansatz is :   
\begin{enumerate}
\item $E-Y_0 = 1$, $[U_{j \kappa-1}(E-1) , U_{l \kappa-1}(Y_1) ] = 0$ and $[U_{j \kappa-1}(E-1) , U_{l \kappa-1}(E-Y_1) ] = 0$,
\item$E-Y_0=-1$, $[U_{j \kappa-1}(E+1) , U_{l \kappa-1}(Y_1) ] =0$ and $[U_{j \kappa-1}(E+1) , U_{l \kappa-1}(E-Y_1) ] =0$,
\item $Y_0 = E/2$, $[U_{j \kappa-1}(E/2) , U_{l \kappa-1}(Y_1) ] =0$ and $[U_{j \kappa-1}(E/2) , U_{l \kappa-1}(E-Y_1) ] =0$,
\item $U_{\kappa-1}(Y_1)=0$, $[U_{j \kappa-1}(Y_0) , U_{l \kappa-1}(E-Y_1) ]=0$, $[U_{j \kappa-1}(E-Y_0) , U_{l \kappa-1}(E-Y_1) ]=0$,
\item $U_{\kappa-1}(E-Y_1)=0$, $[U_{j \kappa-1}(Y_0) , U_{l \kappa-1}(Y_1) ]=0$, $[U_{j \kappa-1}(E-Y_0) , U_{l \kappa-1}(Y_1) ]=0$,
\item $T_{\kappa}(Y_0)=T_{\kappa}(Y_1)$, $T_{\kappa}(E-Y_0)=T_{\kappa}(Y_1)$, $T_{\kappa}(Y_0)=T_{\kappa}(E-Y_1)$, and $T_{\kappa}(E-Y_0)=T_{\kappa}(E-Y_1)$.
\end{enumerate}
The correct statement is that $(i)$ implies \eqref{long_comm22} for $i=1,....,6$. Note also that for $i=1,..,5$ we have boiled down to 3 equations, 3 unknowns, whereas for $i=6$ we have created ourselves 4 equations, 3 unknowns. In solving $(i)$, we systematically use Corollary \ref{basic_lemma}. Once $(i)$ is solved, we compute $\omega_0$ as per \eqref{OMEGAO} and check if it is negative. If it is the case, we have found a valid solution to our problem \eqref{special relationship}. To speed up calculations, we always ignore solutions where $Y_0 = Y_1$ or $Y_0 = E-Y_1$ as these would lead to $\omega_0 =1$. 

\begin{Lemma} For $\kappa=2$, solutions to (1) are $E= 2/3$, $Y_1 = E/2$ and $E = 1/2$, $Y_1 = 0$. For $2 \leq i \leq 6$, either there are no solutions to (i) or they are the same as (1). For $\kappa =3$, solutions to (i) are in Table \ref{sol_kappa_2346}.

\begin{table}[H]
\footnotesize
  \begin{center}
    \begin{tabular}{c|c|c} 
   (1) & (3) & (4)  \\ [0.5em]
      \hline
  $E=(5-3\sqrt{2})/7 \simeq 0.108$, $Y_1 = E/2 \simeq 0.054$  & $E=2/7 \simeq 0.285$, & $E = 1/ \sqrt{6} \simeq 0.408$, $Y_1 = 1/2$, $Y_0=-1/2$ \\ [0.5em]
  $E=(9-\sqrt{33})/12 \simeq 0.271$, $Y_1 = 1/2$  & \ \ $Y_1 = -1/2$ & $E = 1/4$, $Y_1 = 1/2$, $Y_0=(1+3\sqrt{5})/8 \simeq 0.963$ \\ [0.5em]
  $E=(9+\sqrt{33})/12 \simeq 1.228$, $Y_1 = 1/2$ & &  \\ [0.5em]
  $E=(5+3\sqrt{2})/7 \simeq 1.320$, $Y_1 = E/2 \simeq 0.660$ & & 
    \end{tabular}
  \end{center}
    \caption{Solutions to $(i)$ for $\kappa=3$. For (2), (5), (6) either there are no solutions or they are the same as in (1), (3), (4)}
        \label{sol_kappa_2346}
\end{table}
\normalsize

\end{Lemma}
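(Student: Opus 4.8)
The plan has three parts: show that each hypothesis $(i)$ does force \eqref{long_comm22}; for $\kappa\in\{2,3\}$, convert the bracket conditions of $(i)$ into algebraic equations via Corollary \ref{basic_lemma} and solve them; and then discard the degenerate, out-of-range and $0^{\text{th}}$-order branches, keeping the solutions with $\omega_0<0$.

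For the first part, write $T_1,T_2,T_3,T_4$ for the four summands of \eqref{long_comm22}. If $(i)\in\{1,2\}$ then $m(E-Y_0)=m(\mp1)=0$, so $T_2=T_4=0$, while the two remaining brackets $[U_{j\kappa-1}(Y_0),U_{l\kappa-1}(Y_1)]$ and $[U_{j\kappa-1}(Y_0),U_{l\kappa-1}(E-Y_1)]$ vanish for all $j,l$ by assumption. If $(i)=3$ then $Y_0=E/2$ gives $E-Y_0=Y_0$, hence $T_1=T_2$, $T_3=T_4$, and $T_1,T_3$ vanish by assumption. If $(i)\in\{4,5\}$ then, $U_{\kappa-1}$ vanishing at $Y_1$ (resp.\ at $E-Y_1$), Lemma \ref{lemma_rootsy} makes $U_{j\kappa-1}$ vanish there for every $j$, killing the two summands carrying that argument; the other two vanish by assumption. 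If $(i)=6$, Corollary \ref{basic_lemma} turns each of the four equalities $T_\kappa(\cdot)=T_\kappa(\cdot)$ into the vanishing of the matching bracket for all $j,l$. Moreover, by Corollary \ref{basic_lemma} each bracket hypothesis ``$[U_{j\kappa-1}(a),U_{l\kappa-1}(b)]=0$ for all $j,l$'' is itself the trichotomy $U_{\kappa-1}(a)=0$, or $U_{\kappa-1}(b)=0$, or $T_\kappa(a)=T_\kappa(b)$.

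For the second part I would substitute $U_1(x)=2x$, $T_2(x)=2x^2-1$ with $T_2(x)=T_2(y)\iff(x+y)(x-y)=0$ when $\kappa=2$, and $U_2(x)=4x^2-1$, $T_3(x)=4x^3-3x$ with $T_3(x)=T_3(y)\iff(x-y)\bigl[4(x^2+xy+y^2)-3\bigr]=0$ when $\kappa=3$ (see \eqref{cheby_23}); recall also that $Y_0$ is fixed by $E$ in $(1)$--$(3)$ ($Y_0=E-1,\ E+1,\ E/2$) and that one of $Y_1,E-Y_1$ is a root of $U_{\kappa-1}$ (so $0$ when $\kappa=2$, or $\pm1/2$ when $\kappa=3$) in $(4)$--$(5)$. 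Then each $(i)$ becomes a finite union of polynomial systems in the unknowns among $\{E,Y_0,Y_1\}$ — several branches for a two-bracket Ansatz, more for $(4)$--$(6)$ — and whenever a $T_\kappa$-equality appears I factor off its linear factor $x-y$ (that factor is exactly a degeneracy $Y_0=Y_1$, $Y_0=E-Y_1$, or an $E=2Y$ relation, which is dropped) and am left with a line or the conic $4(x^2+xy+y^2)=3$ to intersect with another line or conic, yielding solutions with at worst one nested radical. I then discard every branch forcing $Y_0=Y_1$ or $Y_0=E-Y_1$ (there $\omega_0=1$), every branch with $E\notin[0,2]$ or $Y_0,Y_1\notin[E-1,1]$ (e.g.\ $(2)$ gives nothing, since $Y_0=E+1>1$ once $E>0$), and every branch with $g^E_{j\kappa}(Y_0)=0$ for all $j$, as such $E$ are $0^{\text{th}}$-order thresholds handled in subsection \ref{subsection0th} (for the pinned $Y_0$ this is a short condition putting $Y_0$, equivalently $E-Y_0$ by the symmetry of Lemma \ref{symmetry_minima_E/2}, among the $\cos(l\pi/\kappa)$). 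For each of the finitely many survivors I compute $\omega_0$ from \eqref{OMEGAO} with $j=1$ — its independence of $j$ is automatic, because $[g^E_{j\kappa}(Y_0),g^E_{l\kappa}(Y_1)]=0$ for all $j,l$ together with $g^E_\kappa(Y_0)\neq0$ forces $g^E_{j\kappa}(Y_1)/g^E_{j\kappa}(Y_0)$ to be $j$-free — and keep the survivor iff $\omega_0<0$. Carrying this out, for $\kappa=2$ hypothesis $(1)$ leaves exactly $E=2/3$ with $Y_1=E/2=1/3$ ($\omega_0=-2$) and $E=1/2$ with $Y_1=0$ ($\omega_0=-1$) — here $U_{2j-1}$ being odd makes the ratios visibly $j$-free — and $(2)$--$(6)$ are empty in range or return these same two energies; for $\kappa=3$ one obtains precisely the entries of Table \ref{sol_kappa_2346}, with $(2),(5),(6)$ empty or redundant ($(5)$ is the image of $(4)$ under $Y_1\mapsto E-Y_1$, which leaves $E$ and $\omega_0$ unchanged; $(6)$ is over-determined).

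The main obstacle will be the volume of bookkeeping rather than any single computation: six hypotheses, two values of $\kappa$, each Ansatz fanning out through Corollary \ref{basic_lemma} into a dozen-odd algebraic branches, several of which require intersecting the conic $4(x^2+xy+y^2)=3$ with a line or another conic and then deciding, for a possibly irrational energy, the sign of $\omega_0$. Every branch is elementary; the care lies entirely in pruning all degenerate, out-of-range and secretly $0^{\text{th}}$-order branches and in verifying that no admissible solution has been missed.
\qed
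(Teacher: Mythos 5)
Your proposal is correct and follows the same approach the paper (implicitly) uses. The paper itself does not supply an explicit proof block for this Lemma — the "proof" consists entirely of the procedure described in the paragraphs preceding the statement: verify that each Ansatz $(i)$ forces \eqref{long_comm22} to vanish, reduce the bracket hypotheses to Chebyshev equalities via Corollary \ref{basic_lemma}, solve the resulting algebraic system, prune the $Y_0=Y_1$ / $Y_0=E-Y_1$ degeneracies and the $0^{\text{th}}$-order and out-of-range branches, and retain only those $E$ with $\omega_0<0$. Your write-up simply makes the paper's procedure precise, and the additional observations you make (the reflection $Y_1\mapsto E-Y_1$ sending Ansatz $(4)$ to $(5)$, the $j$-independence of $\omega_0$ following from the bracket relation itself, the range argument eliminating $(2)$ for $E>0$) are accurate and tighten the bookkeeping the paper leaves to the reader.
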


\subsection{Trying to determine $m^{th}$ order thresholds} For general $m \geq 1$, $E \in \boldsymbol{\Theta}_{m,\kappa} (\Delta) \cap [0,2]$ in dimension 2 iff $\exists E \in [0,2]$, $(Y_q)_{q=0}^m \subset [E-1, 1]$, and $(\omega_q)_{q=0}^{m-1} \subset \R$, $\omega_q \leq 0$, such that 
\begin{equation*}
\label{special relationship}
g_{j \kappa} ^E (Y_m) = \sum_{q=0} ^{m-1} \omega_q \cdot g_{j \kappa} ^E ( Y_q), \quad \forall j \in \N^* \quad (\omega_q \ independent \ of j).
\end{equation*} 
Now, if this linear relationship holds, it must be that for any choice of disctinct $j_1$, $j_2$, ..., $j_{m} \in \N^*$,
\begin{equation}
\begin{aligned}
\label{matrix_system_tosolve00}
\begin{pmatrix}
\omega_{m-1} \\
\omega_{m-2} \\
... \\
\omega_{0}
\end{pmatrix}
&= \begin{pmatrix} g_{\kappa} ^E (Y_{m-1}) & g_{\kappa} ^E (Y_{m-2}) & ... & g_{\kappa} ^E (Y_0) \\ g_{2\kappa} ^E (Y_{m-1}) & g_{2 \kappa} ^E (Y_{m-2}) & ... &  g_{2 \kappa} ^E (Y_0) \\
... & ... & ... & ... \\
g_{m \kappa} ^E (Y_{m-1}) & g_{m\kappa} ^E (Y_{m-2}) & ... &  g_{m \kappa} ^E (Y_0) \\
\end{pmatrix} ^{-1}
   \begin{pmatrix} g_{ \kappa} ^E (Y_{m}) \\ g_{2 \kappa} ^E (Y_{m}) \\
   ... \\
  g_{m  \kappa} ^E (Y_{m})  \end{pmatrix}  \\
  &=
\begin{pmatrix} g_{j_{1}  \kappa} ^E (Y_{m-1)} & g_{j_{1}  \kappa} ^E (Y_{m-2}) & ... & g_{j_{1}  \kappa} ^E (Y_0) \\ g_{j_{2}  \kappa} ^E (Y_{m-1}) & g_{j_{2}  \kappa} ^E (Y_{m-2}) & ... &  g_{j_{2}  \kappa} ^E (Y_0) \\
... & ... & ... & ... \\
g_{j_{m} \kappa} ^E (Y_{m-1}) & g_{j_{m} \kappa} ^E (Y_{m-2}) & ... &  g_{j_{m} \kappa} ^E (Y_0) \\
\end{pmatrix} ^{-1}
   \begin{pmatrix} g_{j_{1} \kappa} ^E (Y_m) \\ g_{j_{2} \kappa} ^E (Y_m) \\
   ... \\
  g_{j_{m}  \kappa} ^E (Y_m)  \end{pmatrix}.
\end{aligned}
\end{equation}
Note we are perhaps uncorrectly assuming that the above matrices are invertible, but at this point we're just trying to illustrate the concept. Unless there is a simplification we do not see, this system appears very complicated to solve directly, even for small $m$. In other words solving \eqref{matrix_system_tosolve00} apparently entails inverting Vandermonde matrices, whose entries consist of the polynomials $\{ g_{j_{q} \kappa} ^E (x)\}_{q=1}^{m}$ evaluated at $x = Y_0$, ..., $Y_{m-1}$ (which together with $Y_m$ and $E$ are unknowns). 

For $m=2$, \eqref{matrix_system_tosolve00} leads to solving (unknowns are $E,Y_0,Y_1,Y_2$) the system of 2 equations
\begin{equation}
\label{cases1}
\begin{cases}
\ \ [ g_{j_1 \kappa} ^E (Y_1) , g_{j_2 \kappa} ^E (Y_0)] \times [g_{2 \kappa} ^E (Y_0) , g_{\kappa} ^E (Y_2) ] =  [g_{\kappa} ^E (Y_1) , g_{2\kappa} ^E (Y_0) ] \times [ g_{j_2 \kappa} ^E (Y_0) , g_{j_1 \kappa} ^E (Y_2)], & \\
\ \ [ g_{j_1 \kappa} ^E (Y_1) , g_{j_2 \kappa} ^E (Y_0)] \times [g_{2 \kappa} ^E (Y_1) , g_{\kappa} ^E (Y_2) ] =  [g_{\kappa} ^E (Y_1) , g_{2\kappa} ^E (Y_0) ] \times [ g_{j_2 \kappa} ^E (Y_1) , g_{j_1 \kappa} ^E (Y_2)].
\end{cases}
\end{equation}
Of course, here one is interested in solutions where each of the commutator terms is non-zero, otherwise that leads us back to the case $m=1$. Frankly, we have no idea how to solve this system, and this is just $m=2$.

\section{A geometric construction to find thresholds in dimension 2}
\label{geo_construction}

The idea below is our bread and butter to find thresholds $\in \boldsymbol{\Theta}_{m,\kappa} (\Delta)$ in dimension 2. In sections \ref{section J2}, \ref{section J3a} and \ref{section J3b} we apply the idea and prove the existence of valid solutions to \eqref{conjecture_system_conj_intro} and \eqref{conjecture_system_conj_intro_even}. 

Fix $\kappa \geq 2$, $n \in \N$. Consider real variables $\E_n, X_{0,n}, X_{1,n}, ..., X_{n,n}, X_{n+1,n}$. For $n \in \N^*$ \underline{odd} define $\mathfrak{T}_{n,\kappa}$ to be the set of $\E_n = \E_n (\kappa)$ such that the system of $n+3$ equations in $n+3$ unknowns 
\begin{equation}
\label{conjecture_system_conj_intro}
\begin{cases}
T_{\kappa}(X_{q,n}) = T_{\kappa}(X_{n-q,n}), \quad \forall \ q = 0,1,..., (n-1)/2&  \\
X_{n-q,n} = \E_n -X_{1+q,n}, \quad \forall \ q = -1, 0,..., (n-1)/2  & 
\end{cases}
\end{equation}
has a solution satisfying 
\begin{align}
\label{o1}
& X_{q,n}, X_{n-q,n} \in (-1,1), \quad \forall \ 0 \leq q \leq (n-1)/2, \\ 
\label{o2}
& X_{n+1,n} \in \{ \cos(j \pi / \kappa) : 0 \leq j \leq \kappa \}, \\
\label{o3}
& T'_{\kappa}(X_{q,n}) \cdot T'_{\kappa}(X_{n-q,n}) < 0, \quad \forall \ 0 \leq q \leq (n-1)/2.
\end{align}
When $n$ is fixed and no confusion can arise we will simply write $X_i$ instead of $X_{i,n}$. Note also that the $X_i$'s depend on $\kappa$.

\begin{proposition} (odd terms)
\label{prop1_intro}
Fix $\kappa \geq 2$, let $n \in \N^*$ odd be given. Let $(X_q)_{q=0}^{n+1}$, $\mathcal{E}_n (\kappa)$ be a solution to \eqref{conjecture_system_conj_intro} such that $\mathcal{E}_n (\kappa) \in \mathfrak{T}_{n,\kappa}$. Then for all $j \in \N^*$,
\begin{equation}
\label{problem1_tosolve_again_101_88}
g_{j \kappa} ^{\mathcal{E}_n} ( X_{(n+1)/2} ) = \sum _{q =0} ^{(n-1)/2} \omega_q \cdot g_{j \kappa} ^{\mathcal{E}_n} ( X_{q}), \quad \omega_q = 2 (-1)^{(n-1)/2-q} \frac{\prod _{p=(n+1)/2} ^{n-q} m(X_p) U_{\kappa-1} (X_p)}{\prod _{p'=q} ^{(n-1)/2} m(X_{p'}) U_{\kappa-1} (X_{p'})}.
\end{equation}
The components of $\omega$ are all strictly negative and independent of $j$. In particular, for all $n \in \N^*$, $\mathfrak{T}_{2n-1,\kappa} \subset \boldsymbol{\Theta}_{n,\kappa} (\Delta[d=2])$.
\end{proposition}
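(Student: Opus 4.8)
The plan is to establish the identity \eqref{problem1_tosolve_again_101_88} directly from the structure imposed by the system \eqref{conjecture_system_conj_intro}, and then verify the sign and $j$-independence of the $\omega_q$, after which the inclusion $\mathfrak{T}_{2n-1,\kappa} \subset \boldsymbol{\Theta}_{n,\kappa}(\Delta[d=2])$ will follow immediately from the definition of $\boldsymbol{\Theta}_{m,\kappa}(\Delta)$ given in the introduction (with $m = n$, $\vec{x}_q = X_q$ for $0 \le q \le (n-1)/2$ being the first $m$ points, and $\vec{x}_m = X_{(n+1)/2}$). The key observation is that the second family of equations in \eqref{conjecture_system_conj_intro}, namely $X_{n-q,n} = \E_n - X_{1+q,n}$, means that the pairs $(X_{q}, X_{n-q})$ and the shifted pairs both lie on the constant-energy surface $S'_{\E_n}$; concretely $X_{q} + (\E_n - X_q) = \E_n$, so $g_{j\kappa}^{\E_n}(X_q) = m(X_q)U_{j\kappa-1}(X_q) + m(\E_n - X_q)U_{j\kappa-1}(\E_n - X_q)$, and $\E_n - X_q = X_{n+1-q}$ by the $q \mapsto q-1$ instance of that relation. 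The first family $T_\kappa(X_q) = T_\kappa(X_{n-q})$ together with Corollary \ref{corollaryEquiv} gives $[U_{\alpha\kappa-1}(X_q), U_{\beta\kappa-1}(X_{n-q})] = 0$ for all $\alpha,\beta$, i.e.\ $U_{j\kappa-1}(X_q)/U_{j\kappa-1}(X_{n-q})$ is independent of $j$ — this is exactly the mechanism that will force $\omega_q$ to be $j$-independent.

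First I would set up notation: write $u_j(x) := m(x) U_{j\kappa-1}(x)$, so that $g_{j\kappa}^{\E_n}(X_q) = u_j(X_q) + u_j(\E_n - X_q) = u_j(X_q) + u_j(X_{n+1-q})$. I would then prove the chain of identities by working from the ``middle'' pair $(X_{(n-1)/2}, X_{(n+1)/2})$ outward. Note $X_{(n+1)/2} = \E_n - X_{(n+1)/2}$ would be the degenerate case, but in general the middle equation $T_\kappa(X_{(n-1)/2}) = T_\kappa(X_{(n+1)/2})$ and the relation $X_{(n+1)/2} = \E_n - X_{(n-1)/2}$ (the $q = (n-1)/2$ instance) tie the center together. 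The telescoping structure is: $g_{j\kappa}^{\E_n}(X_{(n+1)/2})$ should be re-expressed using $U_{j\kappa-1}(X_{(n+1)/2}) = c\, U_{j\kappa-1}(X_{(n-1)/2})$ where $c$ is the $j$-independent ratio coming from Corollary \ref{corollaryEquiv}, then $g_{j\kappa}^{\E_n}(X_{(n-1)/2})$ gets related to $g_{j\kappa}^{\E_n}(X_{(n-3)/2})$, etc. The precise bookkeeping that produces the product $\prod_{p=(n+1)/2}^{n-q} m(X_p)U_{\kappa-1}(X_p) \big/ \prod_{p'=q}^{(n-1)/2} m(X_{p'})U_{\kappa-1}(X_{p'})$ and the sign $2(-1)^{(n-1)/2 - q}$ is where I would slow down: I expect one proves by (downward) induction on $q$ that a partial linear combination $\sum_{p=q}^{(n-1)/2}\omega_p\, g_{j\kappa}^{\E_n}(X_p)$ equals $g_{j\kappa}^{\E_n}(X_{(n+1)/2})$ plus a remainder term proportional to $g_{j\kappa}^{\E_n}(X_{q-1})$ (or to $u_j$ at an appropriate node), with the proportionality constant being exactly the next $\omega$. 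The ratio $U_{j\kappa-1}(x)/U_{j\kappa-1}(y)$ under $T_\kappa(x) = T_\kappa(y)$ equals, for $x = \cos\theta$, $y = \cos\phi$ with $\cos(\kappa\theta)=\cos(\kappa\phi)$, the quantity $\sin\phi\,\sin(j\kappa\theta)/(\sin\theta\,\sin(j\kappa\phi))$; by Lemma \ref{lemma_cos_alpha_beta} with $\alpha = \beta$ this is $\sin(2\kappa\theta)/\sin(2\kappa\phi)$-type and ultimately reduces to $m(y)U_{\kappa-1}(y)/(m(x)U_{\kappa-1}(x))$ up to sign — establishing that cleanly, i.e.\ computing $U_{j\kappa-1}(x)/U_{j\kappa-1}(y) = \pm\, U_{\kappa-1}(x)/U_{\kappa-1}(y)$ with the sign governed by $\sign(T'_\kappa(x)T'_\kappa(y))$ via \eqref{identity_derivative} and Lemma \ref{variationsTk}, is really the technical heart.

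For the sign claim: once each $\omega_q$ is written as $2(-1)^{(n-1)/2-q}$ times a ratio of products of $m(X_p)U_{\kappa-1}(X_p)$, I would argue that condition \eqref{o3}, $T'_\kappa(X_{q})T'_\kappa(X_{n-q}) < 0$, combined with $T'_\kappa = \kappa U_{\kappa-1}$ from \eqref{identity_derivative}, controls the signs of the $U_{\kappa-1}(X_p)$ so that consecutive factors in the telescoped product contribute the correct alternation, and $m(X_p) = 1 - X_p^2 > 0$ by \eqref{o1}; the upshot is that the explicit sign $(-1)^{(n-1)/2-q}$ in the formula is precisely what makes every $\omega_q$ come out strictly negative. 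The $j$-independence is automatic because, as noted, every occurrence of $U_{j\kappa-1}$ at a paired node cancels against its partner by Corollary \ref{corollaryEquiv}, leaving only $\kappa$-indexed (i.e.\ $j=1$) Chebyshev values. Finally, with \eqref{problem1_tosolve_again_101_88} in hand and all $\omega_q < 0$ independent of $j$, the points $\vec{x}_q = X_q$ ($0 \le q \le (n-1)/2$), together with $\vec{x}_n := X_{(n+1)/2}$ and, for the intermediate indices $(n+1)/2 < q < n$, the reflected points $X_q = \E_n - X_{n-q}$ which satisfy $g_{j\kappa}^{\E_n}(X_q) = g_{j\kappa}^{\E_n}(X_{n-q})$, supply exactly the data $(\vec x_q)_{q=0}^{n}$ and $(\omega_q)_{q=0}^{n-1}$ required in the definition of $\boldsymbol{\Theta}_{n,\kappa}(\Delta)$ — one repackages the $(n-1)/2 + 1$ distinct values with suitable multiplicities, using $\omega_q \le 0$ — which proves $\mathcal{E}_n \in \boldsymbol{\Theta}_{n,\kappa}(\Delta[d=2])$, hence $\mathfrak{T}_{2n-1,\kappa} \subset \boldsymbol{\Theta}_{n,\kappa}(\Delta[d=2])$. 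The main obstacle, as flagged, is the careful inductive derivation of the closed form for $\omega_q$ (the telescoping and the exact sign/product bookkeeping); the rest is assembling already-proved corollaries.
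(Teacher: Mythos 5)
Your proposal runs on the same rail as the paper's proof: Corollary~\ref{corollaryEquiv} (the vanishing of the Bezoutians $[U_{\alpha\kappa-1}(X_q),U_{\beta\kappa-1}(X_{n-q})]$ forced by $T_\kappa(X_q)=T_\kappa(X_{n-q})$) is the telescoping engine that produces $j$-independence, and \eqref{o1}, \eqref{o3} together with \eqref{identity_derivative} control the sign of $\omega_q$. But whereas you propose to \emph{generate} the closed form for $\omega_q$ by a downward induction whose steps you describe as something you ``expect'' to go through, the paper does not induct: it writes the candidate $\omega_q$ down, multiplies \eqref{problem1_tosolve_again_101_88} through by $2^{-1}\prod_{r=0}^{(n-1)/2}m(X_r)U_{\kappa-1}(X_r)$ (nonzero by \eqref{o1} and \eqref{o3}), and then verifies the resulting polynomial identity term-by-term, with each cancellation being a single application of Corollary~\ref{corollaryEquiv}. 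The two presentations are interchangeable in spirit, but your central step --- the inductive bookkeeping that produces exactly the claimed numerator, denominator, and sign $2(-1)^{(n-1)/2-q}$ --- is the proposition's actual content and you have not carried it out.

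Two further slips. The ratio identity you single out as the ``technical heart'' is cleaner than you make it: taking $\alpha=j$, $\beta=1$ in Corollary~\ref{corollaryEquiv} gives $U_{j\kappa-1}(X_q)/U_{j\kappa-1}(X_{n-q}) = U_{\kappa-1}(X_q)/U_{\kappa-1}(X_{n-q})$ \emph{exactly}, with no $\pm$. What \eqref{o3} and \eqref{identity_derivative} control is the sign of this common $j$-independent ratio, not a discrepancy between two different ratios --- the sign ambiguity you introduce does not exist, so there is no extra lemma to prove there. Second, your closing translation to $\boldsymbol{\Theta}_{n,\kappa}(\Delta)$ is over-engineered: in dimension $2$ the scalar $X_q$ corresponds to the point $\vec{x}_q := (X_q,\, \E_n - X_q) \in S_{\E_n}$, and \eqref{problem1_tosolve_again_101_88} is then \emph{literally} \eqref{special relationship_intro} with $m=(n+1)/2$, $\vec{x}_m := (X_{(n+1)/2},\, \E_n - X_{(n+1)/2})$, and $\vec{x}_q$ as above for $0\le q\le (n-1)/2$. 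No ``intermediate'' indices $(n+1)/2<q<n$ and no multiplicities are needed; replacing $n$ by $2n'-1$ (odd) gives $m=n'$ and the inclusion $\mathfrak{T}_{2n'-1,\kappa}\subset\boldsymbol{\Theta}_{n',\kappa}(\Delta[d=2])$ immediately.
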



According to \eqref{o1} and \eqref{o3}, and using \eqref{identity_derivative}, note that the denominator of $\omega_q$ is well-defined. A similar comment applies to the $n$ even case, which we now describe. For $n \in \N^*$ \underline{even} define $\mathfrak{T}_{n,\kappa}$ to be the set of $\E_n$ such that the system of $n+3$ equations in $n+3$ unknowns 

\begin{equation}
\label{conjecture_system_conj_intro_even}
\begin{cases}
T_{\kappa}(X_{q,n}) = T_{\kappa}(X_{n-q,n}), \quad \forall \ q = 0,1,..., n/2-1 &  \\
X_{n-q,n} = \E_n -X_{1+q,n}, \quad \forall \ q = -1, 0,..., n/2 -1  & 
\end{cases}
\end{equation}
has a solution satisfying 
\begin{align}
\label{o11}
& X_{q,n}, X_{n-q,n} \in (-1,1), \quad \forall \ 0 \leq q \leq n/2-1 \\
\label{o22}
& X_{n/2,n}, X_{n+1,n} \in \{ \cos(j \pi / \kappa) : 0 \leq j \leq \kappa \}, \\
\label{o33}
& T'_{\kappa}(X_{q,n}) \cdot T'_{\kappa}(X_{n-q,n}) < 0, \quad \forall \ 0 \leq q \leq n/2-1.
\end{align}
Again, when $n$ is fixed and no confusion can arise we will simply write $X_i$ instead of $X_{i,n}$.

\begin{proposition} (even terms)
\label{prop1_intro_even}
Fix $\kappa \geq 2$, let $n \in \N^*$ even be given. Let $(X_q)_{q=0}^{n+1}$, $\mathcal{E}_n (\kappa)$ be a solution to \eqref{conjecture_system_conj_intro_even} such that $\mathcal{E}_n (\kappa) \in \mathfrak{T}_{n,\kappa}$. Then for all $j \in \N^*$,
\begin{equation}
\label{problem1_tosolve_again_101even_88}
g_{j \kappa} ^{\mathcal{E}_n} ( X_{n/2} ) = \sum _{q =0} ^{n/2-1} \omega_q \cdot g_{j \kappa} ^{\mathcal{E}_n} ( X_{q}), \quad \omega_q =  (-1)^{n/2-1-q} \frac{\prod _{p=n/2+1} ^{n-q} m(X_p) U_{\kappa-1} (X_p)}{\prod _{p'=q} ^{n/2-1} m(X_{p'}) U_{\kappa-1} (X_{p'})}.
\end{equation}
The components of $\omega$ are all strictly negative and independent of $j$. In particular, for all $n \in \N^*$, $\mathfrak{T}_{2n,\kappa} \subset \boldsymbol{\Theta}_{n,\kappa} (\Delta[d=2])$.
\end{proposition}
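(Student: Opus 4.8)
The plan is to prove Proposition~\ref{prop1_intro_even} in exact parallel with the (anticipated) proof of Proposition~\ref{prop1_intro}; so I first describe the common strategy and then point out where the even case differs. The key object is the polynomial $g_{j\kappa}^{E}(x) = m(x)U_{j\kappa-1}(x) + m(E-x)U_{j\kappa-1}(E-x)$ of \eqref{def:gE22}, and the goal is to verify the linear relation \eqref{problem1_tosolve_again_101even_88} for \emph{every} $j\in\N^*$ with the \emph{single} choice of coefficients $\omega_q$ displayed there, and then to check the sign of each $\omega_q$. Once the linear relation holds with the stated (negative) $\omega_q$, the inclusion $\mathfrak{T}_{2n,\kappa}\subset\boldsymbol{\Theta}_{n,\kappa}(\Delta[d=2])$ is immediate from the very definition of $\boldsymbol{\Theta}_{m,\kappa}(\Delta)$ given in the introduction, with $m=n$, reading off $\vec{x}_q = (X_q, \E_n-X_q)$ as the points of $S_{\E_n}'$ and noting $X_{n-q} = \E_n - X_{1+q}$ re-expresses everything on the energy surface.

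The heart of the argument is a telescoping/induction computation. Define the ``coordinate blocks'' $B_p := m(X_p)U_{\kappa-1}(X_p)$ for the relevant indices $p$. Using the constraints \eqref{conjecture_system_conj_intro_even}: the relation $X_{n-q,n} = \E_n - X_{1+q,n}$ (for $q=-1,\dots,n/2-1$) means that $X_{q}$ and $X_{n-q}$ are ``energy-conjugate'' in the sense $X_q + X_{n-q} = \E_n$ shifted appropriately, so that a term $m(\E_n - x)U_{j\kappa-1}(\E_n-x)$ evaluated at one node equals a term evaluated at its partner node; and the relation $T_\kappa(X_q) = T_\kappa(X_{n-q})$ together with Corollary~\ref{corollaryEquiv} gives $[U_{\alpha\kappa-1}(X_q), U_{\beta\kappa-1}(X_{n-q})] = 0$, i.e. $U_{j\kappa-1}(X_q)/U_{\kappa-1}(X_q) = U_{j\kappa-1}(X_{n-q})/U_{\kappa-1}(X_{n-q})$ for all $j$ (the hypotheses $U_{\kappa-1}(X_q)\neq 0$ are guaranteed by \eqref{o11},\eqref{o33} and \eqref{identity_derivative}). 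The strategy is then: starting from $g_{j\kappa}^{\E_n}(X_0)$, use these two facts to ``peel off'' one block $B_p$ at a time, turning $g_{j\kappa}^{\E_n}(X_q)$ into a scalar multiple (independent of $j$, by Corollary~\ref{corollaryEquiv}) of $g_{j\kappa}^{\E_n}(X_{q+1})$ plus lower-order terms, and iterate until one reaches $g_{j\kappa}^{\E_n}(X_{n/2})$. Careful bookkeeping of the accumulated ratios of $B_p$'s yields precisely the products $\prod_{p=n/2+1}^{n-q} B_p \big/ \prod_{p'=q}^{n/2-1} B_{p'}$ and the alternating sign $(-1)^{n/2-1-q}$ in \eqref{problem1_tosolve_again_101even_88}. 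I would organize this as an induction on $n/2$, checking the base case $n=2$ (three nodes $X_0, X_1, X_2$, one equation $T_\kappa(X_0)=T_\kappa(X_2)$, with $X_1, X_3$ pinned to the grid $\{\cos(j\pi/\kappa)\}$) by direct expansion, then the inductive step.

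The sign analysis is the place where the even/odd distinction and the extra constraint \eqref{o22} enter decisively. Each block $B_p = m(X_p)U_{\kappa-1}(X_p) = \tfrac{1}{\kappa}m(X_p)T_\kappa'(X_p)$ by \eqref{identity_derivative}; since $m(X_p) = 1-X_p^2 > 0$ for $X_p\in(-1,1)$, the sign of $B_p$ equals the sign of $T_\kappa'(X_p)$. Condition \eqref{o33}, $T_\kappa'(X_q)T_\kappa'(X_{n-q})<0$, says the two partners in a pair have opposite-signed blocks, so in the product $\prod_{p=n/2+1}^{n-q} B_p$ (ranging over the ``upper'' partners $X_{n/2+1},\dots,X_{n-q}$) versus $\prod_{p'=q}^{n/2-1} B_{p'}$ (the matched ``lower'' partners $X_q,\dots,X_{n/2-1}$) each upper factor is opposite in sign to its lower mate; there are $n/2-q$ such mismatched pairs, contributing $(-1)^{n/2-q}$, which combines with the explicit prefactor $(-1)^{n/2-1-q}$ to give an overall factor $(-1)^{-1}=-1$, i.e. $\omega_q<0$ for every $q$. (In the odd case the node $X_{(n+1)/2}$ is a genuine interior node rather than a grid point, which is why a factor of $2$ and a slightly different index shift appear in \eqref{problem1_tosolve_again_101_88}; in the even case \eqref{o22} forces $X_{n/2}$ onto the grid where $B_{n/2}$ would vanish, so it is correctly excluded from both products.) I expect the main obstacle to be exactly this bookkeeping: getting the index ranges in the two products, the power of $-1$, and the role of the two ``boundary'' nodes $X_{n/2}$ and $X_{n+1}$ all consistent, and verifying that the telescoping genuinely closes up after $n/2$ steps rather than leaving a residual term — the algebra is routine but unforgiving, and the cleanest route is probably to prove a single-step lemma ($g_{j\kappa}^{E}(X_q) = c_q\, g_{j\kappa}^{E}(X_{q+1}) + (\text{explicit remainder})$) and then sum the geometric-like cascade.
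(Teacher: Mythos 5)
Your plan is correct and, in substance, is exactly what the paper does: the paper proves the odd case (Proposition \ref{prop1_intro}) by multiplying \eqref{problem1_tosolve_again_101_88} through by the common denominator $\prod_{r=0}^{(n-1)/2} m(X_r) U_{\kappa-1}(X_r)$, expanding, and exhibiting the pairwise cancellation of terms via Corollary \ref{corollaryEquiv} together with the symmetry $X_{n-q}=\E_n-X_{1+q}$; for the even case it simply says the argument is ``very similar.'' Your ``single-step lemma plus cascade'' is the same telescoping cancellation, just narrated as a recursion rather than as a single cleared-denominator identity — a cosmetic difference. Two small confirmations that your bookkeeping is sound: (i) in the even case, since $X_{n/2}=\cos(j_0\pi/\kappa)$ for some $0\le j_0\le\kappa$, one has $m(X_{n/2})U_{l\kappa-1}(X_{n/2})=0$ for \emph{all} $l\in\N^*$ (either $m$ vanishes or every $U_{l\kappa-1}$ does), so $g_{j\kappa}^{\E_n}(X_{n/2}) = m(X_{n/2+1})U_{j\kappa-1}(X_{n/2+1})$ and the missing factor of $2$ relative to the odd case is exactly as you describe; (ii) your sign count matches the paper's: both products in $\omega_q$ have $n/2-q$ factors, each ratio $B_{n-q-l}/B_{q+l}$ is negative by \eqref{o33} and \eqref{identity_derivative}, giving $(-1)^{n/2-q}\cdot(-1)^{n/2-1-q}=-1$.
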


\begin{remark} 
For $\kappa \geq 3$ and fixed $n$, systems \eqref{conjecture_system_conj_intro} and \eqref{conjecture_system_conj_intro_even} usually admit several solutions. This is because $T_{\kappa}$ is not an injective function on $[-1,1]$. So for given $x$, $T_{\kappa}(x) = T_{\kappa}(y)$ has several solutions (although we crucially require $T'_{\kappa} (x) \cdot T'_{\kappa} (y) < 0$, see \eqref{o3} and \eqref{o33}). The fact that the equation $T_{\kappa}(x) = T_{\kappa}(y)$ for given $x$ typically has several solutions means there is generally an abundance of solutions, see section \ref{KAPPA10} for an obvious graphical illustration.
\end{remark}

\begin{remark}
\label{obs_symmetry}
Equation $X_{n-q} = \E_n -X_{1+q}$ in \eqref{conjecture_system_conj_intro} and \eqref{conjecture_system_conj_intro_even} entails $\E_n  /2 - \min(X_{n-q}, X_{1+q}) = \max(X_{n-q}, X_{1+q}) - \E_n/2$. Graphically this equation says that $X_{n-q}$ is the symmetric of $X_{1+q}$ wrt.\ the vertical axis $x= \E_n /2$. This is very helpful to bear in mind to construct solutions. Furthermore, in the $n$ odd case, $X_{(n+1)/2} = \E_n /2$, which helps for a graphical construction.
\end{remark} 

Behind systems \eqref{conjecture_system_conj_intro} and \eqref{conjecture_system_conj_intro_even} is a simple graphical construction and interpretation. 

Figures \ref{fig:test_T3k3firsty}, \ref{fig:T3, increasing to 0.5} and \ref{fig:test_T3k3} illustrate solutions to Propositions \ref{prop1_intro} and \ref{prop1_intro_even} for $1 \leq n \leq 6$ and $\kappa=3$. The vertical dotted line is the axis of symmetry $x = \E_n / 2$. The key observation when looking at these graphs is that every point $(X_q, T_{\kappa}(X_q))$ always satisfies 2 crucial conditions :
\begin{enumerate}[label*=\arabic*.]
\item \textit{a symmetry condition} : each $X_q$ is the symmetric of another point $X_r$ wrt.\ the axis $x= \E_n /2$, namely $X_r = X_{n-q+1}$. This is Remark \ref{obs_symmetry}.
\item \textit{a level condition} : each $X_q$ satisfies at least one of the following 3 conditions :
\begin{enumerate}[label*=\arabic*.]
\item $m(X_q) =0$, or
\item $U_{\kappa-1}(X_q) = 0$ (equivalently $T'_{\kappa}(X_q)=0$), or 
\item $\exists X_r$ such that $T_{\kappa}(X_q) = T_{\kappa}(X_r)$ and $T'_{\kappa}(X_q) \cdot T'_{\kappa}(X_r) < 0$ (!), namely $X_r = X_{n-q}$. 
\end{enumerate}
\end{enumerate}

The symmetry and level conditions set the rules of the game to construct a valid threshold solution $\in \mathfrak{T}_{n,\kappa} (\Delta)$. Possible constructions are as follows :

\noindent \underline{\textbf{Algorithm for $n$ odd}} -- system \eqref{conjecture_system_conj_intro} along with conditions \eqref{o1}--\eqref{o3} : 
\begin{enumerate}
\item Fix $\kappa \geq 2$ and plot the Chebyshev polynomial $T_{\kappa}(x)$.
\item Initialize energy $E$ to a certain value such that $E/2 \in [-1,1] \setminus \{ \cos( j \pi / \kappa) : 0 \leq j \leq \kappa \}$, and draw the vertical axis of symmetry $x=E/2$.
\item Place the first point $(X_{q_0}, T_{\kappa}(X_{q_0}))$ such that $X_{q_0} = E/2$.
\item Place $(X_{q_1}, T_{\kappa}(X_{q_1}))$, $(X_{q_2}, T_{\kappa}(X_{q_2}))$, ..., $(X_{q_{n+1}}, T_{\kappa}(X_{q_{n+1}}))$ by alternating between applying the level condition 2.3 wrt.\ the last point constructed, and then the symmetry condition wrt.\ to the last point constructed.
\item Finally calibrate $E$ in such a way that the $x$-coordinate of the last point constructed, $X_{q_{n+1}}$, also satisfies a level condition 2.1 or 2.2. Upon calibration, $E = \E_n$.
\end{enumerate}

\noindent \underline{\textbf{Algorithm for $n$ even}} -- system \eqref{conjecture_system_conj_intro_even} along with conditions \eqref{o11}--\eqref{o33} : 
\begin{enumerate}
\item Fix $\kappa \geq 2$ and plot the Chebyshev polynomial $T_{\kappa}(x)$.
\item Initialize energy $E$ to a certain value and draw the vertical axis of symmetry $x=E/2$.
\item Place a first point $(X_{q_0}, T_{\kappa}(X_{q_0}))$ such that $X_{q_0}$ satisfies a level condition 2.1 or 2.2, (and for $n$ even perhaps require $X_{q_0} \neq E/2$).
\item Place $X_{q_1}$ as the symmetric of $X_{q_0}$ wrt.\ the axis of symmetry.
\item Place $(X_{q_2}, T_{\kappa}(X_{q_2}))$, $(X_{q_3}, T_{\kappa}(X_{q_3}))$, ..., $(X_{q_{n+1}}, T_{\kappa}(X_{q_{n+1}}))$ by alternating between applying the level condition 2.3 wrt.\ the last point constructed, and then the symmetry condition wrt.\ to the last point constructed.
\item Finally calibrate $E$ in such a way that the $x$-coordinate of the last point constructed, $X_{q_{n+1}}$, also satisfies a level condition 2.1 or 2.2. Upon calibration, $E = \E_n$.
\end{enumerate}

Note that the proposed Algorithms are dynamical constructions : the positioning of all the $X_{q_i}$'s depends on the value of $E$ (with the exception of $X_{q_0}$ in the $n$ even case). In the final step when $E$ is adjusted, all the $X_{q_i}$'s migrate (with the exception of $X_{q_0}$ that stays put in the $n$ even case). Adjusting $E$ preserves the symmetry and level conditions.

As we'll be using these systems as our only strategy to find thresholds for the rest of the article, it is useful to know that we can allow ourselves to focus only on positive energies :

\begin{Lemma}
\label{Lemma_symmetryTT}
$\mathfrak{T}_{n, \kappa} = - \mathfrak{T}_{n, \kappa}$, for all $n \in \N^*$, $\kappa \geq 2$.
\end{Lemma}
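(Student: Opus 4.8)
The plan is to exhibit an explicit involution on the solution sets of the defining systems that sends a solution at energy $\E_n$ to a solution at energy $-\E_n$. Concretely, given a solution $(\E_n, X_{0,n}, \ldots, X_{n+1,n})$ of \eqref{conjecture_system_conj_intro} (for $n$ odd) or of \eqref{conjecture_system_conj_intro_even} (for $n$ even) satisfying the attendant side conditions \eqref{o1}--\eqref{o3} (resp.\ \eqref{o11}--\eqref{o33}), I would consider the reflected tuple $\widetilde{\E}_n := -\E_n$, $\widetilde{X}_{i,n} := -X_{i,n}$ and verify it is again a solution satisfying all the side conditions. Since $X \mapsto -X$ is an involution, this immediately yields $-\mathfrak{T}_{n,\kappa} \subseteq \mathfrak{T}_{n,\kappa}$, and applying it to $-\mathfrak{T}_{n,\kappa}$ (or simply noting the map is its own inverse) gives the reverse inclusion, hence $\mathfrak{T}_{n,\kappa} = -\mathfrak{T}_{n,\kappa}$.

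The verification rests on elementary parity facts for Chebyshev polynomials: $T_\kappa(-x) = (-1)^\kappa T_\kappa(x)$, and, by \eqref{identity_derivative} together with the fact that $U_m$ is even for $m$ even and odd for $m$ odd (as recalled in the proof of Lemma \ref{Lemma_symmetryDelta}), $T'_\kappa(-x) = \kappa U_{\kappa-1}(-x) = (-1)^{\kappa-1}\kappa U_{\kappa-1}(x) = (-1)^{\kappa-1} T'_\kappa(x)$. Consequently $T_\kappa(\widetilde{X}_{q,n}) = (-1)^\kappa T_\kappa(X_{q,n}) = (-1)^\kappa T_\kappa(X_{n-q,n}) = T_\kappa(\widetilde{X}_{n-q,n})$, so the first block of equations in \eqref{conjecture_system_conj_intro} (resp.\ \eqref{conjecture_system_conj_intro_even}) is preserved; moreover $T'_\kappa(\widetilde{X}_{q,n})\, T'_\kappa(\widetilde{X}_{n-q,n}) = \big((-1)^{\kappa-1}\big)^2 T'_\kappa(X_{q,n})\, T'_\kappa(X_{n-q,n}) = T'_\kappa(X_{q,n})\, T'_\kappa(X_{n-q,n}) < 0$, so \eqref{o3} (resp.\ \eqref{o33}) holds. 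The second block $X_{n-q,n} = \E_n - X_{1+q,n}$ becomes $\widetilde{X}_{n-q,n} = -X_{n-q,n} = -\E_n + X_{1+q,n} = \widetilde{\E}_n - \widetilde{X}_{1+q,n}$, so it too is preserved (this also covers the relations $X_{n+1,n} = \E_n - X_{0,n}$ and, in the $n$ odd case, $X_{(n+1)/2,n} = \E_n/2$). Condition \eqref{o1} (resp.\ \eqref{o11}) is immediate because $(-1,1)$ is symmetric about $0$.

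The one point requiring a remark is condition \eqref{o2} on $X_{n+1,n}$, and additionally \eqref{o22} on $X_{n/2,n}$ in the even case: one needs that negating an element of $\{\cos(j\pi/\kappa) : 0 \leq j \leq \kappa\}$ stays in that set. This holds because $-\cos(j\pi/\kappa) = \cos(\pi - j\pi/\kappa) = \cos\big((\kappa-j)\pi/\kappa\big)$, and $0 \leq j \leq \kappa$ forces $0 \leq \kappa - j \leq \kappa$; hence $\widetilde{X}_{n+1,n} = \cos\big((\kappa-j)\pi/\kappa\big)$ lies in the required set, and likewise for $\widetilde{X}_{n/2,n}$. This is the only step where something could a priori fail, so it is the ``main obstacle'' only in a nominal sense; no genuine difficulty arises.

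Assembling the above, the reflection $\big(\E_n,(X_{i,n})\big) \mapsto \big(-\E_n,(-X_{i,n})\big)$ carries every valid solution of \eqref{conjecture_system_conj_intro} (for $n$ odd) or \eqref{conjecture_system_conj_intro_even} (for $n$ even) at energy $\E_n \in \mathfrak{T}_{n,\kappa}$ to a valid solution at energy $-\E_n$, so $-\mathfrak{T}_{n,\kappa} \subseteq \mathfrak{T}_{n,\kappa}$; since the map is an involution, equality follows, for all $n \in \N^*$ and $\kappa \geq 2$.
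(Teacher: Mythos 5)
Your proof is correct and follows essentially the same approach as the paper: exploit the parity of $T_\kappa$ and $U_{\kappa-1}$ to show the reflection $(\E_n,(X_i)) \mapsto (-\E_n,(-X_i))$ preserves the defining systems and side conditions. You spell out one detail the paper leaves implicit — that negation preserves the set $\{\cos(j\pi/\kappa) : 0 \leq j \leq \kappa\}$ — which is a worthwhile clarification.
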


\begin{proof}
Use the fact that $T_{\kappa}$ and $U_{\kappa}$ are even polynomials if $\kappa$ is even and odd if $\kappa$ is odd. Also use \eqref{identity_derivative}. Finally, note that $\E_n, X_0, ..., X_{n+1}$ satisfy \eqref{conjecture_system_conj_intro} and \eqref{o1} -- \eqref{o3} (respectively \eqref{conjecture_system_conj_intro_even} and \eqref{o11} -- \eqref{o33}) iff $-\E_n, -X_0, ..., -X_{n+1}$ satisfy the same equations and conditions.
\qed
\end{proof}

Note the similarity of Lemma \ref{Lemma_symmetryTT} with Lemma \ref{Lemma_symmetryDelta}, but partial discrepency with Lemma \ref{Lemma_symmetryDelta_88} -- this is another issue we don't understand. It is an open problem for us to decide if $\mathfrak{T}_{2n-1, \kappa} \cup \mathfrak{T}_{2n, \kappa} = \boldsymbol{\Theta}_{n,\kappa} (\Delta[d=2])$ holds.

To close this section, we mention it is possible to express the thresholds $\E_n$ in systems \eqref{conjecture_system_conj_intro} and \eqref{conjecture_system_conj_intro_even} as solutions to a single equation (with nested square root terms). It's a matter of knowing which branch of $T_{\kappa} ^{-1} (x)$ to choose from. It is easier to explain with an example : using system \eqref{conjecture_system_conj_intro}, for $n=3$, let $X_{n+1} = X_4$ be given by \eqref{o2}. Then $\E_3 (\kappa)$ is the solution to the following equation :
\begin{align*}
\E_3 &= X_4 + X_0 = X_4+ T_{\kappa} ^{-1} T_{\kappa} (X_3) =  X_4 + T_{\kappa} ^{-1} T_{\kappa} (\E_3-X_1)   \\
&= X_4 + T_{\kappa} ^{-1} T_{\kappa} \left(\E_3- T_{\kappa} ^{-1} T_{\kappa}  (X_2) \right) =  X_4 + T_{\kappa} ^{-1} T_{\kappa} \left(\E_3- T_{\kappa} ^{-1} T_{\kappa}  \left(\E_3 / 2 \right) \right).
\end{align*}
\normalsize
The advantage of having 1 equation with 1 unknown over a system of equations with several unknowns is that it is easier (in our opinion) to solve numerically. Proposition \ref{lem1_sequence3344} is one (of many) applications of this idea.

\vspace{0.5cm}
\noindent \textit{Proof of Proposition \ref{prop1_intro}.}

From \eqref{conjecture_system_conj_intro}, $X_{(n+1)/2} = \mathcal{E}_n /2 \Rightarrow g_{j \kappa} ^{\mathcal{E}_n} ( X_{(n+1)/2} ) = 2 m(X_{(n+1)/2}) U_{j \kappa-1}( X_{(n+1)/2})$. For $q \geq 1$, let 
\begin{equation*}
\begin{aligned}
\tilde{\omega}_q &:= \frac{\omega_q}{2}  \prod _{r=0} ^{\frac{n-1}{2}} m(X_{r}) U_{\kappa-1} (X_{r}) =  (-1)^{\frac{n-1}{2}-q} \prod _{p=\frac{n+1}{2}} ^{n-q} m(X_p) U_{\kappa-1} (X_p) \prod _{p'=0} ^{q-1} m(X_{p'}) U_{\kappa-1} (X_{p'}).
\end{aligned}
\end{equation*} 
In order for what follows to apply to the cases $n=1,3$ as well, interpret $\prod _{\alpha} ^{\beta} =0$ and $\sum_{\alpha} ^{\beta} =0$ whenever $\beta < \alpha$. Multiplying \eqref{problem1_tosolve_again_101_88} throughout by $2^{-1}  \prod _{r=0} ^{(n-1)/2} m(X_{r}) U_{\kappa-1} (X_{r}) $ shows that \eqref{problem1_tosolve_again_101_88} is equivalent to
\begin{equation}
\label{problem1_tosolve_again_202}
\begin{aligned}
& m(X_{(n+1)/2}) U_{j \kappa-1}( X_{(n+1)/2}) \prod _{p'=0} ^{(n-1)/2} m(X_{p'}) U_{\kappa-1} (X_{p'}) \\
& \quad = (-1)^{(n-1)/2} m(X_0) U_{j\kappa-1} (X_0) \prod _{p=(n+1)/2} ^{n} m(X_p) U_{\kappa-1} (X_p)  \\
& \quad \quad +  \sum _{q =1} ^{(n-1)/2} \tilde{\omega}_q m(X_q) U_{j\kappa-1} (X_q) + \sum _{q =1} ^{(n-1)/2} \tilde{\omega}_q m(X_{n-q+1}) U_{j\kappa-1} (X_{n-q+1}) \\
& \quad = (-1)^{(n-1)/2} m(X_0) U_{j\kappa-1} (X_0) \prod _{p=(n+1)/2} ^{n} m(X_p) U_{\kappa-1} (X_p)  \\
& \quad \quad + m(X_{(n-1)/2}) U_{j\kappa-1} (X_{(n-1)/2}) m(X_{(n+1)/2}) U_{\kappa-1} (X_{(n+1)/2}) \prod _{p'=0} ^{(n-3)/2} m(X_{p'}) U_{\kappa-1} (X_{p'}) \\
& \quad \quad - (-1)^{(n-1)/2} m(X_0) U_{\kappa-1} (X_0) m(X_n) U_{j\kappa-1} (X_n) \prod_{p=(n+1)/2} ^{n-1} m(X_p) U_{\kappa-1} (X_p) \\
& \quad \quad +  \sum _{q =1} ^{(n-3)/2} \tilde{\omega}_q m(X_q) U_{j\kappa-1} (X_q) + \sum _{q =2} ^{(n-1)/2} \tilde{\omega}_q m(X_{n-q+1}) U_{j\kappa-1} (X_{n-q+1}).
\end{aligned}
\end{equation}
We apply Corollary \ref{corollaryEquiv}. The 2nd term on the rhs of \eqref{problem1_tosolve_again_202} equals the lone term on the lhs of \eqref{problem1_tosolve_again_202}. The 3rd term on the rhs of \eqref{problem1_tosolve_again_202} cancels the 1st term on the rhs of \eqref{problem1_tosolve_again_202}. Finally the 2 sums at the very end of the rhs of \eqref{problem1_tosolve_again_202} cancel each other ; specifically, the $q^{th}$ term in the first sum equals
$$(-1)^{(n-1)/2-q} m(X_q) U_{j\kappa-1} (X_q)  \prod _{p=(n+1)/2} ^{n-q} m(X_p) U_{\kappa-1} (X_p) \prod _{p'=0} ^{q-1} m(X_{p'}) U_{\kappa-1} (X_{p'})$$
and it cancels the $q+1^{th}$ term in the second sum which equals
$$-(-1)^{(n-1)/2-q} m(X_{n-q}) U_{j\kappa-1} (X_{n-q})  \prod _{p=(n+1)/2} ^{n-q-1} m(X_p) U_{\kappa-1} (X_p) \prod _{p'=0} ^{q} m(X_{p'}) U_{\kappa-1} (X_{p'}),$$
and this for $q=1,2,...,(n-3)/2$.

It remains to prove that $\omega_q <0$. Because we assume $X_q, X_{n-q} \in (-1,1)$, $\forall q=0,..., (n-1)/2$, $m(X_q), m(X_{n-q}) \in (0,1)$. Thus the sign of $\omega_q$ is that of 
$$(-1)^{(n-1)/2-q} \frac{U_{\kappa-1} (X_{n-q})}{U_{\kappa-1} (X_{q})} \frac{U_{\kappa-1} (X_{n-q-1})}{U_{\kappa-1} (X_{q+1})}  \frac{U_{\kappa-1} (X_{n-q-2})}{U_{\kappa-1} (X_{q+2})} \times... \times \frac{U_{\kappa-1} (X_{(n+1)/2})}{U_{\kappa-1} (X_{(n-1)/2})}.$$
By Lemma \eqref{o3}, $U_{\kappa-1} (X_{n-q-l}) / U_{\kappa-1} (X_{q+l}) < 0$ for $l=0,...,(n-1)/2-q$, and so the sign of $\omega_q$ is $(-1)^{(n-1)/2-q} \times (-1)^{(n-1)/2-q + 1} = -1$. 
\qed

\vspace{0.5cm}
\noindent \textit{Proof of Proposition \ref{prop1_intro_even}.} Very similar to that of Proposition \ref{prop1_intro} so we leave it to the reader. 
\qed



\section{Graphical visualization of the threshold solutions \texorpdfstring{$\in \boldsymbol{\theta}_{0,\kappa}(\Delta)$}{TEXT} in dimension 2}

\label{section_g222}
Recall the basic threshold solutions $\in \boldsymbol{\theta}_{0,\kappa}(\Delta)$ given in Lemma \ref{lemSUMcosINTRO}. In dimension 2, these can framed as solutions $\E_0$ to the following system :

\begin{equation}
\label{conjecture_system_conj_intro_0}
\begin{cases}
X_{1} = \E_0 - X_{0}, & \\
X_0, X_1 \in \{ \cos(j \pi / \kappa) : 0 \leq j \leq \kappa \}. &
\end{cases}
\end{equation}
Note that this is a simplification of system \eqref{conjecture_system_conj_intro_even} to $n=0$ ($q=-1$). A graphical interpretation of this system is given in Figure \ref{fig:test_T8Interpretation} for $\kappa=8$. The vertical dotted line is the equation $x = \E_0 /2$.

\begin{figure}[htb]
  \centering
 \includegraphics[scale=0.09]{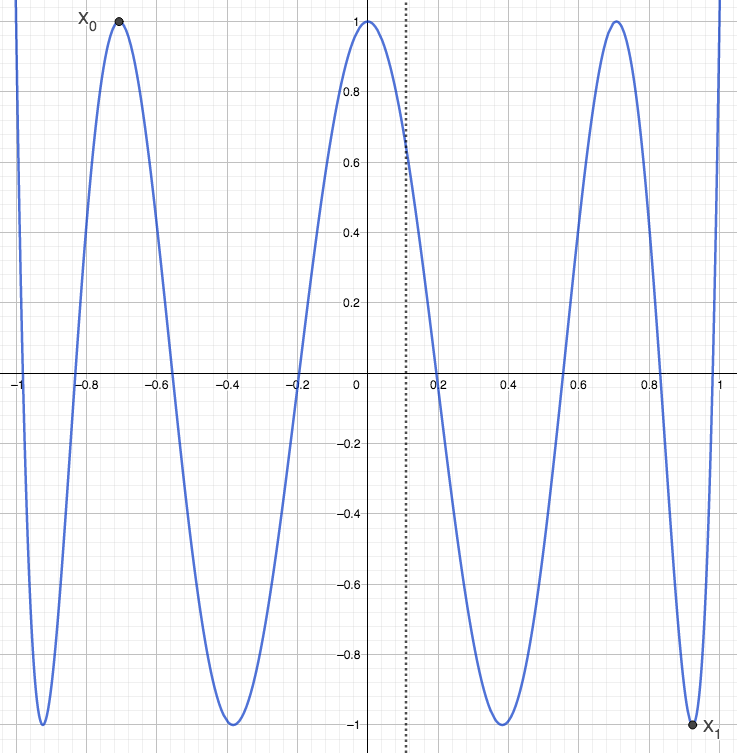}
 \includegraphics[scale=0.09]{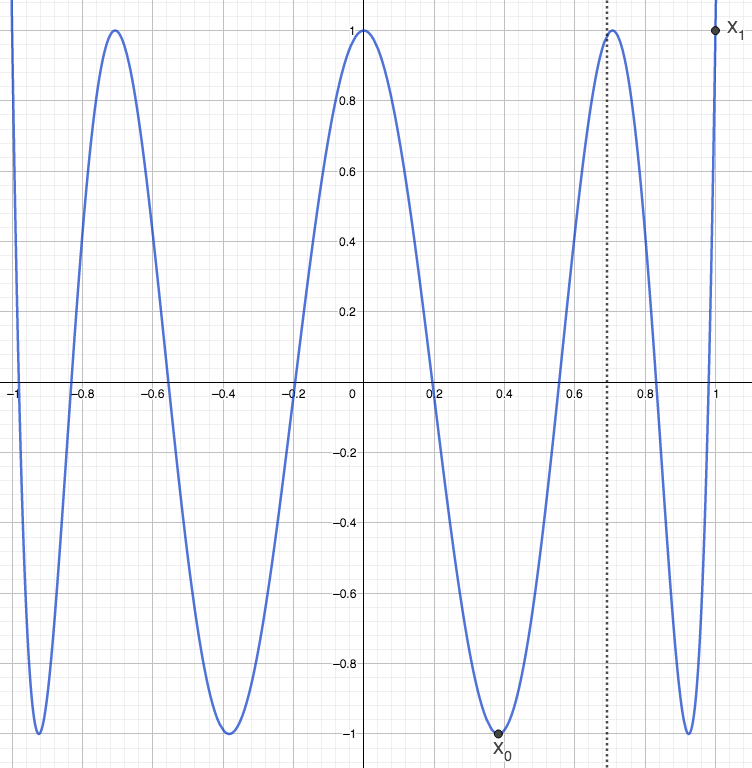}
   \includegraphics[scale=0.09]{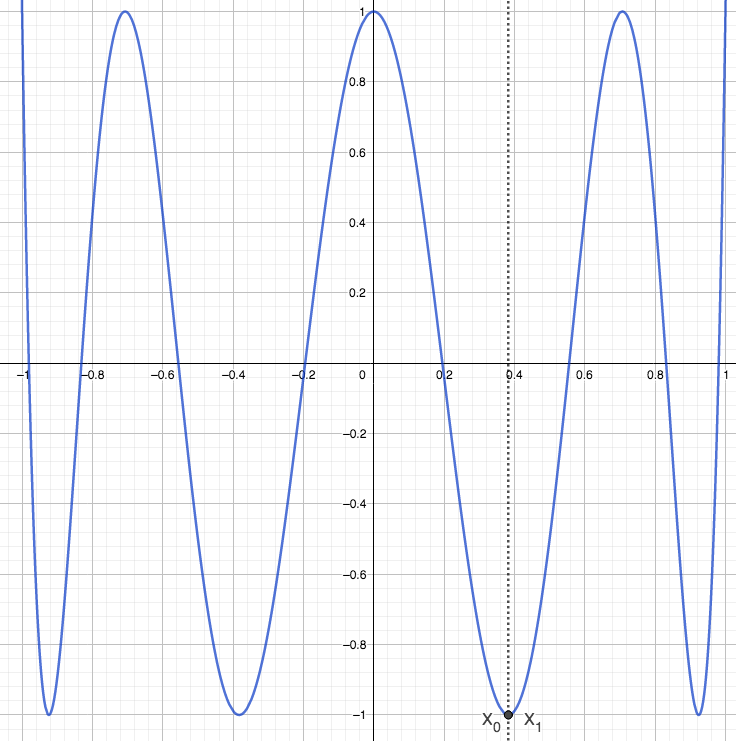}
  \includegraphics[scale=0.09]{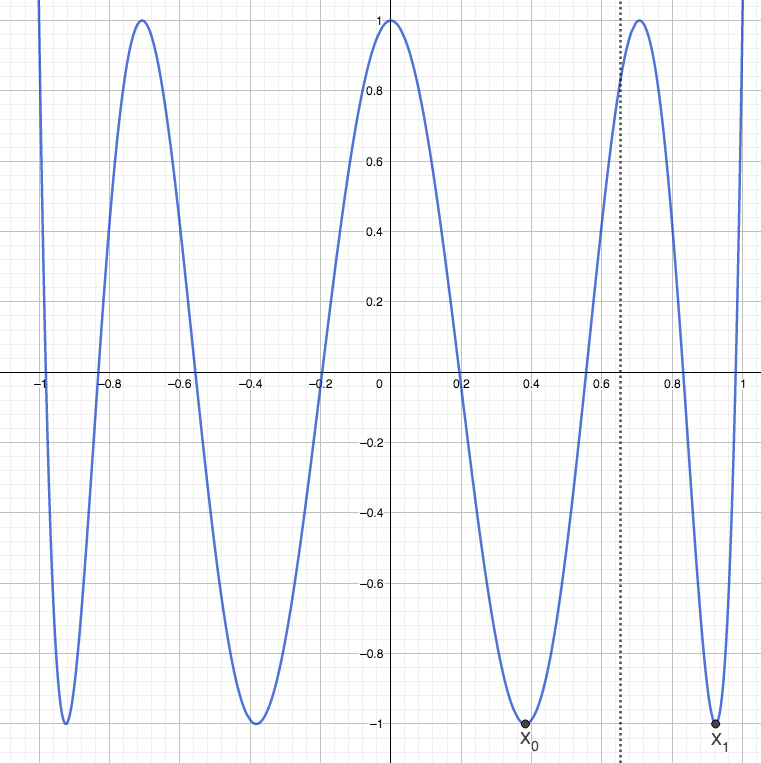}
   \includegraphics[scale=0.09]{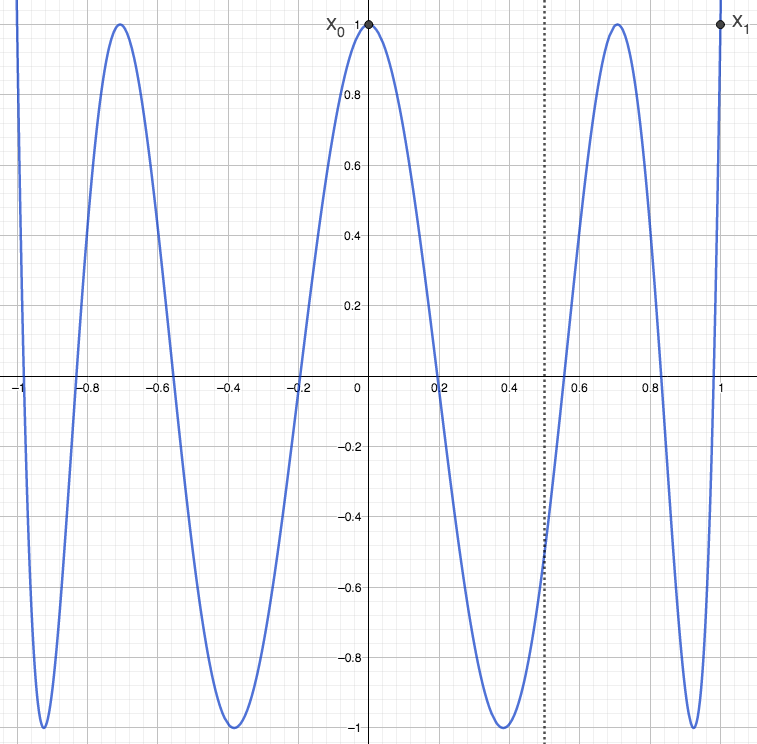}
  \includegraphics[scale=0.09]{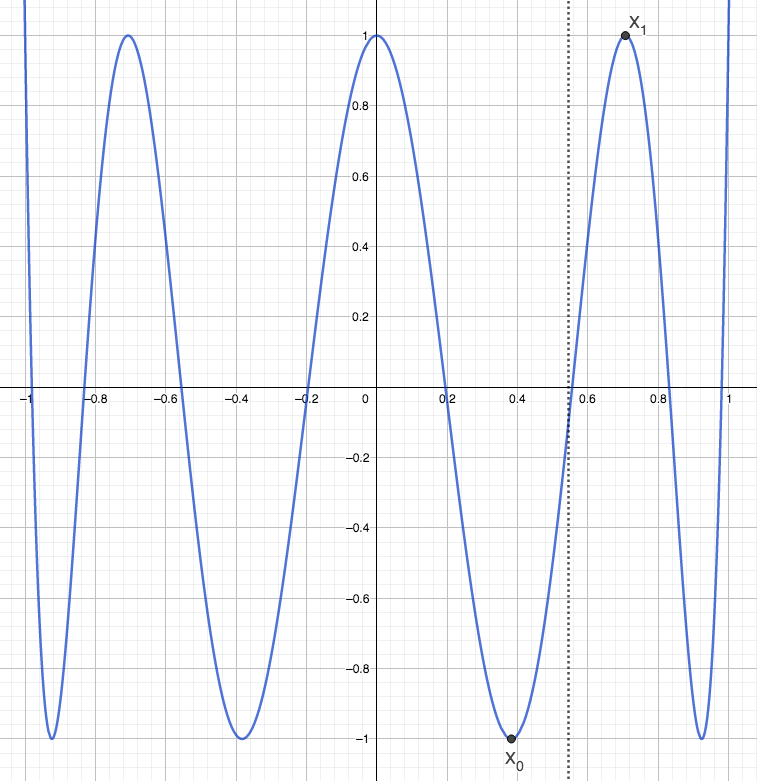}
\caption{$\kappa=8$. $T_{\kappa=8}(x)$. Illustration of solutions $\E_0$ in system \eqref{conjecture_system_conj_intro_0}. Left to right : $\E_0 = \cos(6\pi/ \kappa) + \cos(\pi / \kappa)$, $\E_0 = \cos(3\pi/ \kappa) + 1$, $\E_0 = \cos(3\pi/ \kappa) + \cos(3\pi / \kappa)$, $\E_0 = \cos(3\pi/ \kappa) + \cos(\pi/ \kappa)$, $\E_0 = \cos(4\pi/ \kappa) +1$, $\E_0 = \cos(2\pi/ \kappa) + \cos(3\pi/ \kappa)$.}
\label{fig:test_T8Interpretation}
\end{figure}

\section{A decreasing sequence of thresholds in \texorpdfstring{$J_2(\kappa) :=  (2\cos(\pi / \kappa ) , 1+ \cos(\pi / \kappa) )$}{TEXT}}
\label{section J2}

This entire section is in dimension 2. Using ideas of section \ref{geo_construction} we prove the existence of a sequence of threshold energies $\in J_2 :=  (2\cos(\pi / \kappa ) , 1+ \cos(\pi / \kappa) )$. Theorem \ref{thm_decreasing energy general} is a consequence of Propositions \ref{prop1}, \ref{prop2}, \ref{prop_interlace}, \ref{prop_convergeence}. We state these Propositions now, and prove them at the end of the section. We also prove Proposition \ref{propk2_2/n+2} and give justification for Conjecture \ref{conjecture12}.

\begin{proposition} (odd terms)
\label{prop1}
Fix $\kappa \geq 2$, and let $n \in \N^*$ odd be given. System \eqref{conjecture_system_conj_intro} admits a unique solution satisfying $\mathcal{E}_n \in J_2(\kappa)$ and

\begin{equation}
\begin{aligned}
\label{chain_1}
& \mathcal{E}_n -1 = X_{0} < X_{1} < X_{2} < ... <  X_{(n-1)/2} \\
& \quad \quad \quad \quad \quad \quad \quad \quad \quad \quad \quad \quad < \cos(\pi / \kappa) < X_{(n+1)/2} < ... < X_{n} < X_{n+1} := 1.
\end{aligned}
\end{equation}

Furthermore, this solution satisfies \eqref{o1}, \eqref{o2} and \eqref{o3}, and so $\E_n \in \mathfrak{T}_{n,\kappa}$.
\end{proposition}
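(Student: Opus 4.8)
The plan is to carry out the "Algorithm for $n$ odd" from Section \ref{geo_construction} and verify that it produces exactly one solution lying in $J_2(\kappa)$. First I would set up the natural coordinate change: because of the symmetry equation $X_{n-q} = \E_n - X_{1+q}$ (Remark \ref{obs_symmetry}), all of $X_0,\dots,X_{n+1}$ are determined by $\E_n$ and the "left half" $X_0 < X_1 < \dots < X_{(n-1)/2} < \cos(\pi/\kappa)$, together with $X_{(n+1)/2} = \E_n/2$. The constraint $T_\kappa(X_q) = T_\kappa(X_{n-q})$ with $T'_\kappa(X_q)\cdot T'_\kappa(X_{n-q})<0$ pins down $X_{n-q}$ as the unique point on the \emph{other} monotone branch of $T_\kappa$ adjacent to the extremum at $\cos(\pi/\kappa)$ having the same height; concretely, for $x$ in the branch where $T_\kappa$ is monotone on $(\cos(\pi/\kappa), \cos(0))=(\cos(\pi/\kappa),1)$ I would introduce the reflection-type map $\varphi_\kappa$ sending $x$ to the unique $y$ in the adjacent branch $(\cos(2\pi/\kappa),\cos(\pi/\kappa))$ with $T_\kappa(y)=T_\kappa(x)$ (this uses Lemma \ref{variationsTk}: on each interval $(\cos(j\pi/\kappa),\cos((j-1)\pi/\kappa))$, $T_\kappa$ is strictly monotone, so the branch inverse is well-defined). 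Then the chain \eqref{chain_1} becomes a recursion: starting from $X_{(n+1)/2}=\E_n/2$, one repeatedly applies "$T_\kappa$-matching onto the lower branch" and "reflection about $\E_n/2$" to generate $X_{(n-1)/2}, X_{(n-3)/2}, \dots, X_0$, and finally one must solve $X_0 = \E_n - 1$, i.e.\ $X_{n+1}:=1$, which is exactly \eqref{o2} with $j=0$.

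Next I would make this a fixed-point / intermediate-value argument in the single scalar unknown $\E_n$. Define $h_n(\E) := X_0(\E) - (\E - 1)$, where $X_0(\E)$ is the endpoint of the backward chain built as above starting from $\E/2$. I would show: (a) $h_n$ is continuous and strictly monotone in $\E$ on $J_2(\kappa) = (2\cos(\pi/\kappa), 1+\cos(\pi/\kappa))$ — monotonicity because each of the branch-inversion and reflection steps is monotone in $\E$ with a definite sign, so the composition's dependence on $\E$ is strictly monotone, and one tracks that $X_0$ moves in the opposite direction to $\E-1$; (b) at the right endpoint $\E = 1+\cos(\pi/\kappa)$ the chain degenerates so that all $X_q$ collapse appropriately and $h_n$ has one sign (this is the $\E_0$ threshold of Lemma \ref{lemSUMcosINTRO}, corresponding to $n$ "collapsing"), while as $\E \downarrow 2\cos(\pi/\kappa)$ the axis $\E/2 \downarrow \cos(\pi/\kappa)$ and the chain cannot fit, giving $h_n$ the opposite sign; hence by the intermediate value theorem there is a unique zero $\E_n \in J_2(\kappa)$. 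Along the way I must check that for this $\E_n$ all intermediate $X_q$ genuinely lie in the claimed open intervals so that \eqref{o1} holds and \eqref{chain_1} is a strict chain — this follows from the branch structure of $T_\kappa$ (Lemma \ref{variationsTk}) provided $\E/2$ stays strictly between $\cos(\pi/\kappa)$ and $1$, which is exactly the defining inequality $2\cos(\pi/\kappa) < \E < 2$.

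Then verifying the remaining conditions is routine: \eqref{o3}, $T'_\kappa(X_q)\cdot T'_\kappa(X_{n-q})<0$, is built into the definition of the branch-matching map (consecutive monotone branches of $T_\kappa$ have opposite-sign derivative by Lemma \ref{variationsTk}); \eqref{o2} is $X_{n+1}=1\in\{\cos(j\pi/\kappa)\}$ which is how we closed the chain; and $X_{(n+1)/2}=\E_n/2 \notin \{\cos(j\pi/\kappa)\}$ since $\E_n\in J_2$ avoids the endpoints. Finally, from $\E_n\in\mathfrak{T}_{n,\kappa}$ and Proposition \ref{prop1_intro} we conclude $\E_n\in\boldsymbol{\Theta}_{n,\kappa}(\Delta[d=2])\subset\boldsymbol{\Theta}_\kappa(\Delta)$.

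I expect the \textbf{main obstacle} to be the strict monotonicity and the endpoint sign analysis of $h_n$: one must argue carefully that composing $n$ branch-inversions and reflections — each individually monotone in the parameter $\E$ but with the "base point" $\E/2$ also moving — yields a net strictly monotone scalar function, and that the degenerate limits at the two endpoints of $J_2(\kappa)$ produce opposite signs. Making the endpoint behavior rigorous requires understanding how the chain collapses onto the $0^{\text{th}}$-order threshold configuration of Lemma \ref{lemSUMcosINTRO} at $\sup J_2$, and why it becomes infeasible at $\inf J_2$; a clean way to do this is to reparametrize by $t = \E/2 - \cos(\pi/\kappa) \in (0, 1-\cos(\pi/\kappa))$ and expand the branch inverses near the extremum, which also feeds directly into the $O(1/n^2)$ rate of Conjecture \ref{conjecture12}. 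Uniqueness then comes for free from the strict monotonicity of $h_n$.
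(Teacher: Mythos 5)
Your proposal follows essentially the same route as the paper's proof: both parametrize by the energy (the paper writes $E = 2\cos(\pi/\kappa)+\alpha$), build the chain dynamically via the ping-pong of level and symmetry conditions (the paper's Remark \ref{r:ppcroissant}), and close by a continuity/monotonicity argument in one scalar variable; your function $h_n(\E)=X_0(\E)-(\E-1)$ equals $1-X_{n+1}(\E)$, which is precisely the quantity the paper tracks. The monotonicity you flag as the main obstacle is indeed the point the paper handles most briefly, asserting that $X_{n+1}(\alpha)$ is strictly increasing on $\mathbf{A}_n$ by construction, and the endpoint analysis you sketch matches the paper's contradiction argument (if $X_0$ reached $\cos(2\pi/\kappa)$ before $X_{n+1}$ reached $1$ then $2\cos(\pi/\kappa)-\cos(2\pi/\kappa)<1$, which is false).
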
 

\begin{proposition}  (even terms)
\label{prop2}
Let $\kappa \geq 2$, and $n \in \N^*$ even be given. System \eqref{conjecture_system_conj_intro_even} admits a unique solution satisfying $\mathcal{E}_n \in J_2(\kappa)$ and 
\begin{equation}
\label{chain_1even}
\mathcal{E}_n -1 = X_{0} < X_{1} < X_{2} < ... <  X_{n/2} = \cos(\pi / \kappa) < X_{n/2+1} < ... < X_{n} < X_{n+1} := 1.
\end{equation}
Furthermore, this solution satisfies \eqref{o11}, \eqref{o22} and \eqref{o33}, and so $\E_n \in \mathfrak{T}_{n,\kappa}$.
\end{proposition}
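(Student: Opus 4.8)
The plan is to recast each of systems \eqref{conjecture_system_conj_intro} and \eqref{conjecture_system_conj_intro_even} as a single scalar fixed-point equation $\Psi_{n,\kappa}(\mathcal{E})=1$ for a function $\Psi_{n,\kappa}$ that is continuous and \emph{strictly increasing} in $\mathcal{E}$; this settles existence and uniqueness at once, and makes the chains \eqref{chain_1}, \eqref{chain_1even} and conditions \eqref{o1}--\eqref{o3}, \eqref{o11}--\eqref{o33} fall out of the construction. The building block is the branch of $T_\kappa^{-1}$ implicit in the nested-radical display closing Section \ref{geo_construction}. By Lemma \ref{variationsTk}, $T_\kappa$ restricts to an increasing bijection $[\cos(\pi/\kappa),1]\to[-1,1]$ and to a decreasing bijection $[\cos(2\pi/\kappa),\cos(\pi/\kappa)]\to[-1,1]$, so $L:=\bigl(T_\kappa|_{[\cos(2\pi/\kappa),\cos(\pi/\kappa)]}\bigr)^{-1}\circ T_\kappa|_{[\cos(\pi/\kappa),1]}$ is a continuous strictly decreasing bijection $[\cos(\pi/\kappa),1]\to[\cos(2\pi/\kappa),\cos(\pi/\kappa)]$ with $L(\cos(\pi/\kappa))=\cos(\pi/\kappa)$, $L(1)=\cos(2\pi/\kappa)$, and $T'_\kappa(y)>0>T'_\kappa(L(y))$ on $(\cos(\pi/\kappa),1)$. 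Setting $F_{\mathcal{E}}(y):=\mathcal{E}-L(y)$, the map $F_{\mathcal{E}}$ is continuous and strictly increasing in both $y$ and $\mathcal{E}$, with $\cos(\pi/\kappa)$ its unique fixed point when $\mathcal{E}=2\cos(\pi/\kappa)$.

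Second, I would read off from \eqref{conjecture_system_conj_intro_even} that imposing the chain \eqref{chain_1even} together with \eqref{o33} determines the whole tuple from $\mathcal{E}$: the $q=-1$ symmetry row gives $X_0=\mathcal{E}-1$; the chain forces $X_0<\cdots<X_{n/2-1}$ into the single decreasing branch $(\cos(2\pi/\kappa),\cos(\pi/\kappa))$ --- one uses the elementary bound $2\cos(\pi/\kappa)-1\geq\cos(2\pi/\kappa)$, so that $X_0=\mathcal{E}-1>\cos(2\pi/\kappa)$ --- and forces $X_{n/2+1}<\cdots<X_{n+1}$ into $[\cos(\pi/\kappa),1]$, with the frozen middle value $X_{n/2}=\cos(\pi/\kappa)$. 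The level rows then read $X_{n/2-k}=L(X_{n/2+k})$, and with $Y_k:=X_{n/2+k}$ the symmetry rows become $Y_{k+1}=F_{\mathcal{E}}(Y_k)$ for $k\geq1$, seeded by $Y_1=X_{n/2+1}=\mathcal{E}-\cos(\pi/\kappa)$. A one-line check gives $Y_2-Y_1=\cos(\pi/\kappa)-L(Y_1)>0$ as soon as $\mathcal{E}>2\cos(\pi/\kappa)$, whence monotonicity of $F_{\mathcal{E}}$ propagates $Y_1<Y_2<\cdots$; this \emph{is} the chain \eqref{chain_1even}, and it shows $Y_{n/2}$ is the largest of $Y_1,\dots,Y_{n/2}$, so the recursion runs precisely while $Y_{n/2}\leq1$. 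I then set $\Psi_{n,\kappa}(\mathcal{E}):=Y_{n/2+1}(\mathcal{E})$, the would-be $X_{n+1}$; it is continuous and strictly increasing on its maximal interval of definition $[2\cos(\pi/\kappa),\mathcal{E}^{\ast}]$, and $\mathcal{E}^{\ast}>2\cos(\pi/\kappa)$ because at $\mathcal{E}=2\cos(\pi/\kappa)$ all $Y_k=\cos(\pi/\kappa)<1$. The odd case is handled verbatim from \eqref{conjecture_system_conj_intro}, now with $Y_k:=X_{(n+1)/2+k}$ and seed $Y_0=X_{(n+1)/2}=\mathcal{E}/2$ (the axis of symmetry), iterating $F_{\mathcal{E}}$ a total of $(n+1)/2$ times.

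Third, an intermediate-value argument finishes it: $\Psi_{n,\kappa}(2\cos(\pi/\kappa))=\cos(\pi/\kappa)<1$ (the seed is the fixed point of $F_{\mathcal{E}}$), while at $\mathcal{E}^{\ast}$ one has $Y_{n/2}=1$ by continuity, so $\Psi_{n,\kappa}(\mathcal{E}^{\ast})=\mathcal{E}^{\ast}-\cos(2\pi/\kappa)>2\cos(\pi/\kappa)-\cos(2\pi/\kappa)\geq1$ (again elementary). Strict monotonicity gives a unique $\mathcal{E}_n\in(2\cos(\pi/\kappa),\mathcal{E}^{\ast})$ with $\Psi_{n,\kappa}(\mathcal{E}_n)=1$; since $L(Y_{n/2})=\mathcal{E}_n-1$ and $Y_{n/2}>\cos(\pi/\kappa)$ force $\mathcal{E}_n-1<\cos(\pi/\kappa)$, we get $\mathcal{E}_n\in J_2(\kappa)$. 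Unwinding the $Y_k$ back to the $X_i$ yields a solution of the system obeying \eqref{chain_1even}; conditions \eqref{o11}--\eqref{o33} are then immediate --- interiors lie in $(-1,1)$, $X_{n/2}=\cos(\pi/\kappa)$ and $X_{n+1}=1=\cos(0)$ lie in the allowed set, and $T'_\kappa(X_q)T'_\kappa(X_{n-q})<0$ because $X_q=L(Y_k)$ sits in the decreasing branch while $X_{n-q}=Y_k$ sits in the increasing one --- so $\mathcal{E}_n\in\mathfrak{T}_{n,\kappa}$ (and likewise for the odd case).

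Finally, uniqueness is the same reduction run backwards: any solution of the system satisfying the stated chain must have its coordinates in exactly those two monotone branches of $T_\kappa$ (this is where the chain is used, via the sign condition and the bounds on $X_0$ and $X_{(n+1)/2}=\mathcal{E}/2$ forced by $\mathcal{E}\in J_2$), hence its level and symmetry rows collapse to $Y_{k+1}=F_{\mathcal{E}}(Y_k)$ with the prescribed seed, hence its energy solves $\Psi_{n,\kappa}(\mathcal{E})=1$, which has a single root. I expect the only genuine difficulty to be organizational: pinning the correct branch of $T_\kappa^{-1}$ and propagating the chain of inequalities cleanly through the recursion --- plus, for Proposition \ref{prop2}, the bookkeeping around the frozen coordinate $X_{n/2}$ --- with no analytic content beyond the one-line Chebyshev identities $2\cos(\pi/\kappa)\geq1+\cos(2\pi/\kappa)$ and $2\cos(\pi/\kappa)-\cos(2\pi/\kappa)\geq1$.
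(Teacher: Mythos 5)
Your proof is correct, and it is in substance the same ``shooting by energy'' argument the paper runs, but your packaging is tighter and arguably cleaner. The paper's proof of Proposition~\ref{prop2} executes the dynamical ``ping-pong'' from Section~\ref{geo_construction}: it parameterizes by $\alpha=E-2\cos(\pi/\kappa)$, builds the whole tuple $(X_0(\alpha),\dots,X_{n+1}(\alpha))$ step by step via Remark~\ref{r:ppcroissant}, checks that the chain \eqref{chain_1_reveal} persists and that $X_{n+1}$ increases strictly in $\alpha$, and concludes by the intermediate value argument that exactly one $\alpha^\ast$ gives $X_{n+1}=1$; the domain bookkeeping is handled via the set $\mathbf A_n$ and the observation that $X_{n+1}$ hits $1$ before $X_0$ hits $\cos(2\pi/\kappa)$. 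Your reduction collapses this to a one-sided scalar recursion: by introducing the branch map $L=\bigl(T_\kappa|_{[\cos(2\pi/\kappa),\cos(\pi/\kappa)]}\bigr)^{-1}\!\circ T_\kappa|_{[\cos(\pi/\kappa),1]}$ and writing $Y_{k+1}=\E-L(Y_k)$, you capture the ping-pong and its monotonicity in one stroke, and the paper's three separate checks (chain persistence, strict increase of $X_{n+1}$, the $X_0$-versus-$X_{n+1}$ race) all become elementary properties of $F_\E$: strict increase in both arguments, $Y_2>Y_1$ iff $\E>2\cos(\pi/\kappa)$, and the equivalence $X_0>\cos(2\pi/\kappa)\iff Y_{n/2}<1$ via $L(1)=\cos(2\pi/\kappa)$. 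This is essentially the single-scalar-equation reformulation the paper itself gestures at near the end of Section~\ref{geo_construction} (the nested-radical display) and uses in Proposition~\ref{lem1_sequence3344}, but does not exploit in the proofs of Propositions~\ref{prop1}--\ref{prop2}; adopting it there, as you do, gives a leaner proof that settles existence, uniqueness, the chain, and \eqref{o11}--\eqref{o33} in one pass. No gaps: your strict monotonicity of $\Psi_{n,\kappa}$ follows from $L$ being strictly decreasing (differentiability at $\cos(\pi/\kappa)$ is not needed), your identity $2\cos(\pi/\kappa)-\cos(2\pi/\kappa)\geq1$ is correct, and the strictness of the chain at the solution $\E_n$ (in particular $X_n<1$) follows from $\E_n<\E^\ast$ and strict monotonicity, which you use implicitly.
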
 

Figure \ref{fig:test_T3k3firsty} illustrates the solutions in Propositions \ref{prop1} and \ref{prop2} for $1 \leq n \leq 6$ and $\kappa=3$. A few exact solutions are given in Table \ref{table with endpoints3and4}.

\begin{figure}[htb]
  \centering
 \includegraphics[scale=0.1]{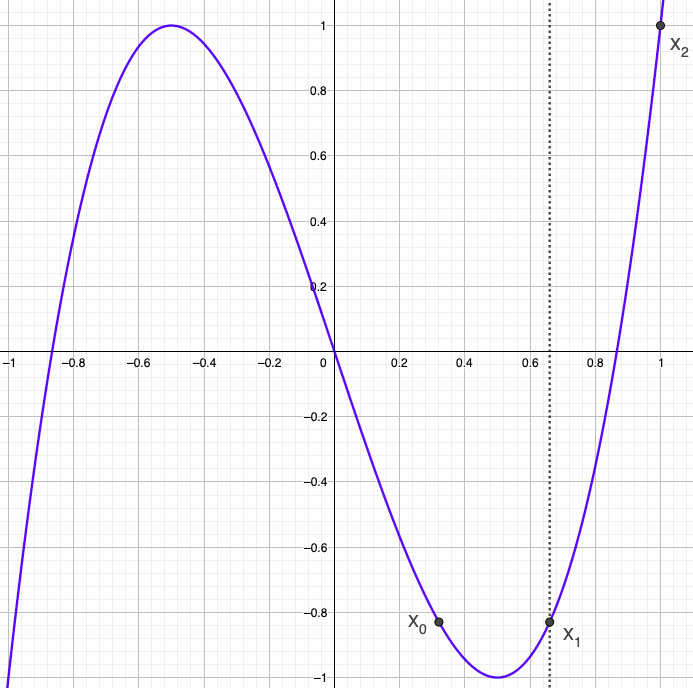}
  \includegraphics[scale=0.1]{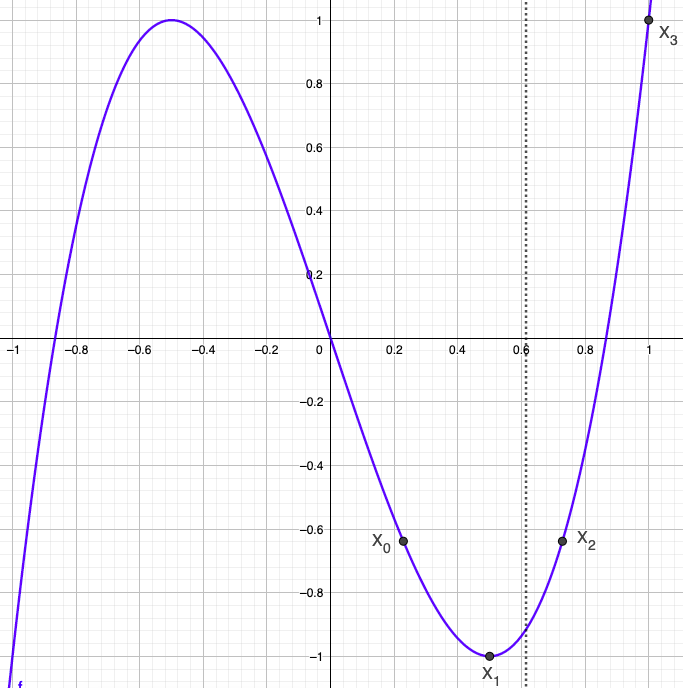}
  \includegraphics[scale=0.1]{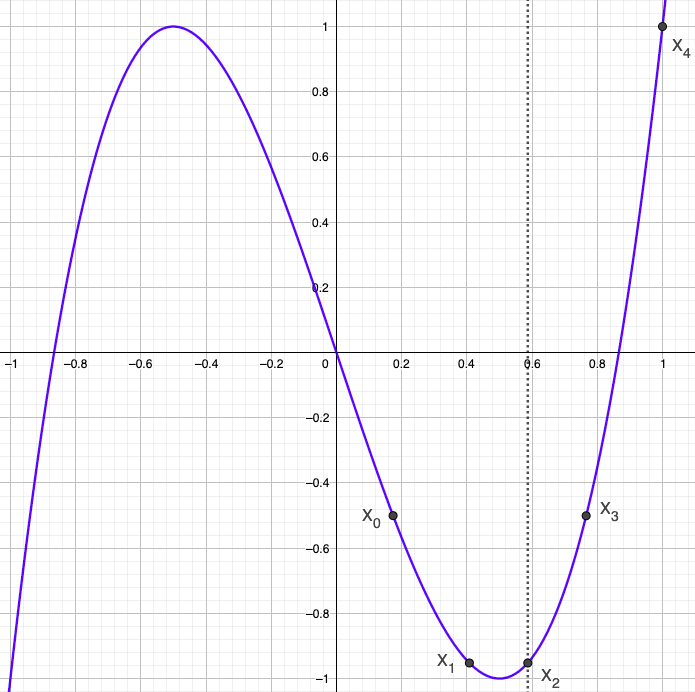}
    \includegraphics[scale=0.1]{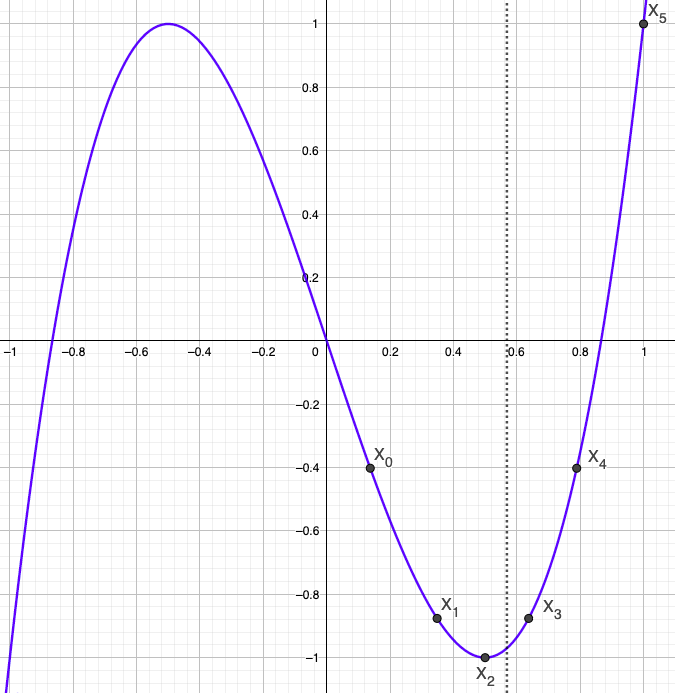}
      \includegraphics[scale=0.1]{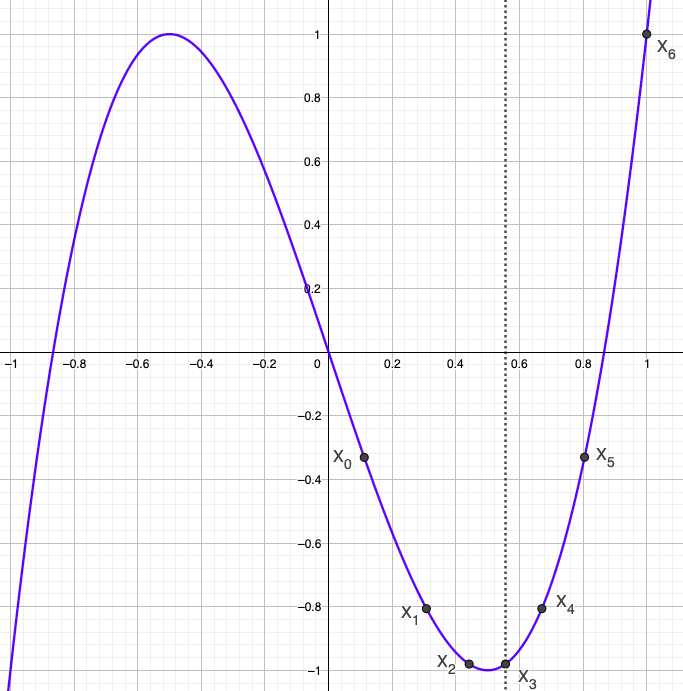}
      \includegraphics[scale=0.1]{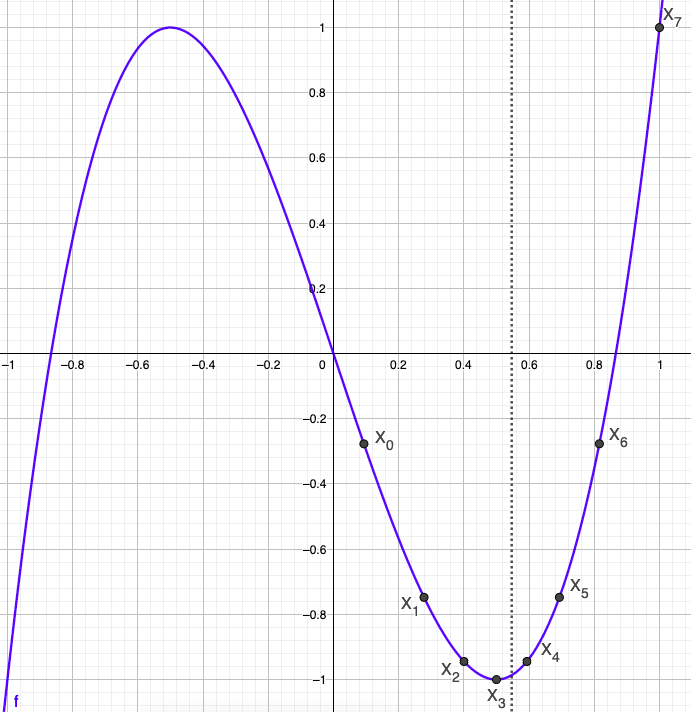}
\caption{$T_{\kappa=3}(x)$. Solutions $\E_n$ in Propositions \ref{prop1} and \ref{prop2}. Left to right : $\E_1 \simeq 1.3207$, $\E_2 \simeq 1.2287$, $\E_3 \simeq 1.1737$, $\E_4 \simeq 1.1375$, $\E_5 \simeq 1.1121$, $\E_6 \simeq 1.0934$.}
\label{fig:test_T3k3firsty}
\end{figure}


\begin{proposition} 
\label{prop_interlace}
Fix $\kappa \geq 2$. The odd and even energy solutions $\mathcal{E}_n$ of Propositions \ref{prop1} and \ref{prop2} interlace and are a strictly decreasing sequence : $\mathcal{E}_{n+2} < \mathcal{E}_{n+1} < \mathcal{E}_{n}$, $\forall n\in \N^*$. 
\end{proposition}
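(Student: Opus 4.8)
The plan is to notice that the asserted chain $\mathcal{E}_{n+2}<\mathcal{E}_{n+1}<\mathcal{E}_n$ is just strict monotonicity of the whole sequence $(\mathcal{E}_n)_{n\in\N^*}$ spelled out, so it suffices to prove $\mathcal{E}_{n+1}<\mathcal{E}_n$ for every $n$. I would encode the solution chains of Propositions \ref{prop1} and \ref{prop2} as orbits of one monotone one–dimensional map and then compare the order-$n$ orbit with the order-$(n{+}1)$ orbit: if $\mathcal{E}_{n+1}\ge\mathcal{E}_n$ the latter dominates the former term by term, which is incompatible with where the two orbits are forced to terminate. Concretely, by Lemma \ref{variationsTk}, for $\kappa\ge 2$ the polynomial $T_\kappa$ restricts to a strictly decreasing bijection of $I^-:=(\cos(2\pi/\kappa),\cos(\pi/\kappa))$ onto $[-1,1]$ and to a strictly increasing bijection of $I^+:=(\cos(\pi/\kappa),1)$ onto $[-1,1]$, the endpoints carrying $T_\kappa$-values $\pm1$. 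Let $\psi:=(T_\kappa|_{I^+})^{-1}\circ T_\kappa|_{I^-}:I^-\to I^+$, a strictly decreasing bijection with $\psi(\cos(\pi/\kappa))=\cos(\pi/\kappa)$, and for $E\in J_2(\kappa)$ set $h_E(x):=E-\psi(x)$. Then $h_E$ is a composition of two strictly decreasing maps, hence strictly increasing in $x$, and it is obviously strictly increasing in $E$; moreover $E\in J_2(\kappa)$ forces $E>2\cos(\pi/\kappa)>2\cos^2(\pi/\kappa)=1+\cos(2\pi/\kappa)$, so $E-1\in I^-$.

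Next I would read the recursion off the systems. Fix $n$, and let $(X_{q,n})$, $\mathcal{E}_n$ denote the solution of \eqref{conjecture_system_conj_intro} resp.\ \eqref{conjecture_system_conj_intro_even} with ordering \eqref{chain_1} resp.\ \eqref{chain_1even}. For each index $q$ below the middle one, $(X_{q,n},X_{n-q,n})\in I^-\times I^+$ with $T_\kappa(X_{q,n})=T_\kappa(X_{n-q,n})$, so Corollary \ref{corollaryEquiv} together with the sign condition \eqref{o3} resp.\ \eqref{o33} (recall $T'_\kappa=\kappa U_{\kappa-1}$ by \eqref{identity_derivative}) forces $X_{n-q,n}=\psi(X_{q,n})$, the canonical branch; combined with $X_{n-q,n}=\mathcal{E}_n-X_{q+1,n}$ this reads $X_{q+1,n}=h_{\mathcal{E}_n}(X_{q,n})$. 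Writing $\ell:=\lfloor(n+1)/2\rfloor$, a short check with Remark \ref{obs_symmetry} shows this holds for $q=0,\dots,\ell-1$, with $X_{0,n}=\mathcal{E}_n-1$, with $X_{q,n}\in I^-$ for $0\le q\le\ell-1$ (here one uses $\mathcal{E}_n-1>\cos(2\pi/\kappa)$), and with terminal value $X_{\ell,n}=\mathcal{E}_n/2\in I^+$ when $n$ is odd and $X_{\ell,n}=\cos(\pi/\kappa)$ when $n$ is even; in particular the identity $X_{\ell,n}=h_{\mathcal{E}_n}(X_{\ell-1,n})$ holds for both parities. (As a sanity check, for $\kappa=2$ one has $\psi(x)=-x$, $h_E(x)=E+x$, and the recursion reproduces $\mathcal{E}_n=2/(n+2)$, matching Proposition \ref{propk2_2/n+2}.)

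Now the comparison step. Suppose, for contradiction, $\mathcal{E}_{n+1}\ge\mathcal{E}_n$, abbreviate $X_q:=X_{q,n}$, $Y_q:=X_{q,n+1}$, and keep $\ell=\lfloor(n+1)/2\rfloor$. Then $Y_0=\mathcal{E}_{n+1}-1\ge\mathcal{E}_n-1=X_0$, and since $X_q,Y_q\in I^-$ for $q\le\ell-1$ while $h$ is increasing in each variable, induction gives $Y_q\ge X_q$ for $q=0,\dots,\ell$: indeed $Y_{q+1}=h_{\mathcal{E}_{n+1}}(Y_q)\ge h_{\mathcal{E}_{n+1}}(X_q)\ge h_{\mathcal{E}_n}(X_q)=X_{q+1}$ for $q\le\ell-1$ (the last step, $q=\ell-1$, uses $X_\ell=h_{\mathcal{E}_n}(X_{\ell-1})$). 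If $n$ is even, then $n+1$ is odd, $X_\ell=\cos(\pi/\kappa)$ by \eqref{chain_1even}, while $Y_\ell$ sits strictly to the left of $\cos(\pi/\kappa)$ in the order-$(n{+}1)$ chain \eqref{chain_1}, so $Y_\ell<\cos(\pi/\kappa)=X_\ell$, contradicting $Y_\ell\ge X_\ell$. If $n$ is odd, then $n+1$ is even, $Y_\ell=\cos(\pi/\kappa)$ by \eqref{chain_1even}, while $X_\ell=\mathcal{E}_n/2>\cos(\pi/\kappa)$ because $\mathcal{E}_n>2\cos(\pi/\kappa)$, again contradicting $Y_\ell\ge X_\ell$. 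Hence $\mathcal{E}_{n+1}<\mathcal{E}_n$ for all $n$, which is the claim.

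The step I expect to be the main obstacle is the faithful extraction of the monotone recursion in the second paragraph: one must verify that every low-index point $X_{q,n}$ actually lands in the \emph{open} interval $I^-$ on which the canonical branch $\psi$ lives, that the sign condition \eqref{o3}/\eqref{o33} genuinely excludes all other roots of $T_\kappa(\cdot)=T_\kappa(X_{q,n})$ (essential since $T_\kappa$ is far from injective once $\kappa\ge 3$), and that the terminal identities $X_{\ell,n}=\mathcal{E}_n/2$ (odd $n$) and $X_{\ell,n}=\cos(\pi/\kappa)$ (even $n$) really drop out of \eqref{conjecture_system_conj_intro}/\eqref{conjecture_system_conj_intro_even} via Remark \ref{obs_symmetry}. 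All of this is already essentially packaged into Propositions \ref{prop1} and \ref{prop2}, so once those are invoked with care the comparison argument is the routine ``one extra monotone step of $h_E$ pushes the energy strictly down'' mechanism, the decisive ingredients being those terminal identities and the strict inequality $\mathcal{E}_n>2\cos(\pi/\kappa)$ valid throughout $J_2(\kappa)$.
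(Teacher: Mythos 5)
Your proposal is correct, and it proves strict monotonicity by a route that is genuinely different in presentation from the paper's. The paper fixes the common energy $E:=\min(\mathcal{E}_n,\mathcal{E}_{n+1})$, runs both dynamical constructions at that $E$, and uses Remark \ref{r:ppcroissant} to interlace the upper halves of the two chains; since $X_{n+1,n}(E)<X_{n+2,n+1}(E)$ and exactly one of these equals $1$, it must be the latter, which identifies $E$ with $\mathcal{E}_{n+1}$ and forces $\mathcal{E}_{n+1}<\mathcal{E}_n$. You instead suppose $\mathcal{E}_{n+1}\ge\mathcal{E}_n$ for contradiction, encode the ping-pong as iterates of the single monotone map $h_E(x)=E-\psi(x)$ on the lower branch $I^-$, deduce $Y_q\ge X_q$ termwise on the lower half by monotonicity of $h$ in both arguments, and then contradict the two terminal identities $X_{\ell}=\mathcal{E}_n/2$ versus $\cos(\pi/\kappa)$. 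Your map $\psi=(T_\kappa|_{I^+})^{-1}\circ T_\kappa|_{I^-}$ is precisely the content of Remark \ref{r:ppcroissant}, so the two arguments rest on the same monotonicity facts; what your packaging buys is that the entire chain comparison collapses to ``a monotone recursion run with a larger parameter $E$ dominates term by term,'' which is arguably tidier and avoids the need to argue separately that one of the two $X_{n+1}$'s equals $1$ after choosing $E=\min$.

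Two small remarks that do not affect correctness. First, the intermediate inequality $2\cos(\pi/\kappa)>2\cos^2(\pi/\kappa)$ is an equality at $\kappa=2$ (both sides are $0$); what you actually need, namely $E-1>\cos(2\pi/\kappa)$ for $E\in J_2(\kappa)$, still follows because the first inequality $E>2\cos(\pi/\kappa)$ is strict. Second, it is worth flagging explicitly (as you do in your last paragraph) that the fact the recursion selects the canonical branch $\psi$ — i.e.\ that $X_{n-q,n}$ lies in $I^+$ rather than some other monotone piece of $T_\kappa$ once $\kappa\ge 3$ — is not automatic from the sign condition \eqref{o3}/\eqref{o33} alone but is supplied by the orderings \eqref{chain_1}/\eqref{chain_1even} together with $X_{0,n}=\mathcal{E}_n-1\in I^-$, which is exactly what Propositions \ref{prop1} and \ref{prop2} deliver.
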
 

\begin{proposition}
\label{prop_convergeence} 
Let $\E_n$ be the solutions in Propositions \ref{prop1} and \ref{prop2}. Then $\E_n \searrow 2\cos(\pi / \kappa)$. 
\end{proposition}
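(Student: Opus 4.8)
\textbf{Proof proposal for Proposition \ref{prop_convergeence}.}

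The plan is to show two things: first, that the sequence $\{\E_n\}$ is bounded below by $2\cos(\pi/\kappa) = \inf J_2(\kappa)$, and second, that its (necessarily existing) limit cannot exceed $2\cos(\pi/\kappa)$. The lower bound is immediate: by Propositions \ref{prop1} and \ref{prop2} each $\E_n \in J_2(\kappa) = (2\cos(\pi/\kappa), 1+\cos(\pi/\kappa))$, so $\E_n > 2\cos(\pi/\kappa)$ for every $n$. Combined with Proposition \ref{prop_interlace}, which gives that $\{\E_n\}$ is strictly decreasing, the monotone convergence theorem yields $\E_n \searrow L$ for some $L \geq 2\cos(\pi/\kappa)$. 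So the entire content of the proposition is the reverse inequality $L \leq 2\cos(\pi/\kappa)$.

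To get $L \leq 2\cos(\pi/\kappa)$ I would argue by contradiction, assuming $L > 2\cos(\pi/\kappa)$, and extract quantitative information from the chains \eqref{chain_1} and \eqref{chain_1even}. Fix attention on, say, the even case \eqref{chain_1even} (the odd case is parallel): we have
\[
\E_n - 1 = X_{0} < X_{1} < \cdots < X_{n/2} = \cos(\pi/\kappa) < X_{n/2+1} < \cdots < X_{n} < X_{n+1} = 1,
\]
with the symmetry $X_{n-q} = \E_n - X_{1+q}$ and the level relations $T_\kappa(X_q) = T_\kappa(X_{n-q})$. The key geometric fact (Remark \ref{obs_symmetry} together with Lemma \ref{variationsTk}) is that passing from $X_{q}$ to $X_{q+1}$ via one "level step" followed by one "symmetry step" moves the point by an amount controlled by the local behaviour of $T_\kappa$ and by the displacement $\E_n - 2\cos(\pi/\kappa)$ of the axis $x = \E_n/2$ from the critical point $\cos(\pi/\kappa)$. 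Concretely: the symmetry step sends $u \mapsto \E_n - u$, and the level step sends $v$ to the other preimage of $T_\kappa(v)$ on the appropriate monotonicity branch of $T_\kappa$ (guaranteed distinct and on the correct side by the sign condition \eqref{o3}/\eqref{o33} and Lemma \ref{variationsTk}). If $\E_n \to L > 2\cos(\pi/\kappa)$, then for large $n$ the points $X_q$ that lie just above $\cos(\pi/\kappa)$ stay a definite distance $\delta = \delta(L,\kappa) > 0$ above $\cos(\pi/\kappa)$ after the first few steps, because the composition of a symmetry step about $x = \E_n/2$ and a level step near the critical point $\cos(\pi/\kappa)$ is (for $\E_n$ bounded away from $2\cos(\pi/\kappa)$) a contraction-free map pushing points monotonically toward the endpoint $1$ by at least a fixed positive increment per step. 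But the chain has $n/2$ links between $X_{n/2} = \cos(\pi/\kappa)$ and $X_{n+1} = 1$, and all the $X_q$ are confined to $[\cos(\pi/\kappa), 1]$, a set of finite length $1 - \cos(\pi/\kappa)$. A fixed positive increment per link over $n/2 \to \infty$ links forces the chain out of this interval — contradiction. Hence $L = 2\cos(\pi/\kappa)$.

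The main obstacle is making the "fixed positive increment per step" claim rigorous and uniform in $n$: one must show that when $\E_n$ is bounded away from $2\cos(\pi/\kappa)$, each application of (symmetry $\circ$ level) near the critical point increases the relevant coordinate by at least some $\epsilon(L,\kappa) > 0$ independent of $n$. I would handle this by a compactness/continuity argument: the map in question depends continuously on $\E_n \in [L, 1+\cos(\pi/\kappa)]$ and on the current coordinate in $[\cos(\pi/\kappa),1]$, it has no fixed point in the relevant region when $\E_n > 2\cos(\pi/\kappa)$ (a fixed point would force $T'_\kappa$ to have the wrong sign pattern, violating \eqref{o33}, or force $\E_n = 2\cos(\pi/\kappa)$ by the symmetry relation collapsing $X_q$ onto $\cos(\pi/\kappa)$), so the increment attains a positive minimum on the compact parameter set. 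An alternative, possibly cleaner route — and the one I would try first for $\kappa = 2$ before the general case — is to use the single-equation nested-radical reformulation of $\E_n$ described at the end of the section (and worked out in Proposition \ref{lem1_sequence3344}): there $\E_n - \inf J_2$ is expressed through an $n$-fold iterate of an explicit map $f_{\E}$ with a fixed point at the endpoint, and convergence $\E_n \searrow \inf J_2$ follows from the iterates of $f$ tending to that fixed point. For $\kappa = 2$ this is exactly Proposition \ref{propk2_2/n+2}, where $\E_n = 2/(n+2) \searrow 0 = 2\cos(\pi/2)$ is transparent, and it serves as a sanity check that the general mechanism is the iteration of a map with an attracting (from the relevant side) endpoint fixed point.
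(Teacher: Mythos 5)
Your overall strategy coincides with the paper's: extract a monotone, bounded sequence (via Proposition \ref{prop_interlace} and the containment in $J_2$), assume by contradiction that the limit $L$ satisfies $L > 2\cos(\pi/\kappa)$, and show the chain \eqref{chain_1} or \eqref{chain_1even} must then leave the bounded interval $[\cos(\pi/\kappa),1]$ when $n$ is large. The paper organizes the contradiction slightly differently (it tracks the growth of $X_{(n+q)/2}-\cos(\pi/\kappa)$, showing it exceeds $q\epsilon$ where $2\epsilon = L - 2\cos(\pi/\kappa)$, whereas you phrase it as a fixed per-step increment), but these are the same accumulation argument.

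The genuine gap is in your justification of the fixed positive increment. The step map is $u \mapsto E - \tau(u)$, where $\tau(u)$ is the preimage of $T_\kappa(u)$ on the adjacent monotonicity branch, and the increment is $E - \tau(u) - u$. This is positive with a uniform lower bound \emph{precisely} because $\tau(u) + u < 2\cos(\pi/\kappa)$, which is the content of the paper's Lemma \ref{l:croissdiff}: if $\cos(2\pi/\kappa)<a<\cos(\pi/\kappa)<b<1$ and $T_\kappa(a)=T_\kappa(b)$, then $a+b<2\cos(\pi/\kappa)$. You propose instead a compactness argument resting on a "no fixed point" claim, but the two reasons you offer for the absence of a fixed point do not hold up. A fixed point is a $u>\cos(\pi/\kappa)$ with $\tau(u)+u=E$; here $\tau(u)<\cos(\pi/\kappa)<u$, so $T'_\kappa(\tau(u))<0<T'_\kappa(u)$ and the sign condition \eqref{o3}/\eqref{o33} is \emph{not} violated, and the collapse onto $\cos(\pi/\kappa)$ only rules out a fixed point at $u=\cos(\pi/\kappa)$ itself, not one elsewhere in $(\cos(\pi/\kappa),1)$. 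So your compactness argument has nothing to bootstrap from. What is actually needed is a monotonicity fact about $T_\kappa$ near its local minimum, and the paper proves it by direct computation: $S_\kappa(x):=T_\kappa(x)-T_\kappa(2\cos(\pi/\kappa)-x)$ vanishes at $\cos(\pi/\kappa)$, and $S_\kappa'(x)=\kappa\bigl(U_{\kappa-1}(x)+U_{\kappa-1}(2\cos(\pi/\kappa)-x)\bigr)$ is shown to be nonnegative on $(\cos(\pi/\kappa),1)$ by factoring $U_{\kappa-1}$ into linear factors and comparing them term by term. This is a nontrivial ingredient that your sketch does not supply and that no soft argument replaces. The alternative route you mention through the nested-radical reformulation (Proposition \ref{lem1_sequence3344}) is not developed in the paper beyond $\kappa=3,4$, and in any case the map $f_E$ there depends on the unknown energy, so it is not a genuine self-map iteration; deducing convergence from it would still require a quantitative control equivalent to Lemma \ref{l:croissdiff}.
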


We now prove Proposition \ref{propk2_2/n+2}.

\noindent \textit{Proof of Proposition \ref{propk2_2/n+2}.}
Fix $\kappa=2$. We prove that the unique solutions of Prop.\ \ref{prop1} and \ref{prop2} are
$$\mathcal{E}_n = 2/(2+n), \quad X_q = \mathcal{E}_n - 1 + q \mathcal{E}_n, \quad q=0,...,n, \quad n \in \N^*.$$
In particular, $\E_n \searrow 0 = \inf J_2$ and Conjecture \ref{conjecture12} holds.

\noindent $\bullet$ Fix $n$ odd. $T_2(X_{q}) = T_2(X_{n-q}) \Leftrightarrow X_q =\pm X_{n-q} = \pm (\mathcal{E}_n-X_{q+1})$, $q=0,1,...,(n-1)/2$. Thanks to ordering \eqref{chain_1} and  $X_{(n+1)/2} = \mathcal{E}_n/2$ the appropriate sign is always $-$. Thus, recursively :
\begin{equation*}
\label{formula_En_odd}
X_0 = \mathcal{E}_n-1 = -\mathcal{E}_n + \left( - \mathcal{E}_n+ \left( ... \right) \right) = -(n+1)\mathcal{E}_n / 2 + \mathcal{E}_n/2 \Longrightarrow \mathcal{E}_n = 2/(n+2).
\end{equation*}
\noindent $\bullet$  Fix $n$ even. We also have $X_q = \pm (\mathcal{E}_n-X_{q+1})$, $q=0,1,...,n/2-1$. Thanks to the ordering \eqref{chain_1even} and the fact that $X_{n/2} = 0$ and we assume $\mathcal{E}_n \geq 0$ the appropriate sign is always $-$. Thus :
$$X_0 = \mathcal{E}_n-1 = -\mathcal{E}_n + \left( - \mathcal{E}_n + \left( ... \right) \right) = -n \mathcal{E}_n/2  \Longrightarrow \mathcal{E}_n = 2/(n+2).$$
Moreover, finite induction shows that the system also implies $X_q = \mathcal{E}_n -1 +q \mathcal{E}_n$, $q=1,2,...,n$. 
\qed

\begin{remark}
As a consequence of the proof of Proposition \ref{propk2_2/n+2}, the distance between adjacent $X_q$'s is the same for $\kappa=2$ : $|X_q - X_{q+1}| \equiv \E_n$, $0 \leq q \leq n$. This property is specific to $\kappa=2$.
\end{remark}

Let us express the solutions $\E_n$ as solutions to a single equation for $\kappa=3,4$. To do this we need to select the appropriate branches of $T_3 ^{-1}$ and $T_4 ^{-1}$. Let
\begin{equation*}
\begin{cases}
f_E : x \mapsto \frac{-(E-x) + \sqrt{3}\sqrt{1-(E-x)^2} }{2} & \text{if} \ \kappa = 3, \\
f_E : x \mapsto \sqrt{1-(E-x)^2} &  \text{if} \ \kappa = 4.
\end{cases}
\end{equation*}

Figure \ref{fig:test_T3k3_converge} illustrates solutions $\E_{2n}(\kappa)$ of Proposition \ref{lem1_sequence3344} for $\kappa = 3,4$. In these graphs the slope of the orange trend line is close to $-2$, and this is our rationale behind Conjecture \ref{conjecture12}. Numerically we found similar behavior for $\kappa = 5,6,8$, see \cite{GM4}. Based on these graphs the interested reader could conjecture on the constant $c(\kappa)$ in Conjecture \ref{conjecture12}.

\begin{figure}[htb]
  \centering
 \includegraphics[scale=0.3]{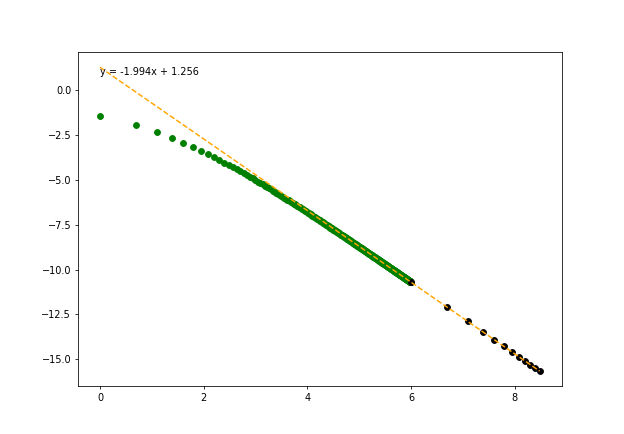}
  \includegraphics[scale=0.28]{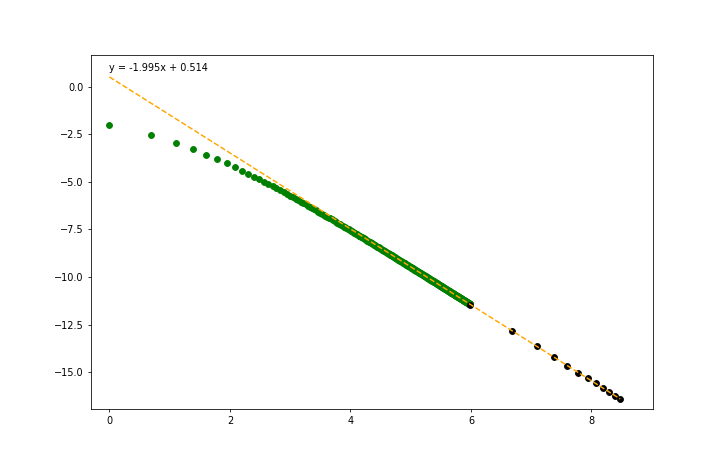}
\caption{Graphs with $\log(n)$ on $x$-axis and $\log(\E_{2n}(\kappa) - 2\cos(\pi / \kappa))$ on $y$-axis. Left : $\kappa=3$ ; Right : $\kappa=4$. Green dots are $1 \leq n \leq 400$ ; black dots are $n= 400, 800, 1200,...,4800$. Orange line is trend line based on linear regression of black dots.}
\label{fig:test_T3k3_converge}
\end{figure}

\begin{proposition}
\label{lem1_sequence3344}
Let $\kappa \in \{3,4\}$. Even terms: the $\E_{2n}$ in Theorem \ref{thm_decreasing energy general} are solution to the equation 
$\E_{2n} = 1+ f_{\E_{2n}}^{(n)}\left(\cos(\pi / \kappa) \right)$, $n \in \N^*$ ($f_{\E_{2n}}$ composed with itself $n$ times evaluated at $\cos(\pi / \kappa)$). Odd terms: the $\E_{2n-1}$ in Theorem \ref{thm_decreasing energy general} are solution to the equation $\E_{2n-1} = 1+f_{\E_{2n-1}}^{(n)}(\frac{\E_{2n-1}}{2})$. 
\end{proposition}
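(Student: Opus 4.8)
The plan is to unwind the recursive structure already displayed at the end of Section \ref{geo_construction}, specialized to $\kappa \in \{3,4\}$, using the explicit branch $f_E$ of $T_\kappa^{-1}\circ T_\kappa$ to walk along the chain $X_0 < X_1 < \dots < X_{n+1}$ of Propositions \ref{prop1} and \ref{prop2}. First I would record the two chain orderings \eqref{chain_1} and \eqref{chain_1even} together with the symmetry relation $X_{n-q} = \E_n - X_{1+q}$ from \eqref{conjecture_system_conj_intro}, \eqref{conjecture_system_conj_intro_even}, and the endpoint data $X_{n+1} = 1$, and (in the odd case) $X_{(n+1)/2} = \E_n/2$, (in the even case) $X_{n/2} = \cos(\pi/\kappa)$, which come from \eqref{o2}, \eqref{o22} and Remark \ref{obs_symmetry}. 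The key observation is that the two defining relations let one express $X_q$ in terms of $X_{q+1}$: from $T_\kappa(X_q) = T_\kappa(X_{n-q})$ and $X_{n-q} = \E_n - X_{q+1}$ one gets $T_\kappa(X_q) = T_\kappa(\E_n - X_{q+1})$, hence $X_q$ is one of the preimages of $T_\kappa(\E_n - X_{q+1})$ under $T_\kappa$.

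The main technical point is then to verify that, under the ordering constraints \eqref{chain_1}/\eqref{chain_1even} and the sign condition \eqref{o3}/\eqref{o33}, the correct preimage is exactly $f_{\E_n}$. For $\kappa = 4$, $T_4^{-1}$ restricted to the relevant monotone arc gives $x \mapsto \sqrt{1-x^2}$ up to sign, and the chain ordering (all $X_q$ lying in the region where $T_4$ is increasing to the right of $\cos(\pi/4) = 1/\sqrt2$, decreasing to its left) forces the positive square root, matching the stated $f_E$. For $\kappa = 3$, one uses the factorization of $T_3(x) - T_3(y)$ recorded in \eqref{cheby_23}, namely $T_3(x) = T_3(y) \iff (x-y)[4(x^2+xy+y^2)-3] = 0$; discarding the trivial branch $x = y$ (which would collapse the chain) and solving the quadratic $4(x^2 + x y + y^2) = 3$ for $x$ in terms of $y = \E_n - X_{q+1}$ yields $x = \tfrac{-y \pm \sqrt{3}\sqrt{1-y^2}}{2}$, and again the ordering \eqref{chain_1}/\eqref{chain_1even} (the $X_q$'s increase toward $1$, with $T_3$ decreasing on $(\cos 2\pi/3, \cos \pi/3)$ and increasing on $(\cos\pi/3, 1)$) picks out the "$+$" branch, which is precisely $f_E$. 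Once this branch selection is justified, iterating $X_q = f_{\E_n}(X_{q+1})$ from the known endpoint down the chain gives: in the even case, starting from $X_{n/2} = \cos(\pi/\kappa)$ and applying $f_{\E_n}$ a total of $n$ times (each application moving one index down, using the symmetry to fold the right half onto the left) produces $X_0 = f_{\E_n}^{(n)}(\cos(\pi/\kappa))$; combined with $X_0 = \E_n - 1$ this is the asserted equation $\E_{2n} = 1 + f_{\E_{2n}}^{(n)}(\cos(\pi/\kappa))$. In the odd case, starting from $X_{(n+1)/2} = \E_n/2$ and iterating $n$ times yields $X_0 = f_{\E_n}^{(n)}(\E_n/2)$, whence $\E_{2n-1} = 1 + f_{\E_{2n-1}}^{(n)}(\tfrac{\E_{2n-1}}{2})$.

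I expect the branch-selection step to be the main obstacle: one must check carefully that every intermediate $X_q$ lands in the arc where the chosen branch of $T_\kappa^{-1}$ is the right one, i.e.\ that the recursion $X_q = f_{\E_n}(X_{q+1})$ never leaves the interval $(\cos(2\pi/\kappa), 1)$ where the monotonicity pattern of Lemma \ref{variationsTk} guarantees the sign choice, and that it reproduces the strict inequalities in \eqref{chain_1}/\eqref{chain_1even} rather than some other interleaving of preimages. This is essentially a bookkeeping argument riding on the geometry already set up in Propositions \ref{prop1}, \ref{prop2} and the sign condition \eqref{o3}, \eqref{o33}, but it needs to be done cleanly for both parities and both values of $\kappa$. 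Everything else — writing $T_3(x)-T_3(y)$ in factored form, solving one quadratic, and counting the number of $f_{\E_n}$-iterations needed to traverse the chain — is routine.
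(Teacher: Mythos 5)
Your proof takes the route the paper intends: it unwinds the recursion $T_\kappa(X_q)=T_\kappa(\E_n - X_{q+1})$ sketched at the end of Section~\ref{geo_construction}, identifies the correct branch of $T_\kappa^{-1}$ via the factorizations $x^2+y^2=1$ (for $\kappa=4$) and $4(x^2+xy+y^2)=3$ (for $\kappa=3$, from \eqref{cheby_23}), and pins the branch down using the chain orderings \eqref{chain_1}, \eqref{chain_1even} and the sign condition \eqref{o3}, \eqref{o33} — exactly the ``one equation with nested radicals'' idea the paper states the Proposition is an application of. One notational wrinkle you should fix: you use $n$ both as the chain index of Propositions~\ref{prop1}--\ref{prop2} and as the index in $\E_{2n},\E_{2n-1}$, so the phrase ``starting from $X_{n/2}$ and applying $f$ a total of $n$ times'' overshoots by a factor of two; after consistent relabeling (chain length $2n$, so the middle is $X_n$ and one applies $f$ exactly $n$ times to reach $X_0=\E_{2n}-1$) the argument and the stated formulas are correct.
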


We now sequentially give the missing proofs of the aforementioned results in this section. We begin by a remark :

\begin{remark}
\label{r:ppcroissant}
Thanks to Lemma \ref{variationsTk}, 
\begin{enumerate}
\item Given $\cos(2\pi /\kappa)< a<b< \cos(\pi /\kappa)$, there exist unique
$\cos(\pi /\kappa) < b'< a'<1$ such that $T_\kappa(a)=T_\kappa(a')>T_\kappa(b')=T_\kappa(b)$. 
\item Given $\cos(\pi /\kappa) < b'< a'<1$, there exist unique
$\cos(2\pi /\kappa)< a<b< \cos(\pi /\kappa)$, such that $T_\kappa(a')=T_\kappa(a)>T_\kappa(b)=T_\kappa(b')$.
\end{enumerate}
Moreover, $(a,b)$ depends bi-continuously on $(a',b')$.
\end{remark}

\vspace{1cm}

\noindent \textit{Proof of Proposition \ref{prop1}.} 
We implement the dynamical algorithm for $n$ odd of section \ref{geo_construction}. Initialize energy $E$ to $E = E(\alpha) = 2\cos(\pi / \kappa)+\alpha$ with $\alpha \in \mathbf{A}_{max} := (0, 1-\cos(\pi/\kappa))$. First, by Remark \ref{obs_symmetry} we know $X_{(n+1)/2} = E/2 = \cos(\pi / \kappa) + \alpha/2 \in (\cos(\pi / \kappa), 1 )$. In particular when $\alpha \searrow 0_+$, note that $X_{(n+1)/2} \searrow \cos(\pi / \kappa)_+$. 
Now, up to a smaller $\alpha$ still within $\mathbf{A}_{max}$, we construct inductively and continuously in $\alpha$ all of the remaining $X_q = X_q (\alpha)$, by checking all the constraints of \eqref{conjecture_system_conj_intro}, \eqref{o1} -- \eqref{o3}, but with the exception of \eqref{o2}, i.e.\ $X_{n+1} = 1 \Leftrightarrow X_0 = E-1$. 

$\bullet$ $X_{(n-1)/2}$ is determined in $(\cos(2\pi / \kappa), \cos(\pi / \kappa) )$ so that $T_{\kappa}(X_{(n-1)/2}) = T_{\kappa}(X_{(n+1)/2})$. In particular, $X_{(n-1)/2}  < \cos(\pi / \kappa) < X_{(n+1)/2}$. Note that $X_{(n-1)/2} \nearrow \cos(\pi / \kappa)_-$, as $\alpha \searrow 0_+$.

$\bullet$ $X_{(n+3)/2}$ is the symmetric of $X_{(n-1)/2}$ wrt.\ $X_{(n+1)/2} = E/2$. So $X_{(n+1)/2} < X_{(n+3)/2}$. Up to a smaller $\alpha$ possibly, $X_{(n+3)/2} \in (\cos(\pi/\kappa),1)$. As $\alpha \searrow 0_+$, $X_{(n+3)/2} \searrow \cos(\pi / \kappa)_{+}$. 

$\bullet$ As per Remark \ref{r:ppcroissant}, $\exists ! X_{(n-3)/2} \in (\cos(2\pi / \kappa), \cos(\pi/\kappa))$ such that $ X_{(n-3)/2} <X_{(n-1)/2}$ and
$T_{\kappa}(X_{(n-1)/2}) = T_{\kappa}(X_{(n+1)/2}) < T_{\kappa}(X_{(n+3)/2})$. Again, $X_{(n-3)/2}\nearrow \cos(\pi / \kappa)_{-}$, as $\alpha \searrow 0_+$. 

$\bullet$ $X_{(n+5)/2}$ is the symmetric of $X_{(n-3)/2}$ wrt.\ $X_{(n+1)/2} = E/2$. Up to a smaller $\alpha$ possibly, $X_{(n+5)/2} \in (\cos(\pi / \kappa), 1)$. Since $X_{(n-3)/2} < X_{(n-1)/2}$, we infer $X_{(n+3)/2} < X_{(n+5)/2}$.  In particular, $T_{\kappa}(X_{(n-3)/2}) = T_{\kappa}(X_{(n+3)/2}) < T_{\kappa}(X_{(n+5)/2})$. Once more, $X_{(n+5)/2} \searrow \cos(\pi / \kappa)_{+}$, as $\alpha \searrow 0_+$. 

$\bullet$ We continue this ping pong game inductively till all of the $X_q =X_q(\alpha)$, $q=0,...,n+1$, have been defined. Note that the last step of the ping pong game was to place $X_{n+1}$ in such a way that it is the symmetric of $X_0$ wrt.\  $X_{(n+1)/2} = E/2$ (2nd line of \eqref{conjecture_system_conj_intro}). Now we consider the set $\mathbf{A}_n$ ($\mathbf{A}$ depends on $n$) of all the positive $\alpha$'s that allow a construction verifying :
\begin{equation}
\label{chain_reveal}
E -1 \leq X_{0} < X_{1} < X_{2} < ... <  X_{(n-1)/2} < \cos(\pi / \kappa) <X_{(n+1)/2} < ... < X_{n} < X_{n+1} \leq 1.
\end{equation}
$\mathbf{A}_n \subset \mathbf{A}_{max}$ since if $\alpha \geq 1-\cos(\pi / \kappa)$, $X_{n+1} = E - X_0 \geq 1 + \cos(\pi / \kappa) - X_0 \geq 1$. This observation will imply that $\E_n \in J_2 (\kappa)$ when the proof is over. As a side note, it is not hard to see that $\mathbf{A}_{n+2} \subset \mathbf{A}_{n}$ ; later in this section we prove $\cap_{n \in \N, \ n \ \text{odd}} \mathbf{A}_{n} = \emptyset$. It remains to argue that there is a unique $\alpha^* \in \mathbf{A}_n$ such that $X_{n+1}(\alpha^*) = 1 \Leftrightarrow X_0(\alpha^*) = E(\alpha^*)-1$. First, note that by construction, the chain of strict inequalities in \eqref{chain_reveal} remains valid as $\alpha$ increases in $\mathbf{A}_n$. Second, recall that $X_{n+1} \geq 1$ as $\alpha \geq 1 - \cos(\pi/ \kappa)$. Moreover, $X_{n+1}$ is strictly increasing for $\alpha \in \mathbf{A}_{n}$. Thirdly, and finally, note that by construction, as $\alpha$ increases in $\mathbf{A}_n$, $X_{n+1}$ must reach $1$ before $X_0$ reaches $\cos(2\pi / \kappa)$. This is because $T_{\kappa}(X_0) = T_{\kappa}(X_n) < T_{\kappa}(X_{n+1})$. Another way to see this is to argue by contradiction. If $X_0$ were to reach $\cos(2\pi / \kappa)$ before $X_{n+1}$ reaches $1$, then 
$$2\cos(\pi / \kappa) - \cos(2\pi / \kappa) \leq E- \cos(2\pi / \kappa) = E-X_0 = X_{n+1} < 1 \Rightarrow 2\cos(\pi / \kappa) - \cos(2\pi / \kappa) < 1,$$
which is a false statement. Thus, $\exists ! \alpha^*$ s.t.\ $X_{n+1}(\alpha^*) = 1$. The energy solution $\E_n$ is $E(\alpha^*)$. 
\qed

\vspace{1cm}


\noindent \textit{Proof of Proposition \ref{prop2}.} 

We implement the dynamical algorithm for $n$ even of section \ref{geo_construction}, and mimick the proof of Proposition \ref{prop1}. The main difference is that this time $X_{n/2} := \cos(\pi / \kappa)$. 
It implies that the values $X_0, X_1$, ..., $X_{n/2-1}$ will belong to $(\cos(2\pi /\kappa), \cos(\pi /\kappa))$, whereas the values $X_{n/2+1}, X_{n/2+2}$, ..., $X_{n}$ will belong to $(\cos(\pi /\kappa), 1)$. $X_{n+1}$ will be placed ultimately so that it equals 1.

Initialize energy $E$ to $E = E(\alpha) = 2\cos(\pi / \kappa)+\alpha$ with $\alpha \in \mathbf{A}_{max} := (0, 1-\cos(\pi/\kappa))$. First, $E/2 = \cos(\pi / \kappa) + \alpha/2 \in (\cos(\pi / \kappa), 1 )$. 
Now, up to a smaller $\alpha$ still within $\mathbf{A}_{max}$, we construct inductively and continuously in $\alpha$ all of the remaining $X_q = X_q (\alpha)$, by checking all the constraints of \eqref{conjecture_system_conj_intro_even}, \eqref{o11} -- \eqref{o33}, but with the exception of the $X_{n+1}$ condition in \eqref{o22}, i.e.\ $X_{n+1} = 1 \Leftrightarrow X_0 = E-1$.

$\bullet$ $X_{n/2+1}$ is the symmetric of $X_{n/2}$ wrt.\ $E/2$. So $X_{n/2} < E/2 < X_{n/2+1} = \cos(\pi / \kappa) + \alpha$. As per Remark \ref{r:ppcroissant}, $X_{n/2-1}$ is constructed in $(\cos(2\pi / \kappa), \cos(\pi / \kappa) )$ so that $T_\kappa( X_{n/2-1}) = T_\kappa (X_{n/2+1})$. We turn to $X_{n/2+2}$ which is is the symmetric of $X_{n/2-1}$ wrt.\ $E/2$. Up to a smaller $\alpha$ possibly, $X_{n/2+2} \in (\cos(\pi/\kappa), 1)$. 
As per Remark \ref{r:ppcroissant}, there is a unique $X_{n/2-2} \in (\cos(2\pi / \kappa), \cos(\pi/\kappa))$ such that $ X_{n/2-2} <X_{n/2-1}$ and
$T_{\kappa}(X_{n/2-2}) = T_{\kappa}(X_{n/2+2}) > T_{\kappa}(X_{n/2+1})$.

$\bullet$ We continue this ping pong game inductively till all of the $X_q =X_q(\alpha)$, $q=0,...,n+1$, have been defined. Note that the last step of the ping pong game was to place $X_{n+1}$ in such a way that it is the symmetric of $X_0$ wrt.\  $E/2$ (2nd line of \eqref{conjecture_system_conj_intro_even}). Now we consider the set $\mathbf{A}_n$ ($\mathbf{A}$ depends on $n$) of all the positive $\alpha$'s that allow a construction verifying :

\begin{equation}
\label{chain_1_reveal}
E -1 \leq X_{0} < X_{1} < X_{2} < ... <  X_{n/2} = \cos(\pi / \kappa) < X_{n/2+1} < ... < X_{n} < X_{n+1} \leq 1.
\end{equation} 
$\mathbf{A}_n \subset \mathbf{A}_{max}$ since if $\alpha \geq 1-\cos(\pi / \kappa)$, $X_{n+1} = E - X_0 \geq 1 + \cos(\pi / \kappa) - X_0 \geq 1$. As a side note, it is not hard to see that $\mathbf{A}_{n+2} \subset \mathbf{A}_{n}$ ; later in this section we prove $\cap_{n \in \N, \ n \ \text{even}} \mathbf{A}_{n} = \emptyset$. It remains to argue that there is a unique $\alpha^* \in \mathbf{A}_n$ such that $X_{n+1}(\alpha^*) = 1 \Leftrightarrow X_0(\alpha^*) = E(\alpha^*)-1$. First, note that by construction, the chain of strict inequalities in \eqref{chain_1_reveal} remains valid as $\alpha$ increases in $\mathbf{A}_n$. Second, recall that $X_{n+1} \geq 1$ as $\alpha \geq 1 - \cos(\pi/ \kappa)$. Moreover, $X_{n+1}$ is strictly increasing for $\alpha \in \mathbf{A}_{n}$. Thirdly, and finally, note that by construction, as $\alpha$ increases in $\mathbf{A}_n$, $X_{n+1}$ must reach $1$ before $X_0$ reaches $\cos(2\pi / \kappa)$ (see the previous proof for the argument). Thus, $\exists ! \alpha^*$ s.t.\ $X_{n+1}(\alpha^*) = 1$. The energy solution $\E_n$ is $E(\alpha^*)$.
\qed

\vspace{1cm}

\noindent \textit{Proof of Proposition \ref{prop_interlace}.} 

Fix $n$ odd. So $n+1$ is even. Fix $E:= \min(\mathcal{E}_n, \E_{n+1})$ (we suppose at this point that we don't know which of the 2 energies is smaller) with $\E_n$ and $\E_{n+1}$ determined as in the proofs of Propositions \ref{prop1} and \ref{prop2} respectively. The construction gives $(X_{i,n} (E) )_{i=0} ^{n+1}$ satisfying \eqref{chain_reveal} and $(X_{i,n+1} (E) )_{i=0} ^{n+2}$ satisfying \eqref{chain_1_reveal}. By the choice of $E$ we either have $X_{n+1,n} (E) = 1$ or $X_{n+2,n+1} (E) = 1$. This is to be determined. Starting from the bottom of the well we see that :
\[X_{(n-1)/2,n} (E) <X_{(n+1)/2,n+1}(E) = \cos(\pi / \kappa) < E/2 = X_{(n+1)/2,n} (E) < X_{(n+1)/2+1,n+1}(E).\]
By the ping pong game that ensues, and using Remark \ref{r:ppcroissant}, we inductively infer 
\[X_{(n+1)/2+q,n} (E) < X_{(n+1)/2+q+1,n+1}(E), \quad  \mbox{ for } q=0,1,...,(n+1)/2.\]
So $X_{n+1,n} (E) < X_{n+2,n+1}(E)$. It must be therefore that $X_{n+2,n+1}(E) = 1$ and so $E = \E_{n+1} \leq \E_n$. Furthermore, $X_{n+1,n} (E) < X_{n+2,n+1}(E)$ implies $\mathcal{E}_{n+1} < \mathcal{E}_{n}$.

Fix $n$ even. So $n+1$ is odd. We proceed with the same setup as before. Fix $E:= \min(\mathcal{E}_n, \E_{n+1})$ with $\E_n$ and $\E_{n+1}$ determined as in the proofs of Propositions \ref{prop2} and \ref{prop1} respectively. The construction gives $(X_{i,n} (E) )_{i=0} ^{n+1}$ satisfying \eqref{chain_1_reveal} and $(X_{i,n+1} (E) )_{i=0} ^{n+2}$ satisfying \eqref{chain_reveal}. By the choice of $E$ we either have $X_{n+1,n} (E) = 1$ or $X_{n+2,n+1} (E) = 1$. This is to be determined. Starting from the bottom of the well we see that :
\[X_{n/2,n+1}(E) < X_{n/2,n} (E) = \cos(\pi / \kappa) < E/2 = X_{n/2+1,n+1}(E) < X_{n/2+1,n} (E) \]
By the ping pong game that ensues, and using Remark \ref{r:ppcroissant}, we inductively infer 
\[X_{n/2+q,n} (E) < X_{n/2+q+1,n+1}(E), \quad  \mbox{ for } q=0,1,...,n/2+1.\]
So $X_{n+1,n} (E) < X_{n+2,n+1}(E)$. It must be therefore that $X_{n+2,n+1}(E) = 1$ and so $E = \E_{n+1} \leq \E_n$. Furthermore, $X_{n+1,n} (E) < X_{n+2,n+1}(E)$ implies $\mathcal{E}_{n+1} < \mathcal{E}_{n}$.
\qed

Finally, to prove Proposition \ref{prop_convergeence}, we'll start with a Lemma which characterizes a geometric property of the graph of $T_{\kappa}$ :

\begin{Lemma}\label{l:croissdiff}
Let $\kappa\geq 2$. If $\cos(2\pi/\kappa) < a<\cos(\pi/\kappa)<b < 1$ are such that $T_\kappa(a) = T_\kappa(b)$, then 
\begin{equation}
\label{distanceLeftGreaterthanRight}
\cos(\pi/\kappa)-a>b-\cos(\pi/\kappa).
\end{equation}
\end{Lemma}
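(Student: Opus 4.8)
The claim is a geometric fact about the shape of the Chebyshev polynomial $T_\kappa$ on the interval where it has its rightmost local minimum at $x=\cos(\pi/\kappa)$: near that minimum, the graph rises faster to the right than to the left, so a horizontal chord meeting the graph at $a<\cos(\pi/\kappa)<b$ lies closer (horizontally) to the minimum on the right side. My plan is to reduce \eqref{distanceLeftGreaterthanRight} to a statement about the cosine parametrization and then to an elementary trigonometric inequality.

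First I would write $a=\cos\theta$, $b=\cos\phi$ with $\theta,\phi\in(0,\pi)$. By Lemma \ref{variationsTk}, on $(\cos(2\pi/\kappa),\cos(\pi/\kappa))$ and on $(\cos(\pi/\kappa),1)$ the polynomial $T_\kappa$ is strictly monotone (decreasing, then increasing), so the hypotheses pin down $\theta\in(\pi/\kappa,2\pi/\kappa)$ and $\phi\in(0,\pi/\kappa)$. The condition $T_\kappa(a)=T_\kappa(b)$ reads $\cos(\kappa\theta)=\cos(\kappa\phi)$; since $\kappa\theta\in(\pi,2\pi)$ and $\kappa\phi\in(0,\pi)$, this forces $\kappa\theta=2\pi-\kappa\phi$, i.e.\ $\theta+\phi=2\pi/\kappa$. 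Thus $\theta$ and $\phi$ are symmetric about $\pi/\kappa$: writing $\theta=\pi/\kappa+t$, $\phi=\pi/\kappa-t$ with $t\in(0,\pi/\kappa)$. Inequality \eqref{distanceLeftGreaterthanRight} becomes
\begin{equation*}
\cos(\pi/\kappa)-\cos(\pi/\kappa+t) > \cos(\pi/\kappa-t)-\cos(\pi/\kappa).
\end{equation*}
Rearranging, this is equivalent to $2\cos(\pi/\kappa) > \cos(\pi/\kappa-t)+\cos(\pi/\kappa+t)=2\cos(\pi/\kappa)\cos t$, i.e.\ to $\cos(\pi/\kappa)(1-\cos t)>0$. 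Since $\kappa\geq 2$ gives $\pi/\kappa\in(0,\pi/2]$ so $\cos(\pi/\kappa)\geq 0$ — and in fact $\cos(\pi/\kappa)>0$ for $\kappa\geq 3$, while for $\kappa=2$ we need a separate remark — and since $t\in(0,\pi/\kappa)\subset(0,\pi)$ gives $1-\cos t>0$, the inequality holds, except that when $\kappa=2$ one has $\cos(\pi/\kappa)=0$ and the claimed strict inequality would fail; so I should double-check the hypotheses rule out $\kappa=2$ here (indeed for $\kappa=2$ there is no point strictly between $\cos(2\pi/\kappa)=-1$ and $\cos(\pi/\kappa)=0$ on one side together with the symmetry forcing $a=-b$, making \eqref{distanceLeftGreaterthanRight} an equality rather than strict; presumably the intended range is $\kappa\geq 3$, or the statement is applied only there).

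The only mildly delicate point is the reduction $\cos(\kappa\theta)=\cos(\kappa\phi)\Rightarrow \theta+\phi=2\pi/\kappa$: one must use the constraints on the ranges of $\kappa\theta$ and $\kappa\phi$ coming from Lemma \ref{variationsTk} (that $a$ lies in the descending branch just left of the rightmost minimum and $b$ in the ascending branch just right of it) to exclude the other solution family $\kappa\theta=\kappa\phi+2\pi m$. I expect this bookkeeping — confirming $\theta\in(\pi/\kappa,2\pi/\kappa)$ and $\phi\in(0,\pi/\kappa)$ precisely, hence $\kappa\theta\in(\pi,2\pi)$, $\kappa\phi\in(0,\pi)$ — to be the main (and only real) obstacle; everything after the substitution is a one-line trigonometric identity. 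Alternatively, a purely calculus-based argument would compare $|T_\kappa'|$ on the two sides or use convexity of $T_\kappa$ near the minimum, but the cosine substitution is cleaner and avoids estimating derivatives.
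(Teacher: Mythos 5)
Your proof is correct, and it takes a genuinely different and more elementary route than the paper's. The paper defines $S_\kappa(x) := T_\kappa(x) - T_\kappa(2\cos(\pi/\kappa)-x)$, notes $S_\kappa(\cos(\pi/\kappa))=0$, and shows $S_\kappa'>0$ on $(\cos(\pi/\kappa),1)$ by using the factorization $U_{\kappa-1}(x)=2^{\kappa-1}\prod_{j=1}^{\kappa-1}(x-\cos(j\pi/\kappa))$, pulling out the common factor $(x-\cos(\pi/\kappa))$ and comparing the remaining products term by term. You instead parametrize $a=\cos\theta$, $b=\cos\phi$, use $T_\kappa(\cos\alpha)=\cos(\kappa\alpha)$ together with the monotonicity intervals from Lemma \ref{variationsTk} to pin down $\kappa\theta\in(\pi,2\pi)$, $\kappa\phi\in(0,\pi)$ and hence $\theta+\phi=2\pi/\kappa$, and then the sum-to-product identity collapses the inequality to $\cos(\pi/\kappa)(1-\cos t)>0$. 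Your approach avoids any differentiation and is arguably the cleaner of the two; the paper's approach is more robust in the sense that it does not depend on the explicit cosine parametrization of the branches, but that robustness is not needed here.

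You have also spotted a genuine flaw in the statement that the paper's proof glosses over: for $\kappa=2$ one has $\cos(\pi/\kappa)=0$, so your reduction gives $0>0$, and indeed $T_2(x)=2x^2-1$ is even, so $a=-b$ and \eqref{distanceLeftGreaterthanRight} is an equality, not a strict inequality. The paper's own proof fails there too: in its final factorization the products $\prod_{j=2}^{\kappa-1}$ are empty for $\kappa=2$, so the difference in the last line vanishes identically and $S_2'\equiv 0$, contradicting the claimed ``$S_\kappa'$ is positive.'' The hypothesis should read $\kappa\geq 3$, or the conclusion should be weakened to $\geq$. This is harmless for the paper as a whole, because Lemma \ref{l:croissdiff} is invoked only in the proof of Proposition \ref{prop_convergeence}, where the non-strict inequality would suffice, and in any case for $\kappa=2$ the convergence $\E_n\searrow 0$ is established independently in Proposition \ref{propk2_2/n+2}. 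Still, your remark improves the exposition and you were right to flag it.
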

\begin{proof}
To prove this Lemma we analyze the function $S_\kappa(x):=T_\kappa(x)- T_\kappa(2\cos(\pi/\kappa)-x)$. If we can prove that $S_{\kappa} (x) >0$ on $(\cos(\pi/\kappa), 1)$, then $T_{\kappa}(a) = T_{\kappa}(b) >  T_{\kappa}(2\cos(\pi/\kappa) - b)$ for $b \in (\cos(\pi/\kappa), 1)$ and $a \in (\cos(2\pi / \kappa), \cos(\pi/\kappa))$. This implies $a < 2\cos(\pi/\kappa) - b$, i.e.\ \eqref{distanceLeftGreaterthanRight} holds. Clearly $S_\kappa ( \cos(\pi / \kappa)) =0$. Thus, to prove that  $S_{\kappa} (x) >0$ on $(\cos(\pi/\kappa), 1)$ it is enough to prove that $S_\kappa$ is strictly increasing on $(\cos(\pi/\kappa), 1)$. First note that 
\begin{align}\label{e:intervalle}
\cos(2\pi/\kappa)< 2 \cos(\pi/\kappa)-x< \cos(\pi/\kappa), \quad \mbox{ for } x\in (\cos(\pi/\kappa), 1).
\end{align}
We aim to show that $S_\kappa'(x)>0$ for $x \in (\cos(\pi/\kappa), 1)$. Fix $x\in (\cos(\pi/\kappa), 1)$.
\begin{align*}
S_\kappa'(x)&= \kappa(U_{\kappa-1}(x)+ U_{\kappa-1}(2 \cos(\pi/\kappa)-x))
\\
&= \kappa 2^{\kappa-1} \left( \prod_{j=1}^{\kappa-1}(x-\cos(j\pi/\kappa))+  \prod_{j=1}^{\kappa-1}(2\cos(\pi/\kappa)-x-\cos(j\pi/\kappa))\right)
\\
&= \kappa 2^{\kappa-1} (x-\cos(\pi/\kappa)) \left( \prod_{j=2}^{\kappa-1}(x-\cos(j\pi/\kappa))-  \prod_{j=2}^{\kappa-1}(2\cos(\pi/\kappa)-x-\cos(j\pi/\kappa))\right)
\end{align*}
Since cosine is decreasing on $(0, \pi)$ and since \eqref{e:intervalle} holds, note that $x-\cos(j\pi/\kappa) \geq x-\cos(\pi/\kappa)>0$ and $2\cos(\pi/\kappa)-x-\cos(j\pi/\kappa)>0$ for $j\in\{2, \ldots \kappa-1\}$. Next, for $j\in\{2, \ldots \kappa-1\}$, using again \eqref{e:intervalle}, note that
$$0<2\cos(\pi/\kappa)-x-\cos(j\pi/\kappa) < x-\cos(j\pi/\kappa).$$
Therefore, $S_\kappa'$ is positive on $(\cos(\pi/\kappa), 1)$. 
\end{proof}
\qed

\begin{remark}
$S_\kappa$ in Lemma \ref{l:croissdiff} is stricly convex. 
\end{remark}


\noindent \textit{Proof of Proposition \ref{prop_convergeence}.}  

By Proposition \ref{prop_interlace} and since $\E_n>2 \cos(\pi / \kappa)$, $\exists \ell$ such that $\E_n \searrow \ell \geq 2 \cos(\pi / \kappa)$. It is enough to show that
$\E_{2n+1} \to 2 \cos(\pi / \kappa)$, $n \in \N^*$. Therefore, we suppose that $n$ is odd. We proceed by contradiction. Suppose $2\epsilon:= \ell- 2 \cos(\pi / \kappa)>0$. Recall $X_{(n+1)/2,n} (\E_n) = \E_n /2$. Then for all $n \geq 1$ and odd, $X_{(n+1)/2,n} (\E_n) - \cos(\pi/\kappa) > \epsilon$. Choose $n$ odd large enough so that $n \epsilon > 1$. 

By Lemma \ref{l:croissdiff},  $\cos(\pi/\kappa) - X_{(n-1)/2,n} (\mathcal{E}_n) > X_{(n+1)/2,n} (\mathcal{E}_n) - \cos(\pi/\kappa) > \epsilon$. Next, 
since $X_{(n-1)/2,n} (\E_n)$ and $X_{(n+3)/2,n} (\E_n)$ are symmetric wrt.\ the axis $x=\mathcal{E}_n /2$, it must be that $X_{(n+3)/2,n} (\E_n) - \cos(\pi/\kappa) > 3 \epsilon$. Again, apply Lemma \ref{l:croissdiff} to get $\cos(\pi/\kappa) - X_{(n-3)/2,n} (\mathcal{E}_n) > 3 \epsilon$. Continuing in this way, we end up with 
$X_{(n+q)/2,n} (\E_n) - \cos(\pi/\kappa) > q \epsilon$ for $q=1,3,5,...,n$. But $X_{n,n} (\mathcal{E}_n) > n\epsilon > 1$ is absurd. We conclude that $\ell=2 \cos(\pi / \kappa)$. \qed

\section{An increasing sequence of thresholds below $J_3(\kappa) :=\left( 1+\cos(2 \pi / \kappa), 2\cos(\pi / \kappa) \right)$}
\label{section J3a}

This entire section is in dimension 2. We prove the existence of a sequence of threshold energies $\F_n = \F_n (\kappa) \nearrow \inf J_3 (\kappa)$. This section proves Theorem \ref{thm_decreasing energy general_k3} for $\{ \F_n \}$.

\begin{proposition} (odd terms)
\label{prop_k=3_odd_888}
Fix $\kappa \geq 3$, and $n \in \N^*$ odd. System \eqref{conjecture_system_conj_intro} has a unique solution (which we denote $\F_n$ instead of $\E_n$) such that $\F_n \in (\cos(\pi / \kappa) + \cos(2\pi / \kappa), 1+ \cos(2\pi / \kappa))$ and
\begin{equation}
\label{chain_1_k3_888}
\cos(2\pi / \kappa) =: X_{n+1} < X_{n}  < ... <  X_{(n+1)/2} = \F_n / 2  < X_{(n-1)/2} < ...  < X_0 < 1,
\end{equation}
Furthermore, this solution satisfies \eqref{o1}, \eqref{o2} and \eqref{o3}, and so $\F_n \in \mathfrak{T}_{n,\kappa}$.
\end{proposition}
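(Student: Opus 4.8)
\emph{Proof proposal.} The plan is to run the mirror image of the proof of Proposition \ref{prop1}, i.e.\ the dynamical ping-pong algorithm for $n$ odd of Section \ref{geo_construction}, with two adjustments dictated by the geometry. First, the level condition imposed on the anchor point $X_{n+1}$ is now $U_{\kappa-1}(X_{n+1})=0$, so that \eqref{o2} is met by taking $X_{n+1}=\cos(2\pi/\kappa)=\cos(2\cdot\pi/\kappa)$ — here $\kappa\geq3$ is essential, so that $\cos(2\pi/\kappa)$ is an \emph{interior} local maximum of $T_\kappa$ — rather than $m(X_{n+1})=0$ as in Proposition \ref{prop1}. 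Second, the axis of symmetry $x=E/2$ now sits on the \emph{decreasing} branch of $T_\kappa$, namely in $(\cos(2\pi/\kappa),\cos(\pi/\kappa))$: indeed for $E$ in the target interval $\bigl(\cos(\pi/\kappa)+\cos(2\pi/\kappa),\,1+\cos(2\pi/\kappa)\bigr)$ one has $E/2\in\bigl((\cos(\pi/\kappa)+\cos(2\pi/\kappa))/2,\,\cos^2(\pi/\kappa)\bigr)\subset(\cos(2\pi/\kappa),\cos(\pi/\kappa))$, using $\cos(2\pi/\kappa)<\cos(\pi/\kappa)$ and $1+\cos(2\pi/\kappa)=2\cos^2(\pi/\kappa)<2\cos(\pi/\kappa)$.

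First I would fix $E$, set $X_{(n+1)/2}:=E/2$, and build $X_{(n-1)/2},X_{(n+3)/2},X_{(n-3)/2},X_{(n+5)/2},\dots$ inductively and continuously in $E$ by alternating: (a) the level condition $T_\kappa(X_{(n-1)/2-k})=T_\kappa(X_{(n+1)/2+k})$, which by Remark \ref{r:ppcroissant} — applied around the local minimum $\cos(\pi/\kappa)$ — places $X_{(n+1)/2+k}$ on the decreasing branch and its $T_\kappa$-partner $X_{(n-1)/2-k}$ on the increasing branch $(\cos(\pi/\kappa),1)$; and (b) the symmetry condition $X_{(n+1)/2+k+1}=E-X_{(n-1)/2-k}$. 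Let $\mathbf{A}_{n}$ be the set of $E$ for which this construction is valid (every constructed point lies in $(\cos(2\pi/\kappa),1)$) and the chain
\[
X_n<X_{n-1}<\cdots<X_{(n+1)/2}=E/2<X_{(n-1)/2}<\cdots<X_0<1
\]
holds; as in Proposition \ref{prop1} one shows $\mathbf{A}_{n}$ is an interval, that $1+\cos(2\pi/\kappa)\in\mathbf{A}_{n}$, and that its left endpoint $\ell_n$ is exactly the value at which the outermost decreasing-branch point $X_n$ is pushed down to $\cos(2\pi/\kappa)$ (equivalently, at which its partner $X_0$ reaches $1$); note $\ell_n<1+\cos(2\pi/\kappa)$ because $X_n(1+\cos(2\pi/\kappa))>\cos(2\pi/\kappa)$ strictly. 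The final symmetry step $X_{n+1}=E-X_0$ is kept aside for the calibration.

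Then I would calibrate $E\in\mathbf{A}_{n}$ so that $X_{n+1}=\cos(2\pi/\kappa)$, i.e.\ $X_0=E-\cos(2\pi/\kappa)$. The map $E\mapsto X_{n+1}(E)=E-X_0(E)$ is continuous and strictly increasing (as $E$ grows, $E/2$ moves right on the decreasing branch, so $T_\kappa(E/2)$ decreases, hence every increasing-branch point, in particular $X_0$, decreases, so $X_{n+1}$ increases). At $E=1+\cos(2\pi/\kappa)$ one has $X_0<1$ strictly, hence $X_{n+1}=E-X_0>E-1=\cos(2\pi/\kappa)$; as $E\searrow\ell_n$ one has $X_0\to1$ and $X_{n+1}\to\ell_n-1<\cos(2\pi/\kappa)$. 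The intermediate value theorem together with strict monotonicity then yield a unique $\F_n\in\mathbf{A}_{n}$ with $X_{n+1}(\F_n)=\cos(2\pi/\kappa)$; it lies in $\bigl(\cos(\pi/\kappa)+\cos(2\pi/\kappa),\,1+\cos(2\pi/\kappa)\bigr)$ and \eqref{chain_1_k3_888} holds. Conditions \eqref{o1} (all $X_q$, $0\leq q\leq n$, lie in $(\cos(2\pi/\kappa),1)\subset(-1,1)$), \eqref{o2} ($X_{n+1}=\cos(2\cdot\pi/\kappa)$) and \eqref{o3} (each $T_\kappa$-equal pair straddles the local minimum $\cos(\pi/\kappa)$, so $T'_\kappa$ has opposite, nonzero, signs on it) then follow at once, so $\F_n\in\mathfrak{T}_{n,\kappa}$.

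\textbf{Main obstacle.} As in Proposition \ref{prop1}, the one genuinely delicate point is the bookkeeping that makes $\mathbf{A}_{n}$ an interval with the stated endpoint behaviour: showing inductively along the ping-pong that, as $E$ decreases, it is exactly the outermost decreasing-branch point $X_n$ that first reaches $\cos(2\pi/\kappa)$ (forcing simultaneously $X_0=1$), and that this happens at an $E=\ell_n$ still below $1+\cos(2\pi/\kappa)$, so that $X_{n+1}$ does cross $\cos(2\pi/\kappa)$ within $\mathbf{A}_{n}$. This is the mirror of the ``$X_{n+1}$ reaches $1$ before $X_0$ reaches $\cos(2\pi/\kappa)$'' step there, and ultimately boils down to the elementary inequalities $\cos(2\pi/\kappa)<\cos(\pi/\kappa)$ and $1+\cos(2\pi/\kappa)<2\cos(\pi/\kappa)$, i.e.\ $\cos(\pi/\kappa)<1$.
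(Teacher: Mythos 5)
The paper does not write out a proof of this proposition at all; Propositions \ref{prop_k=3_odd_888}--\ref{prop_interlace_888} are stated as the $J_3$-analogues of the $J_2$-propositions and the reader is left to mirror the proofs of Propositions \ref{prop1}--\ref{prop_convergeence}. Your proposal reconstructs exactly that mirror, and it gets the geometry right: the anchor condition is $U_{\kappa-1}(X_{n+1})=0$ with $X_{n+1}=\cos(2\pi/\kappa)$ (rather than $m(X_{n+1})=0$), the axis $x=E/2$ now lies in the second well $(\cos(2\pi/\kappa),\cos(\pi/\kappa))$, and the branch assignment is inverted (indices $0,\dots,(n-1)/2$ on the increasing branch $(\cos(\pi/\kappa),1)$, indices $(n+1)/2,\dots,n$ on the decreasing branch), consistent with chain \eqref{chain_1_k3_888}. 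The definition of $\mathbf{A}_n$, the monotonicity of $X_{n+1}(E)$ in $E$ (through the alternating sign propagation along the ping-pong), and the IVT at the end all track the proof of Proposition \ref{prop1} step for step. The verification of \eqref{o1}--\eqref{o3} is correct, and you rightly flag the endpoint bookkeeping as the delicate spot.

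One point is, however, not quite right. You claim the endpoint step "ultimately boils down to the elementary inequality $1+\cos(2\pi/\kappa)<2\cos(\pi/\kappa)$." But the paper's direct arithmetic contradiction in Proposition \ref{prop1} ("another way to see this") does \emph{not} mirror here: trying it, one gets that if $X_0=1$ while $X_{n+1}=E-1>\cos(2\pi/\kappa)$, then $1+\cos(2\pi/\kappa)<E<2\cos(\pi/\kappa)$, which is perfectly consistent --- no contradiction. The argument that \emph{does} transfer is the Chebyshev one: $X_{n+1}<X_n$ holds automatically (since $X_{n+1}-X_n=X_1-X_0<0$), both lie in the second well where $T_\kappa$ is decreasing, so $T_\kappa(X_{n+1})>T_\kappa(X_n)=T_\kappa(X_0)$. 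If $X_0$ were to reach $1$ while $X_{n+1}$ were still in $(\cos(2\pi/\kappa),\cos(\pi/\kappa))$, then $T_\kappa(X_{n+1})>T_\kappa(X_0)=1$, impossible since $X_{n+1}\in(-1,1)$. This is what forces $X_{n+1}$ to cross $\cos(2\pi/\kappa)$ strictly before $X_0$ reaches $1$, i.e.\ $\ell_n<1+\cos(2\pi/\kappa)$ and $\F_n\in(\ell_n,1+\cos(2\pi/\kappa))$; the lower bound $\F_n>\cos(\pi/\kappa)+\cos(2\pi/\kappa)$ then follows from $\F_n=X_0+\cos(2\pi/\kappa)$ with $X_0>\cos(\pi/\kappa)$ on the increasing branch. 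So the obstacle you identify is closed by the paper's \emph{primary} Chebyshev inequality $T_\kappa(X_0)=T_\kappa(X_n)<T_\kappa(X_{n+1})\leq 1$, not by the arithmetic shortcut you name; with that correction your proof proposal is essentially complete.
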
 

\begin{proposition} (even terms)
\label{prop_k=3_even_888}
Fix $\kappa \geq 3$, and $n \in \N^*$ even. System \eqref{conjecture_system_conj_intro_even} has a unique solution (which we denote $\F_n$ instead of $\E_n$) such that $\F_n \in (\cos(\pi / \kappa) + \cos(2\pi / \kappa), 1+ \cos(2\pi / \kappa))$ and
\begin{equation}
\label{chain_1_k3_888even}
\cos(2\pi / \kappa) =: X_{n+1} < X_{n}  < ... < X_{n/2} = \cos(\pi / \kappa) < X_{n/2-1} < ... < X_0 < 1.
\end{equation}
Furthermore, this solution satisfies \eqref{o11}, \eqref{o22} and \eqref{o33}, and so $\F_n \in \mathfrak{T}_{n,\kappa}$.
\end{proposition}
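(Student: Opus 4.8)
The plan is to prove Proposition \ref{prop_k=3_even_888} by running the dynamical ping-pong algorithm for $n$ even from section \ref{geo_construction}, but organized around the anchor $X_{n/2} = \cos(\pi/\kappa)$ and aimed at the window just below $\inf J_3(\kappa) = 1+\cos(2\pi/\kappa)$. The relevant segment of the graph of $T_\kappa$ is now the interval $[\cos(2\pi/\kappa), 1]$ traversed in the \emph{opposite} sense compared to section \ref{section J2}: by Lemma \ref{variationsTk}, $T_\kappa$ is strictly increasing on $(\cos(2\pi/\kappa),\cos(\pi/\kappa))$ (reaching a local max at $\cos(2\pi/\kappa)$) and strictly decreasing on $(\cos(\pi/\kappa), 1)$. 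So in \eqref{chain_1_k3_888even} the points $X_0, X_1,\dots, X_{n/2-1}$ will live in $(\cos(\pi/\kappa),1)$ (to the \emph{right} of $\cos(\pi/\kappa)$) and $X_{n/2+1},\dots, X_{n}$ will live in $(\cos(2\pi/\kappa),\cos(\pi/\kappa))$, with $X_{n+1}:=\cos(2\pi/\kappa)$ placed last, mirroring the structure of Proposition \ref{prop2} but with the roles of the two monotonicity branches swapped. I will first record the mirror of Remark \ref{r:ppcroissant}: given $\cos(\pi/\kappa)<b<a<1$ there exist unique $\cos(2\pi/\kappa)<a'<b'<\cos(\pi/\kappa)$ with $T_\kappa(a)=T_\kappa(a')>T_\kappa(b)=T_\kappa(b')$, and vice versa, bi-continuously; this follows verbatim from Lemma \ref{variationsTk}.

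Next, initialize the energy as $E = E(\alpha) = 2\cos(\pi/\kappa) - \alpha$ with $\alpha$ in a small interval $\mathbf{B}_{\max} := (0, \cos(\pi/\kappa)-\cos(2\pi/\kappa))$, so that $E$ increases toward $2\cos(\pi/\kappa)$ as $\alpha\searrow 0$; then $E/2 = \cos(\pi/\kappa)-\alpha/2 \in (\cos(2\pi/\kappa),\cos(\pi/\kappa))$. Anchor $X_{n/2} := \cos(\pi/\kappa)$, which satisfies the level condition 2.2 ($U_{\kappa-1}(\cos(\pi/\kappa))=0$). Then $X_{n/2-1}$ is the symmetric of $X_{n/2}$ about $x=E/2$, so $X_{n/2-1} = E - \cos(\pi/\kappa) = \cos(\pi/\kappa)-\alpha < \cos(\pi/\kappa)$; wait — I must be careful about sign conventions, so the cleaner statement is: $X_{n/2-1}$ is the mirror of $X_{n/2}$ across $E/2$, hence $X_{n/2-1} = E - X_{n/2}$, and since $E < 2\cos(\pi/\kappa)$ this puts $X_{n/2-1}$ to the left of $\cos(\pi/\kappa)$ — so to get the ordering in \eqref{chain_1_k3_888even} I instead place the mirror pairs so that the index increases as we move away from $\cos(\pi/\kappa)$ downward and upward respectively; concretely I alternate: apply the level condition 2.3 (via the mirror of Remark \ref{r:ppcroissant}) to get the next point across the axis $x=\cos(\pi/\kappa)$ with opposite-sign derivative, then apply the symmetry condition (mirror across $x=E/2$) to get the one after that, and continue inductively, up to possibly shrinking $\alpha$ within $\mathbf{B}_{\max}$ so that every constructed point stays in the correct subinterval and the full chain of strict inequalities \eqref{chain_1_k3_888even} (with $X_0$ not yet pinned to satisfy the level condition) holds. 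As $\alpha\searrow 0$, every $X_q$ with $1\le q\le n$ migrates to $\cos(\pi/\kappa)$, while $X_0$ stays bounded away, to the right; monitoring $X_{n+1}= E - X_0$ (the last symmetry step) and comparing $T_\kappa(X_0)$ with $T_\kappa(X_n)$, one sees that for $\alpha$ small $T_\kappa(X_0) < T_\kappa(X_n)$ whereas at the other end of $\mathbf{B}_n$ (the set of admissible $\alpha$) $X_{n+1}$ has pushed down to $\cos(2\pi/\kappa)$ first, giving the reverse inequality.

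The core analytic step is then the calibration: I need a unique $\alpha^*\in\mathbf{B}_n$ such that $X_{n+1}(\alpha^*) = \cos(2\pi/\kappa)$, equivalently $X_0(\alpha^*)$ satisfies $T_\kappa(X_0) = T_\kappa(X_n)$ with the right branch. By construction all the $X_q(\alpha)$ and hence $T_\kappa(X_q(\alpha))$ are continuous in $\alpha$; as $\alpha$ varies, $T_\kappa(X_0(\alpha))$ is strictly monotone (since $X_0 = E-X_1$ moves monotonically with $E$ and $T_\kappa$ is strictly monotone on the relevant branch) and $T_\kappa(X_n(\alpha))$ is strictly monotone in the opposite sense; the intermediate value theorem gives existence and strict monotonicity gives uniqueness of $\alpha^*$. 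One also must verify the inclusion $\mathbf{B}_n\subset\mathbf{B}_{\max}$ — i.e. that if $\alpha$ is too large, $X_{n+1} = E - X_0 > \cos(2\pi/\kappa)$ becomes impossible to reach — by the same elementary inequality ($1 > \cos(\pi/\kappa) \iff 2\cos(\pi/\kappa) - 1 > \cos(2\pi/\kappa)$, rearranged) used in the proof of Proposition \ref{prop2}, which also shows $X_{n+1}$ reaches $\cos(2\pi/\kappa)$ before $X_0$ can exit $(\cos(\pi/\kappa),1)$ on the right. Setting $\F_n := E(\alpha^*)$ and reading off \eqref{chain_1_k3_888even}, the inequality $\cos(\pi/\kappa)+\cos(2\pi/\kappa) < \F_n < 1+\cos(2\pi/\kappa)$ falls out from $X_0<1$ and $X_0>\cos(\pi/\kappa)$ together with $\F_n = X_0 + X_{n+1} = X_0 + \cos(2\pi/\kappa)$. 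Finally, \eqref{o11} holds since all $X_q, X_{n-q}\in(-1,1)$ by \eqref{chain_1_k3_888even}; \eqref{o22} holds because $X_{n/2}=\cos(\pi/\kappa)$ and $X_{n+1}=\cos(2\pi/\kappa)$ are both of the form $\cos(j\pi/\kappa)$; and \eqref{o33} follows from the monotonicity pattern of $T_\kappa$ on the two branches ($X_q$ on the decreasing branch, $X_{n-q}$ on the increasing branch, so $T'_\kappa(X_q)T'_\kappa(X_{n-q})<0$), hence $\F_n\in\mathfrak{T}_{n,\kappa}$. The main obstacle I anticipate is bookkeeping: getting the branch choices and the direction of the monotonicities exactly right (this is the mirror image of the $J_2$ situation, and it is easy to flip a sign), and confirming that shrinking $\alpha$ a finite number of times suffices to keep every one of the finitely many points in its designated subinterval throughout the ping-pong; since $n$ is fixed and finite and the construction is continuous, this is routine but needs to be stated carefully. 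The proof of Proposition \ref{prop_k=3_odd_888} is the analogous odd-$n$ construction with $X_{(n+1)/2}=\F_n/2$ as the anchor in place of $X_{n/2}=\cos(\pi/\kappa)$.
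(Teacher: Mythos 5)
Your proposal is the adaptation the paper itself intends: the paper never writes out a proof of Proposition \ref{prop_k=3_even_888}, and its proofs of Propositions \ref{prop1} and \ref{prop2} are the template being mirrored. Your overall architecture is right: anchor $X_{n/2}=\cos(\pi/\kappa)$, axis of symmetry $x=E/2<\cos(\pi/\kappa)$, points $X_0,\dots,X_{n/2-1}$ in $(\cos(\pi/\kappa),1)$ and $X_{n/2+1},\dots,X_n$ in $(\cos(2\pi/\kappa),\cos(\pi/\kappa))$, construct the $X_q(\alpha)$ continuously while checking everything except the $X_{n+1}$ condition in \eqref{o22}, and calibrate $X_{n+1}=E-X_0$ to $\cos(2\pi/\kappa)$ by the intermediate value theorem plus strict monotonicity; the range $\cos(\pi/\kappa)+\cos(2\pi/\kappa)<\F_n<1+\cos(2\pi/\kappa)$ then falls out of $\F_n=X_0+\cos(2\pi/\kappa)$ with $\cos(\pi/\kappa)<X_0<1$, and \eqref{o11}--\eqref{o33} are read off as you say.

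Several of your concrete statements are nonetheless false and must be repaired before this is a proof. (i) The monotonicities are swapped: by Lemma \ref{variationsTk}, $T_\kappa$ is strictly \emph{decreasing} on $(\cos(2\pi/\kappa),\cos(\pi/\kappa))$ and strictly \emph{increasing} on $(\cos(\pi/\kappa),1)$; this does not affect \eqref{o33}, which only needs opposite signs of $T_\kappa'$ on the two branches, but it reverses every direction in your uniqueness argument. (ii) "$X_0=E-X_1$" is wrong; the system gives $X_n=E-X_1$ and $X_{n+1}=E-X_0$, and in the construction $X_0$ is produced from $X_n$ by the level condition $T_\kappa(X_0)=T_\kappa(X_n)$ on the opposite branch. (iii) As $\alpha\searrow 0$, $X_0$ does \emph{not} stay bounded away from $\cos(\pi/\kappa)$: all of $X_1,\dots,X_n$ collapse to $\cos(\pi/\kappa)$, so $T_\kappa(X_n)\to -1$ forces $X_0\to\cos(\pi/\kappa)^+$ as well, whence $X_{n+1}=E-X_0\to\cos(\pi/\kappa)>\cos(2\pi/\kappa)$ --- that limit is what supplies one side of the IVT. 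Your comparison "$T_\kappa(X_0)<T_\kappa(X_n)$" is meaningless in your own setup, where $T_\kappa(X_0)=T_\kappa(X_n)$ holds identically by construction; the calibrated quantity is $X_{n+1}$ versus $\cos(2\pi/\kappa)$. (iv) The inequality you import from Proposition \ref{prop2} is not the one needed here; the mirror claim is that $X_{n+1}$ reaches $\cos(2\pi/\kappa)$ before $X_0$ reaches $1$, which follows since $X_0=1$ would force $T_\kappa(X_n)=1$, i.e.\ $X_n=\cos(2\pi/\kappa)$, contradicting $\cos(2\pi/\kappa)<X_{n+1}<X_n$. None of these changes the architecture, but as written the proof asserts false facts at exactly the spots you flagged as the danger zone.
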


Figure \ref{fig:T3, increasing to 0.5} illustrates the solutions in Propositions \ref{prop_k=3_odd_888} and \ref{prop_k=3_even_888} for $1 \leq n \leq 6$ and $\kappa=3$. Exact solutions for $\F_1$, $\F_2$ and $\F_3$ are :

$$\F_1 = 2/7, \quad \F_2 = 1/\sqrt{6}, \quad \F_3 = \frac{16}{247} + \frac{2\sqrt{2479}}{247}  \cos \left(\frac{1}{3} \arctan \left( \frac{741 \sqrt{2190}}{118457} \right) \right. $$

\begin{figure}[H]
  \centering
 \includegraphics[scale=0.118]{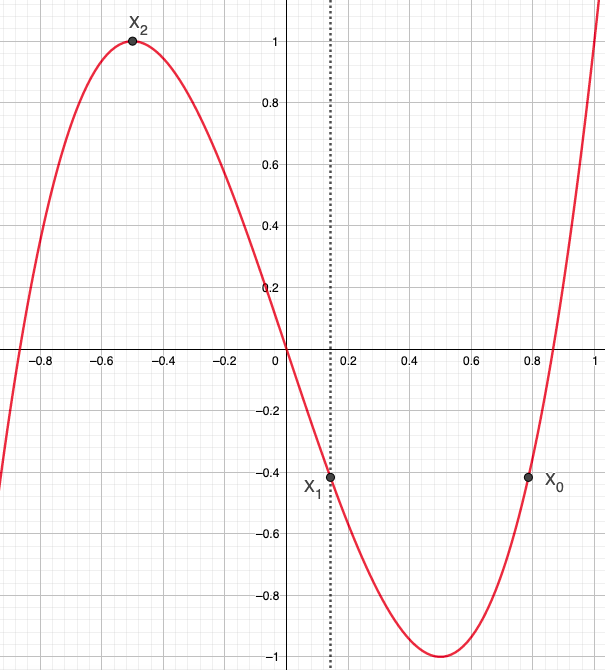}
 \includegraphics[scale=0.115]{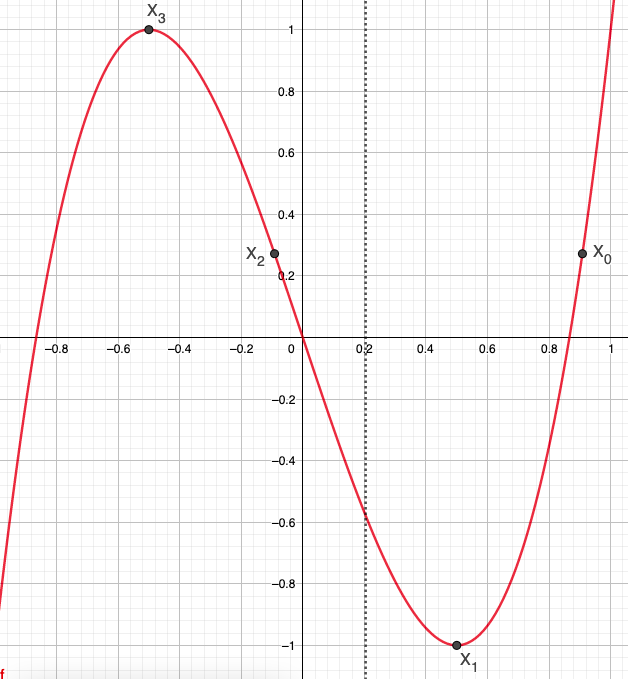}
  \includegraphics[scale=0.13]{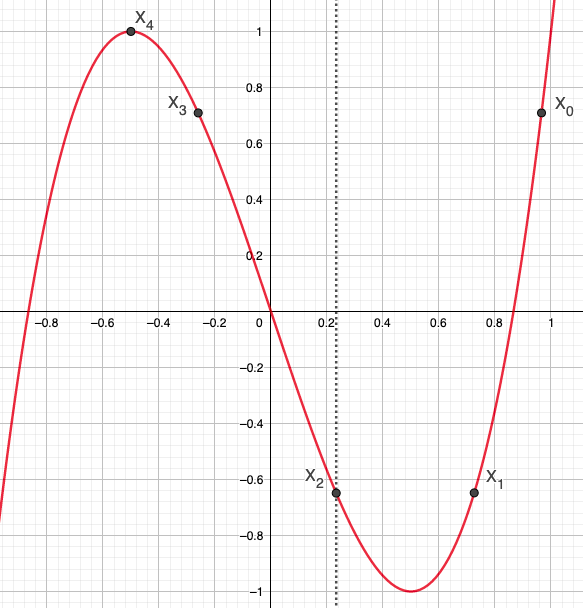}
  \includegraphics[scale=0.118]{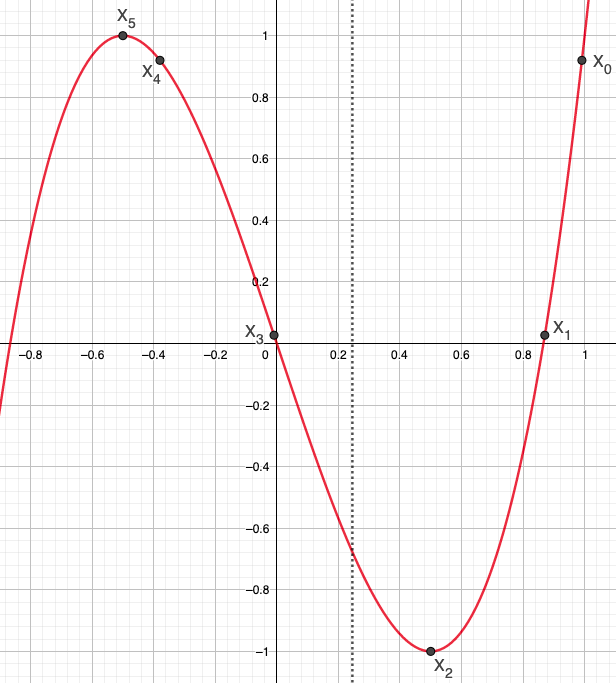}
  \includegraphics[scale=0.13]{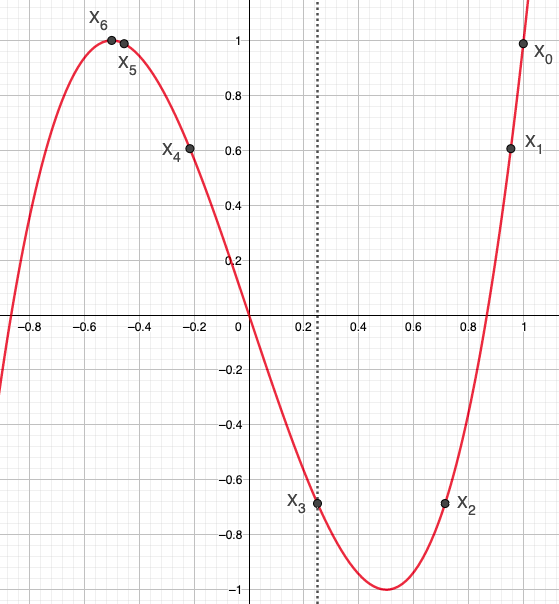}
  \includegraphics[scale=0.11]{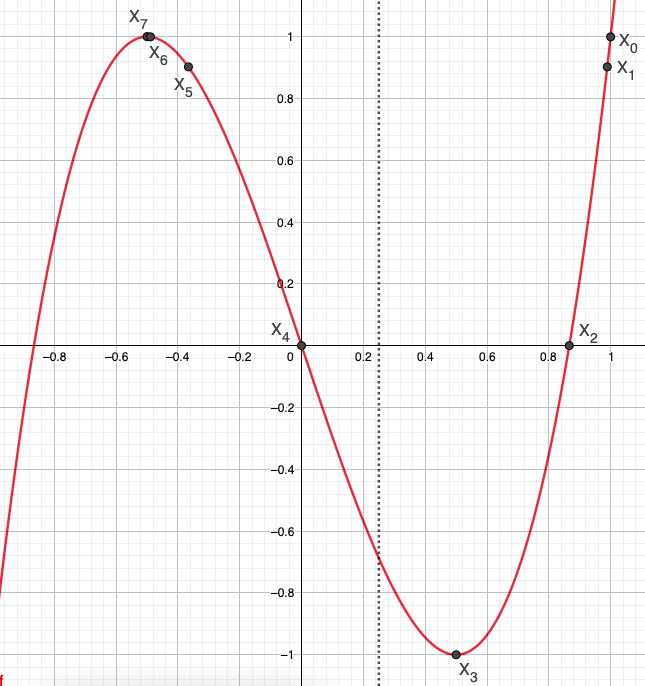}
\caption{$\kappa=3$. $T_{\kappa=3}(x)$. $\F_n$ of Propositions \ref{prop_k=3_odd_888} and \ref{prop_k=3_even_888}. Left to right : $\F_1 \simeq 0.2857$ ; $\F_2 \simeq 0.40824$ ; $\F_3 \simeq 0.46617$ ; $\F_4 \simeq 0.49099$ ; $\F_5 \simeq 0.49867$ ; $\F_6 \simeq 0.499920$.}
\label{fig:T3, increasing to 0.5}
\end{figure}


\begin{proposition} 
\label{prop_interlace_888}
Fix $\kappa \geq 3$. The odd and even energy solutions $\mathcal{F}_n$ of Propositions \ref{prop_k=3_odd_888} and \ref{prop_k=3_even_888} interlace and are a strictly increasing sequence : $\mathcal{F}_{n} < \mathcal{F}_{n+1} < \mathcal{F}_{n+2}$, $\forall n\in \N^*$. Also, $\F_n \nearrow \inf J_3 := 1+\cos(2\pi / \kappa)$. 
\end{proposition}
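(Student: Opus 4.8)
The plan is to mimic closely the proof of Proposition \ref{prop_interlace}, which treated the analogous interlacing statement for the sequence $\{\E_n\}$ in $J_2$, but now adapted to the geometric configuration described by chains \eqref{chain_1_k3_888} and \eqref{chain_1_k3_888even}. The key structural difference is the orientation: in the $J_2$ case the $X_q$'s climbed out of the local minimum of $T_\kappa$ at $\cos(\pi/\kappa)$ toward $X_{n+1}=1$, whereas here the relevant well is the one between the local minimum at $\cos(\pi/\kappa)$ and the local maximum at $\cos(2\pi/\kappa)$, and the ``anchor'' $X_{n+1}=\cos(2\pi/\kappa)$ is placed at that local maximum. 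So I would first set up the dynamical algorithm analogue: initialize $E = 1+\cos(2\pi/\kappa) - \alpha$ with $\alpha$ in a suitable small positive range, so that $E/2$ (for $n$ odd, equal to $X_{(n+1)/2}$) or $\cos(\pi/\kappa)$ (for $n$ even, $X_{n/2}$) sits just above $\cos(\pi/\kappa)$, then build the remaining $X_q$ by alternating the level condition $T_\kappa(X_q)=T_\kappa(X_{n-q})$ with $T'_\kappa(X_q)T'_\kappa(X_{n-q})<0$ and the symmetry condition about $x=E/2$, in the decreasing direction (i.e.\ moving rightward above $\cos(\pi/\kappa)$ and leftward in the $T_\kappa$-range). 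One needs the monotone-dependence-on-$\alpha$ facts and an analogue of Remark \ref{r:ppcroissant} valid on the pair of monotonicity intervals $(\cos(2\pi/\kappa),\cos(\pi/\kappa))$ and $(\cos(\pi/\kappa),1)$ — but that is exactly Remark \ref{r:ppcroissant} as stated, so it applies verbatim.

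For the interlacing itself I would argue exactly as in the $J_2$ case: fix $n$ odd, set $E := \min(\F_n,\F_{n+1})$ (not yet knowing which is smaller), run the construction of Proposition \ref{prop_k=3_odd_888} giving $(X_{i,n}(E))_{i=0}^{n+1}$ satisfying \eqref{chain_1_k3_888} and the construction of Proposition \ref{prop_k=3_even_888} giving $(X_{i,n+1}(E))_{i=0}^{n+2}$ satisfying \eqref{chain_1_k3_888even}; by the choice of $E$ exactly one of $X_{n+1,n}(E)$, $X_{n+2,n+1}(E)$ equals $\cos(2\pi/\kappa)$. Starting ``from the bottom of the well'' at $\cos(\pi/\kappa)$ one compares $X_{n/2,n+1}(E)$ versus $\cos(\pi/\kappa)=X_{(n+1)/2,n+1}$-type terms — more precisely one checks the base inequality $X_{(n+1)/2,n}(E)=E/2 > X_{n/2+1,n+1}(E)$ or its mirror, then propagates it through the ping-pong using Remark \ref{r:ppcroissant} to get $X_{(n+1)/2+q,n}(E) > X_{(n+1)/2+q+1,n+1}(E)$ for all relevant $q$, hence $X_{n+1,n}(E) > X_{n+2,n+1}(E)$. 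Since the left side cannot already have reached the anchor value $\cos(2\pi/\kappa)$ while the right side is still larger, it must be $X_{n+2,n+1}(E)=\cos(2\pi/\kappa)$, i.e.\ $E=\F_{n+1}\le\F_n$, and the strict inequality $X_{n+1,n}(E)>X_{n+2,n+1}(E)$ upgrades this to $\F_{n+1}<\F_n$ — wait, the orientation flips here relative to $J_2$, so the conclusion is $\F_n<\F_{n+1}$. I would be careful to track the orientation; the cleanest way is to note that here $T_\kappa(X_0)$ is a decreasing function of $E$ while $T_\kappa(X_n)$ is increasing (or vice versa), pinning down which energy is smaller. The case $n$ even is symmetric.

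For convergence $\F_n \nearrow 1+\cos(2\pi/\kappa)$ I would first note from Proposition \ref{prop_interlace_888}'s monotonicity that $\F_n \nearrow \ell$ for some $\ell \le 1+\cos(2\pi/\kappa)$, then argue by contradiction as in the proof of Proposition \ref{prop_convergeence}: if $2\epsilon := (1+\cos(2\pi/\kappa)) - \ell > 0$, then $X_{n+1,n}=\cos(2\pi/\kappa)$ is fixed but the chain \eqref{chain_1_k3_888} forces $X_{(n+1)/2,n}=\F_n/2$ to stay at distance $\ge$ something bounded below (here I need the analogue of Lemma \ref{l:croissdiff} — a ``spreading'' estimate saying that as we ping-pong from the well the successive $X_q$ move apart by a fixed amount, forcing $X_0$ or $X_n$ out of $[\cos(2\pi/\kappa),1]$ for $n$ large). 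The cleanest route is to establish the Lemma \ref{l:croissdiff}-type inequality on the interval $(\cos(2\pi/\kappa),1)$ relative to the axis $x=\cos(2\pi/\kappa)$ — namely that if $T_\kappa(a)=T_\kappa(b)$ with $\cos(\pi/\kappa)<b<\cos(2\pi/\kappa)<a$ (or the relevant configuration here), then the point closer to the local max $\cos(2\pi/\kappa)$ is on the appropriate side — giving geometric growth of the gaps, contradicting boundedness of all $X_q$ in $[\cos(2\pi/\kappa),1]$ once $n\epsilon>$ (length of that interval). I expect the main obstacle to be getting this spreading/convexity lemma right: near the local maximum $\cos(2\pi/\kappa)$ the relevant comparison of $T_\kappa$ on its two sides is not symmetric, and unlike the clean statement of Lemma \ref{l:croissdiff} one has to check the sign of $T_\kappa(x) - T_\kappa(2\cos(2\pi/\kappa)-x)$ (or an analogous auxiliary function) on $(\cos(2\pi/\kappa),1)$ via the factorization of $U_{\kappa-1}$ into $\prod(x-\cos(j\pi/\kappa))$ — the cancellation of the $j=2$ factor is the analogue of the cancellation of the $j=1$ factor in Lemma \ref{l:croissdiff}, but the remaining product needs a careful termwise comparison over $j\in\{1,3,4,\dots,\kappa-1\}$, and I would need to verify the inequality $2\cos(2\pi/\kappa) - \cos(\pi/\kappa) > \cos(3\pi/\kappa)$ (and similar boundary inequalities) holds to keep the construction well-defined, which constrains exactly how the ping-pong terminates.
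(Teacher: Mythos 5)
Your interlacing argument is a reasonable adaptation of the proof of Proposition~\ref{prop_interlace}: fix $E=\min(\F_n,\F_{n+1})$, run both constructions, propagate a base inequality through the ping-pong via Remark~\ref{r:ppcroissant}, and conclude $X_{n+1,n}(E)>X_{n+2,n+1}(E)$, which pins down which one is the anchor $\cos(2\pi/\kappa)$ and hence which $\F$ is smaller. You flag the orientation flip correctly; if you track it carefully this step goes through.

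The convergence argument, however, has a genuine gap. You propose two versions of a spreading lemma, and both are wrong. First, the configuration $\cos(\pi/\kappa)<b<\cos(2\pi/\kappa)<a$ is impossible because $\cos(\pi/\kappa)>\cos(2\pi/\kappa)$. Second, the auxiliary function $T_\kappa(x)-T_\kappa(2\cos(2\pi/\kappa)-x)$ for $x\in(\cos(2\pi/\kappa),1)$ has second argument $2\cos(2\pi/\kappa)-x<\cos(2\pi/\kappa)$, which lies outside the interval where all the $X_q$ of chain~\eqref{chain_1_k3_888} live, so it is not the right object. But the deeper obstruction is structural. In the proof of Proposition~\ref{prop_convergeence} the pivot $\E_n/2$ lies to the \emph{right} of $\cos(\pi/\kappa)$, so the level step goes right$\to$left and \emph{expands} by Lemma~\ref{l:croissdiff}, while the symmetry step about $\E_n/2$ also expands; both effects add and force $X_n>1$ once $n\epsilon>1$. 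Here the pivot $\F_n/2$ is always to the \emph{left} of $\cos(\pi/\kappa)$ (since $\F_n<1+\cos(2\pi/\kappa)<2\cos(\pi/\kappa)$), so the first level step goes left$\to$right and \emph{contracts} by the same Lemma~\ref{l:croissdiff}, which fights the expansion from the symmetry step. Writing $\delta_0=\cos(\pi/\kappa)-\F_n/2$ and $L_k,R_k$ for the successive left/right distances from $\cos(\pi/\kappa)$, one only gets $L_{k+1}=2\delta_0+R_k$ with $R_k<L_k$, hence $2\delta_0<L_k<(2k+1)\delta_0$, which never violates $L_{(n+1)/2}=\cos(\pi/\kappa)-\cos(2\pi/\kappa)$. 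Finally, the analogue of the quantity "$\epsilon$" that shrinks to $0$ in the $J_2$ case does not exist here: $\delta_0=\cos(\pi/\kappa)-\F_n/2$ stays bounded below by $\cos(\pi/\kappa)-(1+\cos(2\pi/\kappa))/2>0$ even in the limit $\F_n\to 1+\cos(2\pi/\kappa)$. So your scheme "show the chain escapes unless $\F_n\to\inf J_3$" cannot work as written; the convergence part of Proposition~\ref{prop_interlace_888} needs a genuinely different argument from the one in Proposition~\ref{prop_convergeence}, and you have not supplied it.
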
 

\begin{remark}
Numerically we tried to compute several solutions $\F_n (\kappa=3)$ in order to conjecture the rate of convergence, but we struggled to get accurate numbers because of precision limitations. We found : $\F_8 \simeq 0.4999999956$, which suggests $\F_n$ converges rather quickly to $\inf J_3$.
\end{remark}

\section{Another increasing sequence of thresholds below $J_3(\kappa)$}
\label{section J3b}


We revisit the sequence of the previous section but add a twist to it. The twist is that instead of placing $X_0$ on the branch of $T_{\kappa}$ to the right of $X_n$ we place it on the branch of $T_{\kappa}$ to the left of $X_n$; and instead of placing $X_{n+1}$ at $\cos(2\pi / \kappa)$, we place it at $1$. This gives a sequence which we believe is distinct. This section proves Theorem \ref{thm_decreasing energy general_k3} for $\{ \G_n \}$.

\begin{proposition} (odd terms)
\label{prop_k=3_odd}
Fix $\kappa \geq 3$, and $n \in \N^*$ odd. System \eqref{conjecture_system_conj_intro} has a unique solution (which we denote $\G_n$ instead of $\E_n$) such that $\G_n \in (\cos(\pi / \kappa) + \cos(2\pi / \kappa), 1+ \cos(2\pi / \kappa))$ and
\begin{equation}
\label{chain_1_k3}
\mathcal{G}_n -1 = X_{0} < X_{n} < X_{n-1} < ... <  X_{(n+1)/2} < X_{(n-1)/2} < ... < X_{1} < X_{n+1} := 1.
\end{equation}
Furthermore, this solution satisfies \eqref{o1}, \eqref{o2} and \eqref{o3}, and so $\G_n \in \mathfrak{T}_{n,\kappa}$.
\end{proposition}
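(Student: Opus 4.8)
\textit{Proof of Proposition \ref{prop_k=3_odd}.} The plan is to reuse the dynamical ping-pong algorithm for $n$ odd of section \ref{geo_construction}, exactly as in the proof of Proposition \ref{prop1}, but to select the other pair of branches of $T_\kappa^{-1}$. Concretely, fix $\kappa \geq 3$ and $n$ odd. By Lemma \ref{variationsTk}, on $[\cos(2\pi/\kappa),1]$ the polynomial $T_\kappa$ has a local maximum at $\cos(2\pi/\kappa)$, a local minimum at $\cos(\pi/\kappa)$, and is strictly decreasing on $(\cos(2\pi/\kappa),\cos(\pi/\kappa))$ and strictly increasing on $(\cos(\pi/\kappa),1)$. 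The points $X_1,\dots,X_{(n-1)/2}$ will live in the right branch $(\cos(\pi/\kappa),1)$, the points $X_{(n+1)/2},\dots,X_n$ in the left branch $(\cos(2\pi/\kappa),\cos(\pi/\kappa))$, and $X_0=\G_n-1$ will land even further left, past $X_n$, still inside $(\cos(2\pi/\kappa),\cos(\pi/\kappa))$; finally $X_{n+1}:=1$ is forced at the end of the calibration (this is \eqref{o2}).

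First I would initialize $E=E(\alpha)=2\cos(\pi/\kappa)+\alpha$ but now with $\alpha$ \emph{negative}, $\alpha\in(\cos(\pi/\kappa)+\cos(2\pi/\kappa)-2\cos(\pi/\kappa),0)=(\cos(2\pi/\kappa)-\cos(\pi/\kappa),0)$, so that $E\in(\cos(\pi/\kappa)+\cos(2\pi/\kappa),2\cos(\pi/\kappa))$ and $X_{(n+1)/2}=E/2\in(\tfrac12(\cos(\pi/\kappa)+\cos(2\pi/\kappa)),\cos(\pi/\kappa))$, i.e. $X_{(n+1)/2}$ sits just to the left of $\cos(\pi/\kappa)$. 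Then, as in Proposition \ref{prop1}, I would build the remaining $X_q=X_q(\alpha)$ inductively and continuously in $\alpha$, alternating the level condition (2.3, via Remark \ref{r:ppcroissant} with the roles of the two sub-intervals swapped) and the symmetry condition about $x=E/2$: place $X_{(n-1)/2}$ in $(\cos(\pi/\kappa),1)$ with $T_\kappa(X_{(n-1)/2})=T_\kappa(X_{(n+1)/2})$, then $X_{(n+3)/2}$ as the mirror of $X_{(n-1)/2}$, etc., ending with $X_{n+1}$ the mirror of $X_0$. One checks that the chain \eqref{chain_1_k3} holds on a nonempty interval $\mathbf{A}_n$ of admissible $\alpha$'s (shrinking $\mathbf{A}_{\max}$ as needed so that every constructed point stays in its prescribed open sub-interval), and that $X_{n+1}=E-X_0$ is continuous and strictly monotone in $\alpha$ on $\mathbf{A}_n$. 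Then an intermediate-value argument gives a unique $\alpha^*$ with $X_{n+1}(\alpha^*)=1$, equivalently $X_0(\alpha^*)=E(\alpha^*)-1$; set $\G_n=E(\alpha^*)$. The constraint $\G_n<1+\cos(2\pi/\kappa)$ comes from the observation that $X_0=E-1\geq\cos(2\pi/\kappa)$ would force $X_0$ out of its branch, so the calibration must stop before that, exactly as the upper endpoint of $J_2$ was obtained in Proposition \ref{prop1}; the lower bound $\G_n>\cos(\pi/\kappa)+\cos(2\pi/\kappa)$ comes from $\alpha>\cos(2\pi/\kappa)-\cos(\pi/\kappa)$. Conditions \eqref{o1} and \eqref{o3} follow from the monotonicity intervals of $T_\kappa$ since consecutive mirror partners $X_q$, $X_{n-q}$ lie on opposite branches, hence $T'_\kappa(X_q)T'_\kappa(X_{n-q})<0$; and \eqref{o2} holds by construction. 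Thus $\G_n\in\mathfrak{T}_{n,\kappa}$.

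The main obstacle, as in Proposition \ref{prop1}, is making the continuity/monotonicity bookkeeping of the dynamical construction precise: one must verify that shrinking $\mathbf{A}_{\max}$ to $\mathbf{A}_n$ still leaves an interval whose closure reaches the calibration value, i.e. that as $\alpha$ varies the point $X_{n+1}$ genuinely crosses $1$ while the whole chain \eqref{chain_1_k3} of strict inequalities is preserved, and that $X_0$ does not exit $(\cos(2\pi/\kappa),\cos(\pi/\kappa))$ before that crossing. The inequality needed here, playing the role of the false statement $2\cos(\pi/\kappa)-\cos(2\pi/\kappa)<1$ in the proof of Proposition \ref{prop1}, is again $2\cos(\pi/\kappa)-\cos(2\pi/\kappa)>1$ (true for $\kappa\geq3$), which guarantees that $X_{n+1}$ reaches $1$ strictly before $X_0$ reaches $\cos(2\pi/\kappa)$; I would record this as a short separate computation. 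Everything else is a routine adaptation of the $J_2$ argument with the branches of $T_\kappa^{-1}$ interchanged, so I would state it that way and refer back rather than rewrite the ping-pong induction in full.
\qed
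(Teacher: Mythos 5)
There is a genuine gap: you place $X_0$ in the wrong branch of $T_\kappa$, and the construction cannot close as you describe. You write that ``$X_0=\G_n-1$ will land even further left, past $X_n$, still inside $(\cos(2\pi/\kappa),\cos(\pi/\kappa))$.'' But the system \eqref{conjecture_system_conj_intro} forces $T_\kappa(X_0)=T_\kappa(X_n)$, while the chain \eqref{chain_1_k3} forces $X_0<X_n$. On $(\cos(2\pi/\kappa),\cos(\pi/\kappa))$ the polynomial $T_\kappa$ is \emph{strictly} monotone (decreasing, by Lemma \ref{variationsTk}), so two distinct points in that interval cannot have the same $T_\kappa$-value. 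Hence $X_0$ cannot lie in $(\cos(2\pi/\kappa),\cos(\pi/\kappa))$: it must live in the next (increasing) branch to the left, namely $(\cos(3\pi/\kappa),\cos(2\pi/\kappa))$. This is exactly what Remark \ref{remark000} records, and it is also required for condition \eqref{o3} at $q=0$: since $T'_\kappa(X_n)<0$, one needs $T'_\kappa(X_0)>0$, equivalently $U_{\kappa-1}(X_0)>0$. Your claimed verification of \eqref{o3} (``consecutive mirror partners lie on opposite branches'') is therefore false for the pair $(X_0,X_n)$ under your placement, and in fact your placement makes the system unsolvable.

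This misplacement propagates through the rest of your argument. Two consequences in particular need repair. First, the level step from $X_n$ down to $X_0$ does not fall under Remark \ref{r:ppcroissant} (even with roles swapped), because that remark only concerns reflections between $(\cos(2\pi/\kappa),\cos(\pi/\kappa))$ and $(\cos(\pi/\kappa),1)$; you would need the analogous statement for the adjacent pair $(\cos(3\pi/\kappa),\cos(2\pi/\kappa))$ and $(\cos(2\pi/\kappa),\cos(\pi/\kappa))$, which must be recorded separately. Second, your calibration bound is aimed at the wrong obstruction: the constraint is not that $X_0$ should not reach $\cos(2\pi/\kappa)$ from the right, but that $X_0$ stays in $(\cos(3\pi/\kappa),\cos(2\pi/\kappa))$; the relevant guard becomes an inequality of the form $2\cos(\pi/\kappa)-\cos(3\pi/\kappa)>1$, and the upper bound $\G_n<1+\cos(2\pi/\kappa)$ then drops out directly from $\G_n=X_0+1$ with $X_0<\cos(2\pi/\kappa)$, as the paper points out after Remark \ref{remark00}. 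Once you relocate $X_0$ to $(\cos(3\pi/\kappa),\cos(2\pi/\kappa))$ and add the corresponding level lemma for the new pair of branches, the ping-pong/IVT scheme you outline should indeed go through, but as written the key step placing $X_0$ is not merely different from the paper --- it is impossible.
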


\begin{remark}
\label{remark000}
The proof reveals that $X_0 \in(\cos(3\pi / \kappa), \cos(2\pi / \kappa))$, $X_n$, $X_{n-1}$, ..., $X_{(n+1)/2} \in (\cos(2\pi / \kappa), \cos(\pi / \kappa))$, and for $n \geq 3$, $X_{(n-1)/2}$,..., $X_1 \in (\cos(\pi / \kappa),1)$. This is important because it means that $U_{\kappa-1}(X_0) > 0$, $U_{\kappa-1}(X_n)$, ..., $U_{\kappa-1}(X_{(n+1)/2}) <0$, and for $n \geq 3$, $U_{\kappa-1}(X_{(n-1)/2})$, ..., $U_{\kappa-1}(X_1) >0$.
\end{remark}

\begin{proposition} (even terms)
\label{prop_k=3_even}
Fix $\kappa \geq 3$, and $n \in \N^*$ even. System \eqref{conjecture_system_conj_intro_even} has a unique solution (which we denote $\G_n$ instead of $\E_n$) such that $\G_n \in (\cos(\pi / \kappa) + \cos(2\pi / \kappa), 1+ \cos(2\pi / \kappa))$ and
\begin{equation}
\label{chain_1_k3even}
\mathcal{G}_n -1 = X_{0} < X_{n} < X_{n-1} < ... <  X_{n/2+1} < X_{n/2} := \cos(\pi / \kappa) < ... < X_{1} < X_{n+1}  := 1.
\end{equation}
Furthermore, this solution satisfies \eqref{o11}, \eqref{o22} and \eqref{o33}, and so $\G_n \in \mathfrak{T}_{n,\kappa}$.
\end{proposition}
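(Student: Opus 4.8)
\textbf{Proof proposal for Proposition \ref{prop_k=3_even}.}

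The plan is to mimic, essentially verbatim, the proof of the even-terms result in the previous section (Proposition \ref{prop_k=3_even_888}), adjusting the two endpoint conventions that distinguish the $\G_n$ sequence from the $\F_n$ sequence: namely that $X_0$ sits on the branch of $T_\kappa$ to the \emph{left} of $X_n$, and that $X_{n+1}$ is placed at $1$ rather than at $\cos(2\pi/\kappa)$. Concretely, I would run the dynamical ping-pong algorithm for $n$ even from section \ref{geo_construction}. Start by fixing $X_{n/2} := \cos(\pi/\kappa)$ as forced by \eqref{o22}, initialize the energy $E = E(\alpha)$ ranging over an interval $\mathbf{A}_{\max}$ of admissible values (here the relevant window is $E \in (\cos(\pi/\kappa)+\cos(2\pi/\kappa),\,1+\cos(2\pi/\kappa))$, so $\alpha$ parametrizes that interval), and draw the axis $x = E/2$. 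Then alternately apply the symmetry condition (place $X_{n/2-1}$ as the mirror of $X_{n/2+1}$ across $E/2$, etc.) and the level condition 2.3 via Remark \ref{r:ppcroissant} (solve $T_\kappa(X_q) = T_\kappa(X_{n-q})$ choosing the branch dictated by the chain \eqref{chain_1_k3even}), inductively constructing continuous functions $X_q = X_q(\alpha)$ for $q = 0,\ldots,n+1$.

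The key steps, in order: (1) verify the base steps of the ping-pong — that $X_{n/2+1}$ lands in $(\cos(2\pi/\kappa),\cos(\pi/\kappa))$ and $X_{n/2-1}$ in $(\cos(\pi/\kappa),1)$, using the variations of $T_\kappa$ from Lemma \ref{variationsTk}; (2) carry the induction through, at each stage shrinking $\alpha$ if necessary to keep all points in $(-1,1)$ and to preserve the strict chain of inequalities \eqref{chain_1_k3even}, with the branch choices determined by Remark \ref{r:ppcroissant}; (3) track the monotonicity in $\alpha$ of the two "free" endpoint quantities $T_\kappa(X_0(\alpha))$ and $T_\kappa(X_n(\alpha))$ (equivalently of $X_{n+1}(\alpha)$), establishing that one is increasing and the other decreasing, or that $X_{n+1}(\alpha)$ is strictly monotone; (4) identify the two extremal regimes $\alpha \to 0_+$ and $\alpha \to (\alpha_{\max})_-$ and check that the sign of $X_{n+1}(\alpha) - 1$ (equivalently of $T_\kappa(X_0) - T_\kappa(X_n)$) changes between them; (5) invoke continuity and the intermediate value theorem to get a unique $\alpha^*$ with $X_{n+1}(\alpha^*) = 1$, set $\G_n := E(\alpha^*)$, and read off that $\G_n$ lies in the stated interval from the fact that the chain forces $X_0 = E-1 > \cos(3\pi/\kappa)$ and $X_0 < \cos(2\pi/\kappa)$ (cf. Remark \ref{remark000} for the odd case); (6) finally, observe that \eqref{chain_1_k3even} together with the branch structure and \eqref{identity_derivative} yields \eqref{o11}, \eqref{o22}, \eqref{o33}, so that $\G_n \in \mathfrak{T}_{n,\kappa}$ by definition.

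I expect the main obstacle to be step (3)–(4): pinning down the correct monotonicity of $X_0(\alpha)$ and $X_n(\alpha)$ and ruling out that $X_0$ exits the admissible window $(\cos(3\pi/\kappa),\cos(2\pi/\kappa))$, or that $X_n$ collides with $\cos(\pi/\kappa)$, \emph{before} $X_{n+1}$ reaches $1$. In the $J_2$ proofs this was handled by a short contradiction argument exploiting an inequality among cosines (e.g. $2\cos(\pi/\kappa)-\cos(2\pi/\kappa) > 1$); the analogous inequality here would be something like $1 + \cos(2\pi/\kappa) - \cos(3\pi/\kappa) > 1$, i.e. $\cos(2\pi/\kappa) > \cos(3\pi/\kappa)$, which holds trivially for $\kappa \geq 3$, so I anticipate the argument goes through but requires care in bookkeeping the branch of $T_\kappa^{-1}$ used to place $X_0$ relative to $X_n$. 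Since the statement says "we leave it to the reader" for the even analogue in the companion Proposition \ref{prop1_intro_even}, I would similarly keep this proof terse, citing the structural identity of the argument with Propositions \ref{prop_k=3_odd} and \ref{prop_k=3_even_888} and only spelling out the two endpoint modifications in detail.
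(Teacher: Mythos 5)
The paper omits an explicit proof of this proposition, and your plan of re-running the even-$n$ dynamical ping-pong of Proposition \ref{prop2} with $X_0$ moved to the branch $(\cos(3\pi/\kappa),\cos(2\pi/\kappa))$ and $X_{n+1}$ pinned to $1$ is indeed the route the authors evidently intend. Three specifics in your sketch need correcting, though. First, the pair $(X_{n/2-1},X_{n/2+1})$ that you describe as a symmetry pair is in fact a \emph{level} pair: it is the $q=n/2-1$ instance of the first line of \eqref{conjecture_system_conj_intro_even}. The mirror of $X_{n/2+1}$ across $x=\G_n/2$ is $X_{n/2}$, and the mirror of $X_{n/2-1}$ is $X_{n/2+2}$; the alternation is $X_{n/2+1}=\G_n-X_{n/2}$ (symmetry), then $X_{n/2-1}$ via level, then $X_{n/2+2}=\G_n-X_{n/2-1}$ (symmetry), then $X_{n/2-2}$ via level, and so on, ending with $X_0$ via the twisted level condition and $X_{n+1}=\G_n-X_0$. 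Second, the inequality you propose for the contradiction step, $\cos(2\pi/\kappa)>\cos(3\pi/\kappa)$, has no bearing here. Mimicking the contradiction in Proposition \ref{prop2}'s proof: if $X_0$ reached $\cos(3\pi/\kappa)$ while $X_{n+1}<1$, one would get $X_{n+1}=\G_n-X_0>\cos(\pi/\kappa)+\cos(2\pi/\kappa)-\cos(3\pi/\kappa)$, so the inequality that must be verified is $\cos(\pi/\kappa)+\cos(2\pi/\kappa)-\cos(3\pi/\kappa)\geq 1$ for $\kappa\geq 3$. This holds: setting $\theta=\pi/\kappa$, one computes $\cos\theta+\cos 2\theta-\cos 3\theta-1=2(2\cos\theta-1)\sin^2\theta\geq 0$ on $(0,\pi/3]$, with equality only at $\kappa=3$.

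Third, your lower bound for $\G_n$, derived from $X_0>\cos(3\pi/\kappa)$, gives only $\G_n>1+\cos(3\pi/\kappa)$, which by the identity just displayed is weaker than the stated $\G_n>\cos(\pi/\kappa)+\cos(2\pi/\kappa)$; the correct lower bound follows from the constraint $X_{n/2+1}=\G_n-\cos(\pi/\kappa)>\cos(2\pi/\kappa)$, which the paper in fact sketches in the paragraph after Figure \ref{fig:test_T3k3}. Finally, note a structural asymmetry your sketch does not flag: in Proposition \ref{prop2}'s proof, $\mathbf{A}_n$ shrinks from the top of $\mathbf{A}_{\max}$ (one repeatedly takes ``up to a smaller $\alpha$''), and all $X_q\to\cos(\pi/\kappa)$ as $\alpha\to 0_+$. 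Here, as $\G\searrow\cos(\pi/\kappa)+\cos(2\pi/\kappa)$, one has $X_{n/2+1}\to\cos(2\pi/\kappa)$, hence $X_{n/2-1}\to 1$ and $X_{n/2+2}=\G-X_{n/2-1}\to\G-1<\cos(2\pi/\kappa)$ exits the allowed branch; so $\mathbf{A}_n$ shrinks from the \emph{bottom}, consistent with $\G_n\nearrow 1+\cos(2\pi/\kappa)$ (Proposition \ref{prop_interlace_j4}). The dynamical shrinking clause in your adaptation must be flipped accordingly, and the IVT must be run with $X_{n+1}(\G)$ strictly increasing from a value $<1$ near the lower end of $\mathbf{A}_n$ to a value $\geq 1$ by the contradiction inequality above.
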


\begin{remark}
\label{remark00}
The proof reveals that $X_0 \in (\cos(3\pi / \kappa), \cos(2\pi / \kappa))$, $X_n$, $X_{n-1}$, ..., $X_{n/2+1} \in (\cos(2\pi / \kappa), \cos(\pi / \kappa))$, and for $n \geq 4$, $X_{n/2-1}$, ..., $X_1 \in (\cos(\pi / \kappa), 1)$. This is important because it means that $U_{\kappa-1}(X_0) > 0$, $U_{\kappa-1}(X_n)$, ..., $U_{\kappa-1}(X_{n/2+1}) <0$, and for $n \geq 4$, $U_{\kappa-1}(X_{n/2-1})$, ..., $U_{\kappa-1}(X_1) >0$.
\end{remark}

Figure \ref{fig:test_T3k3} illustrates the solutions in Propositions \ref{prop_k=3_odd} and \ref{prop_k=3_even} for $1 \leq n \leq 6$ and $\kappa=3$. Exact solutions for $\mathcal{G}_1$, $\mathcal{G}_2$ and $\mathcal{G}_3$ are :
\begin{equation*}
\begin{aligned}
& \mathcal{G}_1 = (5-3\sqrt{2})/7 \simeq 0.10819, \quad \mathcal{G}_2 = (9-\sqrt{33})/12 \simeq 0.27129, \\
& \mathcal{G}_3 = \frac{1}{247} \left( \frac{199}{2} + \frac{1}{2} \sqrt{16838+a+b} - \frac{1}{2} \sqrt{32766 - a -b + \frac{4340412}{\sqrt{16383+a+b}}} \right) \simeq 0.382291, \\
& a := \frac{1}{3} (39558654304767 - 2109209004864 \sqrt{327})^{1/3}, \quad b := 741 (3601 + 192\sqrt{327})^{1/3}.
\end{aligned}
\end{equation*}

\begin{figure}[H]
  \centering
 \includegraphics[scale=0.11]{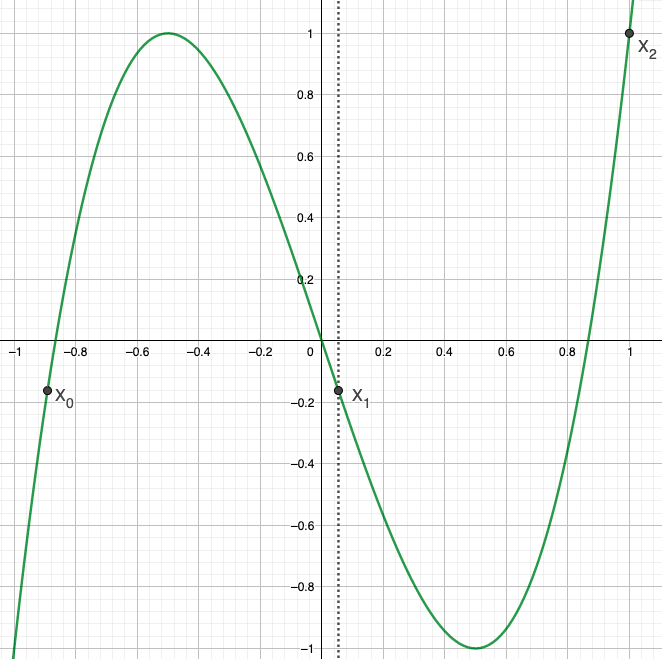}
  \includegraphics[scale=0.11]{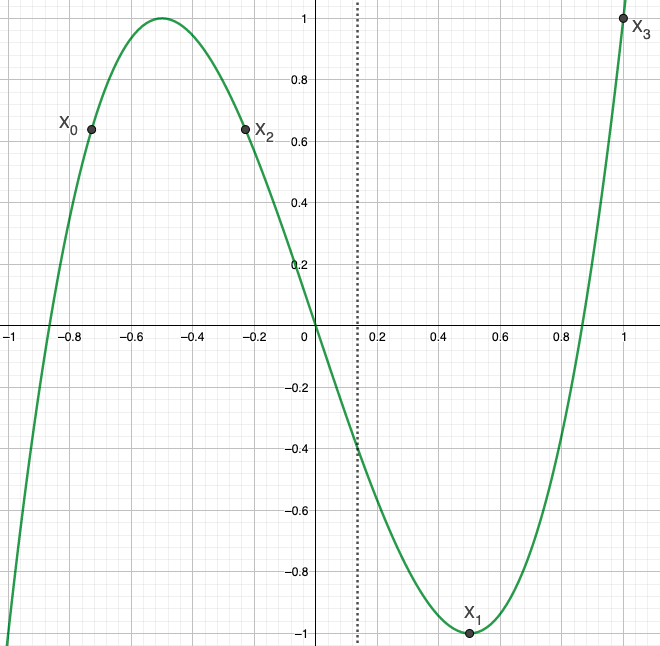}
  \includegraphics[scale=0.11]{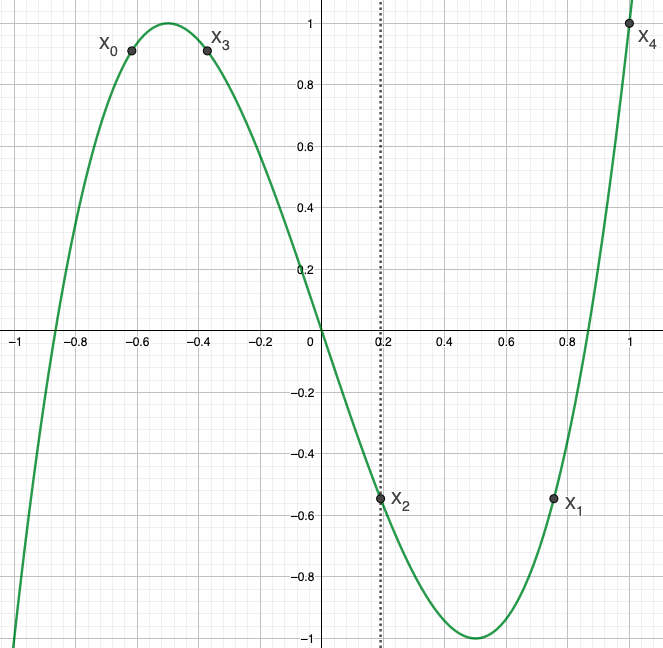}
    \includegraphics[scale=0.11]{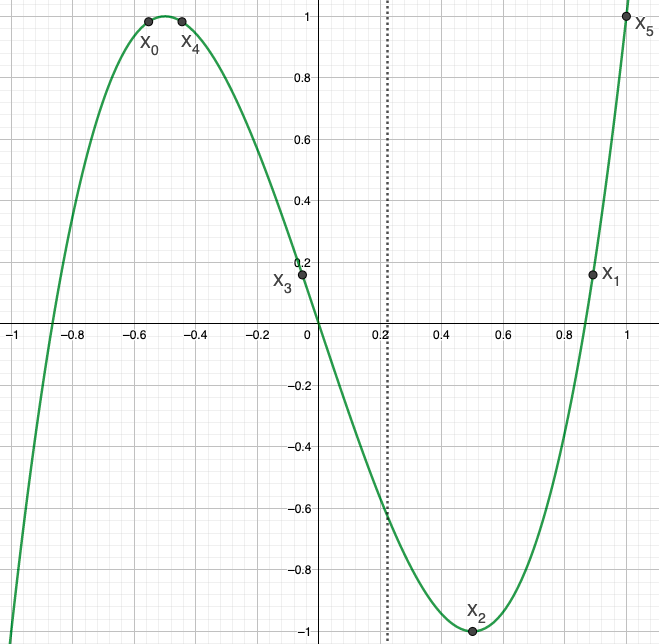}
      \includegraphics[scale=0.11]{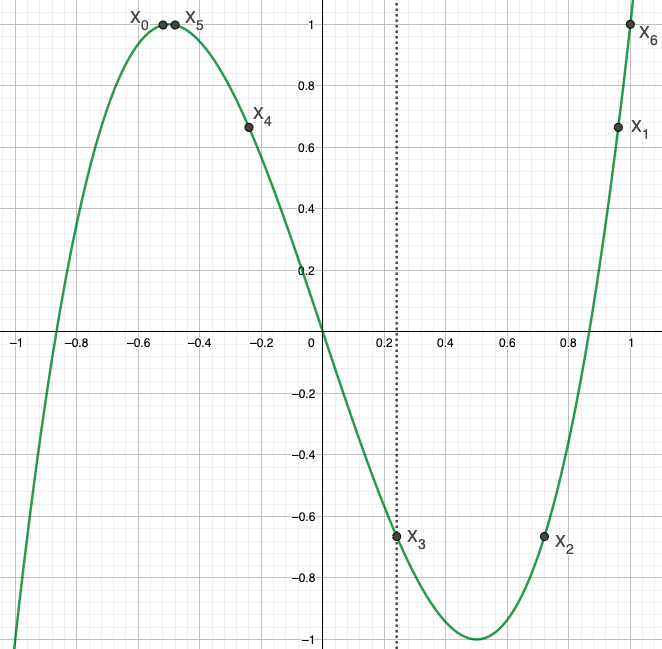}
       \includegraphics[scale=0.11]{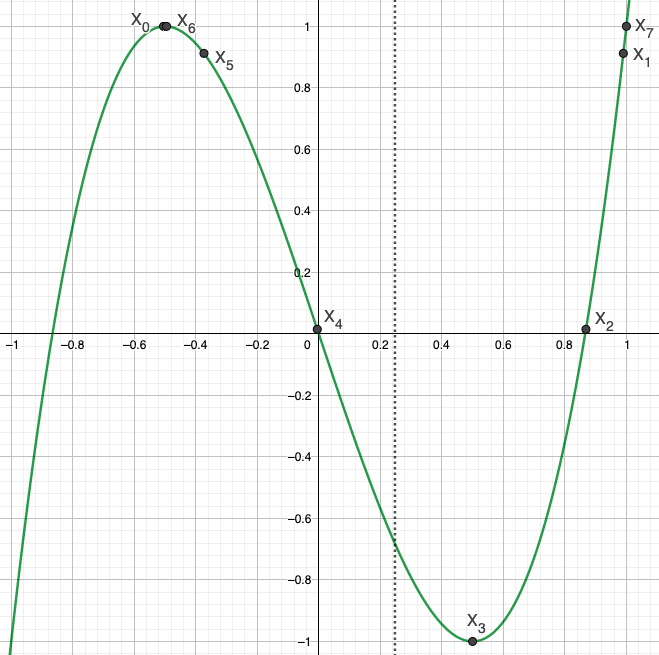}
\caption{$\kappa=3$. $T_{\kappa=3}(x)$. $\G_n$ of Propositions \ref{prop_k=3_odd} and \ref{prop_k=3_even}. Left to right : $\G_1 \simeq 0.10819$ ; $\G_2 \simeq 0.27129$ ; $\G_3 \simeq 0.382291$ ; $\G_4 \simeq 0.446704$ ; $\G_5 \simeq 0.480492$ ; $\G_6 \simeq 0.495054$.}
\label{fig:test_T3k3}
\end{figure}

To justify $\G_n \in (\cos(\pi / \kappa) + \cos(2\pi / \kappa), 1+ \cos(2\pi / \kappa))$ for $n$ even, we know that we want $X_{n/2+1} \in (\cos(2\pi / \kappa), \cos(\pi / \kappa))$, and since $X_{n/2}=\cos(\pi / \kappa)$ and $X_{n/2+1}$ is the symmetric of $X_{n/2}$ wrt.\ $\G_n /2$, it must be that $\G_n - X_{n/2} = \G_n - \cos(\pi / \kappa) \in (\cos(2\pi / \kappa), \cos(\pi / \kappa))$, i.e.\ $\cos(2\pi / \kappa) + \cos(\pi / \kappa) \leq \G_n \leq 2 \cos(\pi / \kappa)$. For the upper bound, we know that $X_0+X_{n+1} = X_0 + 1 = \G_n$ and $X_0 < \cos(2\pi / \kappa)$, so $\G_n < 1 +\cos(2\pi / \kappa)$. For $n$ odd we probably can get the range by using the interlacing property.

\begin{proposition} 
\label{prop_interlace_j4}
Fix $\kappa \geq 3$. The odd and even energy solutions $\mathcal{G}_n$ of Propositions \ref{prop_k=3_odd} and \ref{prop_k=3_even} interlace and are a strictly increasing sequence : $\mathcal{G}_{n} < \mathcal{G}_{n+1} < \mathcal{G}_{n+2}$, $\forall n\in \N^*$. Also, $\mathcal{G}_n \nearrow \inf J_3 := 1+\cos(2\pi / \kappa)$ as $n \to \infty$.
\end{proposition}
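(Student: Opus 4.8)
The plan is to mimic almost verbatim the proofs already given for Proposition \ref{prop_interlace} and Proposition \ref{prop_convergeence} in Section \ref{section J2}, adapting them to the ``left-branch'' configuration of Section \ref{section J3b}. Fix $\kappa \geq 3$. The first step is the interlacing statement $\mathcal{G}_n < \mathcal{G}_{n+1} < \mathcal{G}_{n+2}$. As in the proof of Proposition \ref{prop_interlace}, I would fix $n$ (first odd, then even) and set $E := \min(\mathcal{G}_n, \mathcal{G}_{n+1})$ without assuming which of the two is smaller, running the dynamical constructions of Propositions \ref{prop_k=3_odd} and \ref{prop_k=3_even} (resp.\ \ref{prop_k=3_even} and \ref{prop_k=3_odd}) simultaneously at this common energy $E$. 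Both constructions produce chains $(X_{i,n}(E))$ and $(X_{i,n+1}(E))$ satisfying the orderings \eqref{chain_1_k3} and \eqref{chain_1_k3even}. Starting ``from the top'' (here the relevant pivot is near $\cos(\pi/\kappa)$ or $E/2$, which now sits to the right rather than to the left), I would compare the two chains term by term using Remark \ref{r:ppcroissant} — which gives the bicontinuous, strictly monotone correspondence between the left-of-$\cos(\pi/\kappa)$ and right-of-$\cos(\pi/\kappa)$ branches of $T_\kappa$ — and inductively deduce an inequality $X_{\ast,n}(E) \lessgtr X_{\ast,n+1}(E)$ propagating down the ``ping-pong'' chain, ending in a strict inequality between $X_{0,n}(E) = E - 1$ and $X_{0,n+1}(E) = E-1$ that is only consistent if $E = \mathcal{G}_{n+1}$, forcing $\mathcal{G}_{n+1} \leq \mathcal{G}_n$ is false, i.e.\ $\mathcal{G}_{n+1} < \mathcal{G}_n$ is false — rather, one extracts $\mathcal{G}_{n+1} < \mathcal{G}_{n+2}$ with the signs arranged so that the sequence increases. (The bookkeeping of which endpoint, $1$ versus $\cos(2\pi/\kappa)$, is the ``free'' one must be done carefully; in Section \ref{section J3b} it is $X_{n+1} = 1$, so the calibration variable and the monotonicity direction are opposite to Section \ref{section J2}, which is why the sequence increases instead of decreasing.)

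For the convergence $\mathcal{G}_n \nearrow \inf J_3 = 1 + \cos(2\pi/\kappa)$, the first step is that by interlacing $\{\mathcal{G}_n\}$ is strictly increasing and bounded above by $1 + \cos(2\pi/\kappa)$ (the upper bound is already argued in the text just before Proposition \ref{prop_interlace_j4}), so it converges to some $\ell \leq 1 + \cos(2\pi/\kappa)$. The next step is a contradiction argument in the style of the proof of Proposition \ref{prop_convergeence}: suppose $2\epsilon := (1+\cos(2\pi/\kappa)) - \ell > 0$. One should identify, from the chain \eqref{chain_1_k3} / \eqref{chain_1_k3even}, the fixed point that is ``pinned'' at the top of the well — here $X_0 = \mathcal{G}_n - 1$, which stays bounded away from $\cos(2\pi/\kappa)$ by $\epsilon$ for all $n$ — and then run the geometric amplification lemma (the analogue of Lemma \ref{l:croissdiff}) to show that the successive $X_i$ move down the branches of $T_\kappa$ by ever-increasing amounts $q\epsilon$. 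Since there are only finitely many — in fact $n+1$ — points $X_0, \dots, X_{n+1}$, and each ``level'' of the ping-pong doubles the displacement relative to the previous, taking $n$ with $n\epsilon > $ (width of the relevant interval) yields a point forced outside $[\cos(3\pi/\kappa), 1]$, an absurdity. Hence $\ell = 1 + \cos(2\pi/\kappa)$.

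One subtlety worth flagging: Lemma \ref{l:croissdiff} as stated compares distances to $\cos(\pi/\kappa)$ for points straddling $\cos(\pi/\kappa)$; for the $\{\mathcal{G}_n\}$ construction the relevant ping-pong alternates between the interval $(\cos(2\pi/\kappa), \cos(\pi/\kappa))$ and $(\cos(\pi/\kappa), 1)$, so Lemma \ref{l:croissdiff} applies directly to those alternations, but the very first step involves $X_0 \in (\cos(3\pi/\kappa), \cos(2\pi/\kappa))$ reflecting across $\cos(2\pi/\kappa)$ onto $X_n \in (\cos(2\pi/\kappa), \cos(\pi/\kappa))$, which needs a one-off analogue of Lemma \ref{l:croissdiff} centered at $\cos(2\pi/\kappa)$ instead of $\cos(\pi/\kappa)$ — the same proof (studying $S(x) = T_\kappa(x) - T_\kappa(2\cos(2\pi/\kappa) - x)$ and showing it is monotone via the factorization of $U_{\kappa-1}$) works, using that cosine is decreasing on $(0,\pi)$ and the relevant $x - \cos(j\pi/\kappa)$ terms dominate. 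I expect this — matching up the amplification lemma with the correct center point at each stage of the chain, and being careful about which endpoint is free — to be the main obstacle; the interlacing argument itself is essentially a transcription of the proof of Proposition \ref{prop_interlace} with the orientation reversed. Once both pieces are in place, Proposition \ref{prop_interlace_j4} (hence, together with Proposition \ref{prop_k=3_odd}, Proposition \ref{prop_k=3_even} and the membership $\mathfrak{T}_{2n-1,\kappa}, \mathfrak{T}_{2n,\kappa} \subset \boldsymbol{\Theta}_{n,\kappa}(\Delta)$ from Propositions \ref{prop1_intro}, \ref{prop1_intro_even}, the relevant part of Theorem \ref{thm_decreasing energy general_k3}) follows.
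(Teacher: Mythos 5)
The paper itself gives no proof of Proposition~\ref{prop_interlace_j4} (nor of the analogous Proposition~\ref{prop_interlace_888} for $\{\mathcal{F}_n\}$); the results for Sections~\ref{section J3a}--\ref{section J3b} are stated with the proofs left implicit. So there is nothing to compare against, and one has to assess your argument on its own. The interlacing half of your plan is a plausible transcription of the proof of Proposition~\ref{prop_interlace} with the orientation reversed. The convergence half, however, has two concrete gaps.

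First, you assert that the analogue of Lemma~\ref{l:croissdiff} centered at $\cos(2\pi/\kappa)$ ``works by the same proof,'' namely by showing $S(x)=T_\kappa(x)-T_\kappa(2\cos(2\pi/\kappa)-x)$ has nonnegative derivative. This is exactly the statement of Conjecture~\ref{l:croissdiffJJ} with $j=2$, and immediately after stating it the authors write that they cannot adapt the proof: ``the statement that $S_\kappa'(x)\geq 0$ appears to be true only for $j=1$.'' Indeed, the sign analysis in the proof of Lemma~\ref{l:croissdiff} uses that \emph{every} factor $x-\cos(j\pi/\kappa)$, $j\geq 2$, is positive for $x\in(\cos(\pi/\kappa),1)$; centering at $\cos(2\pi/\kappa)$ instead, the $j=1$ factor $x-\cos(\pi/\kappa)$ is negative on $(\cos(2\pi/\kappa),\cos(\pi/\kappa))$, and the product no longer has a uniform sign. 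So this one-off lemma is not routine; it is an open question in the paper. Second, and more fundamentally, the amplification mechanism of Proposition~\ref{prop_convergeence} does not transfer. There the symmetry step adds $\mathcal{E}_n-2\cos(\pi/\kappa)>0$ to the distance from $\cos(\pi/\kappa)$, and Lemma~\ref{l:croissdiff} adds again, giving $a_q>q\epsilon$ and the contradiction. For $\{\mathcal{G}_n\}$ one has $\mathcal{G}_n < 1+\cos(2\pi/\kappa)=2\cos^2(\pi/\kappa) < 2\cos(\pi/\kappa)$, so $\mathcal{G}_n-2\cos(\pi/\kappa)<0$: writing $a_q:=\cos(\pi/\kappa)-X_{\mathrm{left}}$ and $b_q:=X_{\mathrm{right}}-\cos(\pi/\kappa)$ for the two legs of the $q$th straddling pair, the symmetry step gives $a_{q+1}=b_q+(2\cos(\pi/\kappa)-\mathcal{G}_n)$, and Lemma~\ref{l:croissdiff} gives only $b_q<a_q$, whence $a_{q+1}<a_q+(2\cos(\pi/\kappa)-\mathcal{G}_n)$. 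That is an \emph{upper} bound compatible with all the $a_q$ remaining in the admissible interval --- not the linear lower bound you assert. Your claim that ``the successive $X_i$ move down the branches of $T_\kappa$ by ever-increasing amounts $q\epsilon$'' is therefore unsupported, and the convergence of $\{\mathcal{G}_n\}$ needs a different mechanism (e.g.\ a lower bound on $b_q$ in terms of $a_q$ near the well bottom, or a monotone-orbit argument), which neither you nor the paper supply.
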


\section{A generalization of section \ref{section J2} : a sequence $\E_n \searrow 2\cos(j\pi/ \kappa)$}
\label{section_gen1}

The construction used to get a sequence in the right-most well of $T_{\kappa}(x)$ in section \ref{section J2} is not specific to the right-most well. One can build a similar sequence in other wells.

\subsection{Decreasing sequence in upright well, $j$ odd}
Figure \ref{fig:test_T8decreasingNormalWell} illustrates a decreasing sequence $\E_n \searrow 2 \cos(3\pi / \kappa)$, for $\kappa = 8$, $j=3$. Note that the dotted line $x=\E_n /2$ is to the right of the minimum $x= \cos(3\pi / \kappa)$ but converges to it.

\begin{figure}[htb]
  \centering
 \includegraphics[scale=0.09]{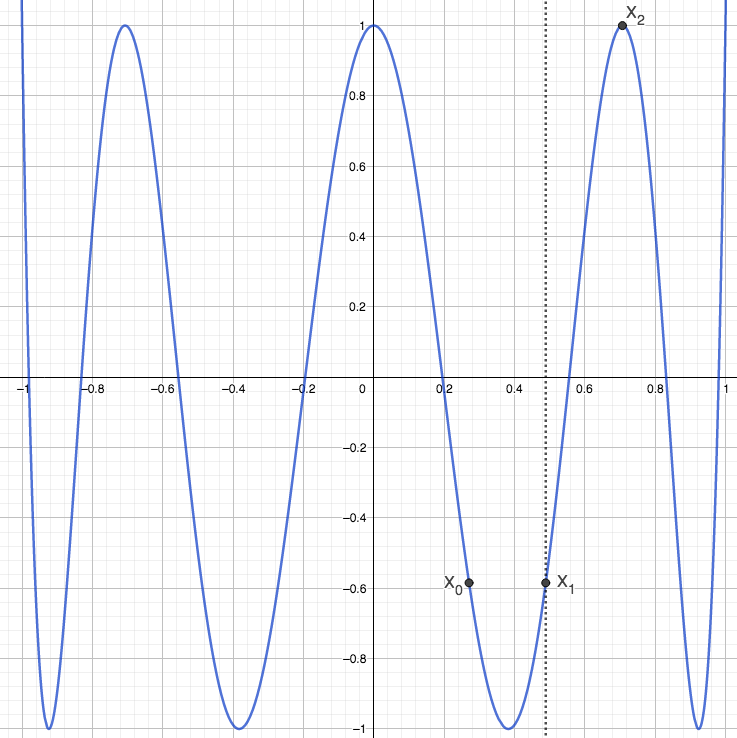}
 \includegraphics[scale=0.09]{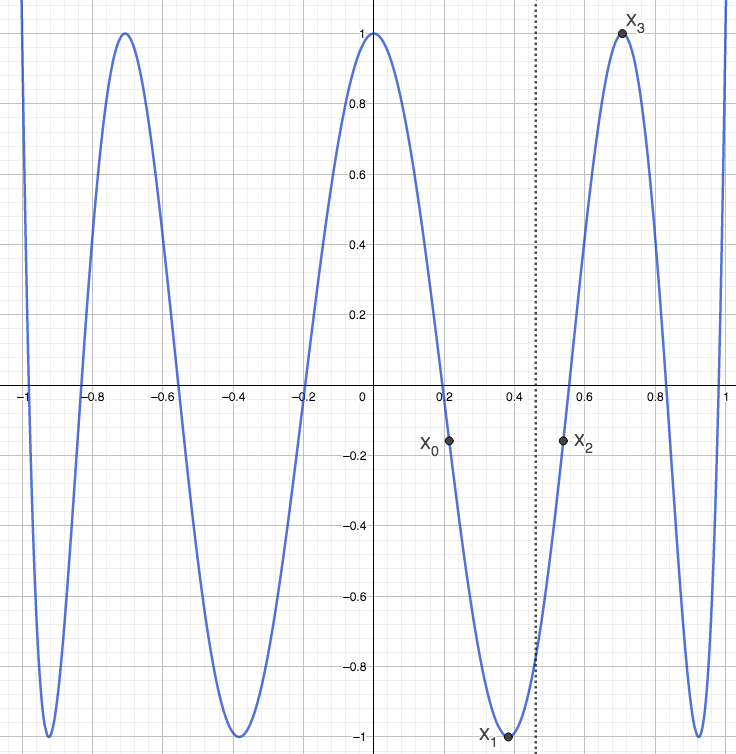}
   \includegraphics[scale=0.09]{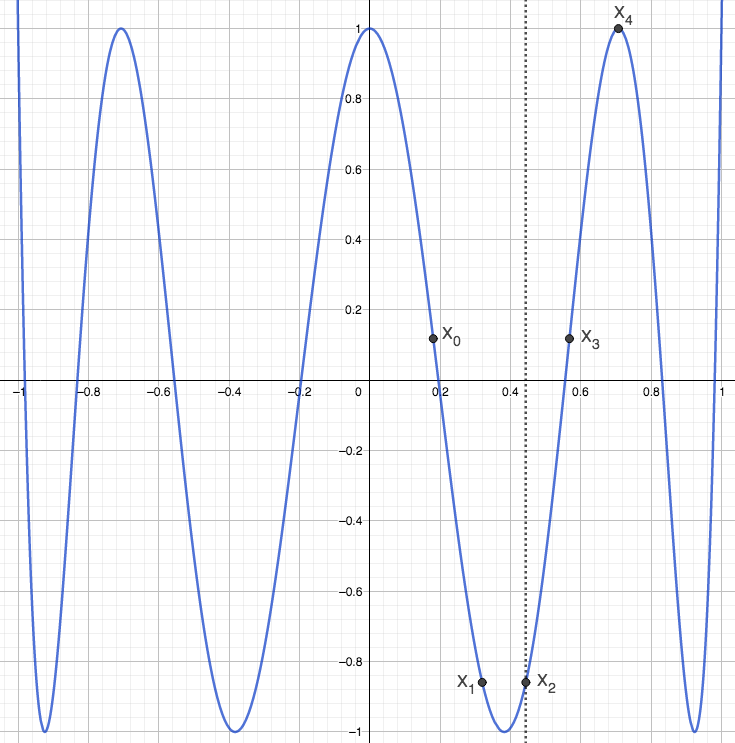}
  \includegraphics[scale=0.09]{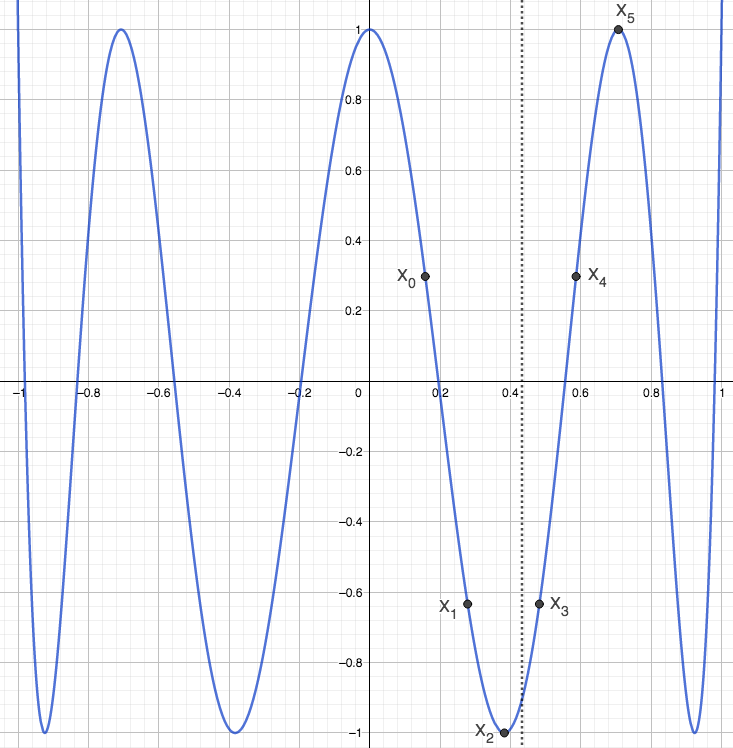}
   \includegraphics[scale=0.09]{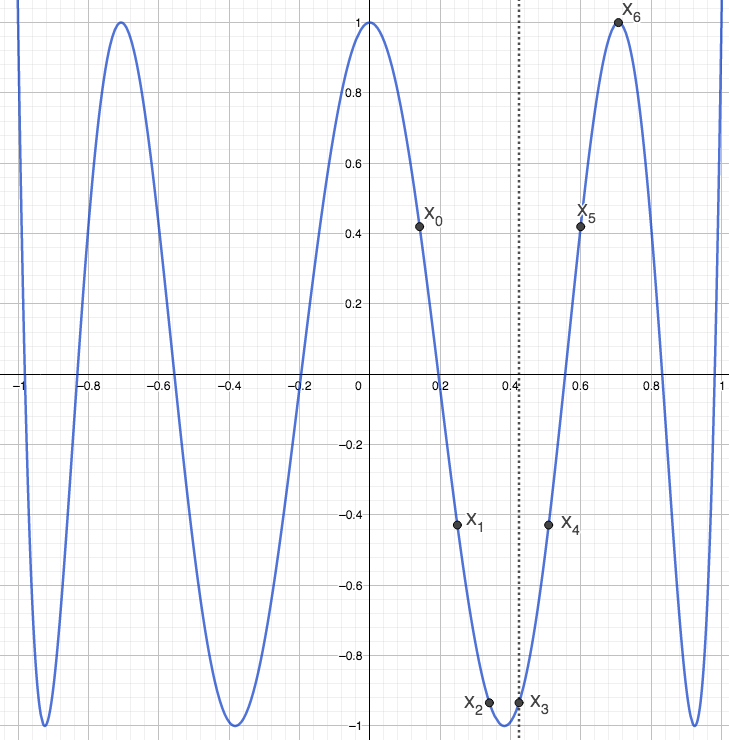}
  \includegraphics[scale=0.09]{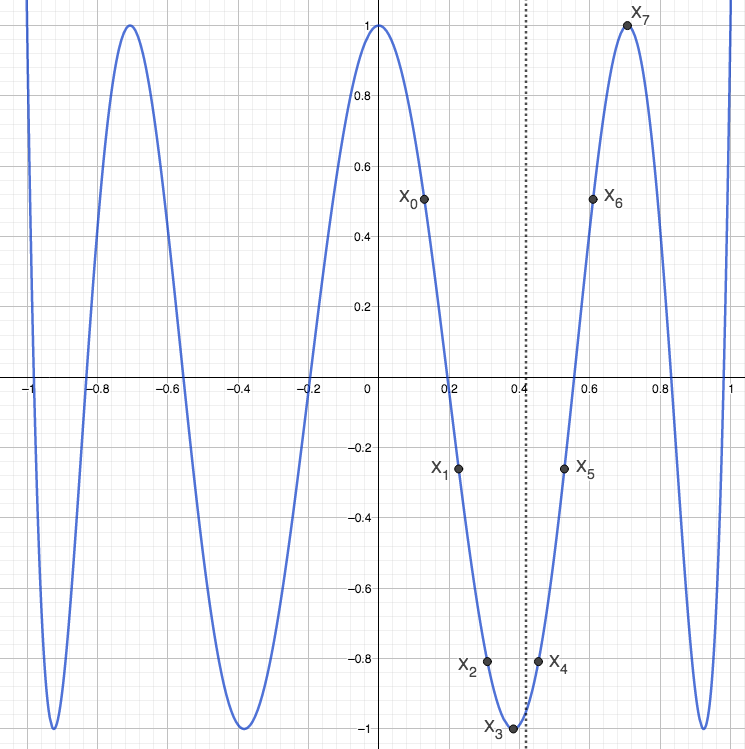}
\caption{$T_{\kappa=8}(x)$. Solutions $\E_n$. Left to right : $\E_1 \simeq 0.9781$, $\E_2 \simeq 0.9216$, $\E_3 \simeq 0.8877$, $\E_4 \simeq 0.8650$, $\E_5 \simeq 0.8488$, $\E_6 \simeq 0.8368$.}
\label{fig:test_T8decreasingNormalWell}
\end{figure}

Thus, we propose a generalization of Theorem \ref{thm_decreasing energy general} :
\begin{theorem} 
\label{thm_averagekk111}
Fix $\kappa \geq 2$. Fix $1 \leq j \leq \floor*{\kappa/2}$, $j$ odd. There is a sequence $\{ \E_n \}_{n=1} ^{\infty}$, which depends on $\kappa$, s.t.\ $\{ \E_n \} \subset (2\cos(j\pi/ \kappa), \cos((j-1)\pi / \kappa) + \cos((j+1)\pi/ \kappa)) \cap \boldsymbol{\Theta}_{\kappa}(\Delta)$, and $\E_{n+2} < \E_{n+1} < \E_n$, $\forall n \in \N^*$. Also, $\E_{2n-1}$, $\E_{2n} \in \boldsymbol{\Theta}_{n, \kappa}(\Delta)$, $\forall n \geq 1$, and 
\begin{equation}
\label{chain_109}
\mathcal{E}_n -\cos((j-1)\pi / \kappa) = X_{0} < X_{1} < ... < X_{n} < X_{n+1} := \cos((j-1)\pi / \kappa).
\end{equation}
\end{theorem}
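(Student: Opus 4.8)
\textbf{Proof proposal for Theorem \ref{thm_averagekk111}.}

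The plan is to mimic the structure used for Theorem \ref{thm_decreasing energy general}, i.e.\ Propositions \ref{prop1}, \ref{prop2}, \ref{prop_interlace} and \ref{prop_convergeence}, but carried out in the $j$-th upright well of $T_{\kappa}$ rather than the right-most one. Since $j$ is odd and $1 \leq j \leq \floor*{\kappa/2}$, Lemma \ref{variationsTk} tells us $T_{\kappa}$ is strictly decreasing on $(\cos((j+1)\pi/\kappa),\cos(j\pi/\kappa))$ and strictly increasing on $(\cos(j\pi/\kappa),\cos((j-1)\pi/\kappa))$, so $x=\cos(j\pi/\kappa)$ is a local minimum with an ``upright well'' on either side; this is the exact local picture that made the proofs in section \ref{section J2} work, with $\cos(j\pi/\kappa)$ playing the role of $\cos(\pi/\kappa)$, $\cos((j+1)\pi/\kappa)$ the role of $\cos(2\pi/\kappa)$, and $\cos((j-1)\pi/\kappa)$ the role of $1$. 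First I would set up the existence statement: for $n$ odd, run the dynamical Algorithm for $n$ odd of section \ref{geo_construction} with $X_{n+1}$ prescribed by \eqref{o2} to be $\cos((j-1)\pi/\kappa)$, initialize $E = E(\alpha) = 2\cos(j\pi/\kappa)+\alpha$ with $\alpha$ in a small interval $\mathbf{A}_{\max} := (0, \cos((j-1)\pi/\kappa)-\cos(j\pi/\kappa))$, place $X_{(n+1)/2}=E/2$, and alternate the level condition 2.3 (solving $T_\kappa(X_q)=T_\kappa(X_{n-q})$ within the well, which is legitimate by the local analogue of Remark \ref{r:ppcroissant}) with the symmetry condition about $x=E/2$. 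The analogue of Remark \ref{r:ppcroissant} holds verbatim with the three cosine values replaced as above, since the only inputs are the monotonicity intervals of $T_\kappa$ from Lemma \ref{variationsTk}. For $n$ even, prescribe additionally $X_{n/2}=\cos(j\pi/\kappa)$ per \eqref{o22}. In both cases a continuity/intermediate value argument in $\alpha$ produces a unique $\alpha^*$ with $X_{n+1}(\alpha^*)=\cos((j-1)\pi/\kappa)$, yielding the chain \eqref{chain_109} and verifying \eqref{o1}--\eqref{o3} (resp. \eqref{o11}--\eqref{o33}); hence $\E_n\in\mathfrak{T}_{n,\kappa}$, and by Propositions \ref{prop1_intro} and \ref{prop1_intro_even} (with the well-definedness of the $\omega_q$ coming from \eqref{o3}/\eqref{o33}) we get $\E_{2n-1},\E_{2n}\in\boldsymbol{\Theta}_{n,\kappa}(\Delta)$, so in particular $\E_n\in\boldsymbol{\Theta}_{\kappa}(\Delta)$.

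Next I would prove the interlacing and monotonicity $\E_{n+2}<\E_{n+1}<\E_n$ exactly as in Proposition \ref{prop_interlace}: fix $E := \min(\E_n,\E_{n+1})$, run both constructions simultaneously, and use the local Remark \ref{r:ppcroissant} to propagate the inequality $X_{(n+1)/2+q,n}(E) < X_{(n+1)/2+q+1,n+1}(E)$ (resp. its $n$-even counterpart) through the ping-pong, concluding $X_{n+1,n}(E) < X_{n+2,n+1}(E)$, which forces $X_{n+2,n+1}(E)=\cos((j-1)\pi/\kappa)$, hence $E=\E_{n+1}\le\E_n$ and in fact $\E_{n+1}<\E_n$. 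Then convergence $\E_n \searrow 2\cos(j\pi/\kappa)$ follows the template of Proposition \ref{prop_convergeence}: I need the local analogue of Lemma \ref{l:croissdiff}, namely that if $\cos((j+1)\pi/\kappa)<a<\cos(j\pi/\kappa)<b<\cos((j-1)\pi/\kappa)$ with $T_\kappa(a)=T_\kappa(b)$ then $\cos(j\pi/\kappa)-a > b-\cos(j\pi/\kappa)$. The proof of Lemma \ref{l:croissdiff} analyzed $S_\kappa(x)=T_\kappa(x)-T_\kappa(2\cos(\pi/\kappa)-x)$ via the factorization of $U_{\kappa-1}$; the same factorization argument should go through with $\cos(\pi/\kappa)$ replaced by $\cos(j\pi/\kappa)$, provided the reflected point $2\cos(j\pi/\kappa)-x$ stays in the adjacent decreasing interval and the comparison of the two products of the form $x-\cos(\ell\pi/\kappa)$ versus $2\cos(j\pi/\kappa)-x-\cos(\ell\pi/\kappa)$ still has the right sign. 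Once this local convexity-type lemma is in hand, the contradiction argument (choosing $n$ odd with $n\epsilon>1$ and iterating the distance-doubling) gives $\E_n\to 2\cos(j\pi/\kappa)$ verbatim.

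The main obstacle I anticipate is this last point: the proof of Lemma \ref{l:croissdiff} uses crucially that $\cos(\pi/\kappa)$ is the \emph{largest} interior critical value, so that $x-\cos(j\pi/\kappa)\ge x-\cos(\pi/\kappa)>0$ for all other roots $\cos(j\pi/\kappa)$ with $j\ge 2$; in a generic well indexed by $j$ this monotone domination of the factors by a single term is no longer automatic, because there are roots $\cos(\ell\pi/\kappa)$ on \emph{both} sides of the well. The honest fix is to reorganize the product comparison pairing each root $\cos(\ell\pi/\kappa)$ with its reflection $\cos((2j-\ell)\pi/\kappa)$ about $\cos(j\pi/\kappa)$ (when $2j-\ell$ is a valid index) and handling the unpaired factors separately; I would expect the inequality $x+\text{(reflected partner)} $ to reduce, after a little trigonometry, to a statement about sums of cosines that is true precisely when $j\le\floor*{\kappa/2}$ (which is why that hypothesis appears in the statement). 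Alternatively, if a clean elementary argument proves elusive, one can fall back on the weaker qualitative fact that the two sequences $X_{(n+q)/2,n}$ move strictly monotonically away from $\cos(j\pi/\kappa)$ under the ping-pong and are bounded, forcing $\E_n$ to converge to a limit $\ell$, and then rule out $\ell>2\cos(j\pi/\kappa)$ by the same distance-accumulation contradiction once the one-step inequality \eqref{distanceLeftGreaterthanRight} (localized) is established even without the sharp convexity. I would present the well-generalized Lemma \ref{l:croissdiff} as the key technical step and relegate the routine continuity/IVT bookkeeping to ``mimic section \ref{section J2}''.
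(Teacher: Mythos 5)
Your proposal is essentially correct for what Theorem \ref{thm_averagekk111} actually claims, and matches the paper's approach: the paper explicitly says the theorem is obtained by adapting Propositions \ref{prop1}, \ref{prop2}, and \ref{prop_interlace} (together with Propositions \ref{prop1_intro}, \ref{prop1_intro_even} for the $\boldsymbol{\Theta}_{n,\kappa}$ membership), with the triple $(\cos(2\pi/\kappa),\cos(\pi/\kappa),1)$ replaced by $(\cos((j+1)\pi/\kappa),\cos(j\pi/\kappa),\cos((j-1)\pi/\kappa))$ and the ping-pong run in the $j$-th upright well. Your first two paragraphs carry this out faithfully.

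However, re-read the statement of Theorem \ref{thm_averagekk111}: it asserts strict monotone decrease $\E_{n+2}<\E_{n+1}<\E_n$ and containment in $(2\cos(j\pi/\kappa),\cos((j-1)\pi/\kappa)+\cos((j+1)\pi/\kappa))$, but it does \emph{not} assert $\E_n\searrow 2\cos(j\pi/\kappa)$. That convergence is Conjecture \ref{conj_JJJ}, left open in the paper, and the localized version of Lemma \ref{l:croissdiff} you identify as the key technical step is itself Conjecture \ref{l:croissdiffJJ}. So your third paragraph, which you flag as ``the main obstacle,'' is an obstacle to proving something the theorem does not actually claim. Your diagnosis of \emph{why} it is hard is nonetheless precisely the paper's: the proof of Lemma \ref{l:croissdiff} establishes $S_\kappa'(x)\ge 0$ by factoring $U_{\kappa-1}$ and dominating each factor $x-\cos(\ell\pi/\kappa)$ by the single term $x-\cos(\pi/\kappa)$, and this domination is available only because $\cos(\pi/\kappa)$ is the extreme interior critical value; the paper remarks that $S_\kappa'\ge 0$ appears to \emph{fail} for $j\ge 2$, so no straightforward reshuffling of the product will work, and one would need a genuinely new idea (your pairing/convexity suggestions are plausible leads but unproved). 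In short: drop the convergence claim from what you are trying to prove and the gap disappears; the rest is a correct account of the paper's (sketched) proof.
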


\subsection{Decreasing sequence in upside down well, $j$ even}

For $j$ even, the well is upside down. Figure \ref{fig:test_T8decreasingUpsideDownWell} illustrates a decreasing sequence $\E_n \searrow 2 \cos(2\pi / \kappa)$, for $\kappa = 8$, $j=2$. Note that the dotted line $x=\E_n /2$ is to the right of the maximum $x= \cos(2\pi / \kappa)$ but converges to it.

\begin{figure}[htb]
  \centering
 \includegraphics[scale=0.09]{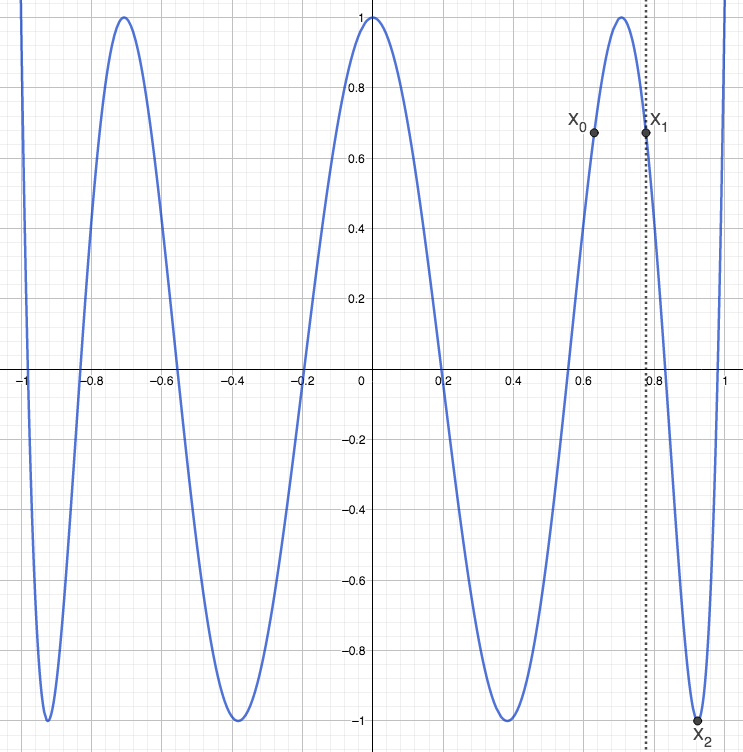}
 \includegraphics[scale=0.09]{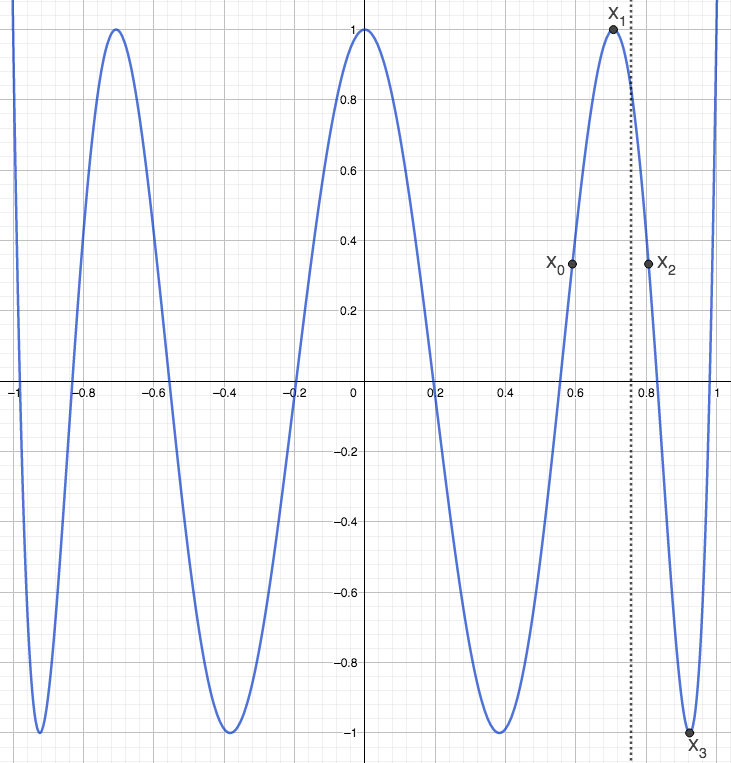}
  \includegraphics[scale=0.09]{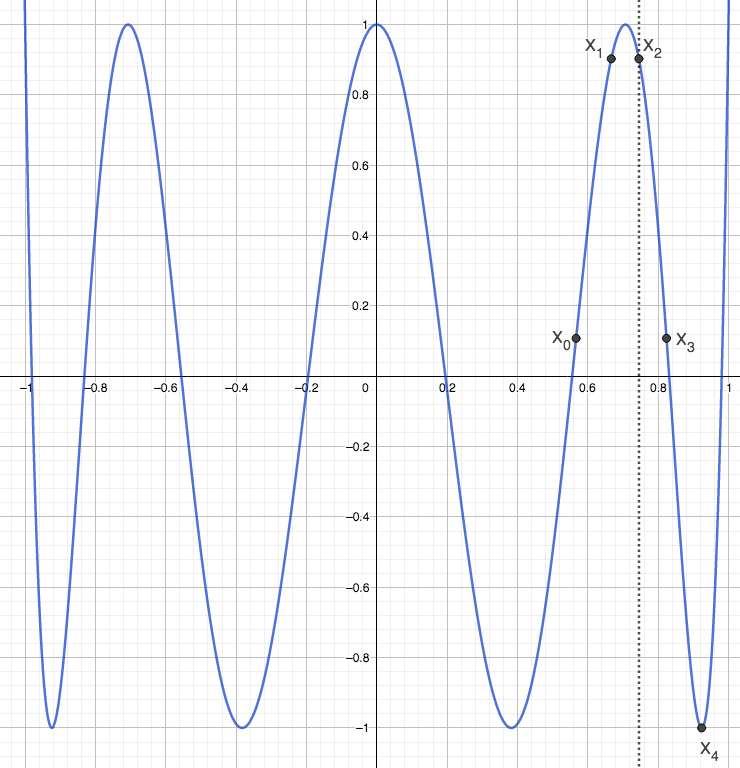}
  \includegraphics[scale=0.09]{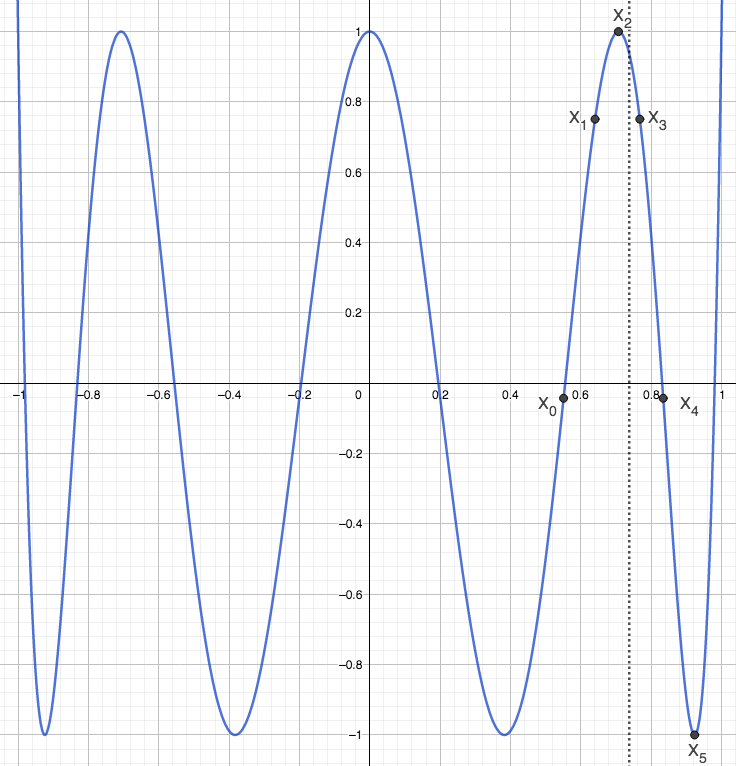}
  \includegraphics[scale=0.09]{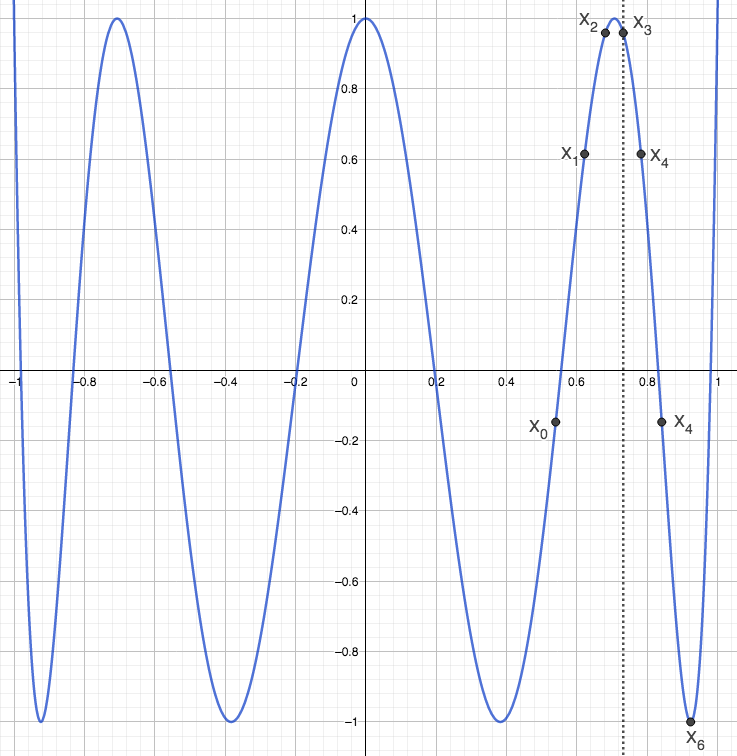}
  \includegraphics[scale=0.09]{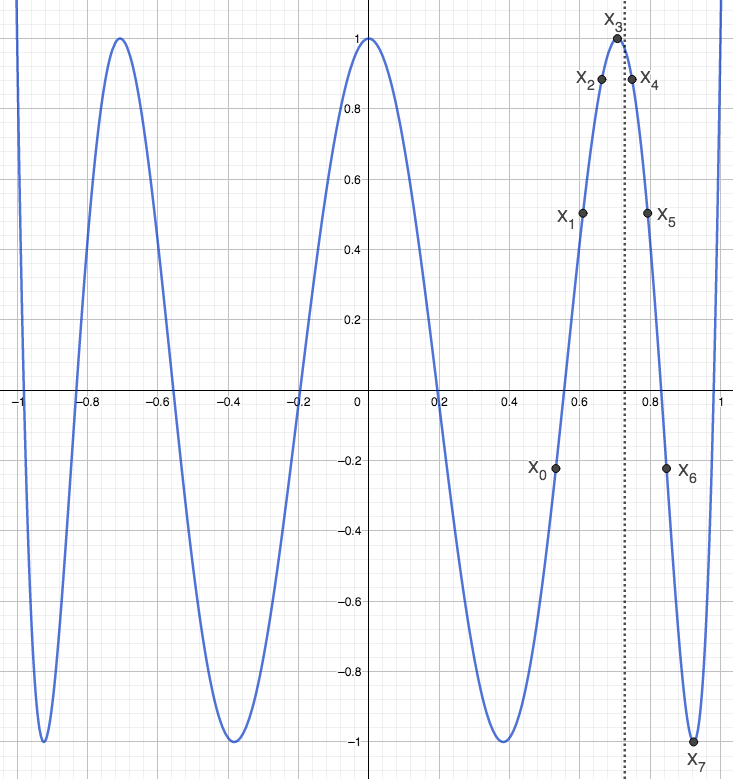}
\caption{$T_{\kappa=8}(x)$. Solutions $\E_n$. Left to right : $\E_1 \simeq 1.5536$, $\E_2 \simeq 1.5142$, $\E_3 \simeq 1.4906$, $\E_4 \simeq 1.4750$, $\E_5 \simeq 1.4640$, $\E_6 \simeq 1.4559$.}
\label{fig:test_T8decreasingUpsideDownWell}
\end{figure}

Thus, we propose a generalization of Theorem \ref{thm_decreasing energy general} :
\begin{theorem} 
\label{thm_averagekk112}
Fix $\kappa \geq 2$. Fix $2 \leq j \leq \floor*{\kappa/2}$, $j$ even. There is a sequence $\{ \E_n \}_{n=1} ^{\infty}$, which depends on $\kappa$, s.t.\ $\{ \E_n \} \subset (2\cos(j\pi/ \kappa), \cos((j-1)\pi / \kappa) + \cos((j+1)\pi/ \kappa)) \cap \boldsymbol{\Theta}_{\kappa}(\Delta)$, and $\E_{n+2} < \E_{n+1} < \E_n$, $\forall n \in \N^*$. Also, $\E_{2n-1}$, $\E_{2n} \in \boldsymbol{\Theta}_{n, \kappa}(\Delta)$, $\forall n \geq 1$, and 
\begin{equation}
\label{chain_1099}
\mathcal{E}_n -\cos((j-1)\pi / \kappa) = X_{0} < X_{1} < ... < X_{n} < X_{n+1} := \cos((j-1)\pi / \kappa).
\end{equation}
\end{theorem}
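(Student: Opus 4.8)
The plan is to adapt the proof of Theorem \ref{thm_decreasing energy general} — i.e.\ Propositions \ref{prop1}, \ref{prop2}, \ref{prop_interlace}, \ref{prop_convergeence} — to the $j$-th upside-down well of $T_\kappa$, for $j$ even, $2\le j\le\lfloor\kappa/2\rfloor$. The key preliminary observation is that on the interval $(\cos((j+1)\pi/\kappa),\cos((j-1)\pi/\kappa))$ the polynomial $T_\kappa$ has a unique interior local \emph{maximum} at $x=\cos(j\pi/\kappa)$ (by Lemma \ref{variationsTk}, since $j$ even forces $T_\kappa$ decreasing then increasing... wait, for a maximum it is increasing then decreasing), and is strictly monotone on each of the two half-intervals. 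So the entire ``ping-pong'' machinery of section \ref{geo_construction} applies verbatim once we relabel: the role of $\cos(\pi/\kappa)$ (the rightmost minimum) is played by $\cos(j\pi/\kappa)$, the role of $1$ (the right endpoint where $X_{n+1}$ lands, a level-2.1-or-2.2 point) is played by $\cos((j-1)\pi/\kappa)$, and the role of $\cos(2\pi/\kappa)$ (the neighbouring extremum bounding the left excursions) is played by $\cos((j+1)\pi/\kappa)$. The interval $\mathbf{A}_{\max}$ becomes $\bigl(0,\ \cos((j-1)\pi/\kappa)-\cos(j\pi/\kappa)\bigr)$ and energies are initialised as $E(\alpha)=2\cos(j\pi/\kappa)+\alpha$.

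Concretely, I would proceed as follows. First, restate and prove the two existence-and-uniqueness Propositions (odd $n$ and even $n$) exactly as Propositions \ref{prop1} and \ref{prop2}, running the dynamical algorithm for $n$ odd / $n$ even of section \ref{geo_construction}: initialise $E$, place $X_{(n+1)/2}=E/2$ (odd case) or $X_{n/2}=\cos(j\pi/\kappa)$ (even case), alternate the symmetry condition (Remark \ref{obs_symmetry}) and the level condition 2.3 via the analogue of Remark \ref{r:ppcroissant} adapted to this well, obtain the chain \eqref{chain_109}/\eqref{chain_1099}, and then calibrate $\alpha$ by an intermediate-value argument so that $X_{n+1}$ hits $\cos((j-1)\pi/\kappa)$. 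The only inequality needing checking is the analogue of $2\cos(\pi/\kappa)-\cos(2\pi/\kappa)<1$; here it becomes $2\cos(j\pi/\kappa)-\cos((j+1)\pi/\kappa)<\cos((j-1)\pi/\kappa)$, which is immediate since $\cos$ is strictly concave... no — it follows from $\cos((j-1)\pi/\kappa)+\cos((j+1)\pi/\kappa)>2\cos(j\pi/\kappa)$, i.e.\ midpoint convexity of $\cos$ on the relevant range, which holds because $\cos$ is convex on $[\pi/2,\pi]$ and more generally one checks $\cos(a-h)+\cos(a+h)=2\cos a\cos h<2\cos a$ when $\cos a<0$ and $>2\cos a$ when $\cos a>0$; since here $a=j\pi/\kappa$ with $j\le\lfloor\kappa/2\rfloor$ we have $\cos a\ge0$, giving the needed inequality (with equality only in the degenerate boundary case, which one excludes). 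That $T_\kappa(X_{n-q})\le1$ throughout the construction (so that the branch of $T_\kappa^{-1}$ exists) must also be verified, just as in section \ref{section J2}. Then: the interlacing Proposition \ref{prop_interlace_888}-style argument goes through unchanged (compare the $X_{i,n}$ for consecutive $n$ at the common energy $E=\min(\E_n,\E_{n+1})$); and convergence $\E_n\searrow 2\cos(j\pi/\kappa)$ follows from the analogue of Lemma \ref{l:croissdiff} — namely that for the $j$-th well, if $\cos((j+1)\pi/\kappa)<a<\cos(j\pi/\kappa)<b<\cos((j-1)\pi/\kappa)$ with $T_\kappa(a)=T_\kappa(b)$ then $\cos(j\pi/\kappa)-a>b-\cos(j\pi/\kappa)$ — proved by the same $S_\kappa(x):=T_\kappa(x)-T_\kappa(2\cos(j\pi/\kappa)-x)$ strict-convexity/monotonicity computation, using that the remaining roots $\cos(l\pi/\kappa)$, $l\ne j$, all lie on the same side appropriately. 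Finally, $\E_{2n-1},\E_{2n}\in\boldsymbol{\Theta}_{n,\kappa}(\Delta)$ follows from Propositions \ref{prop1_intro} and \ref{prop1_intro_even} applied to the solution just constructed (which satisfies \eqref{o1}–\eqref{o3} resp.\ \eqref{o11}–\eqref{o33}), exactly as in section \ref{section J2}; the set containment into $\boldsymbol{\Theta}_\kappa(\Delta)$ is then automatic from the Lemma after \eqref{special G relationship_intro}.

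The main obstacle I anticipate is the analogue of Lemma \ref{l:croissdiff}: in the rightmost well the competing roots $\cos(l\pi/\kappa)$ for $l=2,\dots,\kappa-1$ all sit to the \emph{left} of the well, which made the sign bookkeeping in the factored derivative $S_\kappa'(x)=\kappa2^{\kappa-1}(x-\cos(\pi/\kappa))(\prod_{j\ge2}(\cdots)-\prod_{j\ge2}(\cdots))$ clean. In a general well some roots lie to the left and some to the right, so one must pair them up $\cos((j-k)\pi/\kappa)\leftrightarrow\cos((j+k)\pi/\kappa)$ (these being symmetric about $\cos(j\pi/\kappa)$ in \emph{angle} but not in $x$) and show the sum of products still has a definite sign on $(\cos(j\pi/\kappa),\cos((j-1)\pi/\kappa))$. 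I expect this to work — the geometric content (the graph of $T_\kappa$ is ``steeper on the outside of a well'') is robust and visible in Figures \ref{fig:test_T8decreasingNormalWell} and \ref{fig:test_T8decreasingUpsideDownWell} — but the combinatorics of signs is the place where care is needed, and one may prefer to argue directly from the strict convexity of $S_\kappa$ on the relevant interval (noting $S_\kappa''$ relates to $T_\kappa''$, whose sign on a single monotone half-well is constant) rather than from the factorisation. Everything else is a routine transcription of section \ref{section J2}, so I would state the four Propositions (existence odd, existence even, interlacing, convergence) as a block, prove the adapted Lemma \ref{l:croissdiff}, and then remark that the proofs of the Propositions are word-for-word those of Propositions \ref{prop1}–\ref{prop_convergeence} with the relabelling indicated, leaving details to the reader — which is consistent with the paper's stated style for Theorem \ref{thm_averagekk112} being a ``proposed generalization''.
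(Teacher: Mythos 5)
Your overall plan — adapt the ``ping-pong'' machinery of Propositions \ref{prop1}, \ref{prop2} (existence and uniqueness via the dynamical algorithm plus a calibration of $\alpha$ by an intermediate-value argument) and Proposition \ref{prop_interlace} (interlacing) from the rightmost well to the $j$-th upside-down well — is exactly what the paper does, and the paper explicitly describes that adaptation as straightforward. The one place you hedge, the analogue of $2\cos(\pi/\kappa)-\cos(2\pi/\kappa)\ge 1$, is fine: for $j\le\lfloor\kappa/2\rfloor$ one has $\cos(j\pi/\kappa)\ge 0$, hence $\cos((j-1)\pi/\kappa)+\cos((j+1)\pi/\kappa)=2\cos(j\pi/\kappa)\cos(\pi/\kappa)\le 2\cos(j\pi/\kappa)$, which is precisely the falsity needed for the contradiction step. (You wrote the inequality $2\cos a\cos h>2\cos a$ when $\cos a>0$, which has the sign reversed; the correct direction, $<$, is in fact the one the argument wants, so the slip is harmless but worth fixing.) The $\boldsymbol{\Theta}_{n,\kappa}(\Delta)$ membership via Propositions \ref{prop1_intro} and \ref{prop1_intro_even} is also exactly the paper's route.

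The genuine gap is in your treatment of convergence. You include $\E_n\searrow 2\cos(j\pi/\kappa)$ as part of the proof, claiming the analogue of Lemma \ref{l:croissdiff} ``goes through by the same strict-convexity/monotonicity computation'' and that you ``expect this to work.'' But the statement of Theorem \ref{thm_averagekk112} deliberately does \emph{not} assert convergence to $2\cos(j\pi/\kappa)$; that claim is demoted to Conjecture \ref{conj_JJJ}, and the needed analogue of Lemma \ref{l:croissdiff} is Conjecture \ref{l:croissdiffJJ} — explicitly left open. The reason is precisely the obstacle you anticipate: the paper reports that the factored form of $S_\kappa'$ loses its definite sign when some of the roots $\cos(l\pi/\kappa)$ sit on each side of the well, and in fact ``$S_\kappa'(x)\ge 0$ appears to be true only for $j=1$.'' Your fallback suggestion — argue directly from strict convexity of $S_\kappa$ because ``$T_\kappa''$ has constant sign on a monotone half-well'' — also does not hold: $T_\kappa''=\kappa\,U_{\kappa-1}'$, and $U_{\kappa-1}$ has an extremum strictly between any two consecutive roots $\cos(j\pi/\kappa)$, $\cos((j-1)\pi/\kappa)$, so $U_{\kappa-1}'$ changes sign there. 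So neither route you propose for the convergence lemma works, and you should simply drop the convergence step: it is not part of the statement, and what remains (existence, monotonicity, chain, and $\boldsymbol{\Theta}_{n,\kappa}$ membership) you have correctly matched to the paper's argument.
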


\subsection{A comment on the proofs of these Theorems and a Conjecture on the limit}

To prove the Theorems \ref{thm_averagekk111} and \ref{thm_averagekk112} one needs to adapt the proofs of Propositions \ref{prop1}, \ref{prop2} and \ref{prop_interlace}. The adaptation of these Propositions is straightforward. As for the limit we conjecture :
\begin{conjecture}
\label{conj_JJJ}
Let $\{\E_n\}$ be the sequence in Theorems \ref{thm_averagekk111} and \ref{thm_averagekk112}. Then $\E_n \searrow 2\cos(j\pi/ \kappa)$.
\end{conjecture}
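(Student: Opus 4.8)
\textbf{Proof proposal for Conjecture \ref{conj_JJJ}.}

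The plan is to adapt, essentially verbatim, the proof of Proposition \ref{prop_convergeence}, which handled the case $j=1$ (the right-most well of $T_\kappa$). Recall that the engine there was Lemma \ref{l:croissdiff}, which said that if $\cos(2\pi/\kappa)<a<\cos(\pi/\kappa)<b<1$ and $T_\kappa(a)=T_\kappa(b)$, then the horizontal distance from $a$ to the minimizer $\cos(\pi/\kappa)$ strictly exceeds the distance from $\cos(\pi/\kappa)$ to $b$. First I would prove the analogous geometric fact for the $j$-th well: if $j$ is odd, $T_\kappa$ has a local minimum at $\cos(j\pi/\kappa)$ and is strictly decreasing on $(\cos((j+1)\pi/\kappa),\cos(j\pi/\kappa))$ and strictly increasing on $(\cos(j\pi/\kappa),\cos((j-1)\pi/\kappa))$, by Lemma \ref{variationsTk}; and symmetrically if $j$ is even the well is upside-down with a local maximum at $\cos(j\pi/\kappa)$. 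In either case I claim: for $a,b$ on the two sides of the well with $T_\kappa(a)=T_\kappa(b)$ and $a$ on the left, the distance $|\cos(j\pi/\kappa)-a|$ from the left point to the well-vertex strictly exceeds $|b-\cos(j\pi/\kappa)|$, \emph{provided $a$ (equivalently $b$) is close enough to the vertex} — or, more ambitiously, throughout the relevant sub-intervals. The point is that the construction in Theorems \ref{thm_averagekk111}–\ref{thm_averagekk112} forces all the $X_q$ to lie inside the single well $(\cos((j+1)\pi/\kappa),\cos((j-1)\pi/\kappa))$ and to converge toward the vertex, so only a neighborhood of the vertex matters.

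The natural way to get this local asymmetry is to mimic the function $S_\kappa(x):=T_\kappa(x)-T_\kappa(2\cos(j\pi/\kappa)-x)$ from the proof of Lemma \ref{l:croissdiff} and show it has a definite sign on one side of the vertex. Its derivative is $S_\kappa'(x)=\kappa\bigl(U_{\kappa-1}(x)+U_{\kappa-1}(2\cos(j\pi/\kappa)-x)\bigr)$ by \eqref{identity_derivative}, and factoring out $(x-\cos(j\pi/\kappa))$ from the product formula $U_{\kappa-1}(x)=2^{\kappa-1}\prod_{l=1}^{\kappa-1}(x-\cos(l\pi/\kappa))$ reduces matters to comparing two products of positive terms, exactly as in Lemma \ref{l:croissdiff}. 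The remaining terms are of the form $x-\cos(l\pi/\kappa)$ versus $2\cos(j\pi/\kappa)-x-\cos(l\pi/\kappa)$ for $l\ne j$; near the vertex the first dominates the second in absolute value, which is enough. For $j$ even the sign of $S_\kappa$ flips but the comparison of distances is the same.

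Once this geometric lemma is in hand, the convergence argument is identical to the proof of Proposition \ref{prop_convergeence}: by the interlacing/monotonicity already asserted in Theorems \ref{thm_averagekk111}–\ref{thm_averagekk112}, $\E_n$ decreases to some limit $\ell\ge 2\cos(j\pi/\kappa)$; suppose $2\epsilon:=\ell-2\cos(j\pi/\kappa)>0$; since (in the $n$ odd case) $X_{(n+1)/2,n}(\E_n)=\E_n/2$, the midpoint stays at distance $>\epsilon$ from the vertex $\cos(j\pi/\kappa)$ for all $n$; applying the new geometric lemma and the reflection symmetry about $x=\E_n/2$ alternately (the ``ping-pong''), one gets $|X_{(n+q)/2,n}(\E_n)-\cos(j\pi/\kappa)|>q\epsilon$ for $q=1,3,5,\dots,n$; choosing $n$ with $n\epsilon$ larger than the width of the well then contradicts the confinement of all $X_q$ to that well. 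Hence $\ell=2\cos(j\pi/\kappa)$.

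The main obstacle I anticipate is the geometric lemma, and specifically whether the distance inequality $|\cos(j\pi/\kappa)-a|>|b-\cos(j\pi/\kappa)|$ holds on the \emph{entire} half-wells or only sufficiently near the vertex: for $j=1$ it held globally because the neighboring critical value is at $x=1$ where $T_\kappa=1$ is the global max, making the well manifestly ``steeper on the outside'', but for an interior well the geometry is less lopsided and the global statement may fail. Fortunately the confinement argument only needs the inequality in a neighborhood of the vertex, where a local (Taylor/sign-of-$S_\kappa'$) analysis suffices; making precise ``close enough'', and checking that the ping-pong stays in that neighborhood until the contradiction is reached, is the delicate bookkeeping step. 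The $j$ even case requires only sign adjustments (even/odd parity of $U_{\kappa-1}$, upside-down well) and should not introduce genuinely new difficulties.
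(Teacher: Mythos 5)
The statement you are trying to prove is not a theorem of the paper but an explicitly labelled \emph{Conjecture}; immediately after stating it the authors write that they ``tried adapting the proofs of Proposition \ref{prop_convergeence} and Lemma \ref{l:croissdiff} but to no avail,'' and they isolate the obstruction as Conjecture \ref{l:croissdiffJJ}: the inequality $\cos(j\pi/\kappa)-a>b-\cos(j\pi/\kappa)$ for $T_\kappa(a)=T_\kappa(b)$ in the $j$th well. Your proposal is precisely that attempted adaptation, and it runs into the same wall the authors describe.

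The specific gap is your last paragraph. You correctly observe that the monotonicity $S_\kappa'\ge 0$ fails for $j\ge 2$ (the term-by-term comparison breaks, because in the factored form of $U_{\kappa-1}(x)+U_{\kappa-1}(2\cos(j\pi/\kappa)-x)$ the factors indexed by $l<j$ are negative and pull in the opposite direction from those with $l>j$), and you then fall back on a ``local analysis near the vertex.'' That local claim is in fact correct: one can check that $S_\kappa$, its first and second derivatives vanish at $\cos(j\pi/\kappa)$, while $S_\kappa'''(\cos(j\pi/\kappa))=2\kappa\,U_{\kappa-1}''(\cos(j\pi/\kappa))=-6\kappa^2(-1)^j\cos(j\pi/\kappa)/\sin^4(j\pi/\kappa)$, whose sign is the right one for $1\le j<\kappa/2$. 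But locality is \emph{not} ``enough'': in the ping-pong of Proposition \ref{prop_convergeence} the geometric lemma is applied to the pairs $(X_{(n-q)/2},X_{(n+q)/2})$ for $q=1,3,\dots,n$, and the whole point of the argument is that $|X_{(n\pm q)/2}-\cos(j\pi/\kappa)|>q\epsilon$ grows without bound until the well boundary is breached. Thus the inequality is needed on the \emph{entire} half-well $(\cos(j\pi/\kappa),\cos((j-1)\pi/\kappa))$, not just in a neighbourhood of the vertex. Once the points exit whatever $\delta$-neighbourhood the local Taylor analysis certifies, you have no lower bound on $\cos(j\pi/\kappa)-X_{(n-q)/2}$ and the chain of inequalities --- and hence the contradiction --- collapses. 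Replacing the $j=1$ monotonicity argument by something that controls the sign of $S_\kappa$ globally on the half-well for $j\ge 2$ is exactly the missing idea (the paper's Conjecture \ref{l:croissdiffJJ}), and your proposal does not supply it.
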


To prove this conjecture we tried adapting the proofs of Proposition \ref{prop_convergeence} and Lemma \ref{l:croissdiff} but to no avail. To adapt the Lemma however, we do conjecture :
\begin{conjecture}
\label{l:croissdiffJJ}
Let $\kappa\geq 2$. Fix $1 \leq j \leq \floor*{\kappa/2}$. If $\cos((j+1)\pi/\kappa)<a<\cos(j\pi/\kappa)<b<\cos((j-1)\pi/\kappa)$ are such that $T_\kappa(a) = T_\kappa(b)$, then 
\begin{equation}
\label{distanceLeftGreaterthanRightJJ}
\cos(j\pi/\kappa)-a>b-\cos(j\pi/\kappa).
\end{equation}
\end{conjecture}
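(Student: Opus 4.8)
\textit{Proof proposal for Conjecture \ref{l:croissdiffJJ}.} The plan is to abandon the Bezoutian/product manipulations used for Lemma \ref{l:croissdiff} and instead pass to angle variables, where the constraint $T_\kappa(a)=T_\kappa(b)$ collapses to a symmetry relation and \eqref{distanceLeftGreaterthanRightJJ} drops out of a one-line sum-to-product computation. Write $c:=\cos(j\pi/\kappa)$ and set $\alpha:=\arccos(a)$, $\beta:=\arccos(b)$. Since $\arccos$ is a strictly decreasing bijection of $[-1,1]$ onto $[0,\pi]$, the hypothesis $\cos((j+1)\pi/\kappa)<a<c<b<\cos((j-1)\pi/\kappa)$ translates to $\alpha\in(j\pi/\kappa,(j+1)\pi/\kappa)$ and $\beta\in((j-1)\pi/\kappa,j\pi/\kappa)$; in particular $\beta<j\pi/\kappa<\alpha$, so $\alpha\neq\beta$.

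First I would convert $T_\kappa(a)=T_\kappa(b)$ into the relation $\alpha+\beta=2j\pi/\kappa$. The equality reads $\cos(\kappa\alpha)=\cos(\kappa\beta)$ with $\kappa\alpha\in(j\pi,(j+1)\pi)$ and $\kappa\beta\in((j-1)\pi,j\pi)$; writing $\kappa\alpha=j\pi+s$ and $\kappa\beta=j\pi-t$ with $s,t\in(0,\pi)$, it becomes $(-1)^j\cos s=(-1)^j\cos t$, hence $\cos s=\cos t$, hence $s=t$ by injectivity of $\cos$ on $(0,\pi)$; this is exactly $\kappa(\alpha+\beta)=2j\pi$. (Geometrically: $a$ and $b$ are not symmetric about $c$ on the $x$-axis, but their \emph{angles} are symmetric about $j\pi/\kappa$. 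This uses Lemma \ref{variationsTk} only through the fact that $a,b$ lie in the two stated adjacent monotonicity intervals.)

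Then I would finish by direct calculation. Inequality \eqref{distanceLeftGreaterthanRightJJ} is equivalent to $a+b<2c$, i.e. $\cos\alpha+\cos\beta<2\cos(j\pi/\kappa)$, and by sum-to-product together with $\alpha+\beta=2j\pi/\kappa$,
\begin{equation*}
\cos\alpha+\cos\beta=2\cos\!\Big(\tfrac{\alpha+\beta}{2}\Big)\cos\!\Big(\tfrac{\alpha-\beta}{2}\Big)=2\cos(j\pi/\kappa)\,\cos\!\Big(\tfrac{\alpha-\beta}{2}\Big).
\end{equation*}
Since $\tfrac{\alpha-\beta}{2}\in(0,\pi/\kappa)$ we have $0<\cos\!\big(\tfrac{\alpha-\beta}{2}\big)<1$, so as soon as $\cos(j\pi/\kappa)>0$ the right-hand side is strictly below $2\cos(j\pi/\kappa)$, which is the claim.

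The main thing to get right — and the place where the statement as written needs a small correction rather than a hard argument — is the top of the index range: when $\kappa$ is even and $j=\kappa/2$ one has $\cos(j\pi/\kappa)=0$, the identity above gives $\cos\alpha+\cos\beta=0=2c$, and indeed $b=-a$ there (since $T_\kappa$ is then even and strictly monotone on the interval $(0,\sin(\pi/\kappa))$ containing $b$), so \eqref{distanceLeftGreaterthanRightJJ} holds only with equality. Thus I would restate Conjecture \ref{l:croissdiffJJ} with the hypothesis $2j<\kappa$ (equivalently $1\le j\le\lceil\kappa/2\rceil-1$); under that restriction the argument above is complete, so the ``obstacle'' is purely conceptual — spotting the angle-space symmetry — rather than computational. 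As by-products, the case $j=1$ reproves Lemma \ref{l:croissdiff} and shows that it too needs $\kappa\ge3$ for strict inequality, and feeding the corrected statement into the ping-pong/doubling estimate in the proof of Proposition \ref{prop_convergeence}, in place of Lemma \ref{l:croissdiff}, would then also settle Conjecture \ref{conj_JJJ}.
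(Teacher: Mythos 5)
Your proof is correct, and it settles something the paper explicitly leaves open: right after the statement the authors write that they ``don't know how to adapt the proof of Lemma \ref{l:croissdiff}'' to $j\geq 2$, because the sign argument for $S_\kappa(x):=T_\kappa(x)-T_\kappa(2\cos(j\pi/\kappa)-x)$ that drove $j=1$ does not carry over. Your route avoids that obstacle altogether. The paper works on the $x$-axis, factoring $U_{\kappa-1}$ over its roots and trying to control $S_\kappa'$ term by term; you pass to angle variables, where the two monotonicity hypotheses pin $\kappa\alpha$ and $\kappa\beta$ into adjacent half-periods of cosine, so $T_\kappa(a)=T_\kappa(b)$ collapses to the exact reflection symmetry $\alpha+\beta=2j\pi/\kappa$, and the claim drops out of one sum-to-product identity. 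Each step is sound: $\alpha\in(j\pi/\kappa,(j+1)\pi/\kappa)$ and $\beta\in((j-1)\pi/\kappa,j\pi/\kappa)$ because $\arccos$ is a decreasing bijection of $[-1,1]$ onto $[0,\pi]$ and $j+1\leq\kappa$; writing $\kappa\alpha=j\pi+s$, $\kappa\beta=j\pi-t$ with $s,t\in(0,\pi)$ reduces $\cos(\kappa\alpha)=\cos(\kappa\beta)$ to $\cos s=\cos t$, hence $s=t$; and $(\alpha-\beta)/2\in(0,\pi/\kappa)$ gives $\cos\bigl(\tfrac{\alpha-\beta}{2}\bigr)\in(0,1)$.

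Your edge-case catch is also genuine. At $j=\kappa/2$, which the stated range $1\leq j\leq\floor*{\kappa/2}$ permits when $\kappa$ is even, one has $\cos(j\pi/\kappa)=0$; then $T_\kappa$ is an even polynomial, $T_\kappa(a)=T_\kappa(b)$ forces $b=-a$, and \eqref{distanceLeftGreaterthanRightJJ} degenerates to equality, so the statement as written fails there. The same degeneracy afflicts Lemma \ref{l:croissdiff} at $\kappa=2$: the products over $j\in\{2,\ldots,\kappa-1\}$ in the paper's proof are empty, so $S_2\equiv 0$, and the lemma again gives only equality. Restricting to $2j<\kappa$ (equivalently $\kappa\geq 3$ in Lemma \ref{l:croissdiff}) repairs both. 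Under that restriction your lemma can replace Lemma \ref{l:croissdiff} in the ping-pong doubling argument of Proposition \ref{prop_convergeence} to prove Conjecture \ref{conj_JJJ}, as you observe.
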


We don't know how to adapt the proof of Lemma \ref{l:croissdiff} to prove Conjecture \ref{l:croissdiffJJ}. We would want to prove that the function $S_{\kappa} (x) := T_{\kappa} (x) - T_{\kappa} (2\cos(j \pi / \kappa) - x)$ is positive for $x \in \left( \cos( j \pi / \kappa), \cos( (j-1) \pi / \kappa) \right)$, for $j=1,2,.., \floor*{\kappa/2}$. We speculate this statement is true for $j=1,2,.., \floor*{\kappa/2}$. However the statement that $S_{\kappa} ' (x) \geq 0$ appears to be true only for $j=1$.

\section{A generalization of section \ref{section J3a} : a sequence $\F_n \nearrow \cos((j-1)\pi/ \kappa)+\cos((j+1)\pi/ \kappa)$}
\label{section_gen2}

Again, the construction used to get a sequence in the right-most well of $T_{\kappa}(x)$ in section \ref{section J3a} is not specific to the right-most well. One can build a similar sequence in other wells. In this section we get an increasing sequence $\F_n$. At first, it may be tempting to think that $\F_n \nearrow 2\cos(j\pi/ \kappa)$, but this is not the case because $\cos((j-1)\pi/ \kappa)+\cos((j+1)\pi/ \kappa) < 2\cos(j\pi/ \kappa)$. Instead, we have $\F_n \nearrow \cos((j-1)\pi/ \kappa)+\cos((j+1)\pi/ \kappa)$.

\subsection{Increasing sequence in upright well, $j$ odd}

Figure \ref{fig:test_T8increasingUpsideDownWell99} illustrates an increasing sequence $\F_n \nearrow \cos(4\pi / \kappa)+\cos(2\pi / \kappa)$, for $\kappa = 8$, $j=3$. Note that the dotted line $x=\F_n /2$ is to the left of the minimum $x= \cos(3\pi / \kappa)$, approaches it, but converges before.

\begin{figure}[htb]
  \centering
 \includegraphics[scale=0.085]{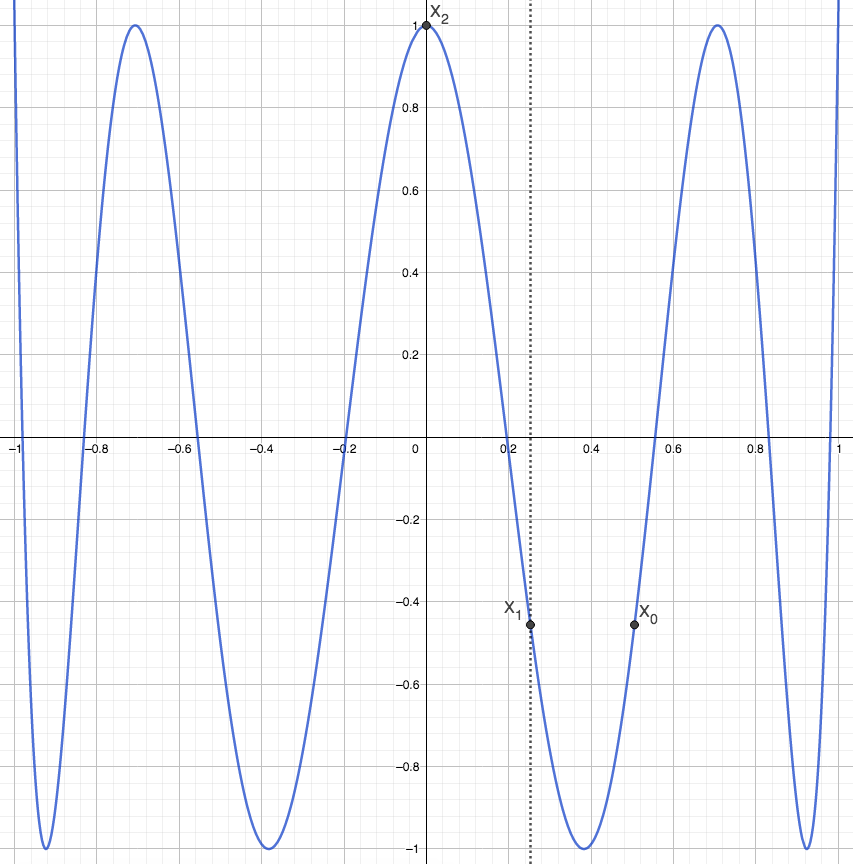}
 \includegraphics[scale=0.085]{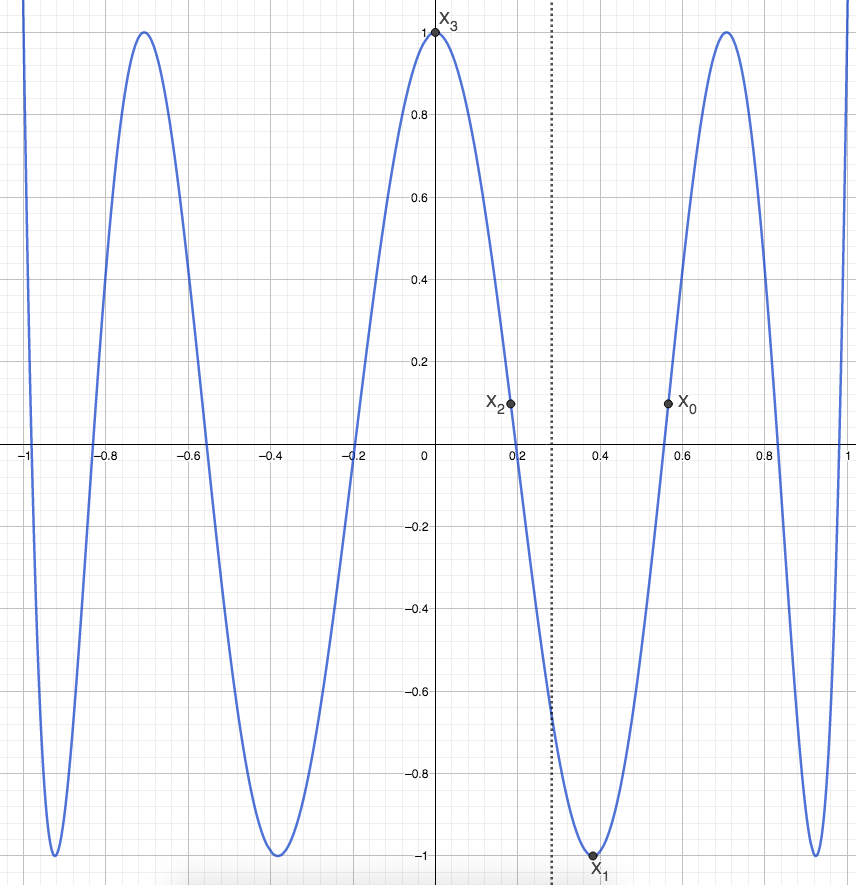}
   \includegraphics[scale=0.085]{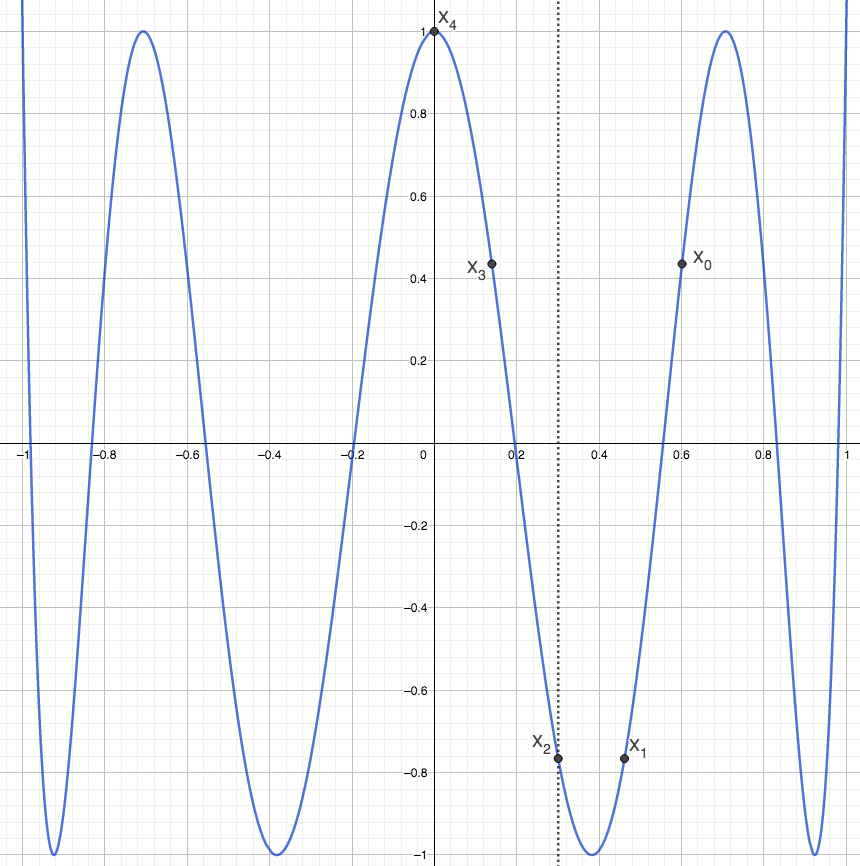}
  \includegraphics[scale=0.085]{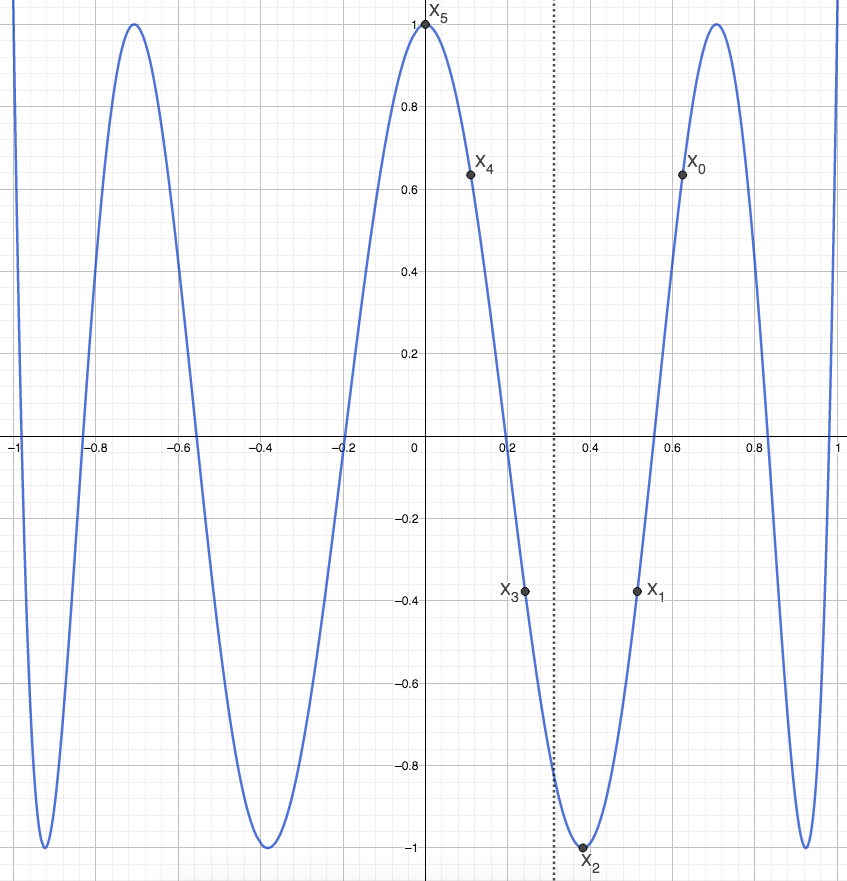}
  \includegraphics[scale=0.085]{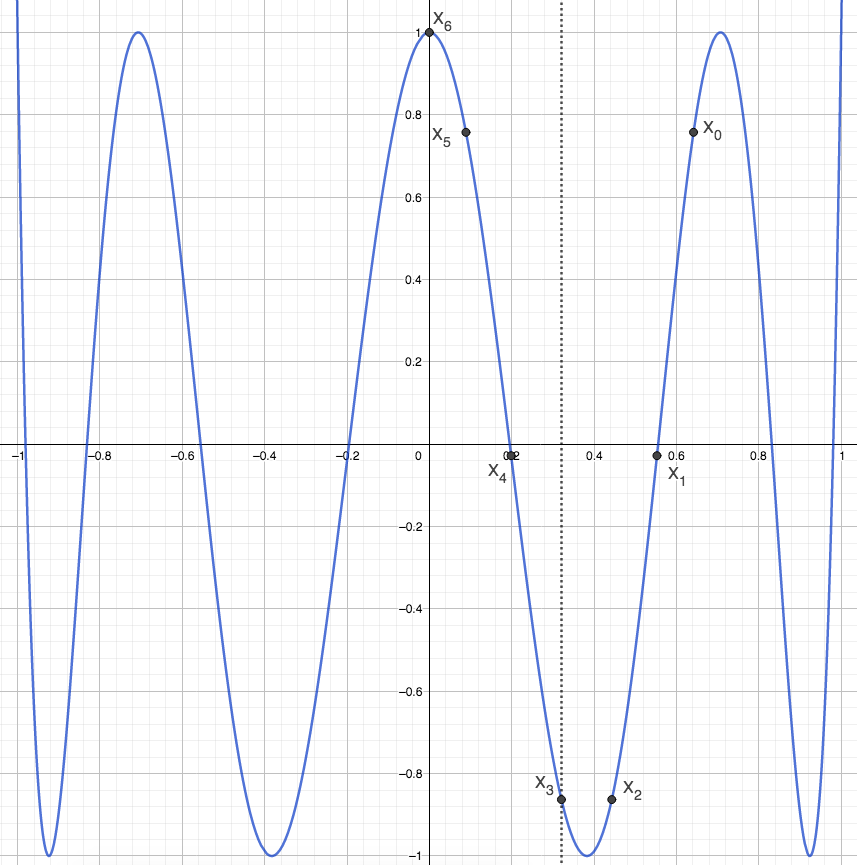}
  \includegraphics[scale=0.085]{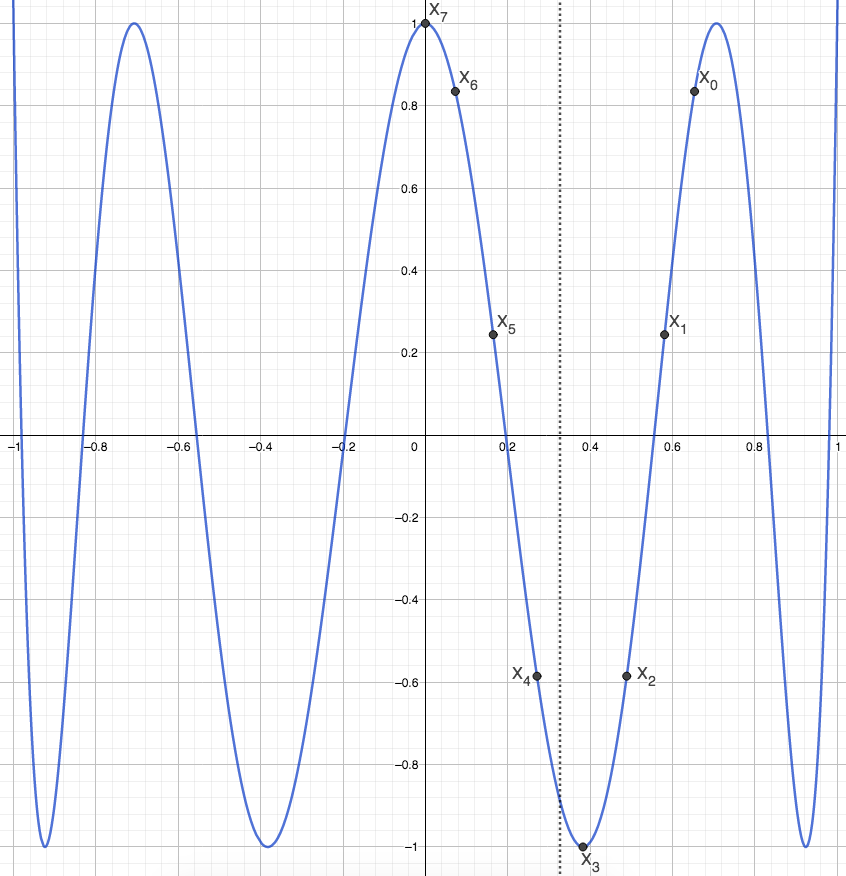}
\caption{$T_{\kappa=8}(x)$. Solutions $\F_n$. Left to right : $\F_1 \simeq 0.5054$, $\F_2 \simeq 0.5657$, $\F_3 \simeq 0.6014$, $\F_4 \simeq 0.6249$, $\F_5 \simeq 0.6415$, $\F_6 \simeq 0.6537$.}
\label{fig:test_T8increasingUpsideDownWell99}
\end{figure}

Thus, we propose a generalization :
\begin{theorem} 
\label{thm_averagekk113}
Fix $\kappa \geq 2$. Fix $1 \leq j \leq \floor*{\kappa/2}$, $j$ odd. There is a sequence $\{ \F_n \}_{n=1} ^{\infty}$, which depends on $\kappa$, s.t.\ $\{ \F_n \} \subset (2\cos((j+1)\pi/ \kappa), \cos((j-1)\pi / \kappa) + \cos((j+1)\pi/ \kappa)) \cap \boldsymbol{\Theta}_{\kappa}(\Delta)$, and $\F_n < \F_{n+1} < \F_{n+2}$, $\forall n \in \N^*$. Also, $\F_{2n-1}$, $\F_{2n} \in \boldsymbol{\Theta}_{n, \kappa}(\Delta)$, $\forall n \geq 1$, and 
\begin{equation}
\label{chain_1093}
\cos((j+1)\pi / \kappa) =: X_{n+1} < X_{n} < ... < X_{1} < X_{0} < \cos((j-1)\pi / \kappa).
\end{equation}
\end{theorem}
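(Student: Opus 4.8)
The plan is to prove Theorem~\ref{thm_averagekk113} by adapting, in a systematic ``shift by $\cos((j-1)\pi/\kappa)$'' manner, the machinery already developed for the right-most well in section~\ref{section J3a}, namely Propositions~\ref{prop_k=3_odd_888}, \ref{prop_k=3_even_888} and \ref{prop_interlace_888}. The key geometric observation is that, since $j$ is odd, on the interval $(\cos((j+1)\pi/\kappa),\cos((j-1)\pi/\kappa))$ the polynomial $T_\kappa$ has exactly the shape of an ``upright well'': by Lemma~\ref{variationsTk}, $T_\kappa$ is strictly decreasing on $(\cos((j+1)\pi/\kappa),\cos(j\pi/\kappa))$ and strictly increasing on $(\cos(j\pi/\kappa),\cos((j-1)\pi/\kappa))$, with a local minimum at $\cos(j\pi/\kappa)$. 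This is the \emph{same} local picture as the right-most well $(\cos(2\pi/\kappa),1)$ used in section~\ref{section J3a}, with $1$ replaced by $\cos((j-1)\pi/\kappa)$, $\cos(\pi/\kappa)$ by $\cos(j\pi/\kappa)$, and $\cos(2\pi/\kappa)$ by $\cos((j+1)\pi/\kappa)$. So the ``ping-pong'' dynamical algorithm from section~\ref{geo_construction} applies verbatim in this shifted well.

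First I would restate the construction precisely for the odd case: fix $X_{n+1} := \cos((j+1)\pi/\kappa)$ (this is the analogue of placing $X_{n+1}$ at $\cos(2\pi/\kappa)$ in Proposition~\ref{prop_k=3_odd_888}), and run the dynamical algorithm for $n$ odd: place $X_{(n+1)/2} = \F_n/2$ inside the well, then alternately apply the level condition 2.3 (solving $T_\kappa(X_q) = T_\kappa(X_{n-q})$ with opposite-sign derivatives, using Remark~\ref{r:ppcroissant} transplanted to this well) and the symmetry condition (reflection across $x = \F_n/2$), climbing outward until $X_0$ is reached, and finally calibrating $\F_n$ so that $X_{n+1}$ lands on $\cos((j+1)\pi/\kappa)$ and the ordering \eqref{chain_1093} holds. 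The monotonicity/intermediate-value argument is identical to the proof of Proposition~\ref{prop1}: parametrize $E = E(\alpha)$ near $2\cos((j+1)\pi/\kappa)$, show all $X_q(\alpha)$ are defined and continuous on an interval $\mathbf{A}_n$, show the chain of strict inequalities \eqref{chain_1093} is preserved, and show $X_{n+1}(\alpha)$ is strictly monotone in $\alpha$ and crosses $\cos((j+1)\pi/\kappa)$ exactly once. The even case ($n$ even) additionally pins $X_{n/2} := \cos(j\pi/\kappa)$ (where $U_{\kappa-1}$ vanishes) and is handled by mimicking Proposition~\ref{prop_k=3_even_888}. Conditions \eqref{o1}--\eqref{o3} (resp. \eqref{o11}--\eqref{o33}) follow from the strictly monotone variations of $T_\kappa$ on the two half-wells exactly as before, so $\F_n \in \mathfrak{T}_{n,\kappa}$, and then Propositions~\ref{prop1_intro} and \ref{prop1_intro_even} give $\F_{2n-1}, \F_{2n} \in \boldsymbol{\Theta}_{n,\kappa}(\Delta[d=2])$.

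Next I would establish the interlacing and strict monotonicity $\F_n < \F_{n+1} < \F_{n+2}$ by copying the proof of Proposition~\ref{prop_interlace_888} (equivalently Proposition~\ref{prop_interlace}): set $E := \min(\F_n, \F_{n+1})$, run both the $n$- and $(n+1)$-constructions simultaneously at this common $E$, and use Remark~\ref{r:ppcroissant} in the shifted well to compare the two families of $X_q$'s term by term, concluding that the $(n{+}1)$-th anchor reaches $\cos((j+1)\pi/\kappa)$ first, hence $\F_{n+1} \le \F_n$... wait — here one must be careful about the \emph{direction}: since in this ``$\F$'' setup the sequence is \emph{increasing} toward $\cos((j-1)\pi/\kappa)+\cos((j+1)\pi/\kappa)$, the comparison should be run with $E := \max(\F_n,\F_{n+1})$ and should show the $X_q$'s for consecutive $n$ interlace so that $\F_n < \F_{n+1}$, exactly paralleling Proposition~\ref{prop_interlace_888}. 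Finally, convergence $\F_n \nearrow \cos((j-1)\pi/\kappa)+\cos((j+1)\pi/\kappa)$ is obtained as in Proposition~\ref{prop_interlace_888}: the sequence is increasing and bounded above (by the calibration constraint, $X_0 < \cos((j-1)\pi/\kappa)$ forces $\F_n = X_0 + X_{n+1} < \cos((j-1)\pi/\kappa)+\cos((j+1)\pi/\kappa)$), and a contradiction argument shows the limit cannot be strictly smaller — if it were, the $n$ inner points $X_1,\dots,X_n$ would all have to squeeze into a well of fixed width while the ``spreading'' forced by the level/symmetry ping-pong pushes them out, which is impossible for $n$ large.

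The main obstacle, just as the authors flag for the analogous Conjectures~\ref{conj_JJJ} and \ref{l:croissdiffJJ} in section~\ref{section_gen1}, is the convergence step: the clean proof of Proposition~\ref{prop_convergeence} rested on Lemma~\ref{l:croissdiff}, whose proof (analyzing $S_\kappa(x) = T_\kappa(x) - T_\kappa(2\cos(\pi/\kappa)-x)$ and showing $S_\kappa' > 0$) genuinely uses that the well is the \emph{right-most} one — the factorization $U_{\kappa-1}(x) = 2^{\kappa-1}\prod_{l=1}^{\kappa-1}(x - \cos(l\pi/\kappa))$ has all its roots on one side. For a general well the corresponding inequality $\cos(j\pi/\kappa) - a > b - \cos(j\pi/\kappa)$ (the shifted Lemma~\ref{l:croissdiffJJ}) is only conjectural, and even if true the derivative $S_\kappa'$ need not be positive. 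So for this Theorem I would either (a) restrict the convergence claim to the cases where the shifted Lemma can be checked directly (small $\kappa$, or $j=1$), or (b) replace the sharp ``distance comparison'' with a softer compactness argument: the increasing bounded sequence $\F_n$ has \emph{some} limit $\F_\infty$; if $\F_\infty < \cos((j-1)\pi/\kappa)+\cos((j+1)\pi/\kappa)$ then the limiting configuration would be an infinite ping-pong chain $(X_q)_{q\ge 0}$ inside a half-open well, contradicting that the reflection-plus-level map is a strict contraction toward the minimum away from its fixed point. Pinning down that last contradiction rigorously, without Lemma~\ref{l:croissdiff}, is the crux; everything else is a faithful transcription of section~\ref{section J3a}.
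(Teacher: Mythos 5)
Your approach — transplanting the ping-pong construction of Propositions~\ref{prop_k=3_odd_888}, \ref{prop_k=3_even_888}, \ref{prop_interlace_888} into the $j$-th well, with $\cos((j+1)\pi/\kappa)$, $\cos(j\pi/\kappa)$, $\cos((j-1)\pi/\kappa)$ playing the roles of $\cos(2\pi/\kappa)$, $\cos(\pi/\kappa)$, $1$, and using a shifted Remark~\ref{r:ppcroissant} — is exactly what the paper intends (compare the one-sentence remark in section~\ref{section_gen1}: ``To prove the Theorems ... one needs to adapt the proofs of Propositions~\ref{prop1}, \ref{prop2} and \ref{prop_interlace}. The adaptation of these Propositions is straightforward.'').

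However, you misread what Theorem~\ref{thm_averagekk113} actually claims. It asserts existence, strict monotonicity, membership in $\boldsymbol{\Theta}_{n,\kappa}(\Delta)$, and the inclusion in the interval $(2\cos((j+1)\pi/\kappa),\ \cos((j-1)\pi/\kappa)+\cos((j+1)\pi/\kappa))$ --- but \emph{not} the convergence $\F_n \nearrow \cos((j-1)\pi/\kappa)+\cos((j+1)\pi/\kappa)$. That limit is stated separately as a Conjecture in subsection~9.3, precisely because (as you correctly diagnosed) the analogue of Lemma~\ref{l:croissdiff} for $j\geq 2$ is only Conjecture~\ref{l:croissdiffJJ}. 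So the last third of your proposal, which treats the convergence step as ``the crux'' and frets about how to establish it, is addressed at a claim the theorem does not make. The upper bound $\F_n < \cos((j-1)\pi/\kappa)+\cos((j+1)\pi/\kappa)$ follows trivially from $X_0 < \cos((j-1)\pi/\kappa)$ and $\F_n = X_0 + X_{n+1} = X_0 + \cos((j+1)\pi/\kappa)$, and the lower bound $\F_n > 2\cos((j+1)\pi/\kappa)$ from $X_0 > X_{n+1}$; no contraction/limit argument is needed. What you correctly regarded as ``a faithful transcription of section~\ref{section J3a}'' is, in fact, the entire content of the proof.
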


\subsection{Increasing sequence in upside down well, $j$ even}

For $j$ even, the well is upside down. Figure \ref{fig:test_T8increasingUpsideDownWell} illustrates an increasing sequence $\F_n \nearrow \cos(3\pi / \kappa)+\cos(\pi / \kappa)$, for $\kappa = 8$, $j=2$. Note that the dotted line $x=\F_n /2$ is to the left of the maximum $x= \cos(2\pi / \kappa)$, approaches it, but converges before.

\begin{figure}[htb]
  \centering
 \includegraphics[scale=0.09]{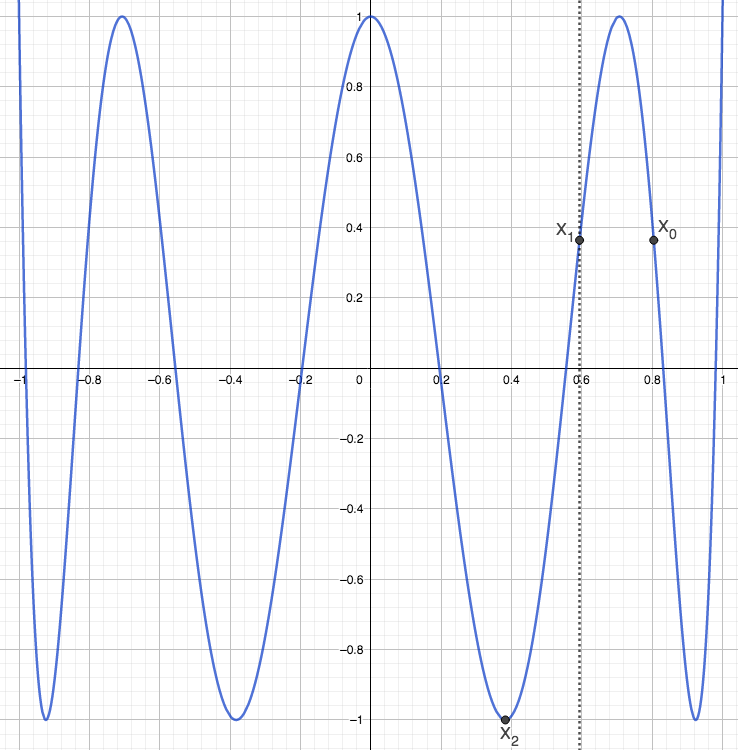}
 \includegraphics[scale=0.09]{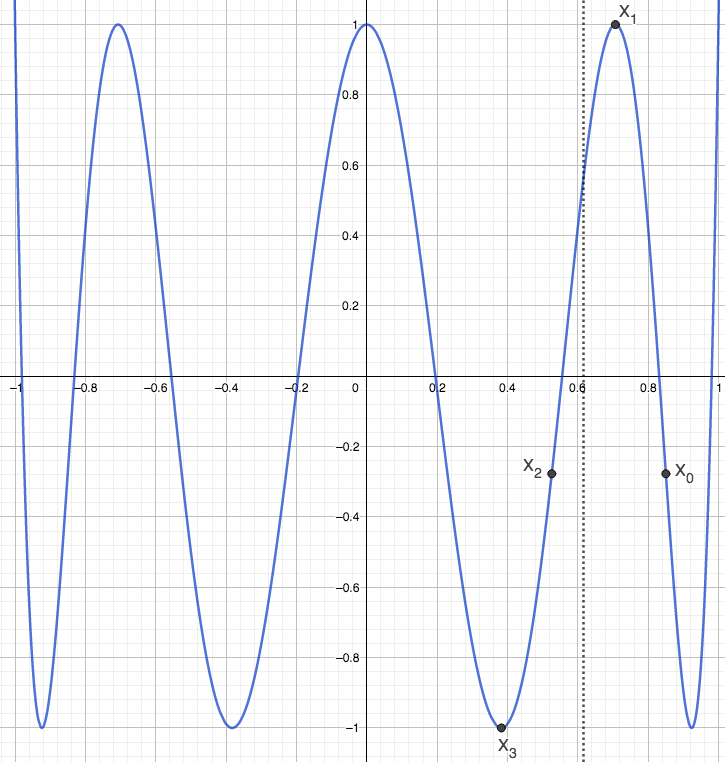}
   \includegraphics[scale=0.09]{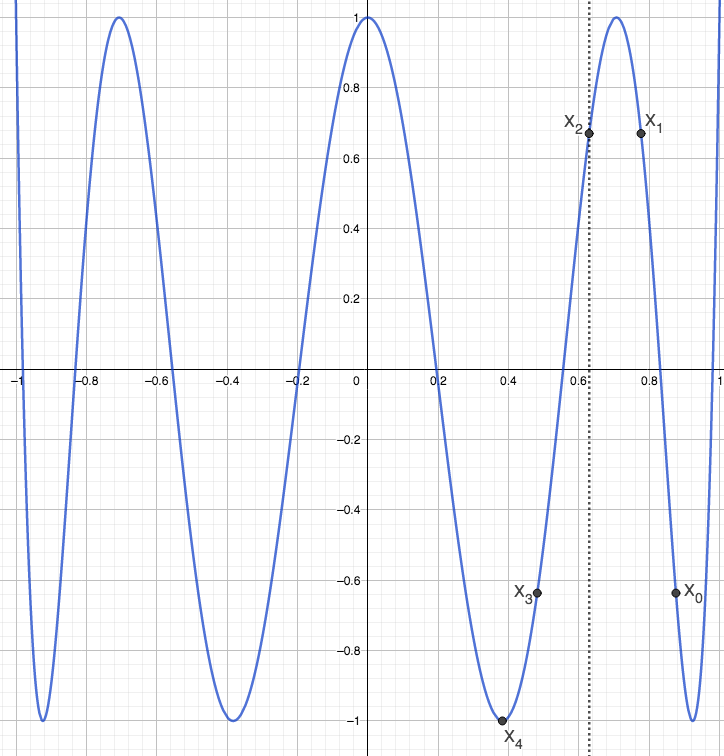}
  \includegraphics[scale=0.09]{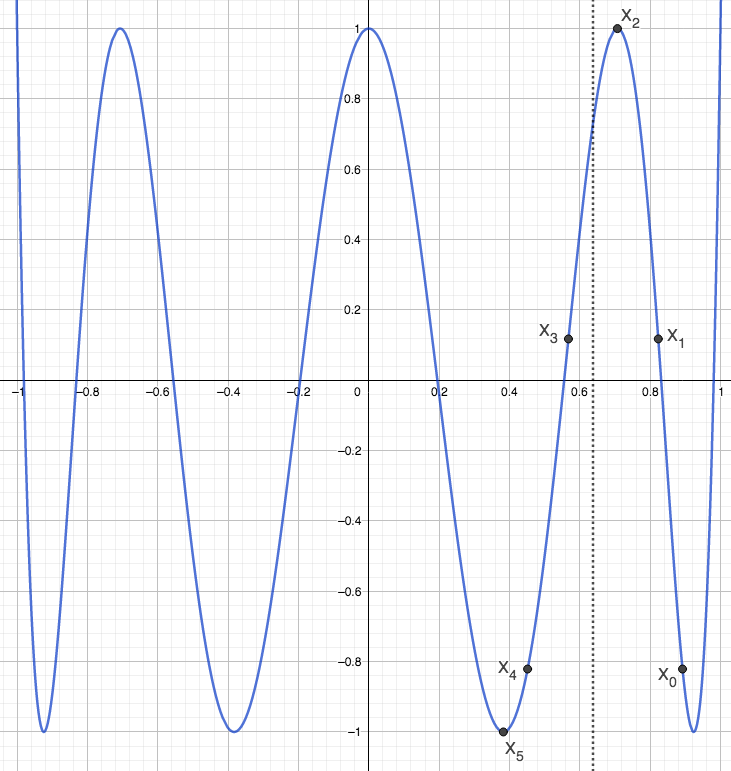}
  \includegraphics[scale=0.09]{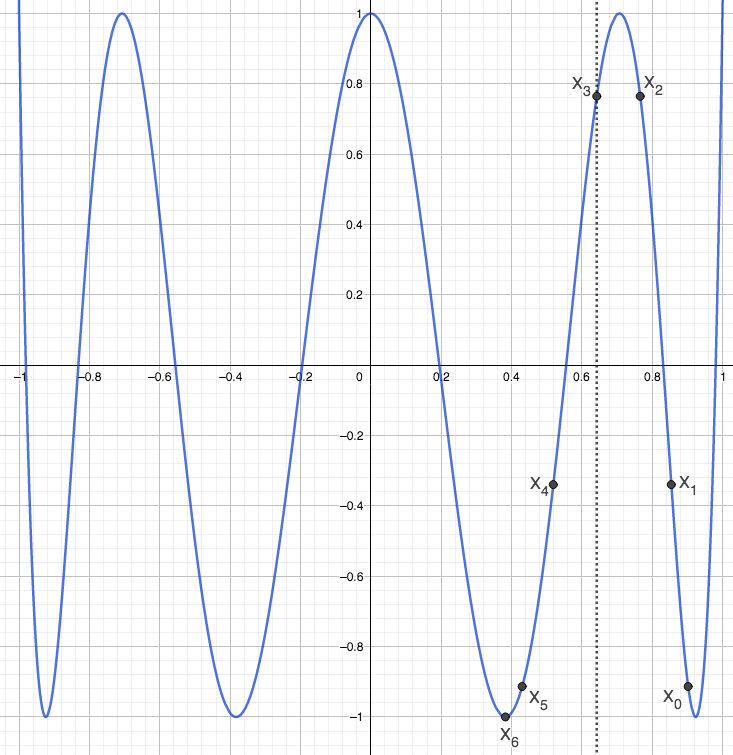}
  \includegraphics[scale=0.09]{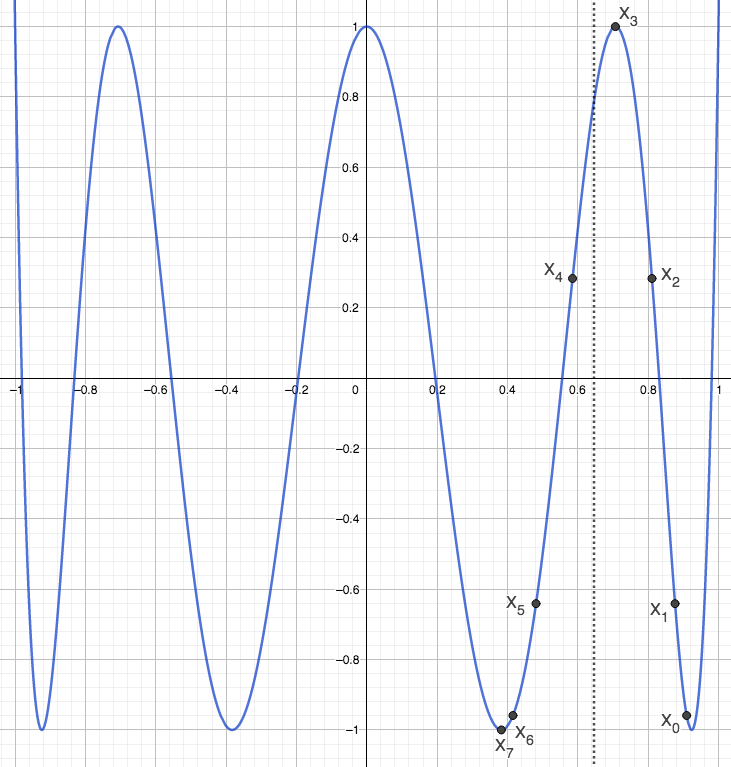}
\caption{$T_{\kappa=8}(x)$. Solutions $\F_n$. Left to right : $\F_1 \simeq 1.1874$, $\F_2 \simeq 1.2331$, $\F_3 \simeq 1.2589$, $\F_4 \simeq 1.2749$, $\F_5 \simeq 1.2853$, $\F_6 \simeq 1.2922$.}
\label{fig:test_T8increasingUpsideDownWell}
\end{figure}

Thus, we propose a generalization :
\begin{theorem} 
\label{thm_averagekk1145}
Fix $\kappa \geq 2$. Fix $1 \leq j \leq \floor*{\kappa/2}$, $j$ even. There is a sequence $\{ \F_n \}_{n=1} ^{\infty}$, which depends on $\kappa$, s.t.\ $\{ \F_n \} \subset (2\cos((j+1)\pi/ \kappa), \cos((j-1)\pi / \kappa) + \cos((j+1)\pi/ \kappa)) \cap \boldsymbol{\Theta}_{\kappa}(\Delta)$, and $\F_n < \F_{n+1} < \F_{n+2}$, $\forall n \in \N^*$. Also, $\F_{2n-1}$, $\F_{2n} \in \boldsymbol{\Theta}_{n, \kappa}(\Delta)$, $\forall n \geq 1$, and 
\begin{equation}
\label{chain_10935}
\cos((j+1)\pi / \kappa) =: X_{n+1} < X_{n} < ... < X_{1} < X_{0} < \cos((j-1)\pi / \kappa).
\end{equation}
\end{theorem}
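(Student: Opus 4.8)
The plan is to prove Theorem \ref{thm_averagekk1145} by adapting the geometric construction of section \ref{section J3a}, now localized around the $j$th critical point of $T_\kappa$ with $j$ even, in which case the well at $x=\cos(j\pi/\kappa)$ is an upside-down well (a local maximum). Concretely, I would show that systems \eqref{conjecture_system_conj_intro} (for $n$ odd) and \eqref{conjecture_system_conj_intro_even} (for $n$ even) each admit a unique solution $\F_n$ lying in the interval $(2\cos((j+1)\pi/\kappa),\cos((j-1)\pi/\kappa)+\cos((j+1)\pi/\kappa))$ and satisfying the chain \eqref{chain_10935} together with the constraints \eqref{o1}--\eqref{o3} (respectively \eqref{o11}--\eqref{o33}). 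Once that is in place, Propositions \ref{prop1_intro} and \ref{prop1_intro_even} immediately give $\F_{2n-1},\F_{2n}\in\boldsymbol{\Theta}_{n,\kappa}(\Delta[d=2])$, and Lemma \ref{shift_threshold_d} is not even needed here since we stay in dimension 2.

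First I would set up the local picture: by Lemma \ref{variationsTk}, on the interval $[\cos((j+1)\pi/\kappa),\cos((j-1)\pi/\kappa)]$ the polynomial $T_\kappa$ increases up to $\cos(j\pi/\kappa)$ (because $j$ is even, by the parity rule in Lemma \ref{variationsTk}) — wait, more carefully: $T_\kappa$ is strictly increasing on $(\cos(j\pi/\kappa),\cos((j-1)\pi/\kappa))$ iff $j$ is odd, so for $j$ even it is strictly decreasing there and strictly increasing on $(\cos((j+1)\pi/\kappa),\cos(j\pi/\kappa))$; hence $x=\cos(j\pi/\kappa)$ is a local maximum. I would then record the analogue of Remark \ref{r:ppcroissant} on this well: given two points on the right branch $\cos(j\pi/\kappa)<b'<a'<\cos((j-1)\pi/\kappa)$ there exist unique reflection-partners $\cos((j+1)\pi/\kappa)<a<b<\cos(j\pi/\kappa)$ on the left branch with $T_\kappa(a')=T_\kappa(a)$, $T_\kappa(b')=T_\kappa(b)$, and the correspondence is bicontinuous. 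Next I would run the "ping-pong" dynamical algorithm for $n$ even from section \ref{geo_construction}: initialize $X_0$ at a level condition point (here $X_{n/2}:=\cos((j-1)\pi/\kappa)$ plays the role of the analogue of $\cos(\pi/\kappa)$ in Proposition \ref{prop_k=3_even}, or rather $X_{n/2}:=\cos(j\pi/\kappa)$ — the precise choice mirrors the one in section \ref{section J3b}), parametrize $E=E(\alpha)$, alternately apply the symmetry condition (reflection about $x=E/2$) and the level condition $T_\kappa(X_q)=T_\kappa(X_{n-q})$ via Remark \ref{r:ppcroissant}, and finally calibrate $\alpha$ so that the last point $X_{n+1}$ lands on $\cos((j+1)\pi/\kappa)$, as demanded by \eqref{chain_10935}. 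Existence and uniqueness of the calibrating $\alpha^*$ follow from an intermediate-value / strict-monotonicity argument exactly as in the proofs of Propositions \ref{prop1} and \ref{prop2}: the relevant endpoint quantity $T_\kappa(X_0(\alpha))-T_\kappa(X_{n+1}(\alpha))$ (or the appropriate difference) changes sign and is strictly monotone in $\alpha$ over the admissible window, which also yields the interlacing $\F_n<\F_{n+1}<\F_{n+2}$ by the comparison-of-solutions technique used in Proposition \ref{prop_interlace_j4}.

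The convergence claim $\F_n\nearrow\cos((j-1)\pi/\kappa)+\cos((j+1)\pi/\kappa)$ I would obtain by a monotone-bounded argument plus a contradiction argument in the spirit of Proposition \ref{prop_convergeence}: the sequence is increasing and bounded above by the stated sum, so it converges to some limit $\ell$; if $\ell$ were strictly smaller, one propagates a fixed positive gap $\epsilon$ through the ping-pong chain using the geometric monotonicity of the graph of $T_\kappa$ on this well (the analogue of Lemma \ref{l:croissdiff}/Conjecture \ref{l:croissdiffJJ}) to force $X_{n}$ out of $[\cos((j+1)\pi/\kappa),\cos((j-1)\pi/\kappa)]$ for $n$ large — a contradiction. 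Here lies the main obstacle: as the authors themselves note after Conjecture \ref{l:croissdiffJJ}, the clean inequality $S_\kappa'(x)\geq0$ (where $S_\kappa(x)=T_\kappa(x)-T_\kappa(2\cos(j\pi/\kappa)-x)$) that drives the $j=1$ proof of Lemma \ref{l:croissdiff} does \emph{not} obviously persist for general $j$ — for $j$ even the well is inverted and the sign analysis of the factored product changes. So I would either (a) restrict the rigorous convergence statement to those $j$ (and $\kappa$) for which the relevant monotonicity of $S_\kappa$ can be checked directly, and otherwise only assert the existence and interlacing (leaving the exact limit as the content of Conjecture \ref{l:croissdiffJJ}/\ref{conj_JJJ}), or (b) replace the $S_\kappa$-monotonicity by a softer quantitative estimate: since $T_\kappa$ is a fixed polynomial, on the compact well the difference $(\cos(j\pi/\kappa)-a)-(b-\cos(j\pi/\kappa))$ between a point and the reflection of its level-partner is a continuous function of, say, $b$, vanishing only at $b=\cos(j\pi/\kappa)$, so it has a definite sign on each side by connectedness; determining that sign is exactly the crux and is where I expect to spend the real effort. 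Modulo that sign, the rest of the argument is a routine transcription of sections \ref{section J3a}--\ref{section J3b}, so I would write "the proof is very similar to that of Propositions \ref{prop_k=3_odd}, \ref{prop_k=3_even}, \ref{prop_interlace_j4} and is left to the reader", flagging only the $S_\kappa$ sign issue as the one genuinely new point.
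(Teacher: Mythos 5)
Your overall approach is the right one, and it matches the paper's (implicit) proof: the paper never writes out a proof of Theorem \ref{thm_averagekk1145} but, just as section \ref{section_gen1} says of Theorems \ref{thm_averagekk111}--\ref{thm_averagekk112}, the intended argument is a direct adaptation of the ping-pong construction — here from Propositions \ref{prop_k=3_odd_888}, \ref{prop_k=3_even_888}, \ref{prop_interlace_888} — followed by an appeal to Propositions \ref{prop1_intro} and \ref{prop1_intro_even} via $\mathfrak{T}_{n,\kappa}\subset\boldsymbol{\Theta}_{n,\kappa}(\Delta)$. Your description of the local picture (upside-down well at $\cos(j\pi/\kappa)$ for $j$ even, the analogue of Remark \ref{r:ppcroissant}, calibrating $\alpha$ by the intermediate value theorem and strict monotonicity, interlacing by comparison of two chains) is exactly the right transcription.

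However, you have over-read the theorem. You flag ``the convergence claim $\F_n\nearrow\cos((j-1)\pi/\kappa)+\cos((j+1)\pi/\kappa)$'' as the main obstacle, and you correctly trace the difficulty to the fact that the analogue of Lemma \ref{l:croissdiff} for $j\geq2$ is only Conjecture \ref{l:croissdiffJJ} (i.e.\ the sign of $S_\kappa'$ need not be constant). But notice that Theorem \ref{thm_averagekk1145} does \emph{not} assert convergence to that limit: it only asserts that the sequence is strictly increasing and lies in the open interval $(2\cos((j+1)\pi/\kappa),\ \cos((j-1)\pi/\kappa)+\cos((j+1)\pi/\kappa))$. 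The exact limit is deliberately segregated into the unnumbered Conjecture immediately following, in the subsection ``Conjecture on the limit.'' The interval upper bound itself is automatic from the chain: $\F_n=X_0+X_{n+1}<\cos((j-1)\pi/\kappa)+\cos((j+1)\pi/\kappa)$. So the ``one genuinely new point'' you flag is, in fact, exactly the item the authors punt to a conjecture; nothing in the theorem's actual content depends on it, and you should delete that discussion from the proof rather than restricting to special $j$ or asserting a soft sign analysis. (This also explains the otherwise puzzling asymmetry you may have noticed: the $j=1$ version of the theorem, Theorem \ref{thm_decreasing energy general_k3}, does include the limit, because there the analogue of Lemma \ref{l:croissdiff} is actually proved.)

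Two smaller points. First, the model Propositions you cite at the end — \ref{prop_k=3_odd}, \ref{prop_k=3_even}, \ref{prop_interlace_j4} — are the $\G_n$-sequence results from section \ref{section J3b}; the theorem explicitly generalizes section \ref{section J3a} and the ordering in chain \eqref{chain_10935} (with $X_{n+1}$ smallest and indices increasing toward the larger end of the well) matches \eqref{chain_1_k3_888}/\eqref{chain_1_k3_888even}, not \eqref{chain_1_k3}/\eqref{chain_1_k3even}, so you should reference Propositions \ref{prop_k=3_odd_888}, \ref{prop_k=3_even_888}, \ref{prop_interlace_888} instead. Second, your wavering over whether $X_{n/2}$ is pinned at $\cos((j-1)\pi/\kappa)$ or $\cos(j\pi/\kappa)$ (for $n$ even) should be resolved firmly in favour of $X_{n/2}=\cos(j\pi/\kappa)$, the analogue of $X_{n/2}=\cos(\pi/\kappa)$ in \eqref{chain_1_k3_888even} and what is forced by \eqref{o22} together with the chain \eqref{chain_10935}.
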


\subsection{Conjecture on the limit}

We conjecture :

\begin{conjecture} 
Let $\{ \F_n \}$ be the sequence in Theorems \ref{thm_averagekk113} and \ref{thm_averagekk1145}. Then 
$$\F_n \nearrow \cos((j-1)\pi / \kappa) + \cos((j+1)\pi/ \kappa).$$
\end{conjecture}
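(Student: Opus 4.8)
The plan is to adapt the strategy of Proposition~\ref{prop_convergeence} (the companion statement $\E_n\searrow 2\cos(\pi/\kappa)$ from Section~\ref{section J2}) to the well of $T_\kappa$ centered at $x=\cos(j\pi/\kappa)$. By Theorems~\ref{thm_averagekk113} and~\ref{thm_averagekk1145} the sequence $\{\F_n\}$ is strictly increasing and contained in $(2\cos((j+1)\pi/\kappa),\cos((j-1)\pi/\kappa)+\cos((j+1)\pi/\kappa))$, so it converges to some $\ell\leq\cos((j-1)\pi/\kappa)+\cos((j+1)\pi/\kappa)$; everything reduces to excluding the strict inequality. As in the proof of Proposition~\ref{prop_convergeence}, the interlacing of the odd- and even-index solutions (the analog of Proposition~\ref{prop_interlace_888}, built into Theorems~\ref{thm_averagekk113}/\ref{thm_averagekk1145}) lets me treat only $n$ odd, in which case $X_{(n+1)/2}=\F_n/2$ is the axis of symmetry of the chain \eqref{chain_1093}.

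Suppose $\ell<\cos((j-1)\pi/\kappa)+\cos((j+1)\pi/\kappa)$ and set $2\epsilon:=\cos((j-1)\pi/\kappa)+\cos((j+1)\pi/\kappa)-\ell>0$. The $q=-1$ instance of the relation $X_{n-q}=\F_n-X_{1+q}$ in \eqref{conjecture_system_conj_intro} gives $X_0=\F_n-\cos((j+1)\pi/\kappa)$, hence $\cos((j-1)\pi/\kappa)-X_0\to 2\epsilon$, so $X_0$ stays at distance $\geq\epsilon$ from the wall $\cos((j-1)\pi/\kappa)$ for all large $n$. I would then propagate this defect along the ping-pong chain $X_0\to X_n\to X_1\to X_{n-1}\to\cdots\to X_{(n+1)/2}$, alternating the level relation $T_\kappa(X_q)=T_\kappa(X_{n-q})$ with the reflection $X_{n-q}=\F_n-X_{1+q}$ about $x=\F_n/2$, and invoking at each level step the geometric input of Conjecture~\ref{l:croissdiffJJ}: in this well, at equal heights the left leg exceeds the right leg, i.e. $\cos(j\pi/\kappa)-a>b-\cos(j\pi/\kappa)$ whenever $T_\kappa(a)=T_\kappa(b)$ with $\cos((j+1)\pi/\kappa)<a<\cos(j\pi/\kappa)<b<\cos((j-1)\pi/\kappa)$. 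The aim is to show the defect is not destroyed at a double step, so that after $\sim n/2$ steps some $X_q$ is pushed outside $(\cos((j+1)\pi/\kappa),\cos((j-1)\pi/\kappa))$, contradicting the chain ordering in \eqref{chain_1093}. The $j$-even case (Theorem~\ref{thm_averagekk1145}) should be handled verbatim after exchanging the increasing and decreasing branches of $T_\kappa$ on the now inverted well.

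The hard part is Conjecture~\ref{l:croissdiffJJ} for $j\geq 2$, which is exactly where the authors report being stuck. For $j=1$ it is Lemma~\ref{l:croissdiff}, obtained by showing the auxiliary $S_\kappa(x)=T_\kappa(x)-T_\kappa(2\cos(\pi/\kappa)-x)$ has $S_\kappa'>0$ on the half-well; for $j\geq 2$ the derivative $S_\kappa'$ changes sign, so I would instead attempt to prove $S_\kappa>0$ directly on the open half-well $(\cos(j\pi/\kappa),\cos((j-1)\pi/\kappa))$ by a global argument: factor out the forced simple root $x-\cos(j\pi/\kappa)$ of $S_\kappa$ using $U_{\kappa-1}(x)=2^{\kappa-1}\prod_{l=1}^{\kappa-1}(x-\cos(l\pi/\kappa))$ and bound the remaining polynomial factor term-by-term using the strict concavity of $\cos$ on $(0,\pi)$ (which is what tilts the well so that $\cos(j\pi/\kappa)-\cos((j+1)\pi/\kappa)<\cos((j-1)\pi/\kappa)-\cos(j\pi/\kappa)$), or rewrite $S_\kappa$ via the identities expressing $T_\kappa\pm1$ as a linear factor times the square of a lower Chebyshev polynomial. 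A secondary, and I expect easier, obstacle is the propagation bookkeeping: contrary to the $J_2$ case the defect need not grow by a fixed additive amount, so one likely has to iterate a multiplicative estimate or track a sturdier quantity, e.g. the ratio $\prod_p U_{\kappa-1}(X_{n-p})/\prod_p U_{\kappa-1}(X_{1+p})$ occurring in the weights $\omega_q$ of Proposition~\ref{prop1_intro}, or the convexity gap of $S_\kappa$, and show it diverges over the $\sim n/2$ steps — modeled on the closing iteration in the proof of Proposition~\ref{prop_convergeence}.
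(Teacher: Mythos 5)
The overall contradiction scheme (suppose $\F_n\nearrow\ell$ with $\ell$ strictly below $\cos((j-1)\pi/\kappa)+\cos((j+1)\pi/\kappa)$, track the defect, run the ping-pong chain $X_0\to X_n\to X_1\to\cdots$) is reasonable and does mirror Proposition~\ref{prop_convergeence}, but the geometric lemma you feed into the level step is the wrong one. Write $L:=\cos((j+1)\pi/\kappa)$, $c:=\cos(j\pi/\kappa)$, $R:=\cos((j-1)\pi/\kappa)$. For a level pair $T_\kappa(a)=T_\kappa(b)$, $L<a<c<b<R$, Conjecture~\ref{l:croissdiffJJ} asserts $c-a>b-c$, i.e.\ $a+b<2c$; what your propagation actually needs is the \emph{opposite-sided} bound $a+b>L+R$, equivalently $a-L>R-b$ (at equal heights, the distance to the left wall exceeds the distance to the right wall). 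The two inequalities bracket $a+b$ between the endpoint value $L+R$ and the bottom value $2c$ and are logically independent; Conjecture~\ref{l:croissdiffJJ} is the one that would enter a proof of Conjecture~\ref{conj_JJJ} for the $\E_n$'s (where the axis of symmetry $\E_n/2$ tends to the critical point $c$), while the $\F_n$-axis $\F_n/2$ tends to the midpoint $(L+R)/2$, not to $c$, and that is precisely why the $L+R$ bound is the one that matters here.

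Once the lemma is corrected, your secondary worry about the bookkeeping not being additive evaporates. For $n$ odd, set $d_q:=R-X_q$ for $0\le q\le (n-1)/2$ (right-branch points) and $e_q:=X_{n-q}-L$ (their left partners). The corrected lemma gives $e_q>d_q$ at each level step, and since $X_{n+1}=L$ forces $\F_n=L+X_0$, the symmetry $X_{q+1}=\F_n-X_{n-q}$ yields the exact identity $d_{q+1}=d_0+e_q>d_0+d_q$, hence $d_q>(q+1)\,d_0\ge 2(q+1)\epsilon$. Taking $q=(n-1)/2$ and using $d_{(n-1)/2}<R-c$ gives the contradiction for $n$ large, so the closing argument is literally the additive iteration of Proposition~\ref{prop_convergeence}. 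Both this lower bound and Conjecture~\ref{l:croissdiffJJ} would in fact follow from the single, cleaner pointwise statement $|U_{\kappa-1}(a)|<|U_{\kappa-1}(b)|$ for all level pairs in the well (which makes $a\mapsto a+b(a)$ monotone between the endpoint values $L+R$ and $2c$); if you want one auxiliary conjecture to carry both sequences, that is the one to pursue rather than $S_\kappa>0$.
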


\section{Description of the polynomial interpolation in dimension 2}
\label{desc_scheme}


This entire section is in dimension 2. In this section we adapt the linear system \eqref{interpol_intro} to the interval $J_2(\kappa) := (2\cos(\frac{\pi}{\kappa}), 1+\cos(\frac{\pi}{\kappa}))$. This will setup our framework behind Conjecture \ref{conjecture22}. In sections \ref{appli_scheme_k2} and \ref{appli_scheme_k3} we numerically implement the equations of this section.

Fix $\kappa \geq 2$, $n \in \N^*$. First, let $\E_n, X_{0,n}, ..., X_{n+1,n}$ be the solutions of Propositions \ref{prop1} and \ref{prop2} (also Theorem \ref{thm_decreasing energy general}). Our aim is to find the coefficients $\rho_{j_q \kappa}$ of $\mathbb{A}(n) = \sum_{q=1} ^{N(n)} \rho_{j_{q} \kappa} (n) A_{j_q \kappa}$ so that a strict Mourre estimate holds on the interval $( \E_n , \E_{n-1} )$. For $n$ \underline{odd}, the linear system \eqref{interpol_intro} becomes (using notation \eqref{def:gE} and \eqref{def:GGE} instead) :



\begin{equation}
\label{system_interpol}
\begin{cases}
G_{\kappa} ^{\E_n} (X_{q,n}) = 0 & q = 0,...,(n-1)/2 \\
\frac{d}{dx} G_{\kappa} ^{\E_n} (X_{q,n}) = 0 & q = 1,...,(n-1)/2 \\
G_{\kappa} ^{\mathcal{E}_{n-1}} (X_{q,n-1}) = 0 & q = 0,...,(n-3)/2 \\
\frac{d}{dx} G_{\kappa} ^{\mathcal{E}_{n-1}} (X_{q,n-1}) = 0 & q = 1,...,(n-3)/2+1. \\
\end{cases}
\end{equation}
This system of $2n-1$ equations has at most rank $2n-1$, but part of our conjecture is that it always has rank $2n-1$ and so one solves for $2n-1$ unknown coefficients $\rho_{j\kappa}$. For $n$ \underline{even}, the linear system \eqref{interpol_intro} becomes (using notation \eqref{def:gE} and \eqref{def:GGE} instead) :

\begin{equation}
\label{system_interpol_even}
\begin{cases}
G_{\kappa} ^{\E_n} (X_{q,n}) = 0 & q = 0,...,n/2-1 \\
\frac{d}{dx} G_{\kappa} ^{\E_n} (X_{q,n}) = 0 & q = 1,...,n/2 \\
G_{\kappa} ^{\mathcal{E}_{n-1}} (X_{q,n-1}) = 0 & q = 0,...,n/2-1 \\
\frac{d}{dx} G_{\kappa} ^{\mathcal{E}_{n-1}} (X_{q,n-1}) = 0 & q = 1,...,n/2-1. \\
\end{cases}
\end{equation}
Again, this system of $2n-1$ equations has at most rank $2n-1$, but part of our conjecture is that it always has rank $2n-1$.

For the coefficients $\rho_{j_q \kappa}$ we will assume $\Sigma = \{ \rho_{j_1 \kappa}, \rho_{j_2 \kappa}, ..., \rho_{j_{2n} \kappa} \}$ and further always take the convention that $j_1 = 1$ and $\rho_{j_1 \kappa} = \rho_{\kappa} = 1$. Thus we have a system of $2n-1$ unknowns and $2n-1$ equations.

\begin{remark}
\label{remmmm}
Fix $n$ odd. By the second equation in \eqref{conjecture_system_conj_intro} $g_{j \kappa} ^{\mathcal{E}_n} (X_{1+q}) = g_{j \kappa} ^{\mathcal{E}_n} (X_{n-q})$ for $q=0,1,...,(n-1)/2$. So $G_{\kappa} ^{\mathcal{E}_n} (X_{1+q}) = G_{\kappa} ^{\mathcal{E}_n} (X_{n-q})$ for any choice of coefficients $\rho_{j \kappa}$. And by \eqref{problem1_tosolve_again_101_88}, $G_{\kappa} ^{\mathcal{E}_n} (X_{(n+1)/2}) = \sum_{q=0} ^{(n-1)/2} \omega_q G_{\kappa} ^{\mathcal{E}_n} (X_q)$. So to avoid obvious linear dependencies, we require the first line of system \eqref{system_interpol} only for $q=0,...,(n-1)/2$. Also a direct computation of $\frac{d}{dx} G_{\kappa} ^{\mathcal{E}_n} (x)$ shows that $\frac{d}{dx} G_{\kappa} ^{\mathcal{E}_n}(X_{1+q}) = \frac{d}{dx} G_{\kappa} ^{\mathcal{E}_n}(X_{n-q})$ for $q=0,1,...,(n-1)/2$ and any choice of coefficients $\rho_{j \kappa}$. And by Lemma \ref{symmetry_minima_E/2}, $\frac{d}{dx} G_{\kappa} ^{\mathcal{E}_n} (X_{(n+1)/2}) =0$ always holds. So to avoid obvious linear dependencies, we require the second line of system \eqref{system_interpol} only for $q=1,...,(n-1)/2$ (we don't include $q=0$ either but that is for a separate reason based only on numerical and graphical evidence).
\end{remark}

\begin{remark}
Analogous remark to \ref{remmmm} holds for $n$ even.
\end{remark}

\section{Application of Polynomial Interpolation to the case $\kappa = 2$ in dimension 2}
\label{appli_scheme_k2}




In this section we prove Theorem \ref{OnlyMourreband} and give some graphical illustrations to justify our Conjecture \ref{conjecture22} for $n=1,2,3,4$ and $\kappa=2$. See the plots of $G_{\kappa=2} ^E(x)$ in Figure \ref{fig:test_k2_3333388aftesty}. In this Figure, note that in the left and right-most columns, $G_{\kappa=2} ^E \geq 0$, whereas in the middle column, $G_{\kappa=2} ^E (x) > 0$, $\forall x \in [E-1,1]$. Throughout this section the sequence $\E_n$ referred to is that of Theorem \ref{thm_decreasing energy general} and Propositions \ref{prop1} and \ref{prop2}. In the middle column of Figure \ref{fig:test_k2_3333388aftesty} the values of $E$ used to illustrate $G_{\kappa=2} ^E$ are, from to top to bottom, $E = 0.8, 0.6, 0.45, 0.36$. Subsections \ref{sub1_k2}, \ref{sub2_k2}, \ref{sub3_k2} and \ref{sub4_k2} detail the calculations underlying Figure \ref{fig:test_k2_3333388aftesty}.

\noindent \textit{Proof of Theorem \ref{OnlyMourreband}}. 

Fix $d=\kappa=2$. It is enough to prove that $G_{\kappa} ^E (x) = g_{2} ^E (x) + 9/14 g_4 ^E (x)$ is such that $G_{\kappa} ^E (x) >0$ for $E \in (2/3,1)$ and $x \in [E-1,1]$. Thanks to the computer, 
$G_{\kappa} ^E (x) = -(180 E / 7 ) (x^2 - E x + r_- r_+) (x^2 -E x + s_- s_+)$ where 
\footnotesize
$$r_{\pm} = \frac{1}{2} \left(E \pm \frac{\sqrt{20 - 15 E^2 - 2\sqrt{5} \sqrt{16 - 20 E^2 + 9 E^4}}}{\sqrt{15}} \right), \ \  s_{\pm} = \frac{1}{2} \left(E \pm \frac{\sqrt{20 - 15 E^2 + 2\sqrt{5} \sqrt{16 - 20 E^2 + 9 E^4}}}{\sqrt{15}} \right),$$
\normalsize
which satisfy $r_- + r_+ = s_- + s_+ = E$ and
\footnotesize
$$r_- r_+ = \left( 15 E^2 - \sqrt{5} \sqrt{16 - 20 E^2 + 9 E^4} - 10 \right)/30, \quad s_- s_+ = \left( 15 E^2 + \sqrt{5} \sqrt{16 - 20 E^2 + 9 E^4} - 10 \right)/30.$$
\normalsize
The parabola $x^2 - E x + r_- r_+$ has real roots iff $E \in [-2/3,2/3]$. Thus for $E \in (2/3,1)$, this parabola is strictly positive. On the other hand, the parabola $x^2 -E x + s_- s_+$ is strictly negative if and only if $x \in (s_- , s_+)$ (one checks that $s_-,s_+$ are real numbers if and only if $E \in [-2,2]$). For a fixed value of $E$, we want $G_{\kappa} ^E (x) >0$ for all $x \in [E-1,1]$. Thus we are led to solve $s_- \leq E-1 \leq 1 \leq s_+$ which has solutions $E \in [2/3,1] \cup [4/3,2]$. This implies the result. Note also $G_{\kappa} ^E (x) = 0$ for $E=2/3$, $x=-1/3,1/3,1$ and $E=1$, $x=0,1$, as expected.
\normalsize
\qed

\subsection{A specificity of the polynomial interpolation problem for $\kappa=2$}

We have a remark that is specific to the case $\kappa = 2$. Recall the definitions of $g_{j \kappa}$ and $G_{\kappa}$ given by \eqref{def:gE_intro} and \eqref{def:GGE_intro} respectively. For $\kappa = 2$, note that the linear span of $\{ U_{j \kappa-1} (x) : j \geq 1 \}$ equals the linear span of $\{ x^{2j-1} : j \geq 1 \}$. So we can interpret the polynomial interpolation problem of finding coefficients $\rho_{j \kappa}$ such that $G_{\kappa} (\vec{x}) > 0$, instead as, we are looking for an odd function $f_-$ such that $G_{\kappa=2} (\vec{x}) = \sum_{1 \leq i \leq d} (1-x_i^2) f_-(x_i) > 0$ for all $\vec{x} \in S_E$. We don't know if this remark is helpful but we found it still interesting enough to mention.

Table \ref{sol_kappa_2_____} gives the inputs we need to feed linear systems \eqref{system_interpol} and \eqref{system_interpol} into the computer. 

\begin{table}[H]
\footnotesize
  \begin{center}
    \begin{tabular}{c|c|c|c} 
    $n$ & Left endpoint & Right endpoint & $\Sigma = $     \\ [0.2em]
      \hline
1 & $\E_1 = \frac{2}{3}$, $X_1 = \E_1 /2 = \frac{1}{3}$ & $\E_0 = 1$ & \{2,4\} \\[0.1em]
2 & $\E_2 = \frac{1}{2}$, $X_1 = 0$, $X_2 = \frac{1}{2}$ & $\E_1 = \frac{2}{3}$, $X_1 = \frac{1}{3}$ & $\{2,4,6,10\}$  \\[0.1em]
3 & $\E_3 = \frac{2}{5}$, $X_1 = -\frac{1}{5}$, $X_2 = \frac{1}{5}$, $X_3 = \frac{3}{5}$ & $\E_2 = \frac{1}{2}$, $X_1 = 0$, $X_2 =\frac{1}{2}$ & $\{2,4,6,8,10,12\}$ \\[0.1em]
4 & $\E_4 = \frac{1}{3}$, $X_1 = -\frac{1}{3}$, $X_2 = 0$, $X_3 = \frac{1}{3}$, $X_4 = \frac{2}{3}$ & $\E_3 = \frac{2}{5}$, $X_1 = -\frac{1}{5}$, $X_2 = \frac{1}{5}$, $X_3 = \frac{3}{5}$ & $\{2,4,6,8,10,12,14,20\}$
    \end{tabular}
  \end{center}
    \caption{Data to setup polynomial interpolation on $(\E_n, \E_{n-1})$. $\kappa=2$. $d=2$.}
        \label{sol_kappa_2_____}
\end{table}
\normalsize

\subsection{$1st$ band ($n=1$)} 
\label{sub1_k2}
By Lemma \ref{lemSUMcosINTRO}, $\E_0 \in \boldsymbol{\Theta}_{0,\kappa}(\Delta)$ and $G_{\kappa} ^{\E_0} (\E_0-1) = 0$ is always true. By Lemma \ref{symmetry_minima_E/2}, $\frac{d}{dx} G_{\kappa} ^{\E_1} (X_1) = 0$ is always true. So these are fake constraints. Then we compute 

\begin{equation*}
M = 
\begin{bmatrix}
g_{2} ^{\E_1} (\E_1-1) & g_{4} ^{\E_1} (\E_1-1) 
\end{bmatrix}
=
\begin{bmatrix}
-\frac{16}{27} & \frac{224}{243} 
\end{bmatrix}, 
\quad 
\rho = 
\begin{bmatrix}
1, & \rho_{4}
\end{bmatrix} ^T.
\quad 
M \rho = 0 \Rightarrow \rho_{4} = 9/14.
\end{equation*}

\color{black}


\subsection{$2nd$ band ($n=2$)} 
\label{sub2_k2}
Matrix $M$ equals 

\begin{equation*}
\begin{bmatrix}
g_{2} ^{\E_2} (\E_2-1) & g_{4} ^{\E_2} (\E_2-1) & g_{6} ^{\E_2} (\E_2-1) & g_{10} ^{\E_2} (\E_2-1) \\[0.5em] 
\frac{d}{dx} g_{2} ^{\E_2} (X_1)  & \frac{d}{dx} g_{4} ^{\E_2} (X_1) & \frac{d}{dx} g_{6} ^{\E_2} (X_1) & \frac{d}{dx} g_{10} ^{\E_2} (X_1) \\[0.5em] 
g_{2} ^{\E_1} (\E_1-1) & g_{4} ^{\E_1} (\E_1-1) & g_{6} ^{\E_1} (\E_1-1) & g_{10} ^{\E_1} (\E_1-1)  \\[0.5em]  
\end{bmatrix}
=
\begin{bmatrix}
- 3/4 & 3/4 & 0 & 3/4  \\[0.5em] 
3/2 & -13/2 & 12 & 9/2   \\[0.5em] 
-\frac{16}{27} & \frac{224}{243} & -\frac{1840}{2187} & \frac{42416}{177147} \\[0.5em] 
\end{bmatrix}.
\end{equation*}

$$M \rho =0 \Rightarrow \rho = [1, \rho_{4}, \rho_{6}, \rho_{10} ] ^{T} = [1, 598/787, 464/2361, 189/787 ]^{T} \simeq [1, 0.7598, 0.1965, 0.2401] ^{T}.$$

\color{black}

\subsection{$3rd$ band ($n=3$)}
\label{sub3_k2}

Matrix $M$ equals 

\begin{equation*}
\begin{bmatrix}
g_{2} ^{\E_3} (\E_3-1) & ... & g_{12} ^{\E_3} (\E_3-1)  \\[0.5em] 
g_{2} ^{\E_3} (X_1) & ... & g_{12} ^{\E_3} (X_1) \\[0.5em] 
\frac{d}{dx} g_{2} ^{\E_3} (X_1) & ... & \frac{d}{dx} g_{12} ^{\E_3} (X_1) \\[0.5em] 
g_{2} ^{\E_2} (\E_2-1) & ... & g_{12} ^{\E_2} (\E_2-1)  \\[0.5em] 
\frac{d}{dx}g_{2} ^{\E_2} (X_1) & ... & \frac{d}{dx}g_{12} ^{\E_2} (X_1)
\end{bmatrix}
= 
\begin{bmatrix}
-\frac{96}{125} & \frac{1344}{3125} & \frac{41184}{78125} & -\frac{1416576}{1953125} & -\frac{5907936}{48828125} & \frac{968071104}{1220703125} \\[0.5em] 
\frac{48}{125} & \frac{864}{3125} & -\frac{112752}{78125} & \frac{3328704}{1953125} & - \frac{37319952}{48828125} & -\frac{174668256}{1220703125} \\[0.5em]
\frac{48}{25} & -\frac{32}{5} & \frac{93072}{15625} & \frac{357824}{78125} & -\frac{139067664}{9765625} &  \frac{485868192}{48828125}  \\[0.5em]
- 3/4 & 3/4 & 0 & -3/4 & 3/4 & 0 \\[0.5em]
3/2 & -13/2 & 12 & -23/2 & 9/2 & 0
\end{bmatrix}.
\end{equation*}
The system $M \rho =0$ has the solution $\rho = [ \rho_2, \rho_4, \rho_6, \rho_8, \rho_{10}, \rho_{12}]^{T}$ equals
\begin{equation*}
\bigg[ 1, \frac{879159}{627154}, \frac{368515}{313577}, \frac{419505}{627154}, \frac{83750}{313577}, \frac{1146875}{14424542} \bigg]^{T}  \simeq [1, 1.4018, 1.1751, 0.6689, 0.2670, 0.0795 ]^{T}.
\end{equation*}
$M$ has rank 5, which is the number of unknown parameters in $\rho$.

\subsection{$4th$ band $(n=4)$} 
\label{sub4_k2}

Matrix $M$ is too large to write out. $M \rho =0 \Rightarrow [\rho_2, \rho_4, \rho_6, \rho_8, \rho_{10}, \rho_{12}, \rho_{14},\rho_{20}]^{T} = $

\footnotesize
\begin{equation*}
\begin{aligned}
& \Big[1, \frac{91312852394687883497633}{60342582799484620292280}, \frac{2214634471921172327027}{1508564569987115507307}, \frac{7972149322756114102612}{7542822849935577536535}, \frac{1480399911203653752057}{2514274283311859178845}, \\
&\quad \frac{305875819949113732527}{1149382529513992767472}, \frac{6549161566940548875}{71836408094624547967}, - \frac{16094965193651953125}{8045677706597949372304} \Big]^{T} \\
&\simeq [1, 1.5132, 1.4680, 1.0569, 0.5887,  0.2661, 0.0911, -0.0020]^{T}.
\end{aligned}
\end{equation*}
\normalsize

$M$ has rank 7, which corresponds to the number of unknown parameters in $\rho$.

\begin{figure}[H]
  \centering
   \includegraphics[scale=0.198]{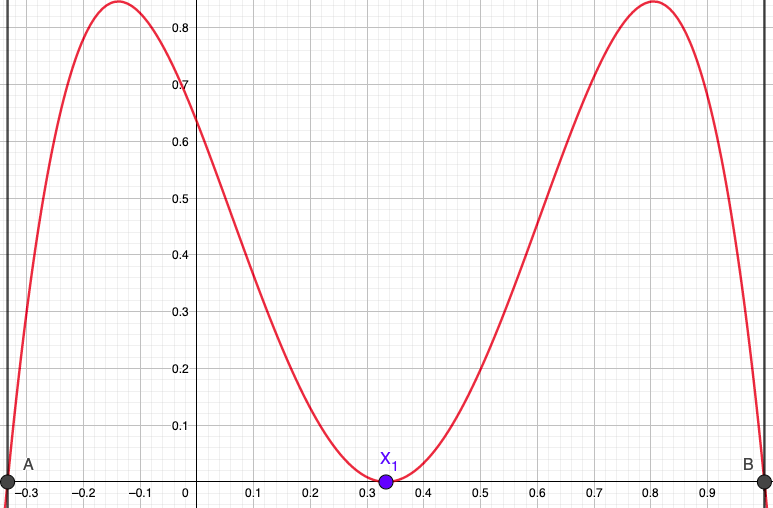}
  \includegraphics[scale=0.185]{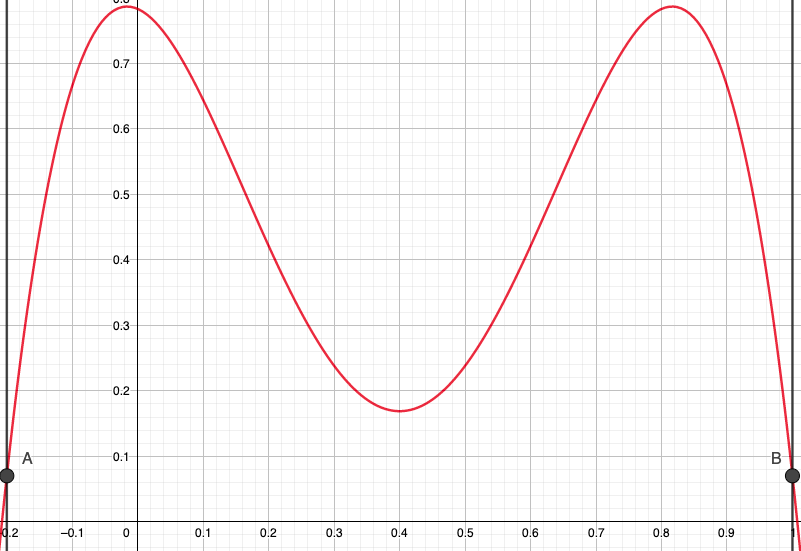}
  \includegraphics[scale=0.155]{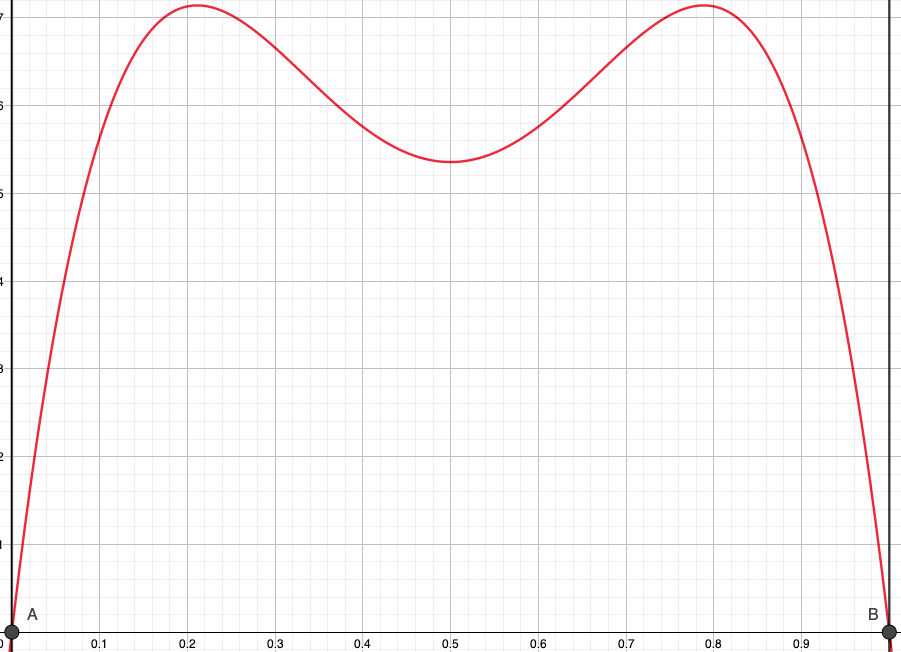}
\\
   \includegraphics[scale=0.164]{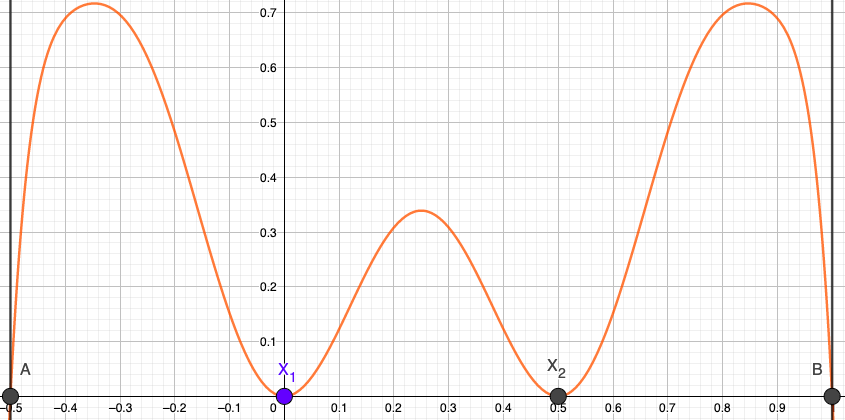}
  \includegraphics[scale=0.205]{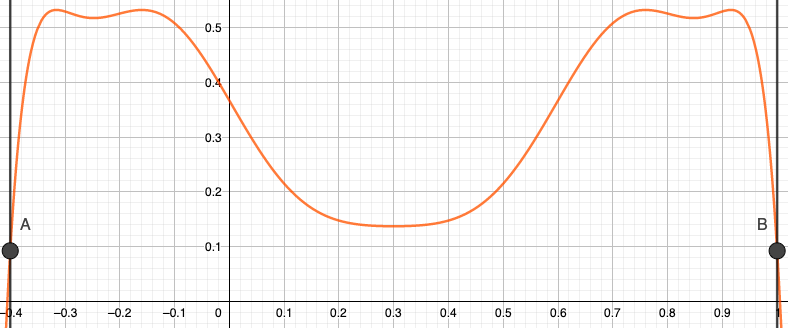}
  \includegraphics[scale=0.155]{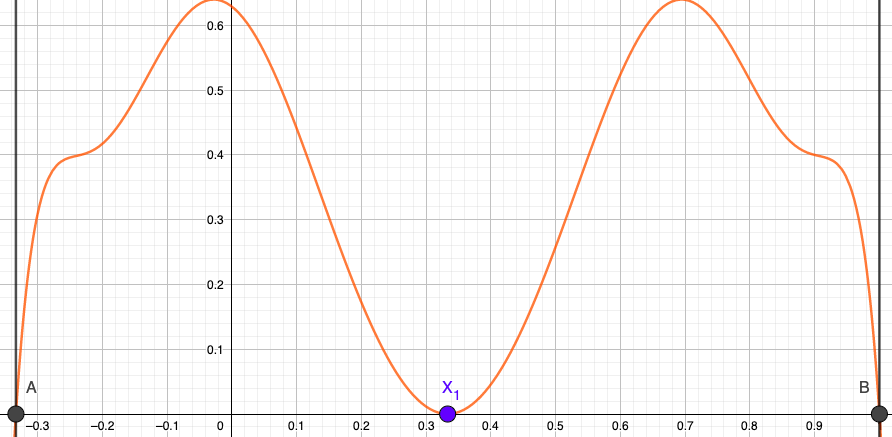}
  \\
 \includegraphics[scale=0.168]{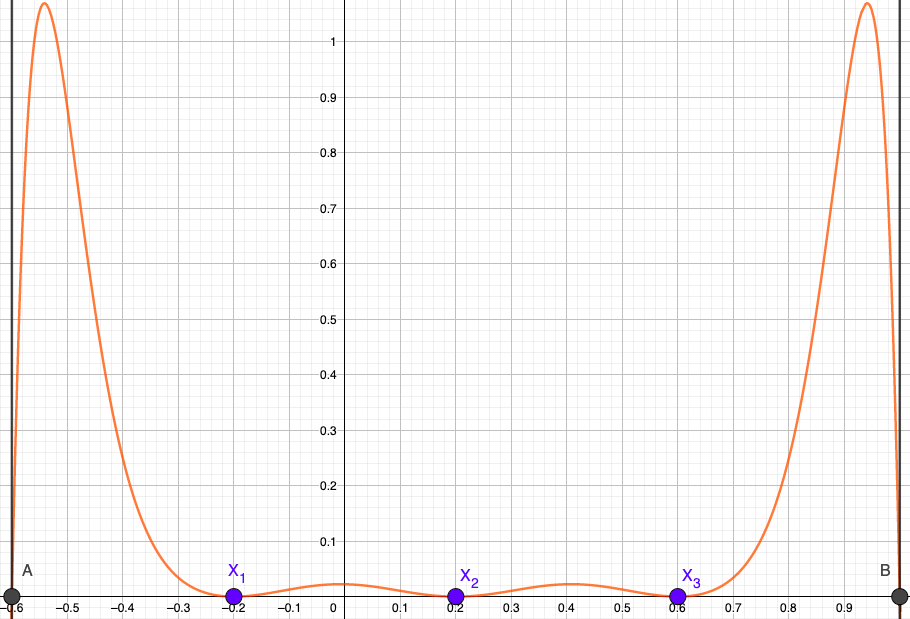}
  \includegraphics[scale=0.169]{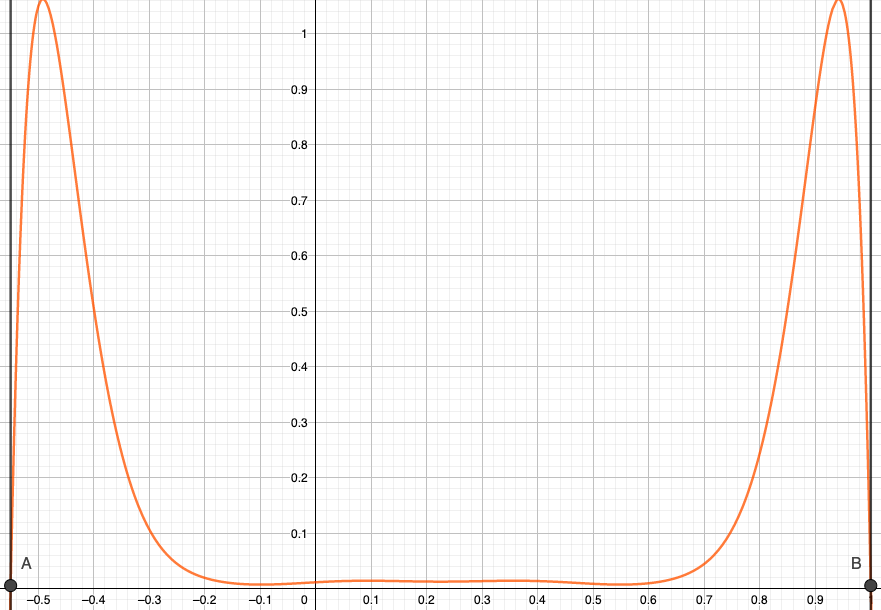}
  \includegraphics[scale=0.169]{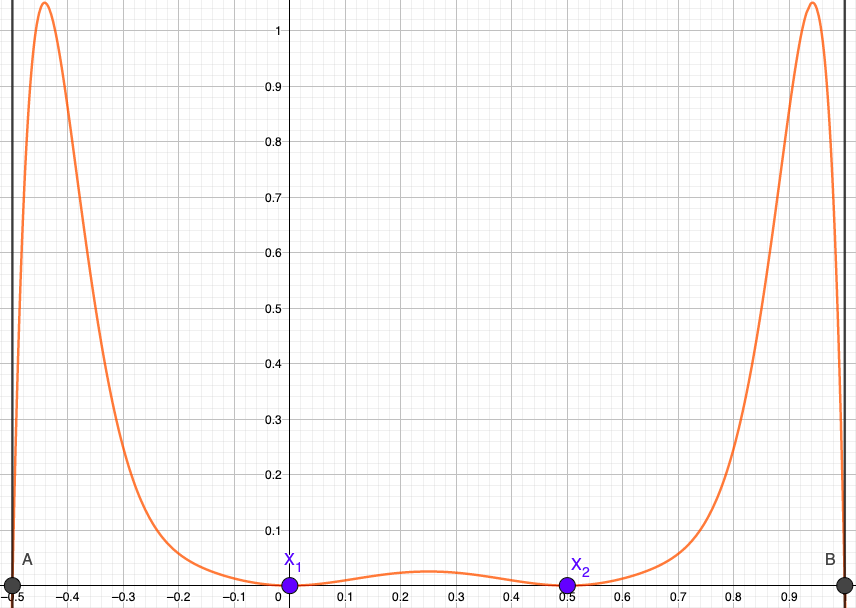}
\\
 \includegraphics[scale=0.2]{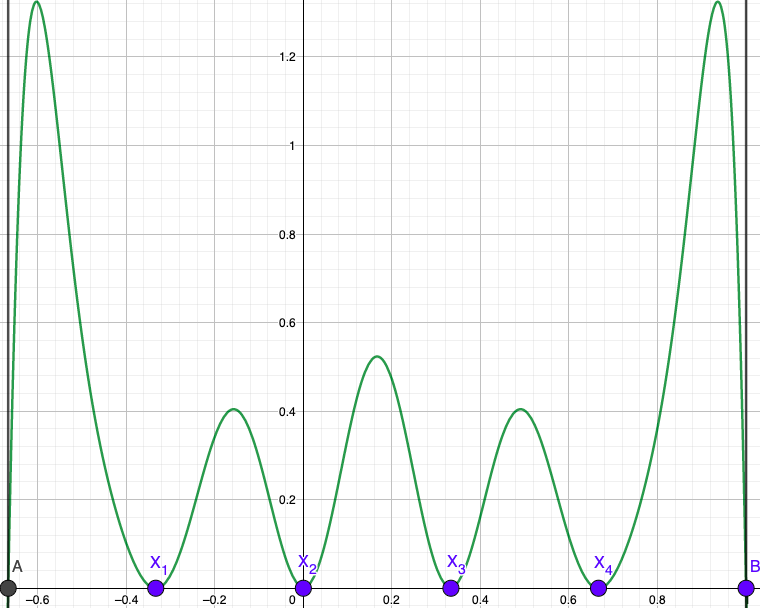}
  \includegraphics[scale=0.2]{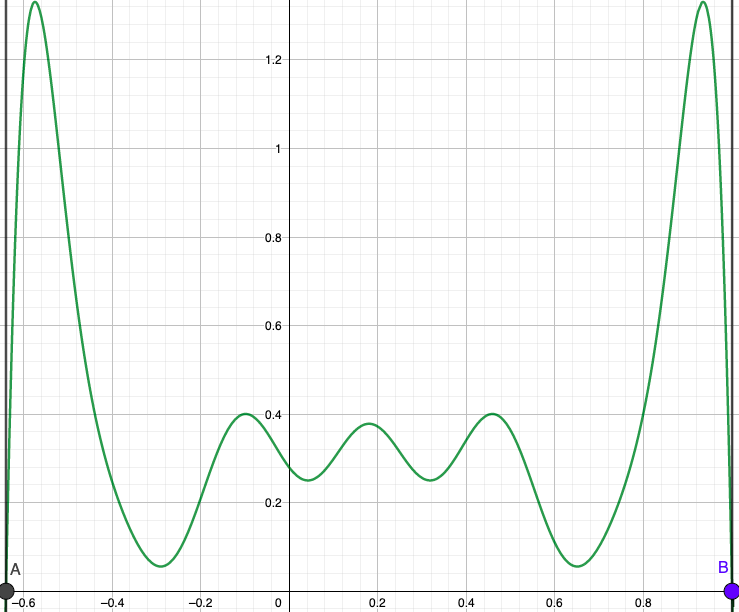}
  \includegraphics[scale=0.205]{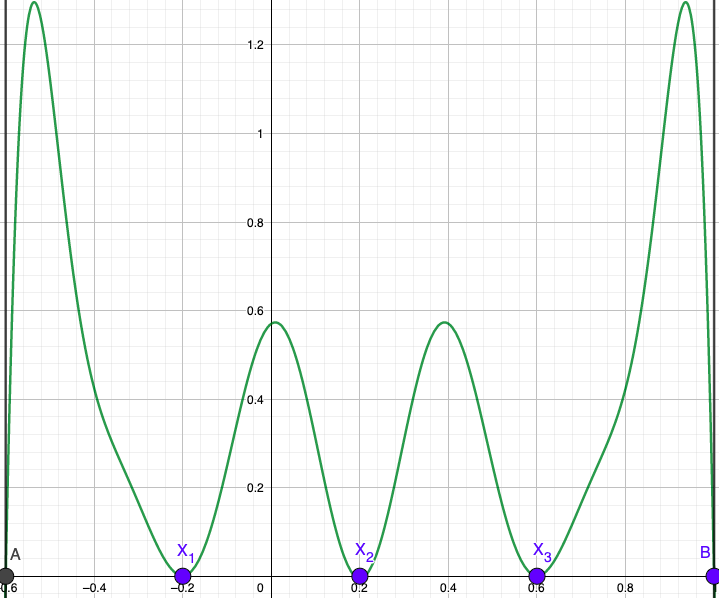}
\caption{Plots of $G_{\kappa=2}^E (x)$, $x \in [E-1,1]$. Rows correspond to $n=1,2,3,4$. Left column : $E = \E_n$. Middle : some $E \in (\E_n, \E_{n-1})$. Right column : $E = \E_{n-1}$.}
\label{fig:test_k2_3333388aftesty}
\end{figure}

\color{black}

\color{black}

\begin{remark}

We check the linear dependencies between the rows of $M$ using linear regression (Python's \textit{statsmodels} for example) and assess based on the $R^2$ among various statistics. 
\end{remark}

\section{Application of Polynomial Interpolation for $\kappa = 3$ in dimension 2}
\label{appli_scheme_k3}



In this section we give graphical illustrations to justify our Conjecture \ref{conjecture22} for $n=1,2,3,4$ and $\kappa=3$. See the plots of $G_{\kappa=3} ^E(x)$ in Figure \ref{fig:test_k2_33333si}. In this Figure, note that in the left and right-most columns, $G_{\kappa=3} ^E \geq 0$, whereas in the middle column, $G_{\kappa=3} ^E (x) > 0$, $\forall x \in [E-1,1]$. Throughout this section the sequence $\E_n$ referred to is that of Theorem \ref{thm_decreasing energy general} and Propositions \ref{prop1} and \ref{prop2}. In the middle column of Figure \ref{fig:test_k2_33333si} the values of $E$ used to illustrate $G_{\kappa=3} ^E$ are, from to top to bottom, $E = 1.4, 1.26, 1.2, 1.15$. Subsections \ref{sub1_k2}, \ref{sub2_k2}, \ref{sub3_k2} and \ref{sub4_k2} detail the calculations underlying Figure  \ref{fig:test_k2_33333si}.

Table \ref{sol_kappa_3_____} gives the inputs we need to feed linear systems \eqref{system_interpol} and \eqref{system_interpol} into the computer. 

\begin{table}[H]
\footnotesize
  \begin{center}
    \begin{tabular}{c|c|c|c} 
    $n$ & Left endpoint & Right endpoint & $\Sigma = $     \\ [0.2em]
      \hline
1 & $\E_1 = \frac{5+3\sqrt{2}}{7} \simeq 1.3203$, $X_1 = \frac{5+3\sqrt{2}}{14}$ & $\E_0 = 1+\cos(\frac{\pi}{3}) = \frac{3}{2}$ & $\{3,6\}$ \\[0.1em]
2 & $\E_2 = (9 + \sqrt{33})/12 \simeq 1.2287$, $X_1 = 1/2$, $X_2 = (3+\sqrt{33})/12$ & $=$ left endpt data of $n=1$  & $\{3,6,9,18\}$ \\[0.1em]
3 & $\E_3 \simeq 1.1737$, $X_1 \simeq 0.4077$, $X_2 = \E_3/2$, $X_3 = \E_3 - X_1$. & $=$ left endpt data of $n=2$ &$\{3, 6, 9, 12, 15, 18 \}$  \\[0.1em]
4 & $\E_4 \simeq 1.1375$, $X_1 \simeq 0.3484$, $X_2 = 1/2$, $X_3 = \E_4 - X_2$, $X_4 = \E_4 - X_1$ & $=$ left endpt data of $n=3$ & $\{3, 6, 9, 12, 15, 18, 21, 36\}$  \\[0.1em]
    \end{tabular}
  \end{center}
    \caption{Data to setup polynomial interpolation on $(\E_n, \E_{n-1})$. $\kappa=3$. $d=2$.}
        \label{sol_kappa_3_____}
\end{table}
\normalsize

\subsection{$1st$ band $(n=1)$} 
\label{sub1_k3}
$M \rho =0 \Rightarrow \rho = [\rho_{3}, \rho_6 ]^{T} = [1, \frac{170 - 81 \sqrt{2}}{92}]^{T} \simeq [1,0.6027]^{T}$. 

\begin{figure}[H]
  \centering
 \includegraphics[scale=0.207]{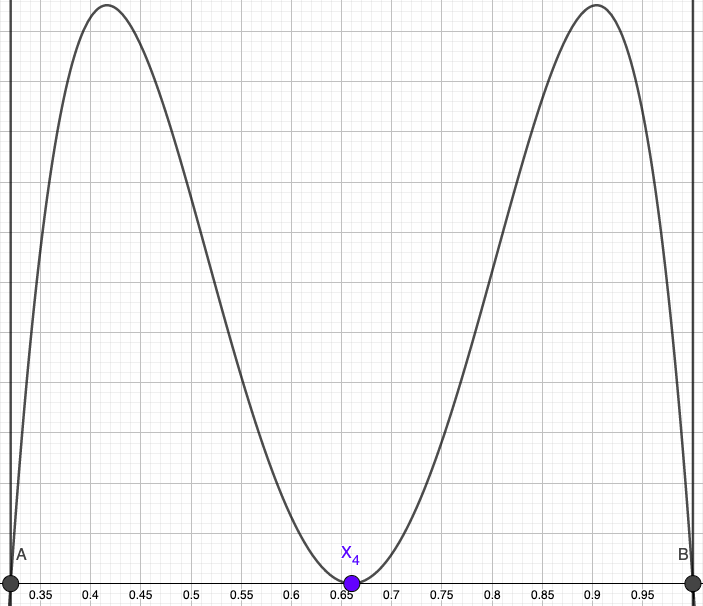}
  \includegraphics[scale=0.22]{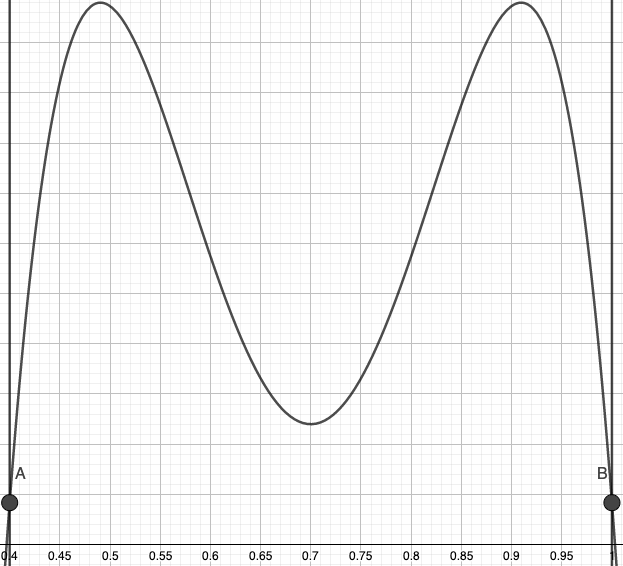}
  \includegraphics[scale=0.206]{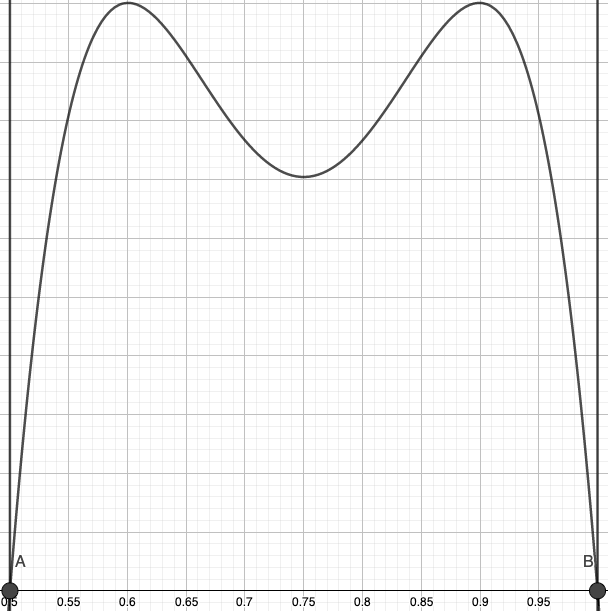}
\\
 \includegraphics[scale=0.18]{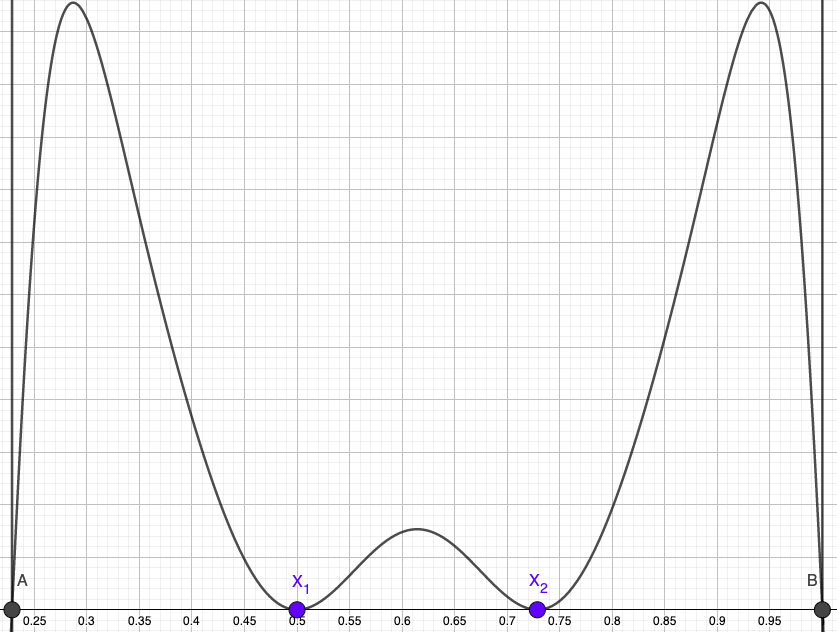}
  \includegraphics[scale=0.185]{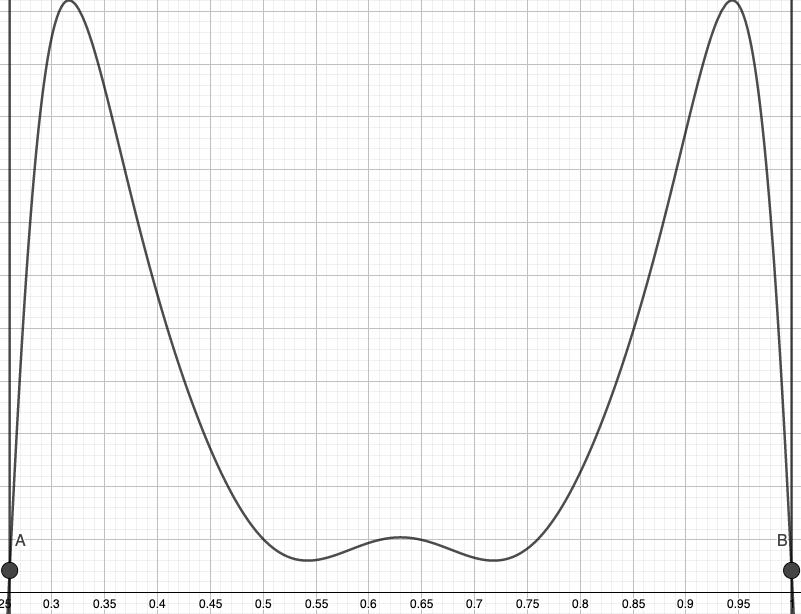}
  \includegraphics[scale=0.20]{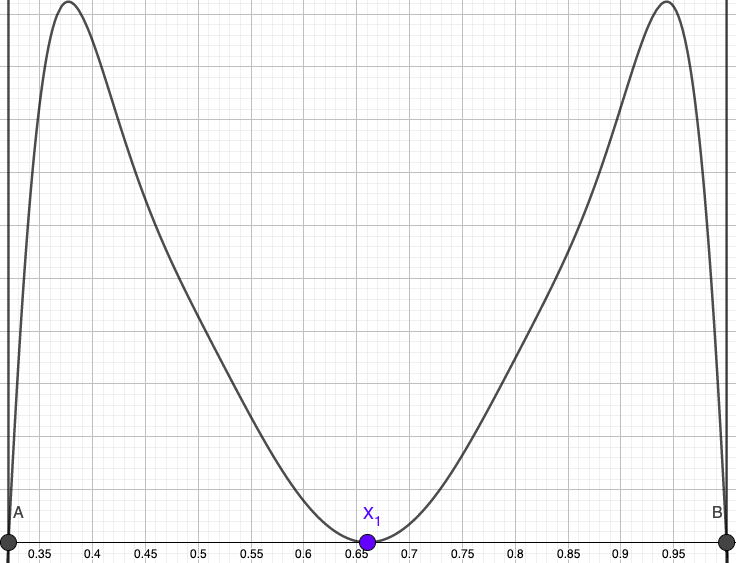}
\\
 \includegraphics[scale=0.205]{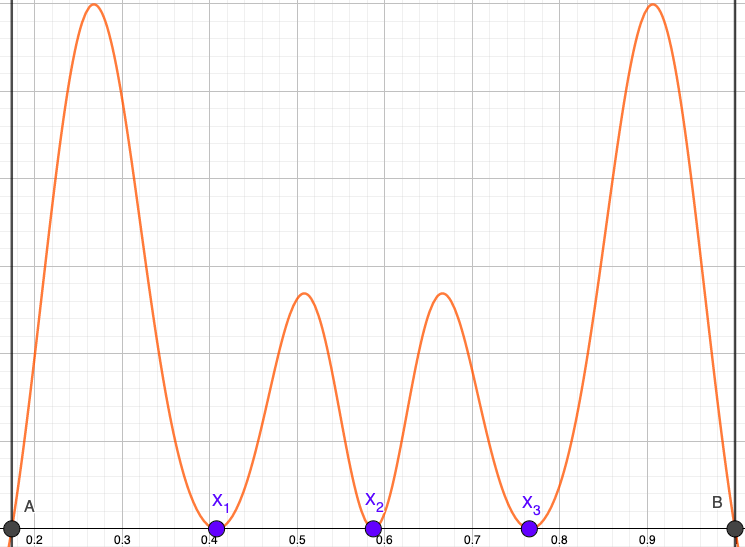}
  \includegraphics[scale=0.175]{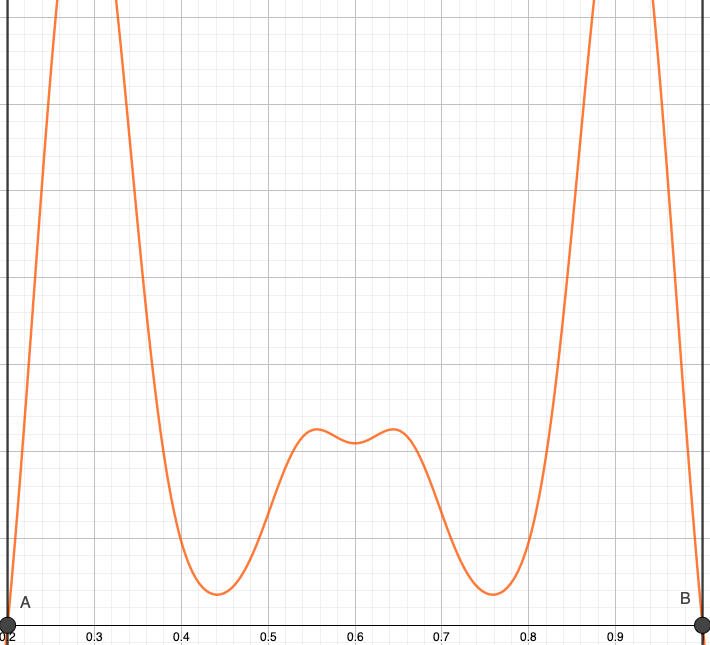}
  \includegraphics[scale=0.19]{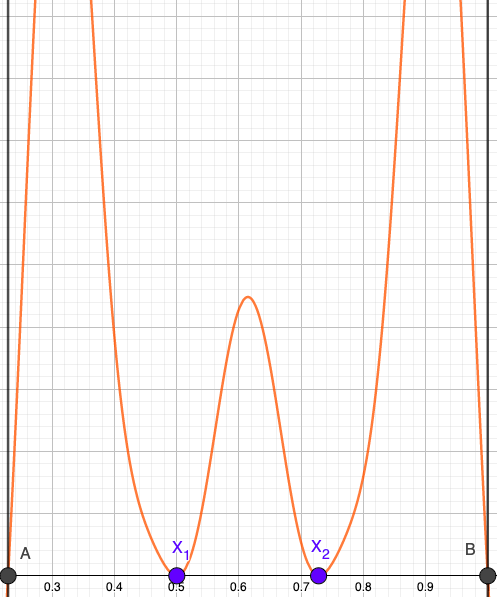}
\\
 \includegraphics[scale=0.22]{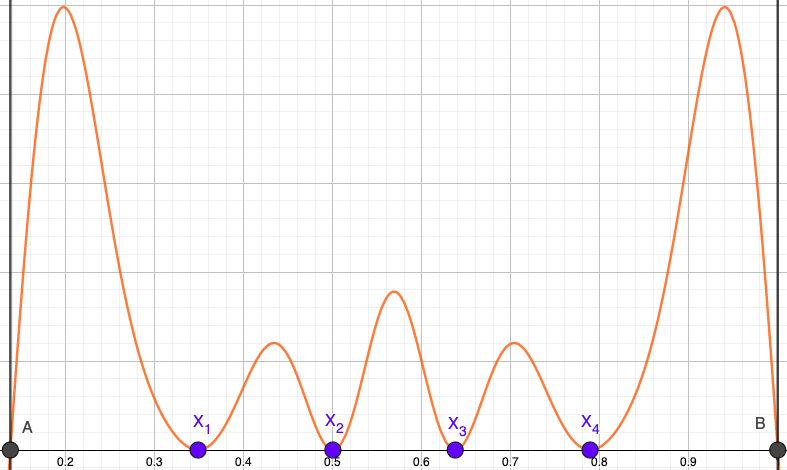}
  \includegraphics[scale=0.21]{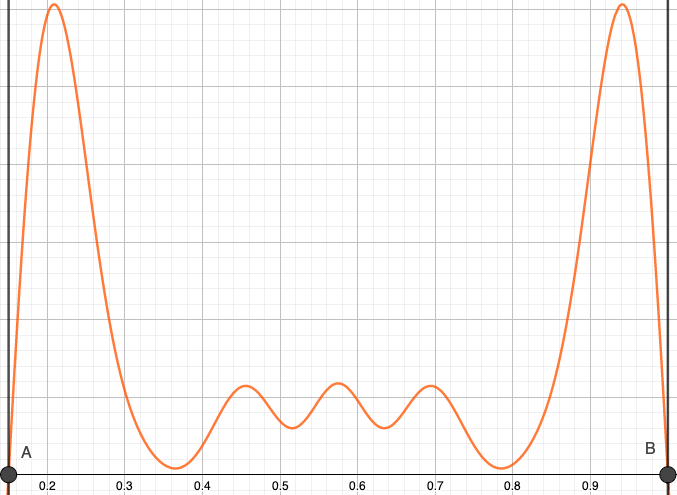}
  \includegraphics[scale=0.15]{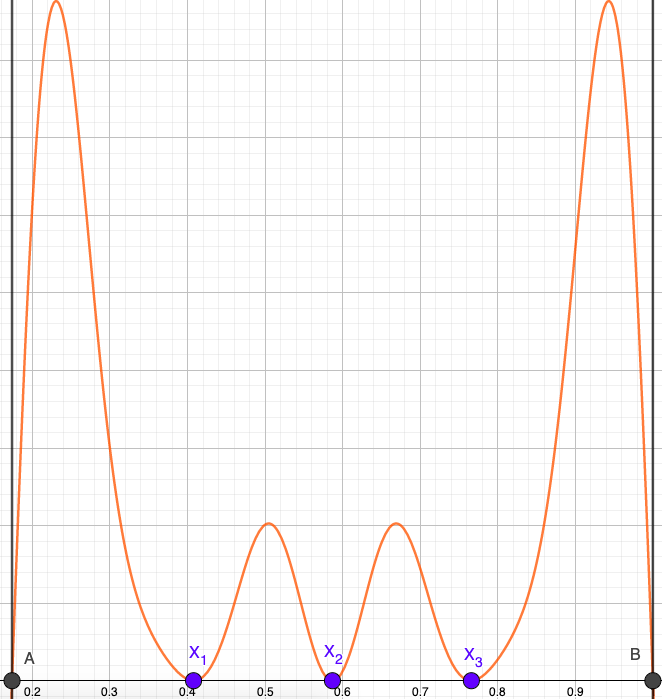}
\caption{Plots of $G_{\kappa=3}^E (x)$, $x \in [E-1,1]$. Rows correspond to $n=1,2,3,4$. Left column : $E = \E_n$. Middle : some $E \in (\E_n, \E_{n-1})$. Right column : $E = \E_{n-1}$.}
\label{fig:test_k2_33333si}
\end{figure}

\subsection{$2nd$ band ($n=2$)} 
\label{sub2_k3}

$M \rho =0 \Rightarrow \rho = [\rho_3, \rho_6, \rho_9, \rho_{18}]^T \simeq [1, 0.8854, 0.2861, -0.0452]^{T}.$

\subsection{$3rd$ band $(n=3)$} 
\label{sub3_k3}
$X_2 = \E_3/2$ and $T_3(\E_3-1) = T_3(\E_3-X_1)$, $T_3(X_1) = T_3(\E_3/2)$. Applying \eqref{cheby_23} leads to
\begin{equation*}
\begin{cases}
4\left( (\E_3-1)^2 + (\E_3-1)(\E_3-X_1) + (\E_3-X_1)^2 \right) = 3 \\
4\left( X_1^2 + X_1 \E_3 /2 + \E_3^2 /4 \right) = 3.
\end{cases}
\end{equation*}
Subtract the equations to get $X_1 = (3 \E_3 -11 \E_3^2/4-1)(1-7 \E_3 /2)^{-1}$. This leads to 
\begin{equation}
\label{E_L_k3_3}
\E_3 = \frac{199}{494} + \frac{\sqrt{49149 + 247 t + 2223 s}}{494 \sqrt{3}} +\frac{1}{2} \sqrt{\frac{32766}{61009} - \frac{t}{741} - \frac{3s}{247} + \frac{4340412 \sqrt{3}}{61009 \sqrt{49149 + 247 t + 2223s}}},
\end{equation}
where $t = \sqrt[3]{2625129-139968 \sqrt{327}}$ and $s = \sqrt[3]{3601+192\sqrt{327}}$. So $\E_3 \simeq 1.1737$. On the other hand, $\E_3 = (X_1-11X_1 ^2 +37/4) (3+9X_1)^{-1}$. So 
\footnotesize
$$X_1 = -\frac{73}{494} + \frac{\sqrt{2129040 + 247 q + 17784 r}}{1976 \sqrt{3}} - \frac{1}{2} \sqrt{\frac{88710}{61009} - \frac{q}{11856} - \frac{3r}{494} + \frac{5651424}{61009} \frac{\sqrt{3}}{\sqrt{2129040 + 247 q + 17784 r}}}.$$
\normalsize
$q = \sqrt[3]{569651470848-1240676352 \sqrt{327}}$, and $r = \sqrt[3]{1526201 + 3324 \sqrt{327}}$. So $X_1 \simeq 0.4077$.
The minimal polynomials of $\E_3$ is
$$mp(E) = 247 E^4 - 398 E^3 + 141 E^2 - 20 E +4,$$
Also, $X_3 = -(\E_3-1)/2 + \sqrt{3} \sqrt{1 - (\E_3-1)^2}/2 \simeq 0.76598$.
Next we suppose $\Sigma = [3,6,9,12,15,18]$. Python says the solution to $M \rho =0$ is \\
$\rho = [\rho_3, \rho_6, \rho_9,\rho_{12}, \rho_{15}, \rho_{18}]^T \simeq [1, 1.38266, 1.09831, 0.56967, 0.18700, 0.03160]^{T}.$

\subsection{$4th$ band ($n=4$)} 
\label{sub4_k3}
$X_2 = 1/2$, and $T_3(\E_4-1) = T_3(\E_4-X_1)$ and $T_3(X_1) = T_3(\E_4-1/2)$. Thus :
\begin{equation*}
\label{sys1000001}
\begin{cases}
 (\E_4-1)^2 + (\E_4-1)(\E_4-X_1) + (\E_4-X_1)^2  = \frac{3}{4} & \\
 X_1^2 + X_1(\E_4-\frac{1}{2})  +(\E_4-\frac{1}{2}) ^2  = \frac{3}{4} & 
\end{cases}
\Leftrightarrow 
\begin{cases}
X_1^2 + X_1(\E_4-\frac{1}{2}) + (\E_4-\frac{1}{2})^2 = \frac{3}{4} & \\
X_1 = \frac{8 \E_4^2-8 \E_4+3}{16 \E_4-6}. &
\end{cases}
\end{equation*}
Technically speaking there are 4 solutions for $\E_4 \simeq 0, 0.23, 0.44, 1.13$. Given the context we infer
\begin{equation}
\label{solution_k3_5th_band}
\E_4 =   \frac{17}{28} + \frac{\sqrt{58}}{14} \cos \left( \frac{1}{3} \arctan \left( \frac{21 \sqrt{687}}{691} \right) \right) \simeq 1.1375.
\end{equation}
The minimal polynomial of $\E_4$ is $mp(E) = 224 E^3 - 408 E^2 + 198E -27.$
To solve for $X_1$, eliminate the $\E_4^2$ in the system. One gets $\E_4 = (-8X_1^2+10X_1+7)/(24X_1)$.
Technically there are 4 possibilities for $X_1 \simeq -0.70, -0.5, 0.34, 0.89$. Given the context we infer
\begin{equation}
\begin{aligned}
\label{solution_k3_5th_band_X}
X_1 & =  \frac{5}{28} - \frac{\sqrt{43}}{14 } \left( \sqrt{3} \sin (\beta) -\cos (\beta) \right) \simeq 0.3484.
\end{aligned}
\end{equation}
where $\beta = \frac{1}{3} \arctan\left( \frac{147 \sqrt{687}}{2347} \right)$. The minimal polynomial of $X_1$ is $ mp(X) = 224 X^3 -120 X ^2 -126 X +49$.
The minimal polynomial of $X_3 = \E_4 - X_2 \simeq 0.6375$ is $mp(X) = 112 X^3 - 36 X^2 - 21 X -1$. Determining the minimal polynomial of $X_4 = \E_4-X_1 \simeq 0.7890$ didn't seem worth the effort. Next we suppose $\Sigma = [3,6,9,12,15,18,21,36]$. We fill the matrix $M$ with floats (exact numbers are getting complicated). Python says the solution to $M \rho =0$ is $[\rho_3, \rho_6, \rho_9,\rho_{12}, \rho_{15}, \rho_{18}, \rho_{21}, \rho_{36}]^T \simeq [1, 1.48690, 1.34705, 0.86282, 0.39635, 0.12308, 0.02043, -0.00012]^{T}.$
On the other hand it seems $\Sigma =  [3,6,9,12,15,18,21,3j]$ is not valid for $j=8,9,10,11$.

\section{How to choose the correct indices $\Sigma =  \{ j_1 \kappa , j_2 \kappa , ..., j_{2n}\kappa \}$ ?}
\label{howchooseindices}

This section is in dimension 2. In this section our message is : certainly not any $2n$-linear combination of the form \eqref{LINEAR_combinationA} works on the interval $J_2(\kappa) :=  (2\cos(\pi / \kappa ) , 1+ \cos(\pi / \kappa) )$, but many do work. 

$\bullet$ For $\kappa=2$, $n=2$ : let us assume instead $\Sigma = \{2, 4, 8, 16\}$. Then we find coefficients 
$$\rho = [\rho_2, \rho_4, \rho_8, \rho_{16}]^{T} = \bigg[1, \frac{3975779}{5332320}, -\frac{2837741}{10664640},  -\frac{13851}{1184960}\bigg]^{T} \simeq [1, 0.7456, -0.2660, -0.0116]^{T}.$$
This gives a valid combination. We have additionally checked that the combinations $\Sigma = \{2,4,8, 2j\}$ are also valid, for $j=5,6,8$, whereas $\Sigma = \{2, 4, 6, 8\}$, $\{4, 6, 8,10\}$ and $\{2, 6, 8, 10\}$ are not valid. Figure \ref{fig:test_other examples} has 2 examples of valid solutions and 2 examples of invalid solutions, on the interval $(\E_2, \E_1) = (1/2,2/3)$.

\begin{figure}[H]
  \centering
 \includegraphics[scale=0.16]{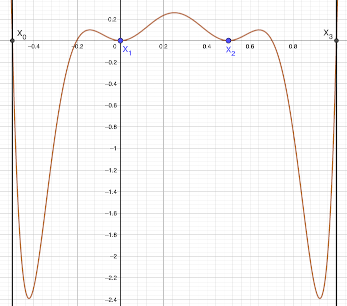}
  \includegraphics[scale=0.18]{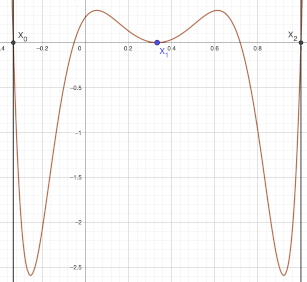}
 \includegraphics[scale=0.20]{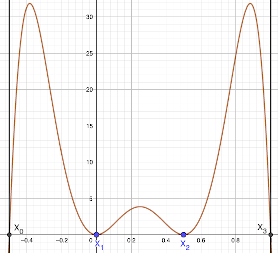}
  \includegraphics[scale=0.2]{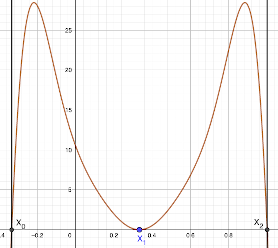}
\\
 \includegraphics[scale=0.18]{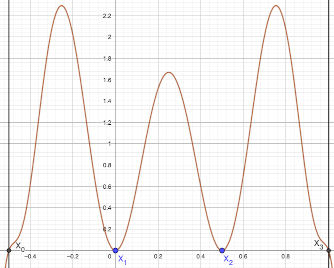}
  \includegraphics[scale=0.18]{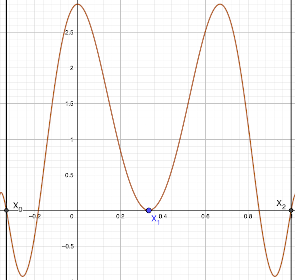}
 \includegraphics[scale=0.16]{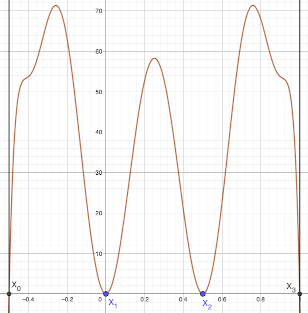}
  \includegraphics[scale=0.16]{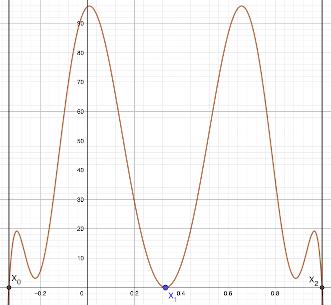}
  
\caption{Plot of $G_{\kappa=2}^E (x)$, $x \in [E-1,1]$. Top row : $\Sigma = \{2,4,6,8\}$ for first 2 graphs and $\Sigma = \{2,4,6,12\}$ for last 2 graphs. Bottom row : $\Sigma = \{2,6,8,10\}$ for first 2 graphs and $\Sigma = \{2,4,10,14\}$ for last 2 graphs. In all cases, Left : $E = \E_2(\kappa=2) = 1/2$. Right : $E = \E_1(\kappa=2) = 2/3$.}
\label{fig:test_other examples}
\end{figure}

$\bullet$ For $\kappa=3$, $n=2$ : other combinations of $\Sigma$'s are valid. For instance $\Sigma = \{3,6,9,21\}$ is also valid, yielding positivity on the same band, but $\Sigma = \{3,6,9,12\}$ and $\{3,6,9,15\}$ are not valid.

\section{Conjecture for the interval $J_3(\kappa) := (1+\cos(2\pi / \kappa), 2\cos(\pi / \kappa))$}
\label{sec_Conjecture J3}

In this section we give evidence for Conjecture \ref{conjecture_k344_2d}. We only do $\kappa=3,4$.

\begin{table}[H]
  \begin{center}
    \begin{tabular}{c|c|c|c} 
      $\kappa$  & $\beta,\beta' $ & proved in \cite{GM2} & Conjecture based on Table \ref{tab:tableDelta2dexploreKAPPA=3_againnn} (improvement) \\ [0.1em]
      \hline
         3 & $\beta = \frac{1}{2} (\frac{1}{2}( 5 - \sqrt{7}) )^{1/2} \simeq 0.5425$ & $(\beta,1) \subset \boldsymbol{\mu}_{\kappa}(\Delta)$  & $J_3 = (1/2,1) \subset \boldsymbol{\mu}_{\kappa}(\Delta)$ \\[0.1em]   
         4 & $\beta' = (\frac{3}{2}-\frac{1}{\sqrt{5}} )^{1/2} \simeq 1.0261$ & $(\beta',\sqrt{2}) \subset \boldsymbol{\mu}_{\kappa}(\Delta)$  & $J_3 = (1,\sqrt{2}) \subset \boldsymbol{\mu}_{\kappa}(\Delta)$ \\[0.1em]   
    \end{tabular}
  \end{center}
    \caption{Sets $\subset \boldsymbol{\mu}_{\kappa}(\Delta)$. $d=2$. Proved in \cite{GM2} vs.\ conjectured}
        \label{tab:tableDelta222222}
\end{table}

\begin{table}[H]
  \begin{center}
    \begin{tabular}{c|c} 
      Intervals $\subset \boldsymbol{\mu} _{\kappa} (\Delta)$. $\kappa=3$  &  Intervals $\subset \boldsymbol{\mu} _{\kappa} (\Delta)$. $\kappa=4$ \\ [0.1em]
      \hline
         $(0.518, 0.64) : \rho = (1, 0.6)$   & $(1.00209, 1.049) : \rho = (1, 0, 0, 0.8331)$   \\[0.1em]   
         $(0.5082 ,0.54) : \rho = (1,1,0.8)$  &  $(1.0011, 1.096) : \rho = (1, 0.3, 0, -0.1, 0, 0.465)$  \\[0.1em]   
         $(0.5037, 0.5087) : \rho = (1, 0.926, 0.5, 2.5)$ & \\[0.1em]   
     \hline
      Union of intervals : $(0.5037, 0.64)$ & Union of intervals : $(1.0011, 1.096)$
    \end{tabular}
  \end{center}
    \caption{Sets $\subset \boldsymbol{\mu}_{\kappa}(\Delta)$. $d=2$. $\rho$ of the form : $\rho = (\rho_{\kappa}, \rho_{2\kappa}, \rho_{3\kappa}, ....)$}
        \label{tab:tableDelta2dexploreKAPPA=3_againnn}
\end{table}

The coefficients $\rho = (\rho_{j\kappa})$ in Table \ref{tab:tableDelta2dexploreKAPPA=3_againnn} were found just by fiddling around with the coefficients and the graphs (we don't know how to cast this problem into polynomial interpolation).

\section{$\left(9-\sqrt{33})/12, 2/7\right)$ is a band of a.c.\ spectrum for $\kappa=3$ in dimension 2 : evidence}
\label{k3_below_05}

\begin{figure}[H]
  \centering
 \includegraphics[scale=0.2]{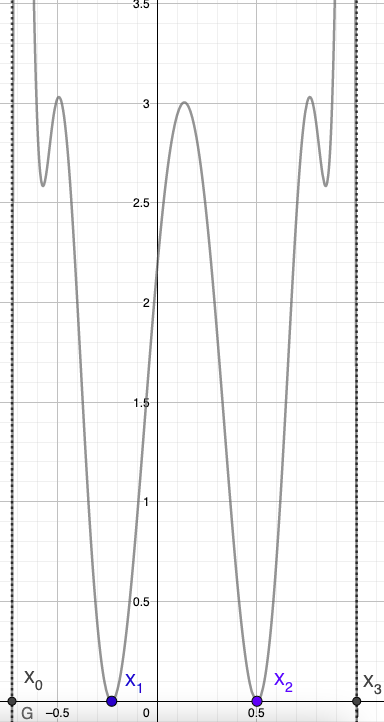}
  \includegraphics[scale=0.2]{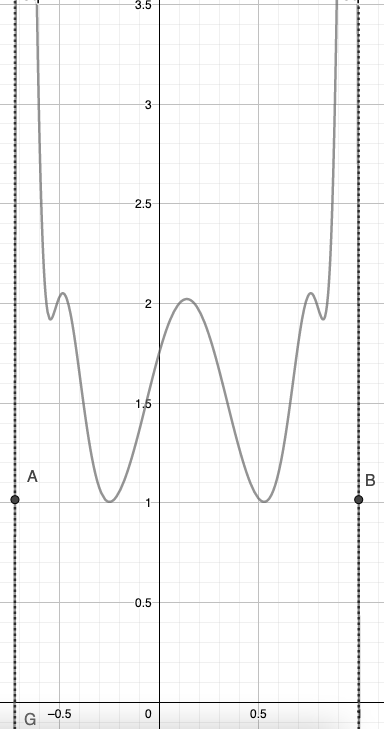}
  \includegraphics[scale=0.2]{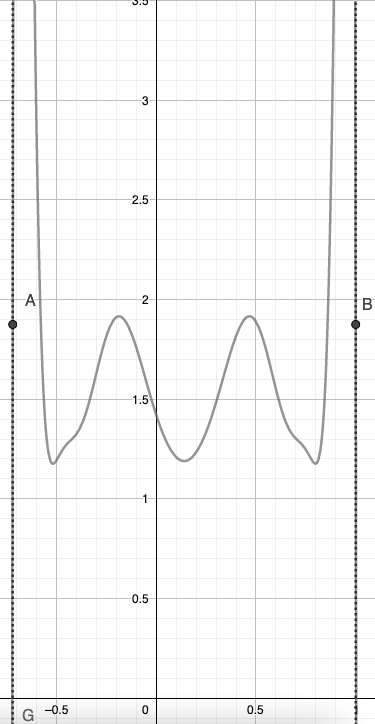}
    \includegraphics[scale=0.2]{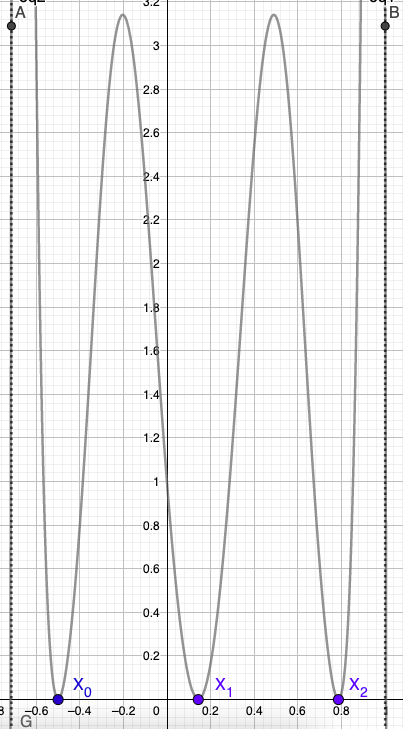}
\caption{$G_{\kappa=3} ^E (x)$, $x \in [E-1,1]$. From left to right: $E=(9-\sqrt{33})/12 \simeq 0.2713$, $E=0.276$, $E=0.28$, $E=2/7 \simeq 0.2857$.  $G_{\kappa=3} ^E (x) > 0$ in the 2 middle pictures, but not in the other two. }
\label{fig:kappa3_lower_energies}
\end{figure}

The 2 energies $(9-\sqrt{33})/12$ and $2/7$ belong to $\boldsymbol{\Theta}_{1,\kappa=3}(\Delta)$ as per Table \ref{sol_kappa_2346}. Python says the solution to $M \rho =0$ is $[\rho_3, \rho_6, \rho_9,\rho_{12}, \rho_{15}]^T \simeq [-2.1648 \cdot t, -7.2577 \cdot t, 22.5984 \cdot t, 3.3111 \cdot t, t]^{T}, \ t \in \R$. $G_{\kappa=3} ^E (x)$ is plotted in Figure \ref{fig:kappa3_lower_energies} for $t=1$.

\section{Numerical evidence for a band of a.c.\ spectrum for $\kappa=8$ in dimension 2}
\label{KAPPA8more}
Fix $\kappa = 8$, $d=2$. We see from Table \ref{tab:table1011} that $(0, 1+ \cos(\frac{3\pi}{8}))$ was identified as a gap in our prior work. Can we find a linear combination of conjugate operators that gives positivity on an interval to reduce this gap ?
We look for a band that is adjacent and to the left of $\H_0:= 1+ \cos(\frac{3\pi}{8})$. Our numerical calculations suggest that $\exists \H_1$ such that $(\H_1, \H_0) \subset  \boldsymbol{\mu}_{\kappa=8} (\Delta)$, but the point is that the system of linear equations \eqref{interpol_intro} is underspecified.

To determine the nearest threshold to the right of $\H_0$, let's call it $\H_1$. We assume $\H_1 \in \boldsymbol{\Theta}_{1,\kappa}(\Delta)$. So $\omega_0 g_{j\kappa}^{\H_1} (X_1) = g_{j\kappa} ^{\H_1} (X_0)$, $\forall j \in \N^*$. To solve this equation, we follow the path of Ansatz (1) in the context of \eqref{long_comm22} : we make the assumption that $X_0=\H_1-1$ and $T_8(X_0) = T_8 ( X_1) = T_8(\H_1-X_1)$. Thus we have a system of 3 equations, 3 unknowns ($\H_1$, $X_0$, $X_1$). There are many solutions to this system; we will focus on the one where
$$\H_1 = \frac{1}{2} \left(\sqrt{2} + 2 \sqrt{\sqrt{2}-1}\right) \simeq 1.3507, \quad X_1 = \frac{2}{5} + \frac{1}{5} \H_1 + \frac{2}{5} \H_1^2 - \frac{2}{5} \H_1^3 \simeq 0.4142.$$
We then compute $\omega_0 = g_{\kappa} ^{\H_1} (X_0) / g_{\kappa} ^{\H_1} (X_1) \simeq - 0.7427 < 0$, which means that $\H_1 \in \boldsymbol{\Theta}_{1,8}(\Delta)$. Next, we assume a linear combination of the form $\mathbb{A} = A_{8} + \rho _{16} A_{16} + \rho_{24} A_{24}$. So $G_{8} ^E (x) = g_{8} ^E  (x) +  \rho_{16} g_{16} ^E (x) + \rho_{24} g_{24} ^E (x)$.  To determine $\rho_{16}$ and $\rho_{24}$ we perform polynomial interpolation with the following \underline{\textit{only}} constraint : $G_{8} ^{\H_1} (X_0) = 0$. Note that by construction this constraint is equivalent to $G_{8} ^{\H_1} (X_1) = 0$ and $G_{8} ^{\H_1} (\H_1-X_1) = 0$. Also, by Lemma \ref{lemSUMcosINTRO}, $G_{8} ^{\H_0} (\H_0-1) = 0$ always holds. The interesting difference here is that we also have $[g_{8j_1} ^{\H_1} (X_0), \frac{d}{dx} g_{8j_2} ^{\H_1} (X_1) ] =0$ for all $j_1,j_2 \in \N^*$ (expand and use the fact $T_8(X_0) = T_8(X_1) =T_8(\H_1-X_1)$ and $(x^2-1)\frac{d}{dx} U_{j \kappa-1} (x) = j\kappa T_{j\kappa} (x) - x U_{j\kappa-1}(x)$). It is not clear to us if this means that $\exists \lambda$ such that $g_{8j} ^{\H_1} (X_0) = \lambda \frac{d}{dx} g_{8j} ^{\H_1} (X_1)$ for all $j \in \N^*$. It seems that the constraint $\frac{d}{dx} G_{8} ^{\H_1} (X_1) = 0$ is redundant. One also checks that $\frac{d}{dx} G_{8} ^{\H_1} (X_2) = 0$. Coming back to our interpolation problem, $G_{8} ^{\H_1} (X_0) = 0$ leads to 
$$\rho_{16} = -\left( g_{8} ^{\H_1} (X_0) + \rho_{24} g_{24} ^{\H_1} (X_0) \right) / g_{16} ^{\H_1} (X_0) = -\left( U_7(\H_1- 1) + \rho_{24} U_{23}(\H_1-1) \right) / U_{15}(\H_1-1).$$
The point is that here we have 1 degree of freedom, namely $\rho_{24}$. Numerically, $\rho_{16} \simeq 0.51952 +1.40530 \rho_{24}$. Graphically, it appears that $G_{8} ^E (x) >0$ for $E \in (\H_1,\H_0)$ and $\rho_{24}$ roughly in $(-0.36,-0.51)$. 

\begin{figure}[H]
  \centering
 \includegraphics[scale=0.208]{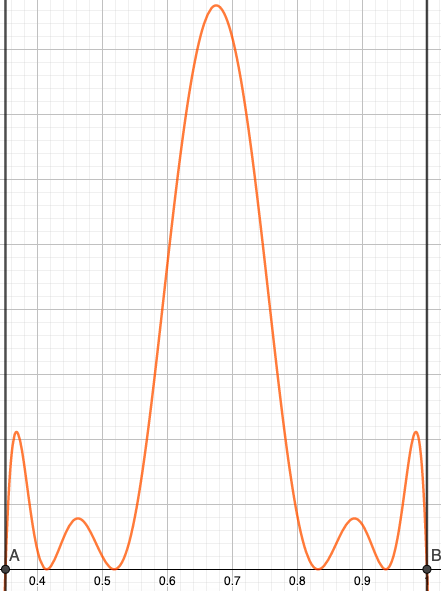}
  \includegraphics[scale=0.26]{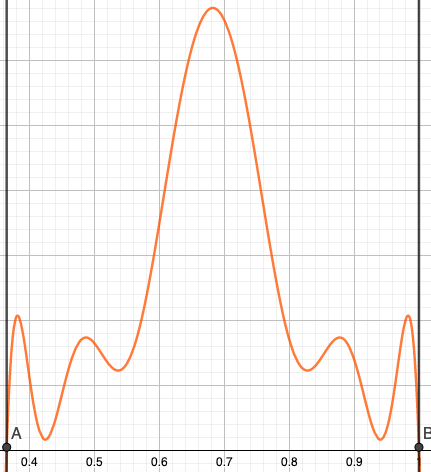}
  \includegraphics[scale=0.29]{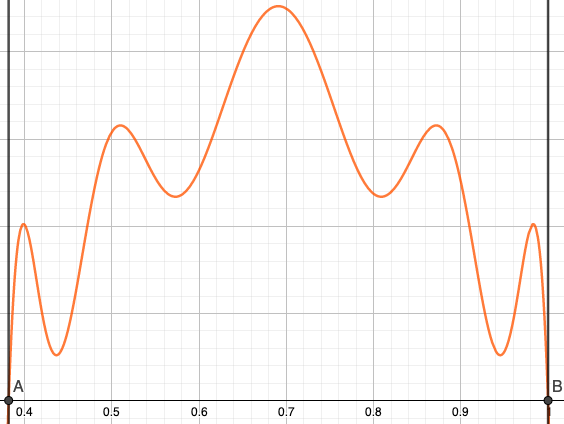} 
  \\
   \includegraphics[scale=0.203]{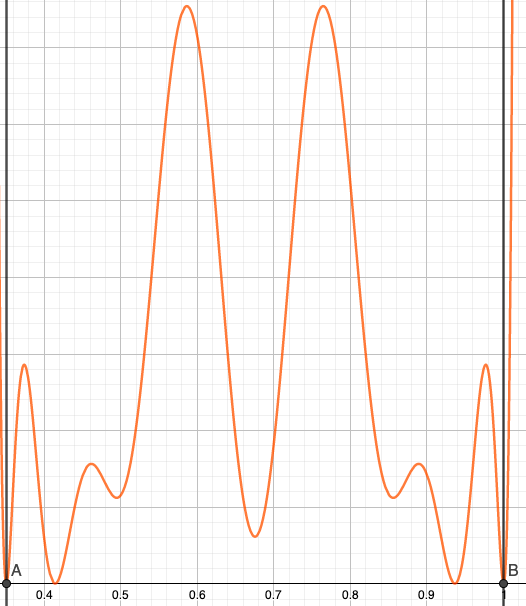}
  \includegraphics[scale=0.19]{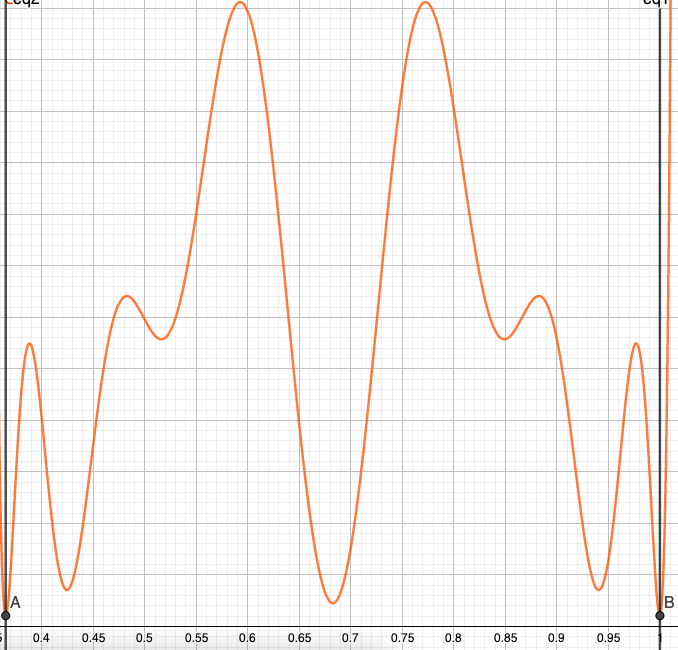}
  \includegraphics[scale=0.211]{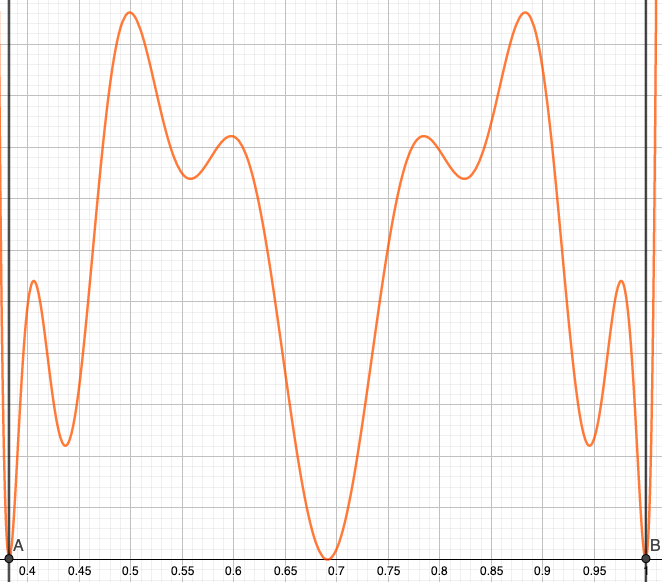} 
\caption{$G_{\kappa=8}^E (x)$, $x \in [E-1,1]$. Top : $\rho_{24}=-0.05$, bottom : $\rho_{24}=-0.36$. From left to right: $E=\H_1$, $E=1.365$, $E=\H_0$. $G_{\kappa=8} ^E (x) > 0$ in middle picture, but not in the other two. }
\label{fig:test_k2_3333388000}
\end{figure}

\section{The case of $\kappa=2$ in dimension 3}

We illustrate the situation for $\kappa=2$, and the $4^{th}$ band, namely $(1+\E_4, 1+\E_3) = (4/3, 7/5)$. We use the linear combination $\sum_j \rho_{j\kappa} A_{j\kappa}$ where the coefficients $\rho_{j\kappa}$ are the same as in dimension 2, i.e.\ the ones found in Subsection \ref{sub4_k2}.

Figure \ref{fig:dim3_k2_band4,n} shows the function $G_{\kappa=2} ^E (x,y)$ at $E = 1+\E_4 = 4/3$ and certain values of $y$. We observe a curious phenomenon. While the pattern observed is the same as the one occurring in dimension 2, the novelty is that it is now occurring simultaneously all for the same energy. In other words the pattern observed in dimension 3 for the $n^{th}$ band is the collection of the patterns observed for bands $1,2,...n$ in dimension 2. The graph of $G_{\kappa=2} ^E (x,y)$ at $y = -2/3$ is not plotted because it is basically non-existant ($x=z=1$ is forced and so the graph is only defined at $x=1$, and $G_{\kappa=2} ^{E=4/3} (1,-2/3) = 0$). For $y \in [E-2,1] \setminus \{-2/3, -1/3,0,1/3,2/3,1\}$ $G_{\kappa=2} ^E (x,y)$ appears to be strictly positive but that is irrelevant.

\begin{figure}[H]
  \centering
 \includegraphics[scale=0.17]{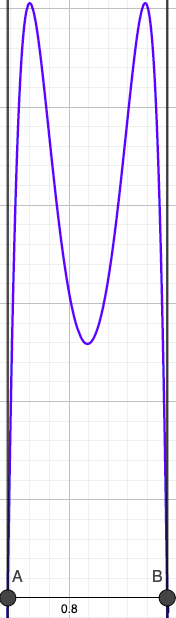}
 \includegraphics[scale=0.153]{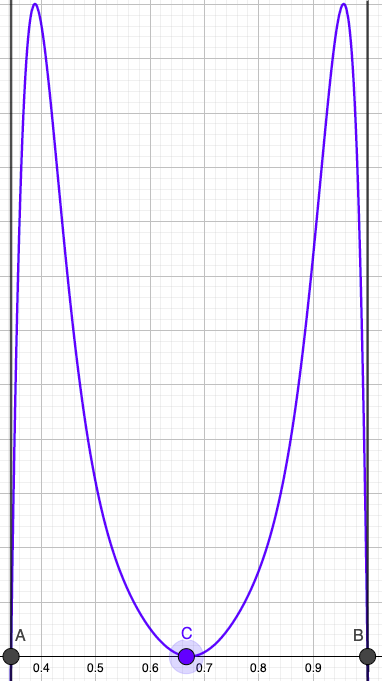}
 \includegraphics[scale=0.152]{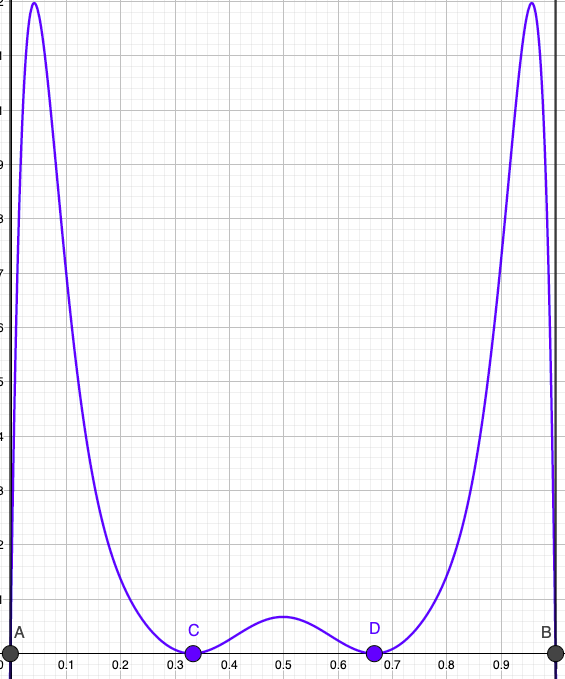}
 \includegraphics[scale=0.154]{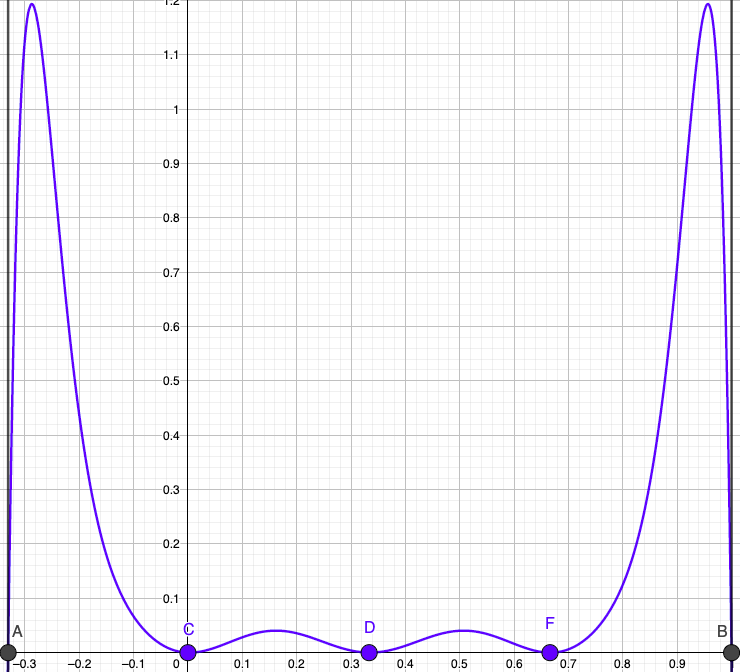}
 \includegraphics[scale=0.18]{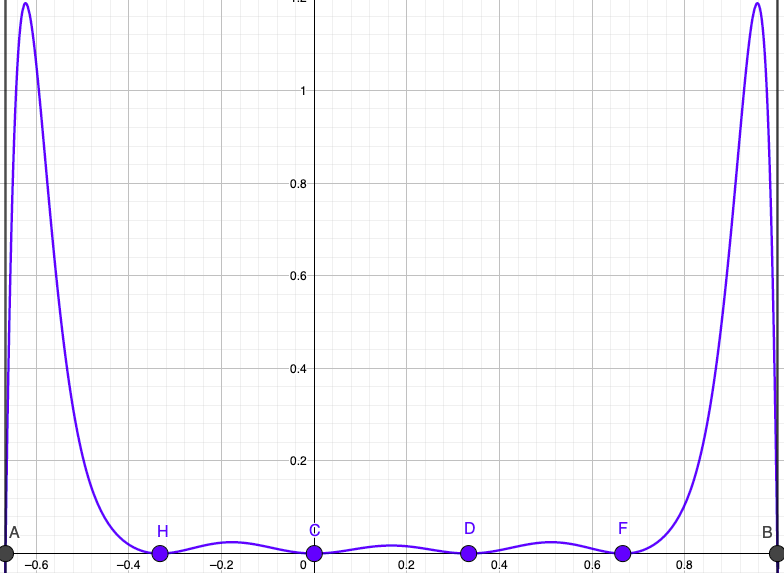}
\caption{$G_{\kappa=2} ^E (x,y)$. $E = 1+\E_4 = 4/3$, $x \in [E-y-1,1]$. From left to right: $y=-1/3, 0,1/3,2/3,1$.}
\label{fig:dim3_k2_band4,n}
\end{figure}

Figure \ref{fig:dim3_k2_band4,m} shows the function $G_{\kappa=2} ^E (x,y)$ at $E = 1+\E_3 = 7/5$ and certain values of $y$.

\begin{figure}[H]
  \centering
 \includegraphics[scale=0.165]{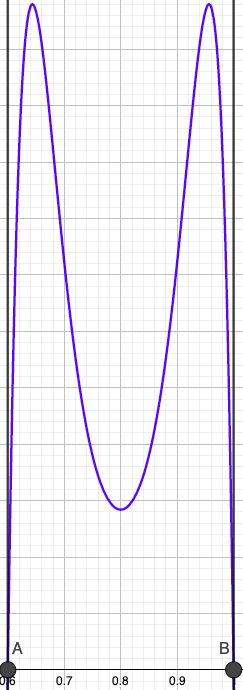}
 \includegraphics[scale=0.165]{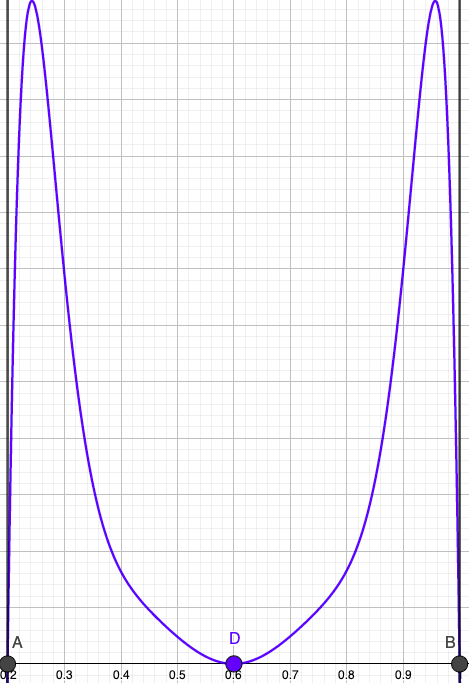}
 \includegraphics[scale=0.165]{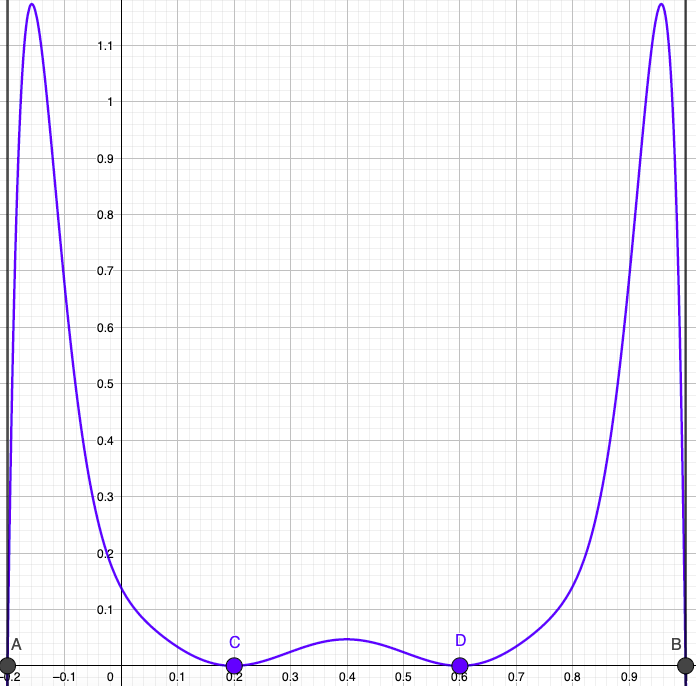}
 \includegraphics[scale=0.205]{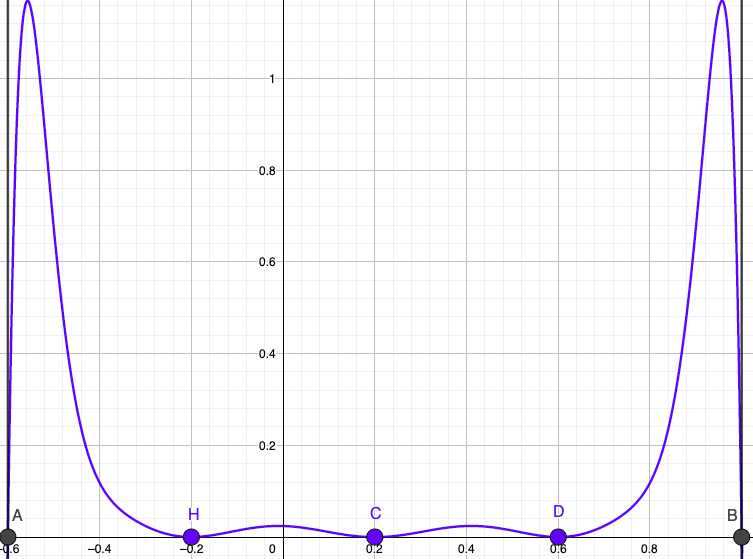}
\caption{$G_{\kappa=2} ^E (x,y)$. $E = 1+\E_3 = 7/5$, $x \in [E-y-1,1]$. From left to right: $y=-1/5, 1/5,3/5,1$.}
\label{fig:dim3_k2_band4,m}
\end{figure}

\section{A threshold in dimension 3 : An example to support Conjecture \ref{conjecture13d33}}

We find a threshold for $\kappa = 3$ in dimension 3. The example justifies our Conjecture \ref{conjecture13d33} :

\begin{example} 
\label{onlyexample}
First we prove $E = \frac{3}{2\sqrt{7}} \in \boldsymbol{\Theta}_{1,\kappa=3} (\Delta[d=3])$. Note $\frac{3}{2\sqrt{7}} \simeq 0.56695$. To do this we follow the procedure in section \ref{subsection1st}, adpated to $d=3$. $E \in \boldsymbol{\Theta}_{1,\kappa} (\Delta[d=3])$ iff $\exists E \in [0,3]$, $Y_i \in [\max(E-2,-1),\min(E+2,1)]$, and $X_i \in [\max(E-Y_i-1,-1),\min(E-Y_i+1,1)]$, $i=0,1$, and $\omega_0 <0$ such that 
\begin{equation}
\label{OMEGAOd3}
\omega_0 = \frac{g_{j \kappa} ^E (X_0, Y_0)}{g_{j \kappa} ^E (X_1,Y_1)} = \frac{g_{l\kappa} ^E (X_0, Y_0)}{g_{l\kappa} ^E (X_1,Y_1)}, \quad \forall j,l \in \N^*.
\end{equation}
Note $g_{j\kappa} ^E (\cdot, \cdot)$ is given by \eqref{def:gE33}. Thus we want to solve $[g_{j \kappa} ^E (X_0, Y_0) , g_{l \kappa} ^E (X_1, Y_1) ] =0$. We make the simplifying assumption that $X_0 = Y_0 = E/3 \neq \pm 1$. Thus, the latter commutator equation reduces to :
\begin{equation}
\label{long_comm22d3}
\begin{aligned}
& m(X_1)  [U_{j \kappa-1}(E/3) , U_{l \kappa-1}(X_1) ] + m(Y_1) [U_{j \kappa-1}(E/3) , U_{l \kappa-1}(Y_1) ]  \\
& \quad + m(E-X_1-Y_1) [U_{j \kappa-1}(E/3) , U_{l \kappa-1}(E-X_1-Y_1) ] = 0.
\end{aligned}
\end{equation}
By virtue of Corollary \ref{basic_lemma}, this equation is satisfied if we further assume :
$$ T_{\kappa=3} (E/3) = T_{\kappa=3} (X_1) = T_{\kappa=3} (Y_1) = T_{\kappa=3} (E- X_1 - Y_1).$$
This is 3 equations, 3 unknowns ($E,X_1, Y_1$). According to Python, a solution is $E = \frac{3}{2\sqrt{7}}$, $X_1 = Y_1 = 2/ \sqrt{7}$. We then check numerically that $\omega_0$ given by \eqref{OMEGAOd3} is $<0$, and it is because $\omega_0 = - 1.8$. We therefore have $E \in \boldsymbol{\Theta}_{1,\kappa=3} (\Delta[d=3])$.

From our discussion in section \ref{subsection1st}, and notably the results in Table \ref{sol_kappa_2346}, we are inclined to believe that $\frac{3}{2\sqrt{7}} - \{ \cos(l \pi / \kappa) : 0 \leq l \leq \kappa \} \not \in \boldsymbol{\Theta}_{1,\kappa=3} (\Delta[d=2])$. Of course, our example is not a hard proof because we don't know exactly what all the energies in $\boldsymbol{\Theta}_{1,\kappa=3} (\Delta[d=2])$ are. In any case, we also conjecture that $\frac{3}{2\sqrt{7}} - \{\cos(l \pi / \kappa) : 0 \leq l \leq \kappa \}  \not \in \boldsymbol{\Theta}_{\kappa=3} (\Delta[d=2])$. 
\end{example}

\section{Appendix : Recap of prior results (\cite{GM2})} 
\label{appendix_std}

Table \ref{tab:table1011} are the bands $\subset \boldsymbol{\mu}_{\kappa}(\Delta)$ identified in \cite{GM2}. These were obtained using the linear combination of conjugate operators \eqref{LINEAR_combinationA} with $\rho_{j \kappa}=1$ if $j=1$, 0 otherwise.

\begin{table}[H]
  \begin{center}
    \begin{tabular}{c|c|c} 
      $\kappa$ & Intervals $\subset \boldsymbol{\mu} _{\kappa} (\Delta)$. $d=2$. &  Intervals $\subset \boldsymbol{\mu} _{\kappa} (\Delta)$. $d=3$.    \\ [0.1em]
      \hline
      $1$  & $(0,2)$ & $[0,1) \cup (1,3)$ \\ [0.1em]
      $2$  & $(1,2)$ & $(2,3)$  \\ [0.1em]
      $3$  &  $(0.542477,1.000000) \cup (1.500000,2)$  &  $(2.5,3)$  \\[0.1em]
      $4$ &  $( 1.026054 , 1.414214 ) \cup ( 1.707107 , 2 ) $ &  $(2.072,2.121) \cup (2.707,3)$  \\[0.1em]
      $5$ &  $( 1.326098 , 1.618034 ) \cup ( 1.809017 , 2 )$ &  $(2.353,2.427) \cup (2.809,3)$ \\[0.1em]
      $6$ &  $( 1.511990 , 1.732051 ) \cup ( 1.866026 , 2 )$ &  $(2.529,2.598) \cup (2.866,3)$ \\[0.1em]
      $7$ &  $( 1.222521 , 1.246980 ) \cup ( 1.632351 , 1.801938 ) \cup ( 1.900969 , 2 )$ &  $(2.645,2.703) \cup (2.901,3)$  \\[0.1em]
      $8$ &  $( 1.382684 , 1.414214 ) \cup ( 1.713916 , 1.847760) \cup ( 1.923880 , 2 )$ &  $(2.724,2.771) \cup (2.924,3)$ \\[0.1em]
    \end{tabular}
  \end{center}
   \caption{Sets $\subset \boldsymbol{\mu}_{\kappa} (\Delta) \cap [0,d]$ found using the trivial linear combination (\cite{GM2})}
           \label{tab:table1011}
\end{table}

In \cite{GM2} we conjectured exact formulas for the most of the band endpoints. In dimension 2 : 
\begin{equation*}
\begin{cases}
\text{Interval} \ = \left( \ \_ \_ \ ,2\cos(\pi/ \kappa) \right) \cup \left(1+\cos(\pi/ \kappa) ,2 \right),  & \kappa = 3-6,  \\
\text{Interval} \ = \left( 1+\cos(3\pi/ \kappa) , 2\cos(2\pi/ \kappa) \right) \cup \left( \ \_ \_ \ , 2\cos(\pi/ \kappa) \right) \cup \left( 1+\cos(\pi/ \kappa) , 2 \right),  & \kappa = 7,8.
\end{cases}
\end{equation*}
And in dimension 3 :
\begin{equation*}
\text{Interval} \ = \left(  \ \_ \_  \ , 3\cos(\pi/ \kappa) \right) \cup \left(2+\cos(\pi/ \kappa),3 \right), \quad \kappa = 4-8.
\end{equation*}

\section{Appendix : Algorithm details}
The following simple algorithms were used to visually assess the positivity of $G_{\kappa} ^E$.

When $\Delta$ in dimension $2$, we used the simple algorithm :
\begin{itemize}
\item For all $E \in [-2,2]$ : 
\begin{itemize}
\item let $y = E-x$
\item check if the function $x \mapsto G_{\kappa} ^E (x)$ has same sign on the interval $x \in [\max(E-1,-1),\min(E+1,1)]$.
\end{itemize}
\end{itemize}

When $\Delta$ in dimension $3$, we used the simple algorithm :
\begin{itemize}
\item For all $E \in [0,3]$ : 
\item For all $y \in [\max(E-2,-1),\min(E+2,1)]$ :
\begin{itemize}
\item let $z = E-x-y$
\item check if the function $x \mapsto G_{\kappa} ^E (x,y)$ has same sign on the interval $x \in [\max(E-y-1,-1),\min(E-y+1,1)]$.
\end{itemize}
\end{itemize}

\section{Appendix : The conjugate operator as a Fourier sine series}
\label{sine_series}

Let $A_{j\kappa}$ be given in \eqref{LINEAR_combinationA}, $\mathcal{F}=$ \eqref{FourierTT}. $\mathcal{F} A_{j\kappa} \mathcal{F}^{-1}  =  (2\i)^{-1} \sum _{i=1} ^d [ \sin(j \kappa \xi_i) \frac{\partial}{ \partial \xi_i} + \frac{\partial }{\partial \xi_i} \sin(j \kappa \xi_i) ]$.
Let $f : [-\pi, \pi] \to \R$ be a $C^1$ function. Let $f = f_{+} + f_-$ be the decomposition of $f$ into an even $f_+$ and odd $f_-$ function. Consider conjugate operators of the form
\begin{equation}
\label{gen_a}
\mathbf{a} := \frac{\i}{2} \sum _{i=1} ^d f(\xi_i) \frac{\partial}{\partial \xi_i} + \frac{\partial}{\partial \xi_i} f(\xi_i), \quad \mathbf{a}_{\pm} := \frac{\i}{2} \sum _{i=1} ^d f_{\pm}(\xi_i) \frac{\partial}{\partial \xi_i} + \frac{\partial}{\partial \xi_i} f_{\pm}(\xi_i).
\end{equation}
One has 
$$\mathcal{F} [ \Delta, \i \mathbf{a} ]_{\circ} \mathcal{F}^{-1} = \sum _{i=1}^d \sin(\xi_i) f(\xi_i), \quad \mathcal{F} [ \Delta, \i \mathbf{a}_{\pm} ]_{\circ} \mathcal{F}^{-1} = \sum _{i=1}^d \sin(\xi_i) f_{\pm}(\xi_i).$$

\begin{Lemma}
If there is a continuous function $f$ such that the strict Mourre estimate \eqref{mourreEstimate123} holds for $\Delta$ wrt.\ $\mathbf{a}$ in a neighborhood of $E$ then it holds for $\Delta$ wrt.\ $\mathbf{a}_-$ in a neighborhood of $E$.
\end{Lemma}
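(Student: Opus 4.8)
The plan is to exploit the decomposition $f = f_+ + f_-$ into even and odd parts together with the symmetry $\xi_i \mapsto -\xi_i$ of the Laplacian in Fourier space. The key observation is that $\mathcal{F}[\Delta, \i\mathbf{a}]_\circ \mathcal{F}^{-1} = \sum_{i=1}^d \sin(\xi_i) f(\xi_i)$, and since $\sin(\xi_i)$ is an odd function, the product $\sin(\xi_i) f_+(\xi_i)$ is odd while $\sin(\xi_i) f_-(\xi_i)$ is even. First I would observe that the even/odd decomposition of the full multiplier is precisely $\sin(\xi_i) f(\xi_i) = \sin(\xi_i) f_-(\xi_i) + \sin(\xi_i) f_+(\xi_i)$, with the first summand even and the second odd in $\xi_i$. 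Summing over $i$ does not mix the parities in a useful single-variable sense, so instead I would work with the full reflection $R$ on $L^2([-\pi,\pi]^d)$ defined by $(R\phi)(\xi) := \phi(-\xi)$, which is unitary and commutes with $\mathcal{F}\Delta\mathcal{F}^{-1} = \sum_i \cos(\xi_i)$ (the symbol is even). Under $R$, the multiplier $\sum_i \sin(\xi_i) f(\xi_i)$ is conjugated to $\sum_i \sin(-\xi_i) f(-\xi_i) = \sum_i \sin(\xi_i) f_-(\xi_i) - \sum_i \sin(\xi_i) f_+(\xi_i)$.

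Next I would invoke the Mourre estimate \eqref{mourreEstimate123} that holds for $\Delta$ with respect to $\mathbf{a}$: there is an open $I \ni E$ and $\gamma > 0$ with $1_I(\Delta)\,\mathcal{F}^{-1}[\sum_i \sin(\xi_i) f(\xi_i)]\mathcal{F}\,1_I(\Delta) \geq \gamma\, 1_I(\Delta)$. Conjugating by the unitary $\mathcal{F}^{-1} R \mathcal{F}$, which commutes with $1_I(\Delta)$ since $R$ commutes with $\mathcal{F}\Delta\mathcal{F}^{-1}$, I obtain $1_I(\Delta)\,\mathcal{F}^{-1}[\sum_i \sin(\xi_i) f_-(\xi_i) - \sum_i \sin(\xi_i) f_+(\xi_i)]\mathcal{F}\,1_I(\Delta) \geq \gamma\, 1_I(\Delta)$. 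Adding this inequality to the original one, the odd-in-$f$ parts ($f_+$ contributions) cancel, and dividing by $2$ yields $1_I(\Delta)\,\mathcal{F}^{-1}[\sum_i \sin(\xi_i) f_-(\xi_i)]\mathcal{F}\,1_I(\Delta) \geq \gamma\, 1_I(\Delta)$. Since $\mathcal{F}[\Delta, \i\mathbf{a}_-]_\circ \mathcal{F}^{-1} = \sum_i \sin(\xi_i) f_-(\xi_i)$, this is exactly the strict Mourre estimate for $\Delta$ with respect to $\mathbf{a}_-$ on $I$.

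I would close by noting the technical preliminaries needed for full rigor: one must check that $\mathbf{a}_-$ is (essentially) self-adjoint so the commutator makes sense — this follows from the structure in \eqref{gen_a} and the boundedness of $f_-$ and its derivative, adapting the argument that $A_{j\kappa}$ is self-adjoint; and one must verify $\Delta \in C^{1,1}(\mathbf{a}_-)$ (or the appropriate regularity class), which again reduces to the smoothness of $f_-$. The main obstacle is purely a matter of making the reflection/conjugation argument clean at the level of unbounded operators and commutator forms rather than just symbols — i.e., justifying that conjugation by $\mathcal{F}^{-1}R\mathcal{F}$ transforms the quadratic form $1_I(\Delta)[\Delta,\i\mathbf{a}]_\circ 1_I(\Delta)$ into $1_I(\Delta)[\Delta,\i(\mathbf{a}_- - \mathbf{a}_+)]_\circ 1_I(\Delta)$, where $\mathbf{a}_+$ is the even-part conjugate operator built from $f_+$. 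Once that identity is established on the level of bounded operators (using that $[\Delta,\i\mathbf{a}]_\circ$ extends to a bounded operator with symbol $\sum_i \sin(\xi_i)f(\xi_i)$), the averaging step is immediate. Everything else is routine functional calculus.
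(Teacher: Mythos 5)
Your proof is correct and takes essentially the same route as the paper's: both exploit the reflection $\xi\mapsto -\xi$, which preserves the symbol of $\Delta$ while flipping the sign of the $f_+$ contribution to $[\Delta,\i\mathbf a]_\circ$, and then average the two resulting inequalities so the even part of $f$ cancels. The only cosmetic difference is that the paper writes the argument directly as a symbol inequality on the constant-energy surface via functional calculus, whereas you package the identical cancellation as conjugation by the unitary $\mathcal F^{-1}R\mathcal F$ commuting with $1_I(\Delta)$.
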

\begin{proof}
Note that $(\xi_1,...,\xi_d) \in S_E \Leftrightarrow (-\xi_1,...,-\xi_d) \in S_E$. If the strict Mourre estimate holds for $\Delta$ wrt.\ $\mathbf{a}$ in a neighborhood of $E$, then $\exists t >0$ such that

\begin{equation}
\begin{aligned}
\chi_{\{E\}} \left(\sum_i \cos(\xi_i) \right) \sum _{i=1}^d \sin(\xi_i) (f_+(\xi_i)+f_-(\xi_i)) & \geq t \cdot \chi_{\{E\}} \left(\sum_i \cos(\xi_i) \right), \quad and \\
\chi_{\{E\}} \left(\sum_i \cos(-\xi_i) \right) \sum _{i=1}^d \sin(-\xi_i) (f_+(-\xi_i)+f_-(-\xi_i)) & \geq t \cdot \chi_{\{E\}} \left(\sum_i \cos(-\xi_i) \right).
\end{aligned} 
\end{equation}
Adding the 2 inequalities gives $\chi_{\{E\}} \left(\sum \cos(\xi_i) \right) \sum _{i=1}^d \sin(\xi_i) f_-(\xi_i)  \geq  t \chi_{\{E\}} \left(\sum \cos(\xi_i) \right)$, which means that the strict Mourre estimate holds for $\Delta$ wrt.\ $\mathbf{a}_-$ in a neighborhood of $E$.
\qed
\end{proof}
This Lemma means that if we assume a conjugate operator of the form \eqref{gen_a}, it is enough to restrict our attention to odd functions. Furthermore, odd $C^1$ functions can be expressed via a Fourier sine series. Thus a conjugate operator of the form \eqref{LINEAR_combinationA} is in line with this observation, but also takes into account the $\kappa$--periodic specificity of the potential $V$.

\section{$\kappa=10$ : thresholds in 1-1 correspondence with the nodes of a binary tree}
\label{KAPPA10}

Figure \ref{fig:test_T10k10} illustrates a sequence of thresholds $\searrow 0$. To construct this sequence, the first point we place is $(0, T_{\kappa}(0)) = (0,-1)$. Then we place pairs of points satisfying first the symmetry condition and then the level condition 2.3. Finally the last point is placed such that it satisfies the symmetry condition wrt.\ the last point constructed, as well as equals $(\cos(2\pi / \kappa), T_{\kappa}(\cos(2\pi / \kappa))) = ( \cos(2\pi / \kappa), 1)$ or $(\cos(4\pi / \kappa), T_{\kappa}(\cos(4\pi / \kappa))) = ( \cos(4\pi / \kappa), 1)$. This sequence is in 1-to-1 correspondence with the nodes of an infinite binary tree, because at every time we want to fulfill the level condition 2.3, i.e.\  place $(X_r, T_{\kappa}(X_r))$ such that $T_{\kappa}(X_r) = T_{\kappa}(X_{q})$ and $T'_{\kappa}(X_r) \cdot T'_{\kappa}(X_{q}) < 0$, we give ourselves the option of choosing from 2 different branches of $T_{\kappa}(\cdot)$ (we could choose many more branches !).

\begin{sidewaysfigure}
  \centering
  \caption{$T_{\kappa=10}(x)$. A sequence of threshold solutions $\searrow 0$.}
 \includegraphics[trim=1cm 120cm 1cm 0cm, width=1.0\textheight]{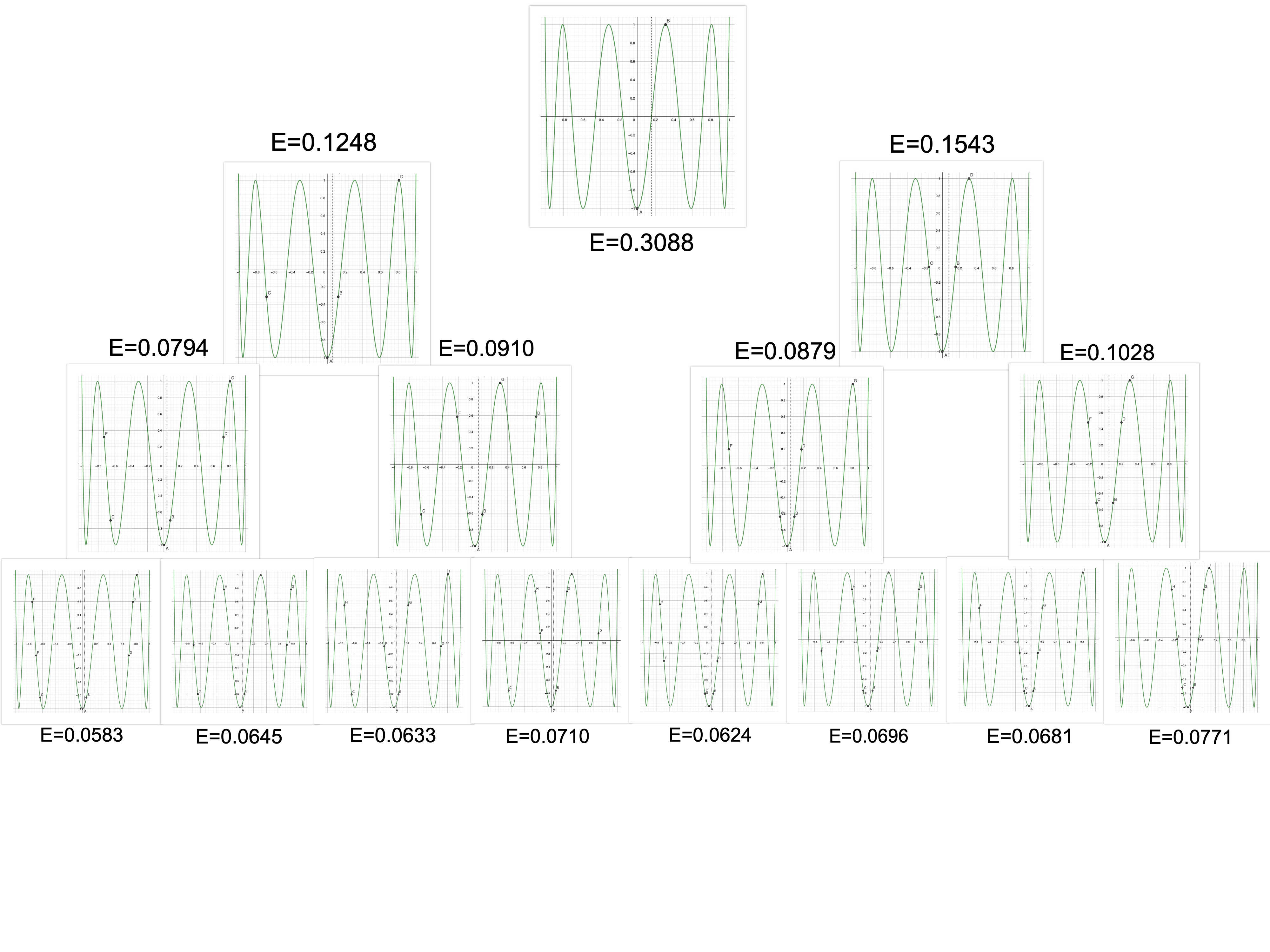}
\label{fig:test_T10k10}
\end{sidewaysfigure}

\end{document}